\documentclass[12pt]{amsart}

\usepackage{amssymb, a4wide, mathdots,url, amsmath}
\usepackage{verbatim}
\usepackage{enumerate}
\usepackage[usenames]{color}
\usepackage[all]{xy}
\usepackage[colorlinks,pagebackref=true]{hyperref}
\hypersetup{colorlinks,linkcolor={blue},citecolor={blue},urlcolor={red}} 
\usepackage{amstext} 
\usepackage{array} 

\newcolumntype{L}{>{$}l<{$}} 

\providecommand{\germ}{\mathfrak}


\newcommand{\Gal}{{\rm Gal}}

\newcommand{\Sp}{\mathrm{Sp}}

\def\BC{\mathbf{BC}}

\newcommand{\Tr}{\mathrm{Tr}}

\renewcommand{\d}{\delta}

\newcommand{\sn}{\mathrm{sn}}
\newcommand{\wsn}{\mathrm{wsn}}

\newcommand{\odd}{\mathrm{o}}

\def\Res{{\rm Res}}
\def\GL{{\rm GL}}

\def\mult{\mathfrak m}


\newcommand{\C}{\mathbb{C}}
\newcommand{\Z}{\mathbb{Z}}

\newcommand{\R}{\mathbb{R}}
\newcommand{\N}{\mathbb{N}}

\newcommand{\A}{\mathbb{A}}
\newcommand{\D}{\mathcal{D}}
\newcommand{\V}{\mathcal{V}}
\newcommand{\M}{\mathcal{M}}
\newcommand{\UU}{\mathcal{U}}
\newcommand{\bG}{\mathbb{G}}
\newcommand{\G}{\mathcal{G}}

\newcommand{\Y}{{\bf Y}}
\newcommand{\T}{{\bf T}}

\renewcommand{\j}{{\jmath}}

\newcommand{\Ad}{\operatorname{Ad}}

\newcommand{\Hasse}{\operatorname{Hasse}}

\renewcommand{\Re}{\operatorname{Re}}
\newcommand{\Ind}{{\rm Ind}}

\renewcommand{\subset}{\subseteq}

\newcommand{\bs}{\backslash}

\newcommand{\diag}{\operatorname{diag}}

\newcommand{\SO}{\operatorname{SO}}
\renewcommand{\O}{\operatorname{O}}
\newcommand{\SL}{\operatorname{SL}}

\newcommand{\U}{{\operatorname{U}}}
\newcommand{\PGL}{\operatorname{PGL}}
\newcommand{\PU}{\operatorname{PU}}

\newcommand{\SU}{\operatorname{SU}}

\newcommand{\Spin}{\operatorname{Spin}}
\newcommand{\GSpin}{\operatorname{GSpin}}

\renewcommand{\mult}{\mathbb{G}_m}

\newcommand{\abs}[1]{\left|{#1}\right|}
\newcommand{\aaa}{\mathfrak{a}}

\newcommand{\op}{\mathrm{op}}

\newcommand{\triv}{{\bf 1}}

\newcommand{\inj}{\iota}


\newcommand{\sprod}[2]{\left\langle#1,#2\right\rangle}

\newcommand{\set}{{\mathfrak{c}}}
\newcommand{\Out}{\mathrm{Out}}

\newcommand{\weyl}{{\mathfrak W}}

\newcommand{\coh}{\mathrm{H}^1}

\newcommand{\Hom}{\operatorname{Hom}}

\renewcommand{\mod}{{\operatorname{mod}}}

\newcommand{\ad}{\operatorname{ad}}
\renewcommand{\sc}{\operatorname{sc}}

\newcommand{\sgn}{\operatorname{\varepsilon}}

\newcommand{\sm}[4]{{\bigl(\begin{smallmatrix}{#1}&{#2}\\{#3}&{#4}
\end{smallmatrix}\bigr)}}

\def\Aut{\operatorname{Aut}}

\newtheorem{theorem}{Theorem}
\newtheorem*{theorem*}{Theorem}

\newtheorem{lemma}{Lemma}
\newtheorem{conjecture}{Conjecture}

\newtheorem{proposition}{Proposition}
\newtheorem{corollary}{Corollary}

\theoremstyle{definition}

\theoremstyle{remark}
\newtheorem{remark}{Remark}
\theoremstyle{observation}
\newtheorem{observation}{Observation}

\title{Intertwining periods and distinction for p-adic Galois symmetric pairs}

\author{Nadir Matringe}
\email{nadirmatringe@outlook.fr}
\address{Universit\'{e} de Paris, Paris, France}
\author{Omer Offen}
\email{offen@brandeis.edu}
\address{Brandeis University, Waltham, MA}

\date{\today}

\begin{document}
\maketitle 

\begin{abstract}
We consider distinction of representations in the context of $p$-adic Galois symmetric spaces. We provide new sufficient conditions for distinction of parabolically induced representations in terms of similar conditions on the inducing data and deduce a characterization for distinction of representations parabolically induced from cuspidal. We explicate the results further for classical groups and give several applications, in particular, concerning the preservation of distinction via Langlands functoriality. We relate our results with a conjecture of Dipendra Prasad.
\end{abstract}
\tableofcontents

\section{Introduction}

Let $E/F$ be a quadratic extension of $p$-adic fields with Galois involution $\theta$, $\mathbb{H}$ a connected reductive group defined over $F$, and $\mathbb{G}=\Res_{E/F}(\mathbb{H})$. Set $G=\mathbb{G}(F)$ and $H=\mathbb{H}(F)$. In this paper we provide a step in reducing the study of $H$-distinction of irreducible representations of $G$ to that of cuspidal representations. 

Recall that a smooth representation $\pi$ of $G$ is called $H$-distinguished if it admits a nonzero $H$-invariant linear form. Also, any irreducible representation $\pi$ of $G$ is the quotient of an induced representation 
of the form $I_P^G(\sigma)$ for $\sigma$ a cuspidal representation of a Levi part of the parabolic subgroup $P$ of $G$. In particular if $\pi$ is $H$-distinguished then so is $I_P^G(\sigma)$. One of our main results is a necessary and sufficient condition on $\sigma$ for $I_P^G(\sigma)$ to be distinguished, it is the content of Corollary \ref{cor cusp dist}.

Of course one is far from understanding when $\pi$ is distinguished from this result, and to go further one needs to understand  when a nonzero $H$-invariant linear form $L$ on $I_P^G(\sigma)$ descends to $\pi$. 

One of the motivations of the present work is the paper \cite{DP} that makes precise predictions on the Langlands parameter of an irreducible representation of $G$ that is $H$-distinguished, and on the multiplicity of $H$-invariant linear forms on $\pi$ in terms of such a parameter. In fact if we want to 
state results in terms of Langlands parameters, one has to go via more steps. More precisely, $\pi$ is the unique irreducible quotient of a standard module $I_P^G(\tau)$ thanks to the Langlands quotient theorem. Here $\tau$ is an essentially tempered representation and therefore a summand of a representation induced from an essentially discrete series representation. Finally, one studies distinction of discrete series representations via distinction of their cuspidal support. At each step one would like a necessary and sufficient condition on the inducing datum of the induced representation considered to be $H$-distinguished, and an understanding of when an $H$-invariant linear form on the induced representation descends to a given irreducible quotient. 

Necessary conditions for distinction of an induced representation of $G$ in terms of distinction of the inducing datum are usually obtained by means of the Berstein-Zelevinsky geometric lemma, and this technique is nowadays very standard. In fact in the case of representations induced from a cuspidal representation the necessary condition obtained from the geometric lemma becomes sufficient, as we show in Corollary \ref{cor cusp dist}. 

To show that the necessary condition given by the geometric lemma is sufficient is not as straightforward and we make use of local intertwining periods which in fact allow us to obtain sufficient conditions for induced representations to be distinguished in a more general context, see Proposition \ref{prop lift}. This can be seen as the most basic application of intertwining periods to the theory. More generally to understand when a nonzero $H$-invariant linear form on an induced representation descends to an irreducible quotient is a notoriously difficult problem in the field. One way to tackle it is to study local 
intertwining periods on induced representations and their functional equation (see Remark \ref{rem descent of linear form}). We expect our result to have further applications concerning distinction of generic discrete series of classical groups.

Intertwining periods were introduced in the global setting in \cite{MR1324453} and further studied in \cite{MR1625060} and 
\cite{MR2010737}. Informally, suppose for a moment that $F$ is a number field with ring of adeles  
$\A$. The global intertwining periods are meromorphic families of $\mathbb{H}(\A)$-invariant linear forms on spaces obtained by parabolic induction from automorphic representations of Levi subgroups of $\mathbb{G}(\A)$. The linear forms are defined by (the meromorphic continuation of) certain integrals. They arise naturally in the study of period integrals of Eisenstein series and consequently appear in the spectral expansion of the relative trace formula. They have also been considered for non-Galois symmetric pairs (see e.g. \cite{MR2254544}  where they were used to study the $\Sp_{2n}(\A)$-period integral on the residual spectrum 
of $\GL_{2n}(\A)$). 

Here, building on \cite{MR3541705}, we develop in Section \ref{sec int per} their local theory for Galois pairs, in analogy with the global work \cite{MR2010737}. Returning to the local setting where $F$ is $p$-adic, our main result on local intertwining periods is their convergence in a cone and meromorphic continuation. More precisely, let $P$ be a standard parabolic subgroup of $G$ (with respect to a fixed minimal parabolic subgroup and a fixed $\theta$-stable maximal $F$-split torus in it) and $M$ the standard Levi subgroup of $P$. Let $x\in G$ be such that $\theta(x)=x^{-1}$ and assume further that the involution $\theta_x=\Ad(x)\circ \theta$ of $G$ stabilizes $M$. Denote by $S_x$ the  $\theta_x$-fixed points in a subset $S$ of $G$.

For a finite length (smooth complex valued) representation $\sigma$ of $M$ with bounded matrix coefficients, $\ell\in \Hom_{M_x}(\sigma,\triv_{M_x})$ and $\chi$ an unramified character of $M$ anti-invariant under $\theta_x$, we show that for $\chi$ in a certain sufficiently positive cone, the integral 
\[\int_{P_x\backslash G_x} \ell(\varphi_\chi(h))dh\] converges absolutely for all holomorphic sections 
$\varphi_\chi\in I_P^G(\chi\otimes \sigma)$ and admits a meromorphic continuation with respect to $\chi$. When $\ell$ is nonzero, by taking a leading term of such an intertwining period we prove in Proposition \ref{prop lift} the existence of a nonzero $G_x$-invariant linear form on 
$I_P^G(\sigma)$.  The existence of such $\ell$ is our new sufficient condition for distinction. 
It immediately leads to our characterization of distinction when $\sigma$ is cuspidal (Corollary \ref{cor cusp dist}). 

The Levi subgroup $M$ acts naturally by twisted conjugation on the set of all $x$ as above and the condition on a cuspidal $\sigma$ in Corollary \ref{cor cusp dist} is expressed in terms of the $M$-orbits. The formulation is not always  convenient to use in view of the motivation, obtaining results in terms of Langlands parametrization. Hence in Section \ref{section Galois distinction and induction}, we make our criterion completely explicit when $\mathbb{H}$ is a classical group. This is the content of Theorem \ref{thm dist ind class}. In fact, Theorem \ref{thm Hdist ind class} is a more precise version of  Theorem \ref{thm dist ind class} where some stabilizers are exlpicated. Its proof requires a careful analysis of orbits and stabilizers that is carried out in Section \ref{s main pf}. In particular, for this task we obtain in \S\ref{ss orbit classification} the classification of the $G$-orbits in the symmetric space $X=\{x\in G: \theta(x)=x^{-1}\}$ with respect to twisted conjugation $g\cdot x=gx\theta(g)^{-1}$.

In Sections \ref{sec selfdual} and \ref{s llc} we present some applications of Theorem \ref{thm dist ind class}. We emphasize that these are applications of the necessary condition in Theorem \ref{thm dist ind class}, in particular, they do not rely on the theory of local intertwining periods developed in Section \ref{sec int per}.

Let $K/k$ be a quadratic extension of number fields with Galois involution still denoted by $\theta$. We prove in Theorem \ref{thm global} that an irreducible generic cuspidal automorphic representation of the split odd special orthogonal group $\SO_{2n+1}(\A_K)$ which is $\SO_{2n+1}(\A_k)$-distinguished (or more generally, distinguished by a certain $K/k$-inner form) is automatically $\theta$-conjugate self-dual. The rigidity theorem of \cite{MR1983781} is a key ingredient in our proof. From this and a globalization result of Beuzart-Plessis \cite{BP} we deduce a local analogue of this result in Theorem \ref{thm local sodd} that slightly generalizes, for p-adic $\SO_{2n+1}$, a Theorem in \cite{BP} for general p-adic quasi-split groups. We also deduce in Proposition \ref{prop standinv} similar results for some non-generic representations.

In Section \ref{s llc} we study a relation between Galois distinction for representations of quasi-split classical groups and their transfer to a general linear group. 
As explained by Proposition \ref{prop distT} this relation is a consequence of Dipendra Prasad's Conjecture \ref{conjecture Prasad}. We prove this relation in Theorem \ref{thm dist and trans} for a certain class of generic irreducible representations of the classical group, that contains all irreducible generic principal series. Such results (and more) have been established in \cite{MR3991419} and \cite{MR4223066} for some small rank classical groups.

Finally in Appendix \ref{app} we provide explicit formulas for Prasad's quadratic character for classical groups. Most of these formulas are well-known but we give proofs for convenience of the reader. We also establish further properties of this character.

\section*{Acknowledgement} We warmly thank Raphaël Beuzart-Plessis for his help in the writing of Section \ref{subsec prasad char} of Appendix \ref{app} and for sharing his preprint \cite{BP}, and Ahmed Moussaoui for useful conversations and explanations. We also thank Colette Moeglin for useful correspondence and the referee for a very careful reading of this work and many useful comments.
The first named author thanks the CNRS for giving him a ``délégation" in 2020.
The second named author was partially supported by Simons collaboration award number $709678$.
\section{Notation and preliminaries}

Let $F$ be a non-archimedean local field of characteristic zero and $\abs{\cdot}$ the normalized absolute value on $F$. For an affine algebraic variety $\mathbb{X}$ defined over $F$ set $X=\mathbb{X}(F)$. Let $\mathbb{G}$ be a connected reductive group defined over $F$. We denote by $X^*(G)$ the group of $F$-rational characters from $\mathbb{G}$ to the multiplicative group $\mathbb{G}_m$ and let $\aaa_G^*=X^*(G)\otimes_\Z \R$. Let $\aaa_G=\Hom(\aaa_G^*,\R)$ be the dual vector space and let $H_G:G\rightarrow \aaa_G$ be the map defined by
\[
e^{\sprod{\chi}{H_G(g)}}=\abs{\chi(g)},\ \ \ \chi\in X^*(G),\ g\in G.
\]
For $\aaa$ a real vector space we denote by $\aaa_\C=\aaa\otimes_\R\C$ its compexification. 

Throughout the paper, we mostly avoid mention of the underlying algebraic groups defined over $F$ and consider only their groups of $F$-points. 
When we say that $P$ is a parabolic subgroup of $G$ we mean that $P=\mathbb{P}(F)$ where $\mathbb{P}$ is a parabolic subgroup of $\mathbb{G}$ defined over $F$. We use a similar convention for other algebraic properties such as, (split) tori, unipotent subgroups etc.

Let $A_G$ be the split component of $G$, that is, the maximal split torus in the center of $G$. We recall that restriction to $A_G$ identifies $\aaa_G^*$ with $\aaa_{A_G}^*$.

For groups $A\subset B$ denote by $Z_B(A)$ the centralizer of $A$ in $B$ and by $N_B(A)$ the normalizer of $A$ in $B$. 
For a locally compact group $Q$ denote by $\delta_Q$ its modulus function.

\subsection{Parabolic subgroups and Iwasawa decomposition}

Let $P_0$ be a minimal parabolic subgroup of $G$ and $T$ a maximal $F$-split torus of $G$ contained in $P_0$. 
The group $P_0$ admits a Levi decomposition $P_0=M_0\ltimes U_0$ where $M_0=Z_G(T)$ and $U_0$ is the unipotent radical of $P_0$. Let $\aaa_0=\aaa_T=\aaa_{M_0}$ and $\aaa_0^*=\aaa_T^*=\aaa_{M_0}^*$.

A parabolic subgroup $P$ of $G$ is called semi-standard if it contains $T$, and standard if it contains $P_0$. A semi-standard parabolic subgroup $P$ of $G$ with  unipotent radical $U$ admits a unique Levi decomposition $P=M\ltimes U$ with the Levi subgroup $M$ containing $T$. Such a Levi subgroup is called semi-standard. If in addition $P$ is standard, $M$ is called a standard Levi subgroup. Whenever we write $P=M\ltimes U$ is a (semi-)standard parabolic subgroup we implicitly assume the above (semi-)standard Levi decomposition. 

Fix a maximal compact open subgroup $K$ of $G$ in good position with respect to $P_0$ (see \cite[I.1.4]{MR1361168}). For a standard parabolic subgroup $P=M\ltimes U$ of $G$ this allows one to extend 
 $H_M$ to a function on $G=UMK$ by 
 \[
 H_M(umk)=H_M(m),\ \ \ u\in U,\,m\in M,\,k\in K.
 \]

\subsection{Roots and Weyl groups}

\subsubsection{Roots}\label{s roots}

Denote by $R(T,G)$ the root system of $G$ with respect to $T$, by $R(T,P_0)$ the set of positive roots of $R(T,G)$ with respect to $P_0$, and by $\Delta_0$ the corresponding set of simple roots. Note that $R(T,G)$ lies in $\aaa_0^*$. For every $\alpha\in R(T,G)$ we denote by $\alpha^\vee\in \aaa_0$ the corresponding coroot. 

There is a unique element $\rho_0\in \aaa_0^*$ (half the sum of positive roots with multiplicities) such that 
\[
\delta_{P_0}(p)=e^{\sprod{2\rho_0}{H_{M_0}(p)}},\ \ \ p\in P_0.
\]

For a standard parabolic subgroup $P=M\ltimes U$ of $G$ let $R(A_M,G)$ be the set of non-trivial restrictions to $A_M$ of elements of $R(T,G)$, $R(A_M,P)$ the subset of non-trivial restrictions to $A_M$ of elements of $R(T,P_0)$ and $\Delta_P$ the subset of non-zero restrictions to $A_M$ of elements in $\Delta_0$. 
For $\alpha\in R(A_M,G)$ we write $\alpha>0$ if $\alpha\in R(A_M,P)$ and $\alpha<0$ otherwise.

Let $Q=L\ltimes V$ be a parabolic subgroup of $G$ containing $P$. 
The restriction map from $A_M$ to $A_L$ defines a projection $\aaa_M^*\rightarrow \aaa_L^*$ that gives rise to a direct sum decomposition $\aaa_M^*=\aaa_L^*  \oplus (\aaa_M^L)^*$ (the second component is the kernel of this projection) and a compatible decomposition $\aaa_M=\aaa_L  \oplus \aaa_M^L$ for the dual spaces.  For $\lambda\in \aaa_M^*$ (respectively $\nu\in\aaa_M$) we write $\lambda=\lambda_L+\lambda_M^L$ (respectively, $\nu=\nu_L+\nu_M^L$) for the corresponding decomposition.
Set
\[
\Delta_P^Q=\{\alpha\in \Delta_P,\ \alpha_L=0 \}.
\] 
(Note that $\Delta_P^Q$ identifies with $\Delta_{P\cap L}$ defined with respect to $(L,L\cap P_0)$ replacing $(G,P_0)$.)

The coroot $\alpha^\vee\in\aaa_M$ associated to $\alpha\in R(A_M,G)$ is defined as follows: let $\alpha_0\in R(T,G)$ be such that $\alpha=(\alpha_0)_M$ then one sets $\alpha^\vee=(\alpha_0^\vee)_M$ (it is independent of the choice of $\alpha_0$).

Let $\rho_P=(\rho_0)_M$. We have the relation
\[
\delta_P(p)=e^{\sprod{2\rho_P}{H_M(p)}},\ \ \ p\in P.
\]

\subsubsection{Weyl groups and Bruhat decomposition}

 We denote by  $W_G:=N_G(T)/M_0$ the Weyl group of $G$.  
 Note that for a semi-standard parabolic subgroup $P=M\ltimes U$ of $G$ the Levi part $M$ contains $M_0$. Therefore expressions such as $wP$ and $Pw$ are well defined for $w\in W_G$. Furthermore, $W_M$ is a subgroup of $W_G$. 
 Let $P=M\ltimes U$ and $Q=L\ltimes V$ be semi-standard parabolic subgroups of $G$.
 The Bruhat decomposition is the bijection $W_MwW_L\mapsto PwQ$ from $W_M\backslash W_G/ W_L$ to $P\backslash G / Q$. 

\subsubsection{Elementary symmetries}\label{s elementary sym}

For standard Levi subgroups $M\subseteq L$ of $G$ let $W_L(M)$ be the set of elements $w\in W_L$ such that $w$ is of minimal length in 
$wW_M$ and $wMw^{-1}$ is a standard Levi subgroup of $G$. Note that for $w\in W_L(M)$ and $w'\in W_L(wMw^{-1})$ we have $w'w\in W_L(M)$. By definition $W_L(M)$ is the disjoint union over standard Levi subgroups $M'$ of $G$ of the sets 
\[
W_L(M,M')=\{w\in W_L(M): wMw^{-1}=M'\}.
\]

Set $W(M)=W_G(M)$. In \cite[I.1.7, I.1.8]{MR1361168} the elementary symmetries $s_\alpha\in W(M)$ attached to each $\alpha\in \Delta_P$ are introduced and used in order to define a length function $\ell_M$ on $W(M)$. There is a unique element of $W_L(M)$ of maximal length for $\ell_M$ and we denote it by $w_M^L$. 

\subsection{Symmetric spaces}

By a symmetric pair $(G,\theta)$ we mean that $\theta$ is an involution of $\mathbb{G}$ defined over $F$.
We say that the symmetric pair $(G,\theta)$ is a Galois pair if $\mathbb{G}=\Res_{E/F}\mathbb{H}$ is the Weil restriction of scalars of a connected reductive group $\mathbb{H}$ defined over $F$, $E/F$ is a quadratic field extension and $\theta$ is the associated Galois action. 
Thus, for a Galois pair $(G,\theta)$ we have $G=\mathbb{H}(E)$ and $G^\theta=\mathbb{H}(F)$.

To a symmetric pair $(G,\theta)$ we associate the symmetric space 
\[
X=\{g\in G,\ \theta(g)=g^{-1}\},
\]
equipped with the twisted conjugation $G$-action  
\[
g\cdot x= g x \theta(g)^{-1}, \ \ \ g\in G,\,x\in X.
\] By \cite[Proposition 6.15]{MR1215304}, $X$ consists of a finite number of $G$-orbits. For $x\in X$ we denote by $\theta_x$ the involution of $G$ given by 
\[\theta_x(g)=x\theta(g)x^{-1}.\] For $x\in X$ and $Q$ a subgroup of $G$, we denote by $Q_x$ the stabilizer of 
$x$ in $Q$, so that 
\[Q_x=(Q\cap \theta_x(Q))^{\theta_x}.\]

Such an involution $\theta$ will induce linear involutions on different vector spaces. Whenever $\theta$ is an involution acting linearly on a vector space $V$ over $\R$ or $\C$, we write
\[
V=V_\theta^+ \oplus V_\theta^-
\]
for the corresponding decomposition into the $\theta$-eigenspaces $V_\theta^{\pm}$ with corresponding eigenvalues $\pm 1$.

\subsection{Parabolic induction and standard intertwining operators}

By a representation of a subgroup of $G$ we always mean a smooth complex valued representation.

\subsubsection{Induced representations} Let $P=M\ltimes U$ be a standard parabolic subgroup of $G$ and $\sigma$ a representation of $M$. 
We denote by $I_P^G(\sigma)$ the representation of $G$ by right translations on the space of functions $\varphi$ on $G$ with values in the space of $\sigma$ that are right invariant by some open subgroup of $G$ and satisfy
\[
\varphi(umg)=\delta_P(m)^{1/2}\sigma(m)\varphi(g),\ \ \ u\in U,\,m\in M,\,g\in G.
\]

\subsubsection{Holomorphic sections}
For $\lambda\in \aaa_{M,\C}^*$ we denote by $\sigma[\lambda]$ the representation of $M$ on the space of $\sigma$ given by $\sigma[\lambda](m)=e^{\sprod{\lambda}{H_M(m)}}\sigma(m)$.
In order to make sense of meromorphic families of linear forms on induced representations we realize all the representations $I_P^G(\sigma[\lambda])$ for $\lambda\in \aaa_{M,\C}^*$ on the space of $I_P^G(\sigma)$.

For $\lambda\in \aaa_{M,\C}^*$ and $\varphi\in I_P^G(\sigma)$ write $\varphi_\lambda(g)=e^{\sprod{\lambda}{H_M(g)}}\varphi(g)$ and let $I_P^G(\sigma,\lambda)$ be the representation of $G$ on the space $I_P^G(\sigma)$ defined by
\[
(I_P^G(g,\sigma,\lambda)\varphi)_{\lambda}(x)=\varphi_\lambda(xg).
\]
The map $\varphi\mapsto \varphi_\lambda$ is an isomorphism of representations $I_P^G(\sigma,\lambda)\rightarrow I_P^G(\sigma[\lambda])$.

\subsubsection{Intertwining operators}\label{sss intop}

Let $w\in W(M)$ and select a representative $n$ of $w$ in $N_G(T)$. Set $M_1=wMw^{-1}=nMn^{-1}$ and let $P_1=M_1\ltimes U_1$ be the standard parabolic subgroup of $G$ with Levi subgroup $M_1$. 
For $c\in \R$ let
\[
\D^{M,w}(c)=\{\lambda \in \aaa_{M,\C}^*: \langle \Re(\lambda), \alpha^\vee \rangle >c, \ \forall \alpha \in R(A_M,G),\,\alpha>0,\, w\alpha<0\}.
\] 
We denote by $n\sigma$ the representation of $M_1$ on the space of $\sigma$ defined by $(n\sigma)(nmn^{-1})=\sigma(m)$, $m\in M$. Note that the isomorphism class of $n\sigma$ is independent of the choice of representative $n$ for $w$ and we denote it by $w\sigma$. For $\lambda\in \aaa_{M,\C}^*$ we have $w\lambda\in \aaa_{M_1,\C}^*$ and the standard intertwining operator 
\[M(n,\sigma,\lambda):I_P^G(\sigma,\lambda)\rightarrow I_{P_1}^G(w\sigma,w\lambda)\] is the meromorphic continuation of 
the operator given by the following convergent integral for $\lambda\in \D^{M,w}(c_{\sigma})$ for some constant $c_{\sigma}$
\[
(M(n,\sigma,\lambda)\varphi)_{w\lambda}(g)= \int_{U_1\cap wUw^{-1}\backslash U_1}\varphi_\lambda (n^{-1}ug)\ du. 
\]
By definition, the convergence of the integral in the appropriate domain means that for every element $v^\vee$ in the smooth dual of $\sigma$ the scalar valued integral
\[
\int_{U_1\cap wUw^{-1}\backslash U_1} v^\vee[\varphi_\lambda (n^{-1}ug)]\ du
\]
converges.
We record the following simple consequence.
\begin{lemma}\label{lem conv l}
With the above notation, let $\lambda\in \D^{M,w}(c_\sigma)$ and $\ell$ a linear form on the space of $\sigma$ such that for every $\varphi\in I_P^G(\sigma)$ and $g\in G$ the integral
\[
\int_{U_1\cap wUw^{-1}\backslash U_1} \abs{\ell(\varphi_\lambda (n^{-1}ug))} du
\]
converges. Then 
\[
\ell((M(n,\sigma,\lambda)\varphi)_{w\lambda}(g))=\int_{U_1\cap wUw^{-1}\backslash U_1} \ell(\varphi_\lambda (n^{-1}ug)) du.
\]
\end{lemma}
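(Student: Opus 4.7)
The plan is to approximate the integral defining $M(n,\sigma,\lambda)$ by partial sums over compact-open subsets, where linearity of $\ell$ trivially commutes with the sum, and then pass to the limit on both sides. Writing $\Phi(u):=\varphi_\lambda(n^{-1}ug)$, the smoothness of $\varphi$ makes $\Phi$ a locally constant function from $U_1\cap wUw^{-1}\backslash U_1$ to the space of $\sigma$. Fix an exhaustion $C_1\subseteq C_2\subseteq\cdots$ of the quotient by compact-open subsets. On each $C_n$, $\Phi$ takes only finitely many values, so the partial integral $I_n:=\int_{C_n}\Phi(u)\,du$ is a finite $\C$-linear combination of vectors in the space of $\sigma$, and linearity of $\ell$ gives
\[
\ell(I_n)=\int_{C_n}\ell(\Phi(u))\,du.
\]

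The hypothesis $\int|\ell(\Phi(u))|\,du<\infty$ combined with dominated convergence yields $\ell(I_n)\to\int\ell(\Phi(u))\,du$ as $n\to\infty$. On the other hand, the absolute convergence built into the definition of $M(n,\sigma,\lambda)$ for $\lambda\in\D^{M,w}(c_\sigma)$ says that for every smooth dual vector $v^\vee$ of $\sigma$ one has $v^\vee(I_n)\to v^\vee\bigl((M(n,\sigma,\lambda)\varphi)_{w\lambda}(g)\bigr)$; that is, $I_n$ converges weakly in the space of $\sigma$ to $(M(n,\sigma,\lambda)\varphi)_{w\lambda}(g)$. Analyzing $\Phi$ via the Iwasawa decomposition together with the local constancy of $\varphi$ places all the $I_n$'s, and their limit, in a common finite-dimensional subspace of the space of $\sigma$, on which $\ell$ is automatically continuous. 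Thus $\ell(I_n)\to\ell\bigl((M(n,\sigma,\lambda)\varphi)_{w\lambda}(g)\bigr)$, and equating the two limits yields the stated identity.

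The main obstacle is the last step: upgrading the weak convergence of partial integrals (guaranteed by the definition of $M(n,\sigma,\lambda)$ through pairings with smooth duals) to convergence compatible with an arbitrary linear form $\ell$. This rests on trapping the $I_n$'s in a common tame subspace, which is really the only non-formal input needed, since everything else follows from Fubini-type manipulations on compact-open approximations.
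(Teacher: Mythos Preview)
Your overall strategy of exhausting by compacts and passing to the limit mirrors the paper's, but the crucial step has a gap. The claim that Iwasawa decomposition together with local constancy of $\varphi$ forces all partial integrals $I_n$ into a \emph{common} finite-dimensional subspace is not justified. Writing $n^{-1}ug=u'\,m(u)\,k(u)$ via Iwasawa, one gets $\Phi(u)=c(u)\,\sigma(m(u))\,\varphi(k(u))$ with $\varphi(k(u))$ ranging over a fixed finite set, but $m(u)$ ranges over an unbounded subset of $M$ as $u$ varies, so the values $\Phi(u)$ typically span an infinite-dimensional subspace. Each $I_n$ is a finite sum and hence lies in \emph{some} finite-dimensional space, but that space depends on $n$ and there is no uniform bound.

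The paper's argument does not attempt to trap the $I_n$ in a finite-dimensional subspace. Instead it fixes a compact open $\mathcal{M}_1\subset M_1$ under which $\varphi$ is right-invariant, sets $\mathcal{M}=n^{-1}\mathcal{M}_1 n\subset M$, and replaces $\ell$ by its average $\ell_0(v)=\int_{\mathcal{M}}\ell(\sigma(m)v)\,dm$, which lies in the smooth dual of $\sigma$. A short computation shows that $(M(n,\sigma,\lambda)\varphi)_{w\lambda}(e)$ is $\sigma(\mathcal{M})$-fixed, so $\ell$ and $\ell_0$ agree on it; convergence of the intertwining integral against $\ell_0$ then holds by the very definition of $M(n,\sigma,\lambda)$. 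Finally, a Fubini/change-of-variables computation on an $\mathcal{M}_1$-normalized compact exhaustion unwinds $\ell_0$ back to $\ell$, and dominated convergence (using your hypothesis on $\ell$) finishes. This averaging trick is what makes the argument work for an arbitrary smooth $\sigma$; your finite-dimensionality route would at minimum require $\sigma$ to be admissible and the $C_n$ to be chosen conjugation-stable under $\mathcal{M}_1$, neither of which you arranged.
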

\begin{proof}
Replacing $\varphi$ with $I_P^G(g,\sigma,\lambda)\varphi$ we assume without loss of generality that $g$ is the identity $e$ of $G$. Let $\M_1$ be a compact open subgroup of $M_1$ such that $\varphi$ is right $\M_1$-invariant and $\M=n^{-1}\M_1n$ a compact open subgroup of $M$. Let
\[
\ell_0(v)=\int_{\M}\ell(\sigma(m)v) \ dm
\] 
for every $v$ in the space of $\sigma$ where integration is over the probability measure of $\M$. Then $\ell_0$ lies in the smooth dual of $\sigma$. 
For $m\in \M$ we have
\[
\sigma(m)[(M(n,\sigma,\lambda)\varphi)_{w\lambda}(e)]=(M(n,\sigma,\lambda)\varphi)_{w\lambda}(nmn^{-1})=(M(n,\sigma,\lambda)\varphi)_{w\lambda}(e)
\]
and therefore
\[
\ell((M(n,\sigma,\lambda)\varphi)_{w\lambda}(e))=\ell_0((M(n,\sigma,\lambda)\varphi)_{w\lambda}(e))=\int_{U_1\cap wUw^{-1}\backslash U_1} \ell_0(\varphi_\lambda (n^{-1}u))\ du.
\]
Let $(\UU_k)_{k\in \N}$ be an increasing sequence of compact open subgroups of $U_1$ with union $U_1$ and such that $\M_1$ normalizes each $\UU_k$. (It is easy to see that such a sequence exists.) 
By the dominated convergence theorem we have
\[
\int_{U_1\cap wUw^{-1}\backslash U_1} \ell_0(\varphi_\lambda (n^{-1}u)) \ du=\lim_{k\to \infty} \int_{\UU_k\cap wUw^{-1}\backslash \UU_k} \ell_0(\varphi_\lambda (n^{-1}u)) \ du.
\]
Note that for every $k$ we have
\[
\int_{\UU_k\cap wUw^{-1}\backslash \UU_k} \ell_0(\varphi_\lambda (n^{-1}u))\ du=\int_{\UU_k\cap wUw^{-1}\backslash \UU_k} \int_{\M}\ell(\sigma(m)[\varphi_\lambda (n^{-1}u))])\ dm\ du
\]
\[
=\int_{\UU_k\cap wUw^{-1}\backslash \UU_k} \int_{\M}\ell(\varphi_\lambda (mn^{-1}u))\ dm\ du.
\]
After the change of variables $m\mapsto n^{-1}mn$ this becomes
\[
\int_{\UU_k\cap wUw^{-1}\backslash \UU_k} \int_{\M_1}\ell(\varphi_\lambda (n^{-1}mu))\ dm\ du.
\]
Changing order of integration, making the change of variables $u\mapsto m^{-1} u m$ and applying the right $\M_1$-invariance of $\varphi$ this becomes
\[
\int_{\UU_k\cap wUw^{-1}\backslash \UU_k} \ell(\varphi_\lambda (n^{-1}u))\ du.
\]
Applying the dominated convergence theorem again we have that
\[
\lim_{k\to \infty}\int_{\UU_k\cap wUw^{-1}\backslash \UU_k} \ell(\varphi_\lambda (n^{-1}u))\ du=\int_{U_1\cap wUw^{-1}\backslash U_1} \ell(\varphi_\lambda (n^{-1}u))\ du
\]
and the lemma follows.
\end{proof}

\subsection{Meromorphic families of linear forms} 
For a finite dimensional complex vector space $V$ and a complex vector space $\V$ we say that $L(\lambda)_{\lambda\in V}$ is a meromorphic family of linear forms on $\V$ if there is a non-zero Laurent polynomial $f$ in $\dim V$-variables such that $f(q^\lambda)L(\lambda)$ is a linear form on $\V$ for every $\lambda\in V$ and the map \[\lambda\mapsto f(q^\lambda)L(\lambda)\nu\] from $V$ to $\C$ is holomorphic for every $\nu\in \V$. Here $q$ is the size of the residual field of $F$ and $q^\lambda=(q^{\lambda_1},\dots,q^{\lambda_r})$ for some choice of coordinates $\lambda_i$ of $\lambda$.

\subsection{The result of Blanc and Delorme}\label{s bd}

For $(G,\theta)$ a symmetric pair, let $P=M\ltimes U$ be a parabolic subgroup of $G$ such that $P\cap \theta(P)=M$. Note that in this case $P^\theta=M^\theta$ is unimodular. Furthermore, since $M$ is $\theta$-stable, $\theta$ acts as a linear involution on $\aaa_M^*$ so that \[\aaa_M^*=(\aaa_M^*)_\theta^+ \oplus (\aaa_M^*)_\theta^-.\]  There is a similar decomposition to the dual space $\aaa_M$ so that $(\aaa_M)_\theta^{\pm}$ is dual to $(\aaa_M^*)_\theta^{\pm}$. In particular, we have
\[
\sprod{\lambda}{H_M(m)}=0, \ \ \ \lambda\in  (\aaa_M^*)_\theta^-,\,m\in M^\theta.
\]
For $c>0$ let
\[
\D_{M,\theta}(c)=\{\lambda\in (\aaa_M^*)_\theta^-:\sprod{\lambda}{\alpha^\vee}>c, \forall \alpha\in R(A_M,P)\}.
\]

 The following is a consequence  of \cite{MR2401221}. Let $\sigma$ be a representation of $M$ of finite length. There exists $c>0$ such that for all $\varphi\in I_P^G(\sigma)$, $\ell\in \Hom_{M^\theta}(\sigma,\triv_{M^\theta})$ and $\lambda\in \D_{M,\theta}(c)$ the integral 
 \[
 J_P^G(\varphi;\theta,\ell,\sigma,\lambda) =\int_{M^\theta\bs G^\theta} \ell(\varphi_\lambda(g))\ dg
 \]
 is absolutely convergent. Furthermore, it admits a meromorphic continuation to a meromorphic family of linear forms  $J_P^G(\theta,\ell,\sigma,\lambda)\in \Hom_{G^\theta}(I_P^G(\sigma,\lambda),\triv_{G^\theta})$, $\lambda\in (\aaa_{M,\C}^*)_\theta^-$.
 
\section{Intertwining periods}\label{sec int per}

Our goal in this section is to construct certain meromorphic families of invariant linear forms, local intertwining periods, on induced representations associated to a p-adic Galois pair. In the global setting, intertwining periods were introduced in \cite{MR1324453} and studied further in a more general setting in \cite{MR1625060} and \cite{MR2010737}. 

\subsection{An informal introduction}
The local intertwining periods emerge as follows. Consider a symmetric pair $(G,\theta)$. Let $\chi$ be a character of $G^\theta$. For a representation $\pi$ of $G$ we denote by $\Hom_{G^\theta}(\pi,\chi)$ the space of $(G^\theta,\chi)$-equivariant linear forms on the space of $\pi$. 

Consider a parabolic subgroup $P$ of $G$ with a $\theta$-stable Levi subgroup $M$ and let $U$ be the unipotent radical of $P$ so that $P^\theta=M^\theta\ltimes U^\theta$. Let $\sigma$ be a representation of $M$ and $\ell\in \Hom_{M^\theta}(\sigma, \delta_{P^\theta}\delta_P^{-1/2}\chi)$. For $\varphi\in I_P^G(\sigma)$, the map $g\mapsto \chi(g)^{-1}\ell(\varphi(g))$ is left $(P^\theta,\delta_{P^\theta})$-equivariant and therefore at least formally the integral  
\[
L(\varphi)=\int_{P^\theta\bs G^\theta} \chi(g)^{-1}\ell(\varphi(g))\ dg
\]
makes sense. If it converges it defines a linear form $L\in \Hom_{G^\theta}(I_P^G(\sigma),\chi)$. In general, however, the integral fails to converge. 
By considering unramified twists of $\sigma$ one hopes to define the linear form by a convergent integral on some cone of unramified twists and by meromorphic continuation in general. 

In this section we achieve this goal when $(G,\theta)$ is a Galois pair and $\chi$ is the trivial character.

\subsection{The strategy}  
Our proof of convergence in a cone and meromorphic continuation is based on the study of the geometry and combinatorics of the $P$-orbits on $G/G^\theta$ carried out in \cite[\S 3-4]{MR2010737}. It allows us to study the intertwining periods inductively along a certain directed graph associated to the $P$-orbits. Roughly speaking, vertices in the graph are associated to certain $P$-orbits and edges to a relation between them defined via twisted conjugation. For a directed edge one expresses the intertwining period associated with the origin as a double integral that formally corresponds with the composition of the intertwining period  associated with the destination and an intertwining operator. These are the local analogues of the simple functional equations obtained in \cite[Proposition 10.1.1]{MR2010737}. They allow reduction of the meromorphic continuation from origin to destination. In order to reduce convergence of the origin intertwining period to the destination one it is required to prove that the above composition expressed as a double integral is absolutely convergent. It is done in two steps. First, we prove it for unramified representations induced from real exponents. In this case the integrands at hand are positive and it is enough to prove convergence as an iterated integral. This reduction is carried out applying the Gindikin-Karpelevich formula and convergence of the standard intertwining operators.  In the second step we use the bounds obtained by Lagier in \cite[Theorem 4]{MR2381204} in order to reduce to the first case. This inductive process along the graph allows us to reduce convergence in a cone and meromorphic continuation to certain minimal cases. By a local analogue of \cite[Lemma 5.4.1]{MR2010737}, the minimal case is reduced to the main results established by Blanc and Delorme in \cite{MR2401221}. The entire proof is a local analogue of the one carried out by Lapid and Rogawski in \cite{MR2010737}. We remark that Lapid and Rogawski assume that $G$ admits a $\theta$-stable minimal parabolic subgroup. Thanks to the analysis of $P$-orbits on $G/G^\theta$ for a general involution $\theta$ of $G$ in \cite{MR3541705} we do not need to make this assumption.

\subsection{The set up}
For the rest of this section assume that $(G,\theta)$ is a Galois symmetric pair. Fix the minimal parabolic subgroup $P_0$ of $G$. According to \cite[Lemma 2.4]{MR1215304} there exists a $\theta$-stable maximal split torus $T$ of $G$ contained in $P_0$. Thus, $P_0$ admits a Levi decomposition $P_0=M_0\ltimes U_0$ where $M_0=Z_G(T)$ is a $\theta$-stable Levi subgroup of $P_0$.

\subsection{A graph of involutions}\label{s graph}

For a standard Levi subgroup $M$ of $G$ let 
\[
X[M]=\{x\in X:\theta_x(M)=M\}.
\]
Note for example that $X[M_0]=X\cap N_G(T)$. 

We define a directed labeled graph $\G=\G(G,\theta,P_0,T)$ as follows.  
The vertices of $\G$ are the pairs $(M,x)$ where $M$ is a standard Levi subgroup of $G$  and $x\in X[M]$. 
The edges of $\G$ are given by 
\[
(M,x) \overset{n}\searrow  (M_1,x_1)
\]
if there is $\alpha\in \Delta_P$ with $-\alpha\ne \theta_x(\alpha) <0$ such that $n\in s_\alpha M$ where $s_\alpha\in W(M)$ is the elementary symmetry associated to $\alpha$, $M_1=nMn^{-1}$ and $x_1=n\cdot x$. 
Note that $(M_1)_{x_1}=n M_x n^{-1}$.

\subsection{Minimal vertices}\label{s min} Recall that the Weyl group $W_G$ acts (by conjugation) simply transitively on the set of minimal semi-standard parabolic subgroups of $G$. There is therefore a unique $\tau\in W_G$ such that $\theta(P_0)=\tau P_0 \tau^{-1}$.

Let $(M,x)$ be a vertex in $\G$ and $P$ the standard parabolic subgroup of $G$ with Levi subgroup $M$. Set $P'=\tau^{-1}\theta(P)\tau$. Then $P'= M' \ltimes U'$ is a standard parabolic subgroup of $G$ with standard Levi subgroup $M'=\tau^{-1} \theta(M)\tau=\tau^{-1}x^{-1} Mx\tau$ and unipotent radical $U'=\tau^{-1} \theta(U)\tau$. We say that $(M,x)$ is minimal if there exists a standard parabolic subgroup $Q=L\ltimes V$ of $G$ containing $P$ such that
\begin{itemize}
\item $\theta(L)=\tau L\tau^{-1}$;
\item $Mx\tau M'=Mw_{M'}^L M'$;
\item $\theta_x(\alpha)=-\alpha$, $\alpha\in \Delta_P^Q$.
\end{itemize}      
This definition coincides with  \cite[Definition 6.6]{MR3541705}. Note that the conditions $M\subseteq L$ together with $\theta(L)=\tau L\tau^{-1}$ imply that $M'\subseteq L$ and therefore $w_{M'}^L$ makes sense. 
The following is a straightforward consequence of the definition.
\begin{lemma}\label{lem min}
Let $(M,x)$ be a minimal vertex in $\G$. In the above notation $Q$, $L$ and $V$ are $\theta_x$ stable. 
\end{lemma}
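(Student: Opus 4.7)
The plan is to show first that $x\tau \in L$ (for any representative of $\tau$ in $N_G(T)$), after which the stability of $L$, $Q$, and $V$ will each follow formally. Condition (iii) on simple roots does not seem to enter this particular argument; it presumably plays a role elsewhere.

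My first step would be to unpack the double coset equality in condition (ii). Since $w_{M'}^L \in W_L(M') \subseteq W_L$, it admits a representative in $N_L(T) \subseteq L$; combined with $M \subseteq L$ and $M' \subseteq L$, the double coset $Mw_{M'}^L M'$ is entirely contained in $L$. Condition (ii) then forces $Mx\tau M' \subseteq L$, so in particular $x\tau \in L$. The choice of representative of $\tau$ in $N_G(T)$ is harmless because any two differ by an element of $M_0 \subseteq L$.

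Once this is in hand, condition (i) gives
\[
\theta_x(L) = x\,\theta(L)\,x^{-1} = (x\tau)\,L\,(x\tau)^{-1},
\]
and since $x\tau \in L$, conjugation by $x\tau$ preserves $L$, so $\theta_x(L) = L$. For $Q$ itself, I would argue by the uniqueness of standard parabolics. From $P \subseteq Q$ and the standardness of $P' = \tau^{-1}\theta(P)\tau$, the subgroup $\tau^{-1}\theta(Q)\tau$ is a parabolic containing $P_0$, hence standard. Its Levi component is $\tau^{-1}\theta(L)\tau$, which equals $L$ by (i). Since a standard parabolic subgroup is determined by its standard Levi, $\tau^{-1}\theta(Q)\tau = Q$, i.e.\ $\theta(Q) = \tau Q \tau^{-1}$. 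Conjugating by $x$ and using $x\tau \in L \subseteq Q$ together with the self-normalization of parabolic subgroups, one concludes $\theta_x(Q) = Q$.

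The stability of $V$ is then automatic, since $V$ is the unipotent radical of $Q$, an intrinsic attribute preserved by any automorphism of $Q$, in particular by $\theta_x$. I do not anticipate a serious obstacle: the lemma essentially amounts to a clean reading of the definition of minimality, with the key observation being that $M w_{M'}^L M' \subseteq L$. The only mild subtlety is keeping track of $\tau$ and $w_{M'}^L$ as Weyl-group elements rather than as specific elements of $G$, but the representatives can be chosen in $L$, making the ambiguity inconsequential.
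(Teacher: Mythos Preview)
Your proposal is correct and follows essentially the same route as the paper's proof: both extract $x\tau \in L$ from condition (ii), use condition (i) to obtain $\theta_x(L)=L$ by conjugating with $x\tau$, then identify $\tau^{-1}\theta(Q)\tau$ as the standard parabolic with Levi $L$ (hence equal to $Q$) and conjugate by $x\tau\in Q$ to get $\theta_x(Q)=Q$, from which $\theta_x(V)=V$ is immediate. Your remark that condition (iii) is not used here is also accurate.
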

\begin{proof}
Let $n\in N_G(T)$ be a representative of $\tau$. It follows from the second point of the definition that $xn\in L$ and from the first that $L=n^{-1}\theta(L) n$. Conjugating with $xn$ we conclude that $L=\theta_x(L)$. Since $n^{-1} \theta(Q)n$ is a standard parabolic subgroup of $G$ with Levi subgroup $n^{-1}\theta(L)n=L$ we conclude that $n^{-1}\theta(Q)n=Q$ and conjugation with $xn$ similarly shows that $\theta_x(Q)=Q$. Since $V$ is the unipotent radical of $Q$ it immediately follows that $\theta_x(V)=V$.
\end{proof}

As a consequence of \cite[Lemma 3.2.1 and Proposition 3.3.1]{MR2010737} (see \cite[Corollary 6.7]{MR3541705} ) we have the following.
\begin{proposition}\label{prop min}
Let $(M,x)$ be a vertex in the graph $\G$. There exists a path 
\[
(M,x) \overset{n_1}\searrow (M_1,\theta_1) \overset{n_2}\searrow\cdots\overset{n_k}\searrow (M_k,x_k)
\] 
in $\G$ such that $(M_k,x_k)$ is a minimal vertex. 
\qed
\end{proposition}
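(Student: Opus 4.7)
The plan is to induct on a non-negative integer invariant of vertices that strictly decreases along each edge of $\mathcal{G}$. A natural choice is
\[
N(M,x) = \#\{\beta \in R(A_M,G) : \beta > 0,\ \theta_x(\beta) < 0,\ \theta_x(\beta) \ne -\beta\}.
\]
Since $N$ is bounded below by zero, any maximal path emanating from $(M,x)$ must terminate, and the task reduces to showing that a vertex with no outgoing edge is necessarily minimal, while every non-minimal vertex admits at least one outgoing edge.

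For the first point, observe that $(M,x)$ has no outgoing edge precisely when no $\alpha \in \Delta_P$ satisfies $\theta_x(\alpha) < 0$ and $\theta_x(\alpha) \ne -\alpha$. I claim that in that case $N(M,x) = 0$ via a standard descent in the positive root system $R(A_M,P)$: any offending positive root would, through successive simple reflections lowering its height, produce a simple root with the same offending property, contradicting the assumption. The vanishing of $N$ then allows one to construct the parabolic $Q = L \ltimes V \supseteq P$ whose Levi $L$ is generated by $M$ together with the root subgroups of $\pm \alpha$ for those $\alpha \in \Delta_P$ with $\theta_x(\alpha) = -\alpha$, and to verify the three minimality conditions: $\theta(L) = \tau L\tau^{-1}$ follows from the induced sign behavior of $\theta_x$ on $\Delta_P$; the condition $\theta_x(\alpha)=-\alpha$ on $\Delta_P^Q$ is built in by the construction of $L$; and the double-coset identity $Mx\tau M' = M w_{M'}^L M'$ follows from the characterization of $w_{M'}^L$ as a minimal-length representative combined with a length computation for $x\tau$ expressed through the positivity pattern of its action on roots.

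For the second point, given a non-minimal $(M,x)$ and a simple root $\alpha \in \Delta_P$ with $\theta_x(\alpha) < 0$ and $\theta_x(\alpha) \ne -\alpha$, fix a representative $n$ in $s_\alpha M$ and form the outgoing edge $(M,x) \overset{n}\searrow (M_1, x_1)$ with $M_1 = nMn^{-1}$ and $x_1 = n\cdot x$. The conjugacy $\theta_{x_1} = \Ad(n) \circ \theta_x \circ \Ad(n)^{-1}$, together with the identification $\aaa_M^* \cong \aaa_{M_1}^*$ via $s_\alpha$, reduces the comparison of $N(M_1,x_1)$ and $N(M,x)$ to a root-by-root check inside $R(A_M,G)$: since $s_\alpha$ permutes the positive roots other than $\alpha$ and flips only $\alpha$, the root $\alpha$ is removed from the count while no new contributor is produced, yielding $N(M_1,x_1) < N(M,x)$. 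The main obstacle is precisely this last piece of bookkeeping---verifying that conjugation by $s_\alpha$ combined with the passage from $\theta_x$ to $\theta_{x_1}$ introduces no new offending positive roots $\beta$ with $\theta_{x_1}(\beta) < 0$ and $\theta_{x_1}(\beta) \ne -\beta$. This verification is the substance of the cited \cite[Lemma 3.2.1 and Proposition 3.3.1]{MR2010737}, adapted to the present setting (where $P_0$ is not assumed $\theta$-stable) via the orbit analysis of \cite[\S6]{MR3541705}.
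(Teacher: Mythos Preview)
The paper gives no proof of its own; it simply cites \cite[Lemma 3.2.1 and Proposition 3.3.1]{MR2010737} and \cite[Corollary 6.7]{MR3541705}. Your proposal is a faithful expansion of the argument in those references: define a non-negative invariant that strictly decreases along edges, conclude that every maximal path terminates, and identify terminal vertices with minimal ones. Your edge-bookkeeping is correct (in fact $N$ drops by exactly $2$ along each edge, since both $\alpha$ and $-\theta_x(\alpha)$ leave the count while $|A_3|=\{\beta>0:\theta_x(\beta)=\alpha\}$ is empty because $\theta_x(\alpha)<0$).

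One step is not right as written. Your ``height descent'' for showing $N(M,x)=0$ at a terminal vertex does not work: applying a simple reflection $s_\gamma$ to an offending $\beta$ does not in general produce another offending root of smaller height, because the condition ``$\theta_x(\beta)<0$ and $\theta_x(\beta)\ne-\beta$'' is not stable under $s_\gamma$ when $\theta_x$ and $s_\gamma$ do not commute. The argument in the references is structural rather than inductive: with $Q=L\ltimes V$ defined by $\Delta_P^Q=\{\alpha\in\Delta_P:\theta_x(\alpha)=-\alpha\}$, one checks that $\theta_x$ acts as $-1$ on $(\aaa_M^L)^*$ and sends every remaining simple root to a positive root, hence preserves $R(A_L,Q)$. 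For any $\beta\in R(A_M,P)$ one then has $\theta_x(\beta)>0$ if $\beta_L\ne 0$ and $\theta_x(\beta)=-\beta$ if $\beta_L=0$. This simultaneously gives $N(M,x)=0$ and the $\theta_x$-stability of $L$, $Q$, $V$ needed to verify the minimality conditions; the double-coset identity $Mx\tau M'=Mw_{M'}^L M'$ is then the remaining combinatorial check, handled in \cite[Corollary 6.7]{MR3541705}.
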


\subsection{The cone of convergence}\label{s cone}
Let $(M,x)$ be a vertex in $\G$. Recall that $\theta_x$ acts as an involution on $\aaa_M^*$ and let
\[
(\aaa_M^*)_x^-=(\aaa_M^*)_{\theta_x}^-.\]

For $c>0$ let 
\[
\D_{M,x}(c)=\{\lambda\in (\aaa_M^*)_x^-:\sprod{\lambda}{\alpha^\vee}>c, \forall \alpha\in R(A_M,G), \,\alpha>0,\,\theta_x(\alpha)<0\}.
\]
For an edge $(M,x)\overset{n}\searrow (M_1,x_1)$ in $\G$ let $\alpha\in \Delta_P$ be such that $n\in s_\alpha M$. By \cite[Lemma 5.2.1]{MR2010737}
we have 
\[
\D_{M,x}(c)=s_\alpha^{-1} \D_{M_1,x_1}(c)\cap \{\lambda\in (\aaa_M^*)_x^-:\sprod{\lambda}{\alpha^\vee}>c\}.
\]

\subsection{Admissible orbits}\label{ss adm}
Let $P=M\ltimes U$ be a standard parabolic subgroup of $G$. For $x\in X$ let $w\in W_G$ be such that $P x\theta(P)=Pw\theta(P)$. (By the Bruhat decomposition, the double coset $W_MwW_{\theta(M)}$ is uniquely determined by $x$.) 

We say that $x$ (or its $P$-orbit in $X$) is \emph{$P$-admissible} if $M=w\theta(M)w^{-1}$. If a $P$-orbit $\O$ in $X$ is $P$-admissible then it contains an element of $X[M]$. In fact $\O\mapsto \O\cap X[M]$ is a bijection from the admissible $P$-orbits in $X$ to the $M$-orbits in $X[M]$. (See \cite[Lemma 3.2]{MR3541705}).
It further follows from \cite[Corollary 6.9]{MR3541705} that 
\begin{equation}\label{eq eq mod}
\delta_P^{1/2}|_{P_x}=\delta_{P_x}, \ \ \ x\in X[M].
\end{equation}

\begin{remark}
Equation \eqref{eq eq mod} is also satisfied by symmetric pairs of Prasad and Takloo-Bighash type (see Remark \ref{rem applications to PTB pairs}). However, it is not satisfied for general symmetric pairs. For example for the symmetric pairs $(\GL_n(F),\GL_a(F)\times \GL_b(F))$ where $a+b=n$ (see for example \cite[Proposition 3.6]{MR3430877}) the character $\delta_{P_x}$ does not, in general, extend to $P$. As pointed out by the referee, determining the symmetric pairs for which Equation (\ref{eq eq mod}) is satisfied is an interesting problem. 
\end{remark}

As a consequence, for  a representation $\sigma$ of $M$, $x\in X[M]$, $\ell\in \Hom_{M_x}(\sigma,\triv_{M_x})$, $\varphi\in I_P^G(\sigma)$ and $\lambda\in (\aaa_{M,\C}^*)_x^-$ we have
\begin{equation}\label{eq triv mod}
\ell(\varphi_\lambda(pg))=\delta_{P_x}(p)\ell(\varphi_\lambda(g)),\ \ \ p\in P_x,\,g\in G.
\end{equation}

\subsection{The intertwining periods modulo convergence}
It follows from \eqref{eq triv mod} in its notation that the following integral formally makes sense
\[
J_P^G(\varphi;x,\ell,\sigma,\lambda)=\int_{P_x\bs G_x} \ell(\varphi_\lambda(g))\ dg.
\]
If $\lambda$ is such that the integral is convergent for all $\varphi\in I_P^G(\sigma)$ then it defines a linear form $J_P^G(x,\ell,\sigma,\lambda)\in \Hom_{G_x}(I_P^G(\sigma,\lambda),\triv_{G_x})$.

Assuming that $\sigma$ is of finite length and with bounded matrix coefficients we prove that there exists $c>0$ such that the integrals are absolutely convergent as long as $\Re(\lambda)\in \D_{M,x}(c)$ and admit meromorphic continuation to $\lambda\in (\aaa_{M,\C}^*)_x^-$. We will prove this by induction along a path as in Proposition \ref{prop min} in the graph $\G$.

\subsection{Intertwining periods for minimal vertices}

Our first step is the observation that the results of Blanc and Delorme (\S \ref{s bd}) imply the base of the induction. 

\begin{corollary}\label{cor bd}
Let $(M,x)$ be a minimal vertex in $\G$ and $\sigma$ a representation of $M$ of finite length. Then for any $\ell\in \Hom_{M_x}(\sigma,\triv_{M_x})$ the linear form $J_P^G(x,\ell,\sigma,\lambda)$ is defined by a convergent integral for $\lambda\in \D_{M,x}(c)$ and admits a meromorphic continuation to $\lambda\in (\aaa_{M,\C}^*)_x^-$.
\end{corollary}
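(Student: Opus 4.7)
The plan is to deduce the corollary from the Blanc--Delorme result (Section \ref{s bd}) applied to the symmetric pair $(L, \theta_x|_L)$ and the parabolic $P \cap L$ of $L$, where $Q = L \ltimes V$ is the standard parabolic produced by the minimality of $(M,x)$.

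By Lemma \ref{lem min}, $Q$, $L$ and $V$ are $\theta_x$-stable, and the minimality condition $\theta_x(\alpha) = -\alpha$ for $\alpha \in \Delta_P^Q$ implies that $\theta_x|_L$ sends the parabolic $P \cap L$ of $L$ (which has Levi $M$) to its opposite parabolic in $L$. In particular $(P \cap L) \cap \theta_x(P \cap L) = M$, which is exactly the Blanc--Delorme hypothesis for $(L, \theta_x|_L)$ and $P \cap L$. Moreover, since $\theta_x|_L$ sends $U \cap L$ to its opposite unipotent radical, every $\theta_x$-fixed element of $U \cap L$ must be trivial; hence $(U \cap L)_x = \{e\}$, so $P_x = M_x V_x$, $Q_x = L_x V_x$, and there is a natural bijection $P_x \bs Q_x \cong M_x \bs L_x$.

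The next step is to decompose the integral via the fibration $P_x \bs G_x \to Q_x \bs G_x$ with fiber $M_x \bs L_x$. Because $Q$ is a $\theta_x$-stable parabolic of $G$, the group $Q_x$ is a parabolic subgroup of the reductive group $G_x$, so $Q_x \bs G_x$ is compact. Combined with Fubini, this yields
\[
J_P^G(\varphi; x, \ell, \sigma, \lambda) = \int_{Q_x \bs G_x} \Bigl( \int_{M_x \bs L_x} \ell(\varphi_\lambda(h g))\, dh \Bigr)\, dg,
\]
where, for each fixed $g \in G_x$, the function $h \mapsto \varphi_\lambda(hg)$ is (after using $\delta_P = \delta_{P \cap L} \delta_Q$ on $M$) a holomorphic section of $I_{P \cap L}^L(\sigma[\rho_Q])$ evaluated at the $(\aaa_M^L)^*$-component of $\lambda$. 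Since $V$ is $\theta_x$-stable, positive roots in $R(A_M, V)$ are permuted by $\theta_x$ and thus remain positive, whereas minimality forces positive roots in $R(A_M, P \cap L)$ to become negative. Hence the set of $\alpha \in R(A_M, G)$ with $\alpha > 0$ and $\theta_x(\alpha) < 0$ is exactly $R(A_M, P \cap L)$, and the cone $\D_{M,x}(c)$ matches the Blanc--Delorme cone attached to $(L, \theta_x|_L, P \cap L)$ under the projection $\aaa_M^* \to (\aaa_M^L)^*$.

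Blanc--Delorme then provides absolute convergence of the inner integral for $\lambda$ in this common cone together with meromorphic continuation of the resulting family of $L_x$-invariant linear forms to all of $(\aaa_{M,\C}^*)_x^-$; integration against the compact outer quotient $Q_x \bs G_x$ preserves both properties and gives the corollary. The principal obstacle is not conceptual but a careful bookkeeping of modular characters: one must verify, using \eqref{eq eq mod} both in $(G, \theta_x)$ and in $(L, \theta_x|_L)$, that the various $\delta^{1/2}$-factors align so that the inner integral is genuinely a Blanc--Delorme intertwining period on $L$ with the shifted inducing datum $\sigma[\rho_Q]$, and that the Haar quotient measure on $P_x \bs G_x$ decomposes consistently as the product of the measures on $M_x \bs L_x$ and $Q_x \bs G_x$.
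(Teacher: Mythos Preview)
Your proposal is correct and follows essentially the same route as the paper's proof: decompose the integral along the fibration $P_x\bs G_x \to Q_x\bs G_x$ with fiber $M_x\bs L_x$, use compactness of $Q_x\bs G_x$ (the paper cites \cite[Lemma 3.1]{MR3490774} rather than the parabolic argument, and notes that by smoothness the outer integral is a finite sum), and recognize the inner integral as a Blanc--Delorme period for $(L,\theta_x|_L,\,P\cap L)$. The only cosmetic difference is that the paper packages the inner section as $m\mapsto \delta_Q^{-1/2}(m)\varphi(mg)\in I_{P\cap L}^L(\sigma)$ rather than as a section of $I_{P\cap L}^L(\sigma[\rho_Q])$, and states directly that $\D_{M,x}(c)=\D_{M,\theta_x|_L}(c)$ (no projection to $(\aaa_M^L)^*$ is needed, since $(\aaa_M^*)_x^-=(\aaa_M^*)_{\theta_x}^-$ is the same ambient space in both settings).
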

\begin{proof}
We use the notation of \S \ref{s min} for a minimal vertex and integrate $J_P^G(\varphi;x,\ell,\sigma,\lambda)$ in stages.
We have
\[
J_P^G(\varphi;x,\ell,\sigma,\lambda)=\int_{P_x\bs G_x} \ell(\varphi_\lambda(g))\ dg=\int_{Q_x\bs G_x} \int_{P_x\bs Q_x}\delta_{Q_x}(m)^{-1} \ell(\varphi_\lambda(mg))\ dm  \ dg.
\]
Recall that $Q$, $L$ and $V$ are $\theta_x$-stable (Lemma \ref{lem min}). It follows that $Q_x\bs G_x$ is compact (see \cite[Lemma 3.1]{MR3490774}). By smoothness, the outer integral may therefore be replaced by a finite sum and it is enough to prove convergence and meromorphic continuation of the inner integral. 

It further follows from  \cite[Lemma 6.3]{MR3541705} that $Q_x=L_x\ltimes V_x$ and $P_x=M_x\ltimes U_x$. Since $U=(U\cap L)\ltimes V$ we conclude that $U_x=(U_x\cap L)\ltimes V_x$.  By the definition of minimality $U_x\cap L=\{e\}$ and therefore $P_x=M_x\ltimes V_x$. 
The inner integral is therefore
\[
\int_{M_x\bs L_x}\delta_{Q_x}(m)^{-1} \ell(\varphi_\lambda(mg))\ dm.
\]
Recall that by \eqref{eq eq mod} we have $\delta_Q^{1/2}|_{Q_x}=\delta_{Q_x}$ and that the function $m\mapsto \delta_Q^{-1/2}(m)\varphi(mg)$, $m\in L$ lies in $I_{P\cap L}^L(\sigma)$. Thus $\theta_x$ is an involution on $L$ and by minimality of the vertex, the parabolic subgroup $P\cap L$ of $L$ satisfies $\theta_x(P\cap L)\cap (P\cap L)=M$. Note further that since $V$ is $\theta_x$-stable we have $\D_{M,x}(c)=\D_{M,\theta_x|_{L}}(c)$ where the right hand side is defined in \S \ref{s bd}. The result therefore follows from \S \ref{s bd}.
\end{proof}

\subsection{Convergence in the unramified case}

Let $P=M\ltimes U$ be a standard parabolic subgroup of $G$ and $\triv_M$ the trivial representation of $M$. For any $\lambda\in \aaa_M^*$ we consider the induced representation $I_P^G(\triv_{M},\lambda)$ realized on the space $I_P^G(\triv_M)$. Let $x\in X[M]$ and $\lambda\in (\aaa_M^*)_x^-$.
For $\varphi\in I_P^G(\triv_M)$ consider the integral
\[
J_P^G(\varphi;x,\lambda)=\int_{P_x\bs G_x} \varphi_\lambda(g)\ dg.
\]
Note that $\varphi_\lambda(pg)=\delta_P^{1/2}(p)\varphi_\lambda(g)$, $p\in P_x$ and $g\in G$ and by \eqref{eq eq mod}  the integral formally makes sense. 
\begin{proposition}\label{prop unram}
There exists $c>0$ such that $J_P^G(\varphi;x,\lambda)$ is defined by a convergent integral for every standard parabolic subgroup $P=M\ltimes U$ of $G$, $\varphi\in I_P^G(\triv_M)$, $x\in X[M]$ and $\lambda\in \D_{M,x}(c)$.
\end{proposition}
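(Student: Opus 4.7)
The plan is to argue by induction along the directed graph $\G$ constructed in \S\ref{s graph}. By Proposition \ref{prop min}, every vertex $(M,x)$ is joined to a minimal vertex by a finite directed path, so I induct on the length of such a path. When $(M,x)$ itself is minimal, Corollary \ref{cor bd} applied to the trivial representation $\sigma = \triv_M$ provides the required convergence of $J_P^G(\varphi; x, \lambda)$ on a cone $\D_{M,x}(c)$, which is the base of the induction.

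For the inductive step I fix an edge $(M,x) \overset{n}{\searrow} (M_1, x_1)$ in $\G$ with $n \in s_\alpha M$ for some $\alpha \in \Delta_P$ satisfying $-\alpha \neq \theta_x(\alpha) < 0$, and assume the proposition is known for $(M_1, x_1)$ with some constant $c_1$. Writing $P_1 = M_1 \ltimes U_1$ for the standard parabolic with Levi $M_1$, the key ingredient is the local functional equation
\[
J_P^G(\varphi; x, \lambda) = \int_{(P_1)_{x_1} \bs G_{x_1}} \int_{U_1 \cap nUn^{-1} \bs U_1} \varphi_\lambda(n^{-1} u g) \, du \, dg,
\]
which is the local analogue of \cite[Proposition 10.1.1]{MR2010737}. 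Its right-hand side is formally the composition of the standard intertwining operator $M(n, \triv_M, \lambda)$ of \S\ref{sss intop} with the intertwining period $J_{P_1}^G(\,\cdot\,; x_1, s_\alpha \lambda)$ attached to the vertex $(M_1, x_1)$. The identity amounts to a measure-theoretic decomposition of $P_x \bs G_x$ compatible with the geometry of $P$-orbits on $G/G^\theta$ studied in \cite{MR3541705}; once it is available as an identity between (possibly infinite) integrals of nonnegative quantities, absolute convergence on the left reduces to finiteness of the iterated integral on the right.

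To treat arbitrary $\varphi \in I_P^G(\triv_M)$, I note that $\varphi$ is locally constant and hence bounded on the compact set $K$, so by the Iwasawa decomposition $|\varphi_\lambda(g)| \leq C \, \varphi^\circ_{\Re\lambda}(g)$ where $\varphi^\circ \in I_P^G(\triv_M)$ denotes the normalized spherical section satisfying $\varphi^\circ|_K \equiv 1$. It therefore suffices to verify convergence of $J_P^G(\varphi^\circ; x, \lambda)$ for real $\lambda \in \D_{M,x}(c)$. In this case every integrand is nonnegative and Tonelli's theorem applies to the displayed iterated integral. The inner integral $\int_{U_1 \cap nUn^{-1} \bs U_1} \varphi^\circ_\lambda(n^{-1} u g) \, du$ converges by the Gindikin-Karpelevich evaluation of $M(n, \triv_M, \lambda) \varphi^\circ$, which requires $\sprod{\lambda}{\alpha^\vee}$ to be sufficiently large, and its value is an explicit scalar $c(\lambda)$ times the normalized spherical section $\varphi^\circ_1$ of $I_{P_1}^G(\triv_{M_1})$. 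The outer integral then becomes $c(\lambda) \cdot J_{P_1}^G(\varphi^\circ_1; x_1, s_\alpha \lambda)$, which is finite by the inductive hypothesis provided $s_\alpha \lambda \in \D_{M_1, x_1}(c_1)$. The cone identity $\D_{M,x}(c) = s_\alpha^{-1} \D_{M_1, x_1}(c) \cap \{\lambda : \sprod{\lambda}{\alpha^\vee} > c\}$ of \S\ref{s cone} ensures both conditions are simultaneously achievable for $c$ sufficiently large, closing the induction.

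The principal obstacle is the rigorous derivation of the displayed measure-theoretic identity. One has to set up the correct open fibration of $P_x \bs G_x$ over $(P_1)_{x_1} \bs G_{x_1}$ with fibers modeled on $U_1 \cap nUn^{-1} \bs U_1$, verify the Haar measure compatibilities, and check that the complement of the open part is negligible so that one genuinely has equality (and not merely an inequality) between the nonnegative integrals, in analogy with the global derivation in \cite{MR2010737}. Once this geometric input is in place, the convergence statement follows cleanly from the combination of the Gindikin-Karpelevich estimate in the intertwining-operator direction and the inductive hypothesis.
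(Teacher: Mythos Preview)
Your proposal follows essentially the same route as the paper's proof: induction along the graph $\G$ via Proposition~\ref{prop min}, base case from Corollary~\ref{cor bd}, reduction to the spherical section $\xi$ by the Iwasawa bound $|\varphi|\le C\xi$, the double-integral decomposition coming from the orbit geometry in \cite{MR3541705}, Gindikin--Karpelevich for the inner integral, and the inductive hypothesis plus positivity for the outer one. Two small points worth tightening: first, in your displayed identity the integrand should be $\varphi_\lambda(n^{-1}ugn)$ rather than $\varphi_\lambda(n^{-1}ug)$, since the change of variable $G_x\to G_{x_1}=nG_xn^{-1}$ is $g\mapsto ngn^{-1}$ (this does not affect convergence, as right translation by $n$ is harmless); second, the proposition asserts a \emph{single} constant $c$ valid for all standard $P$ and all $x\in X[M]$, which requires the observation (made explicitly in the paper) that there are only finitely many $M$-orbits in $X[M]$ and that convergence depends only on the orbit, so one may take the maximum of the finitely many constants produced by your induction.
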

\begin{proof}
Let $\mathcal{M}$ be the finite set of standard Levi subgroups of $G$. For a parabolic subgroup $P$ of $G$ there are finitely many $P$-orbits in $X$, \cite[Proposition 6.15]{MR1215304}. Consequently, there are finitely many $M$-orbits in $X[M]$ for every $M\in \mathcal{M}$ (see \S\ref{ss adm}). 
Note further that for a standard parabolic subgroup $P=M\ltimes U$, $x\in X[M]$ and $m\in M$ we have
\[
J_P^G(\varphi;m\cdot x,\lambda)=e^{\sprod{\lambda+\rho_P}{H_M(m)}} J_P^G(I_P^G(m^{-1},\triv_M,\lambda)\varphi; x,\lambda)
\]
where the left hand side is defined by a convergent integral if and only if the right side is.

It therefore follows from Corollary \ref{cor bd} that there exists $c>0$ such that the integral $J_P^G(\varphi,x,\lambda)$ converges for all standard parabolic subgroups $P=M\ltimes U$ of $G$, $\varphi\in I_P^G(\triv_M)$, $x\in X[M]$ such that $(M,x)$ is a minimal vertex in $\G$ and $\lambda\in \D_{M,x}(c)$. 
We may further assume that $c$ is large enough so that the intertwining operators $M(n,\triv_M,\lambda)$ are defined by a convergent integral for every standard parabolic subgroup $P=M\ltimes U$ of $G$, $\alpha\in \Delta_P$, $n\in s_\alpha M$ and $\lambda\in \D^{M,s_\alpha}(c)$. Note further that if $x\in X[M]$ is such that $-\alpha\ne \theta_x(\alpha)<0$ then $\D_{M,x}(c)\subseteq \D^{M,s_\alpha}(c)$  (see \S\ref{sss intop} and \S \ref{s cone}).

It follows from Proposition \ref{prop min} that it is enough to prove the following. Assume that $(M,x) \overset{n}\searrow (M_1,x_1)$ is an edge on the graph $\G$ and $\alpha\in \Delta_P$ is such that $n\in s_\alpha M$. If 
$J_{P_1}^G(\varphi;x_1,s_\alpha\lambda)$ is defined by a convergent integral for every $\varphi\in I_{P_1}^G(\triv_{M_1})$ and $\lambda\in \D_{M_1,x_1}(c)$
then $J_P^G(\varphi;x,\lambda)$ is defined by a convergent integral for every $\varphi\in I_P^G(\triv_M)$ and $\lambda\in \D_{M,x}(c)$.

Let $\xi$ be the spherical vector in $I_P^G(\triv_M)$ normalized by taking the value one at the identity. Thanks to the Iwasawa decomposition, for every $\varphi \in I_P^G(\triv_M)$ there is a positive constant $A$ such that 
$|\varphi|\leq A\xi$ hence $|\varphi_\lambda|\leq A\xi_\lambda$ for all $\lambda\in \aaa_M^*$. It therefore suffices to consider
$\varphi=\xi$.

Let $P_1=M_1\ltimes U_1$ be the standard parabolic subgroup with Levi subgroup $M_1$ and $Q=L\ltimes V$ the standard parabolic subgroup containing $P$ such that $\Delta_P^Q=\{\alpha\}$.  For $\lambda\in \D_{M,x}(c)$ we have 
\[J_P^G(\xi;x,\lambda)=\int_{P_x\bs G_x} \xi_\lambda(g)\ dg= \int_{nP_{x}n^{-1}\bs G_{x_1}} \xi_\lambda(n^{-1}gn)\ dg.\]
It follows from \cite[Lemma 6.4]{MR3541705} that $nU_x n^{-1}\subseteq (U_1)_{x_1}$ and from \cite[Lemma 6.3]{MR3541705} that $P_x=M_x\ltimes U_x$ and $(P_1)_{x_1}=(M_1)_{x_1}\ltimes (U_1)_{x_1}$.

Since $(M_1)_{x_1}=n M_x n^{-1}$ we deduce that $nP_x n^{-1}\subseteq (P_1)_{x_1}$ and the inclusion $(U_1)_{x_1}\subseteq (P_1)_{x_1}$ induces an isomorphism $nU_{x}n^{-1}\bs (U_1)_{x_1} \simeq nP_x n^{-1}\bs (P_1)_{x_1}$. Applying \cite[Lemma 6.4]{MR3541705} again the projection from $Q$ to $L$ defines an isomorphism $nU_{x}n^{-1}\bs (U_1)_{x_1}\simeq L\cap U_1\simeq (U_1\cap n U n^{-1})\bs  U_1$. The local analogues of \cite[Lemma 4.3.1 (3) and (4)]{MR2010737} easily follow from \cite[Lemma 6.4]{MR3541705}. Proceeding as in the proof of \cite[Proposition 10.1.1]{MR2010737} we get that 
\begin{equation}\label{eq dblint}
J_P^G(\xi;x,\lambda)=\int_{(P_1)_{x_1}\bs G_{x_1}} \int_{(U_1\cap n U n^{-1})\bs  U_1}\xi_{\lambda}(n^{-1}ugn))\ du\ dg.
\end{equation}
By our assumption on $c$, the inner integral is absolutely convergent and we have
\[
 \int_{(U_1\cap n U n^{-1})\bs  U_1}\xi_{\lambda}(n^{-1}ugn))\ du=(M(n,\triv_M,\lambda)\xi)_{s_\alpha\lambda}(gn).
\]
By the Gindikin-Karpelevich formula we have $(M(n,\triv_M,\lambda)\xi)_{s_\alpha\lambda}(g)=c(\alpha,\lambda)\xi_{s_\alpha\lambda}(g)$, $g\in G$ where
\[
c(\alpha,\lambda)=\frac{L(\sprod{\lambda}{\alpha^\vee})}{L(1+\sprod{\lambda}{\alpha^\vee})},
\]
$L(s)=(1-q^{-s})^{-1}$ and $q$ is the size of the residual field of $F$.
By assumption the integral 
\[
\int_{(P_1)_{x_1}\bs G_{x_1}} \xi_{s_\alpha\lambda}(gn) \ dg
\]
converges (see \S \ref{s cone}) and by positivity of the integrand it follows that the double integral \eqref{eq dblint} converges.
The proposition follows.
\end{proof}
\begin{remark}\label{rmk domain}
It follows from the above proposition and its proof that there exists $c=c(G,P_0)> 0$ such that the integrals defining
\begin{itemize}
\item the linear forms $J_P^G(x,\lambda)$ and
\item the intertwining operators $M(n,\triv_M,\lambda)$
\end{itemize}
are absolutely convergent for every standard parabolic subgroup $P=M\ltimes U$ of $G$, $x\in X[M]$, $\lambda\in \D_{M,x}(c)$, $\alpha\in \Delta_P$ such that $-\alpha\ne \theta_x(\alpha)<0$ and $n\in s_\alpha M$. 
For the next section we fix such a $c$ and write $\D_{M,x}=\D_{M,x}(c)$. 
\end{remark}
\subsection{The main result}
Next, we show that the intertwining periods can be defined in the Galois setting. The method of proof also provides us with the simple functional equations that they satisfy. 

\begin{theorem}\label{thm merom}
Let $P=M\ltimes U$ be a standard parabolic subgroup of $G$, $\sigma$ a representation of $M$ of finite length with bounded matrix coefficients, $x\in X[M]$ and $\ell\in \Hom_{M_x}(\sigma,\triv_{M_x})$. For every $\lambda\in (\aaa_{M,\C}^*)_x^-$ such that $\Re(\lambda)\in \D_{M,x}$ and $\varphi\in I_P^G(\sigma)$ the integral
\[
J_P^G(\varphi;x,\ell,\sigma,\lambda)=\int_{P_x\bs G_x} \ell(\varphi_\lambda(g))\ dg
\]
converges. Furthermore, the linear form $J_P^G(x,\ell,\sigma,\lambda)$ admits a meromorphic continuation to $\lambda\in (\aaa_{M,\C}^*)_x^-$ and satisfies the following functional equations. Whenever $\alpha\in \Delta_P$ and $n\in s_\alpha M$ are such that $(M,x)\overset{n}\searrow (M_1,x_1)$ in $\G$ we have
\[
J_P^G(\varphi;x,\ell,\sigma,\lambda)=J_{P_1}^G(M(n,\sigma,\lambda)I_{P}^G(n,\sigma,\lambda)\varphi;x_1,\ell,s_\alpha \sigma, s_\alpha\lambda).
\]
\end{theorem}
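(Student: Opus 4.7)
The plan is to prove the theorem by induction along a path $(M,x)=(M^{(0)},x^{(0)}) \overset{n_1}\searrow (M^{(1)},x^{(1)}) \overset{n_2}\searrow \cdots \overset{n_k}\searrow (M^{(k)},x^{(k)})$ in $\G$ ending at a minimal vertex, whose existence is guaranteed by Proposition \ref{prop min}. The base case, the statement at a minimal vertex, is exactly Corollary \ref{cor bd}, so the heart of the argument is the inductive step along a single edge $(M,x) \overset{n}\searrow (M_1,x_1)$, together with the meromorphic continuation of standard intertwining operators.

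The key identity, which is the local analogue of the simple functional equation of \cite[Proposition 10.1.1]{MR2010737}, is the formal double integral
\begin{equation*}
J_P^G(\varphi;x,\ell,\sigma,\lambda) = \int_{(P_1)_{x_1} \bs G_{x_1}} \int_{(U_1 \cap nUn^{-1}) \bs U_1} \ell(\varphi_\lambda(n^{-1}ugn))\, du\, dg.
\end{equation*}
At the formal level its derivation is exactly as in the proof of Proposition \ref{prop unram}: one rewrites $P_x \bs G_x$ as $nP_x n^{-1} \bs G_{x_1}$ via the change of variable $g \mapsto n^{-1}gn$, then uses \cite[Lemmas 6.3 and 6.4]{MR3541705} to decompose $P_x = M_x \ltimes U_x$ and $(P_1)_{x_1} = (M_1)_{x_1} \ltimes (U_1)_{x_1}$, identify $nU_x n^{-1} \bs (U_1)_{x_1} \simeq (U_1 \cap nUn^{-1}) \bs U_1$ via the projection from $Q$ to $L$, and use $(M_1)_{x_1} = n M_x n^{-1}$. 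Granting absolute convergence of this double integral for $\Re(\lambda) \in \D_{M,x}$, Fubini together with Lemma \ref{lem conv l} identifies the inner integral as $\ell((M(n,\sigma,\lambda) I_P^G(n,\sigma,\lambda)\varphi)_{s_\alpha \lambda}(g))$, and the inductive hypothesis at $(M_1,x_1)$ (applicable because $s_\alpha \D_{M,x} \subseteq \D_{M_1,x_1}$ by the identity recalled in \S\ref{s cone}) yields both the convergence and the stated functional equation in one stroke.

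The main obstacle is the absolute convergence of the double integral, and here is where the hypothesis that $\sigma$ has bounded matrix coefficients enters crucially. Following the strategy of Lapid--Rogawski I would reduce to the scalar unramified case treated in Proposition \ref{prop unram}. Specifically, I would invoke the pointwise estimates of Lagier \cite[Theorem 4]{MR2381204} which, for $\sigma$ of finite length with bounded matrix coefficients, give a bound of the shape $|\ell(\varphi_\lambda(g))| \le A\, \xi_{\Re(\lambda)}(g)$ for some constant $A>0$ and all $g\in G$, where $\xi \in I_P^G(\triv_M)$ is the normalized spherical vector. Inserting this bound into the double integral reduces its absolute convergence to the convergence of the analogous double integral for $\xi$, which in turn follows from the unramified case by the iterated integral argument in the proof of Proposition \ref{prop unram} based on the Gindikin--Karpelevich formula and the positivity of the integrand.

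Meromorphic continuation of $J_P^G(x,\ell,\sigma,\lambda)$ to all of $(\aaa_{M,\C}^*)_x^-$ then follows immediately by iterating the functional equation along the chosen path to a minimal vertex: at each edge the standard intertwining operator $M(n,\sigma,\lambda)$ is known to extend meromorphically in $\lambda$, and the terminal intertwining period at the minimal vertex extends meromorphically by Corollary \ref{cor bd}. Independence of the chosen path is automatic because both resulting meromorphic families agree on the open cone $\D_{M,x}$ of absolute convergence.
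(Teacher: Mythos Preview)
Your proposal is correct and follows essentially the same route as the paper's own proof: the Lagier bound $|\ell(\varphi_\lambda(g))|\le A\,\xi_{\Re(\lambda)}(g)$ reduces absolute convergence to the unramified case of Proposition \ref{prop unram}, the double-integral identity is derived exactly as you describe via \cite[Lemmas 6.3, 6.4]{MR3541705}, Lemma \ref{lem conv l} identifies the inner integral with the intertwining operator, and meromorphic continuation follows by induction along the graph to a minimal vertex where Corollary \ref{cor bd} applies. The only organizational difference is that the paper first proves convergence of the single integral $J_P^G$ directly for \emph{all} vertices at once (using the same Lagier bound), and only afterwards establishes the functional equation; your version folds the convergence into the inductive step via the double integral, which is equivalent but slightly less streamlined.
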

\begin{proof}
We first prove the convergence for $\lambda\in (\aaa_{M,\C}^*)_x^-$ such that $\Re(\lambda)\in \D_{M,x}$. Taking the absolute integral we may assume that $\lambda=\Re(\lambda)$. 
For every $g\in G$ write $g=u_g m_gk_g$ with $u_g\in U$, $m_g\in M$ and $k_g\in K$. Then 
\[
\ell(\varphi_\lambda(g))=e^{\sprod{\lambda+\rho_P}{H_M(m_g)}}\ell(\sigma(m_g)\varphi(k_g))=e^{\sprod{\lambda+\rho_P}{H_M(g)}}\ell(\sigma(m_g)\varphi(k_g)).
\]
It follows from \cite[Th\'eor\`eme 4]{MR2381204} that the function $m\mapsto \ell(\sigma(m)v)$ is bounded for every $v$ in the space of $\sigma$. Since $\varphi$ takes finitely many values on $K$ it follows from the Iwasawa decomposition that there exists $C>0$ such that
\[
\int_{P_x\bs G_x} \abs{\ell(\varphi_\lambda(g))}\ dg\le C\int_{P_x\bs G_x} e^{\sprod{\lambda+\rho_P}{H_M(g)}}\ dg.
\]
The convergence therefore follows from Proposition \ref{prop unram}.

If $(M,x)$ is a minimal vertex in $\G$ the meromorphic continuation follows from Corollary \ref{cor bd}.
Now that convergence is granted, we show that if $\alpha\in \Delta_P$ and $n\in s_\alpha M$ are such that $(M,x)\overset{n}\searrow (M_1,x_1)$ in $\G$ and $P_1=M_1\ltimes U_1$ is the standard parabolic subgroup of $G$ with standard Levi subgroup $M_1$ then the meromorphic continuation of $J_{P_1}^G(x_1,\ell,\lambda)$ implies 
that of  $J_P^G(x,\ell,\lambda)$. In the process we establish the desired functional equation (which already appeared in the proof of the unramified case). In fact the computation hereunder is a completely parallel generalization of that done in Proposition \ref{prop unram}. We will use the notation of its proof and will not repeat all the arguments. In particular, $Q=L\ltimes V$ is the standard parabolic subgroup containing $P$ such that $\Delta_P^Q=\{\alpha\}$.

Since $\sigma$ has bounded matrix coefficients, it is straightforward that the integral defining $M(n,\sigma,\lambda)$ is absolutely convergent for $\lambda\in \D_{M,x}$ (see Remark \ref{rmk domain}).   
If $\Re(\lambda)\in \D_{M,x}$ the arguments in the proof of Proposition \ref{prop unram} imply that 
\[
J_P^G(\varphi;x,\ell,\sigma,\lambda)=\int_{P_x\bs G_x} \ell(\varphi_\lambda(g))\ dg= \int_{nP_{x}n^{-1}\bs G_{x_1}} \ell(\varphi_\lambda(g))\ dg\]
\[
=\int_{(P_1)_{x_1}\bs G_{x_1}} \int_{(U_1\cap n U n^{-1})\bs  U_1}\ell(\varphi_{\lambda}(n^{-1}ugn)))\ du\ dg
\] 
and by Lemma \ref{lem conv l} this equals
\[
\int_{(P_1)_{x_1}\bs G_{x_1}} \ell[(M(n,\sigma,\lambda)\varphi)_{s_\alpha\lambda}(gn)] \ dg.
\]
The theorem now follows by induction based on Proposition \ref{prop min}.

\end{proof}

\subsection{Applications to distinction}\label{section applications to distinction}
In this section we continue to denote by $(G,\theta)$ a Galois symmetric pair and by $X=\{x\in G: \theta(x)=x^{-1}\}$ the associated symmetric space.  Fix a standard parabolic subgroup $P=M\ltimes U$ of $G$ and a representation $\sigma$ of $M$ and let $\pi=I_P^G(\sigma)$. 

For every $P$ orbit $\O$ in $X$ there exists a standard parabolic subgroup $Q=L\ltimes V$ of $G$ contained in $P$ and $x\in \O$ such that $M\cap \theta_x(M)=L$. We say that $\O$ contributes to $\sigma$ if the normalized Jacquet module $r_{L,M}(\sigma)$ is $L_x$-distinguished. This property depends only on $\O$.

Let $x\in X$. As a consequence of \cite[Theorem 4.2 and Corollary 6.9]{MR3541705}, applied to the Galois symmetric pair $(G,\theta_x)$, if $\pi$ is $G_x$-distinguished then there exists a $P$-orbit in $G\cdot x\subseteq X$ that contributes to $\sigma$.
The following is a partial converse for this necessary condition for distinction.

\begin{proposition}\label{prop lift}
With the above notation, if $\sigma$ is of finite length with bounded matrix coefficients and there exists $x'\in G\cdot x$ such that $\theta_{x'}(M)=M$ and $\sigma$ is $M_{x'}$-distinguished then $I_P^G(\sigma,\lambda)$ is $G_x$-distinguished for every $\lambda\in (\aaa_{M,\C}^*)^-_{x'}$. 
\end{proposition}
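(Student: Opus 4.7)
The plan is to apply the intertwining period machinery (Theorem \ref{thm merom}) at $x'$, extract a nonzero $G_{x'}$-invariant linear form at $\lambda_0$ by a leading term argument, and finally transport it to a $G_x$-invariant linear form via $x' \in G \cdot x$.

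Since $\sigma$ is $M_{x'}$-distinguished and $x' \in X[M]$, I would first fix a nonzero $\ell \in \Hom_{M_{x'}}(\sigma,\triv_{M_{x'}})$. Theorem \ref{thm merom} then supplies a meromorphic family
\[
\lambda \mapsto J_P^G(x',\ell,\sigma,\lambda) \in \Hom_{G_{x'}}\!\bigl(I_P^G(\sigma,\lambda),\triv_{G_{x'}}\bigr), \qquad \lambda \in (\aaa_{M,\C}^*)^-_{x'},
\]
defined by an absolutely convergent integral for $\Re(\lambda) \in \D_{M,x'}$. I would argue that this family is not identically zero by a localization argument: for $\Re(\lambda)$ deep enough in $\D_{M,x'}$, one picks $\varphi \in I_P^G(\sigma)$ supported on $P\Omega$ for a sufficiently small compact open neighborhood $\Omega$ of the identity in $G$, with $\varphi(e)=v$ satisfying $\ell(v) \neq 0$; a local calculation then reduces the integral on $P_{x'} \bs G_{x'}$ to a positive multiple of $\ell(v)$.

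Given an arbitrary $\lambda_0 \in (\aaa_{M,\C}^*)^-_{x'}$, choose a direction $\mu \in (\aaa_{M,\C}^*)^-_{x'}$ generic enough that the restriction $t \mapsto J_P^G(x',\ell,\sigma,\lambda_0+t\mu)$ is a nonzero meromorphic function of $t \in \C$, with Laurent expansion $\sum_{k \geq -N} t^k L_k$ and leading coefficient $L_{-N} \not\equiv 0$. For each fixed $\varphi \in I_P^G(\sigma)$ and $h \in G$ the element $I_P^G(h,\sigma,\lambda_0+t\mu)\varphi \in I_P^G(\sigma)$ is entire in $t$, so the identity of meromorphic functions
\[
J_P^G(x',\ell,\sigma,\lambda_0+t\mu)\bigl[I_P^G(h,\sigma,\lambda_0+t\mu)\varphi\bigr] = J_P^G(x',\ell,\sigma,\lambda_0+t\mu)[\varphi], \quad h \in G_{x'},
\]
expanded in $t$ and identified at the $t^{-N}$-coefficient, yields that $L_{-N}$ is a nonzero element of $\Hom_{G_{x'}}(I_P^G(\sigma,\lambda_0),\triv_{G_{x'}})$.

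To pass from $G_{x'}$-invariance to $G_x$-invariance, write $x' = gx\theta(g)^{-1}$ for some $g \in G$; using $\theta(x)=x^{-1}$, a direct computation gives $\theta_{x'} = \Ad(g) \circ \theta_x \circ \Ad(g^{-1})$, hence $G_{x'} = g G_x g^{-1}$. Then $\varphi \mapsto L_{-N}\bigl(I_P^G(g,\sigma,\lambda_0)\varphi\bigr)$ is a nonzero $G_x$-invariant linear form on $I_P^G(\sigma,\lambda_0)$, which proves the proposition. I expect the main obstacle to be the non-vanishing step: one must choose the support $\Omega$ compatibly with $\theta_{x'}$ and with the geometry of $P \bs G / G_{x'}$ near the base point, so that the integral really localizes to a positive multiple of $\ell(v)$ rather than cancelling between different $P_{x'}$-cosets.
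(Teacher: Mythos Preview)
Your proof is correct and follows essentially the same route as the paper: construct the meromorphic family $J_P^G(x',\ell,\sigma,\lambda)$ from Theorem~\ref{thm merom}, take a leading term along a generic direction to produce a nonzero $G_{x'}$-invariant form at the given $\lambda_0$, and transport via $G_{x'}=gG_xg^{-1}$. The only cosmetic differences are that the paper composes with $I_P^G(\eta,\sigma,\lambda)$ \emph{before} taking the leading term (so the family is already $G_x$-equivariant), and treats the non-vanishing as ``clearly non zero'' rather than spelling out the localization argument you give.
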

\begin{proof}
We construct a meromorphic family of linear forms $L_\lambda\in \Hom_{G_x}(I_P^G(\sigma,\lambda),\triv_{G_x})$, $\lambda\in (\aaa_{M,\C}^*)_{x'}^-$ on $I_P^G(\sigma)$ as follows. Let $\eta\in G$ be such that $x'=\eta\cdot x$  (so that $G_{x'}=\eta G_x\eta^{-1}$), $0\ne \ell\in \Hom_{M_{x'}}(\sigma,\triv_{M_{x'}})$ and $L_\lambda=J_P^G(x',\ell,\sigma,\lambda)\circ I_P^G(\eta,\sigma,\lambda)$.
By Theorem \ref{thm merom}, indeed, $L_\lambda$ is a meromorphic family of linear forms as required and it is clearly non zero. The proposition now follows by taking a leading term of $L_\lambda$ along a generic direction. That is, for $\lambda\in (\aaa_{M,\C}^*)_{x'}^-$ there exist $\lambda_0\in (\aaa_{M,\C}^*)_{x'}^-$ (a generic direction) and a non-negative integer $m$ such that $s\mapsto s^m L_{\lambda+s\lambda_0}\in \Hom_{G_x}(I_P^G(\sigma,\lambda+s\lambda_0),\triv_{G_x})$ is holomorphic from $\C$ to the linear dual of $I_P^G(\sigma)$ and non-zero at $s=0$. Its value at $s=0$ gives a non-zero linear form in $\Hom_{G_x}(I_P^G(\sigma,\lambda),\triv_{G_x})$.
\end{proof}

As a consequence, for a representation of $G$ induced from cuspidal, contribution of a $P$-orbit characterizes distinction.

\begin{corollary}\label{cor cusp dist}
Let $x\in X$ and $\sigma$ a cuspidal representation of $M$ of finite length. Then $I_P^G(\sigma)$ is $G_x$-distinguished if and only if there exists $x'\in G\cdot x$ such that $\theta_{x'}(M)=M$ and $\sigma$ is $M_{x'}$-distinguished.
\end{corollary}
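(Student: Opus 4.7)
The plan is to combine the necessary condition for distinction of an induced representation recalled just before Proposition \ref{prop lift} (a consequence of \cite[Theorem 4.2 and Corollary 6.9]{MR3541705}) with the sufficient condition provided by Proposition \ref{prop lift} itself; the cuspidality assumption collapses the ``contribution of a $P$-orbit'' condition to the clean statement in the corollary.

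For the ``only if'' direction, suppose $I_P^G(\sigma)$ is $G_x$-distinguished. The cited necessary condition produces a $P$-orbit $\O\subseteq G\cdot x$ contributing to $\sigma$, i.e., there exist $x'\in\O$ and a standard parabolic $Q=L\ltimes V\subseteq P$ with $M\cap\theta_{x'}(M)=L$ such that the normalized Jacquet module $r_{L,M}(\sigma)$ is $L_{x'}$-distinguished. Since $\sigma$ is cuspidal, $r_{L,M}(\sigma)=0$ unless $L=M$; this forces $L=M$, hence $\theta_{x'}(M)=M$, and then $r_{M,M}(\sigma)=\sigma$ is $M_{x'}$-distinguished, which is the required $x'$.

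For the ``if'' direction, the natural plan is to invoke Proposition \ref{prop lift} directly. The only genuine obstacle is its hypothesis that $\sigma$ have bounded matrix coefficients, whereas a cuspidal representation of $M$ can have an arbitrary (non-unitary) central character on $A_M$. I would handle this by a central-character twist: writing $\lvert\omega_\sigma\rvert(a)=e^{\sprod{\nu}{H_M(a)}}$ on $A_M$ for a unique $\nu\in\aaa_M^*$, the $M_{x'}$-distinction of $\sigma$ forces $\omega_\sigma$ to be trivial on $A_M\cap M_{x'}$, and the duality between the $\theta_{x'}$-decompositions of $\aaa_M$ and $\aaa_M^*$ places $\nu$ in $(\aaa_M^*)_{x'}^-$. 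Setting $\sigma'=\sigma[-\nu]$ gives a cuspidal representation with unitary central character on $A_M$, hence with bounded matrix coefficients (cuspidal matrix coefficients are compactly supported modulo $A_M$); and since $e^{-\sprod{\nu}{H_M(\cdot)}}$ is trivial on $M_{x'}$ (because $\nu\in(\aaa_M^*)_{x'}^-$ annihilates $\aaa_M^+$, which contains $H_M(M_{x'})$), $\sigma'$ remains $M_{x'}$-distinguished.

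Applying Proposition \ref{prop lift} to $\sigma'$ then yields that $I_P^G(\sigma',\lambda)$ is $G_x$-distinguished for every $\lambda\in(\aaa_{M,\C}^*)_{x'}^-$. Taking $\lambda=\nu\in(\aaa_M^*)_{x'}^-$ and using the standard isomorphism $\varphi\mapsto\varphi_\nu$ between $I_P^G(\sigma',\nu)$ and $I_P^G(\sigma'[\nu])=I_P^G(\sigma)$, we obtain a nonzero $G_x$-invariant form on $I_P^G(\sigma)$, completing the proof. The only real subtlety, as expected, is the twist bookkeeping needed to reduce to the bounded-matrix-coefficient hypothesis of Proposition \ref{prop lift}; modulo this, the corollary is an immediate packaging of the preceding results.
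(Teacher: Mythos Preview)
Your proposal is correct and follows essentially the same route as the paper's proof: the only-if direction uses cuspidality to kill proper Jacquet modules in the necessary condition from \cite{MR3541705}, and the if direction twists by the exponent of the central character to reduce to bounded matrix coefficients before invoking Proposition~\ref{prop lift}. The one small point you gloss over is that $\omega_\sigma$ is only well-defined once $\sigma$ is irreducible; the paper first observes that a cuspidal finite-length representation is semisimple and reduces to an irreducible summand, after which your twist argument (with $\nu=-\mu$ in the paper's notation) goes through verbatim.
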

\begin{proof}
If $L$ is a proper Levi subgroup of $M$ then $r_{L,M}(\sigma)=0$. The only if direction is therefore immediate from the necessary condition obtained by \cite[Theorem 4.2 and Corollary 6.9]{MR3541705}. 

Assume now that $x'\in G\cdot x\cap X[M]$ and $\sigma$ is $M_{x'}$-distinguished. 
By cuspidality $\sigma$ is a direct sum of irreducible representations and it is enough to assume that $\sigma$ is irreducible. 

Let $\omega_\sigma$ be the central character of $\sigma$ and note that $\omega_\sigma$ is trivial on $(A_M)_{x'}$. Therefore, there exists $\mu\in (\aaa_M^*)_{x'}^-$ such that $\abs{\omega_\sigma}=e^{\sprod{-\mu}{H_M(\cdot)}}$ and therefore the unramified twist $\sigma[\mu]$ of $\sigma$ is unitary and in particular has bounded matrix coefficients. Furthermore, $\Hom_{M_{x'}}(\sigma,\triv_{M_{x'}})=\Hom_{M_{x'}}(\sigma[\mu],\triv_{M_{x'}})$. Since $I_P^G(\sigma)\simeq I_P^G(\sigma[\mu],-\mu)$ the corollary follows from Proposition \ref{prop lift} applied to $\sigma[\mu]$. 
\end{proof}

\begin{remark}\label{rem stand conj}
In Corollary \ref{cor cusp dist}, the cuspidality of $\sigma$ guarantees that only $P$-admissible orbits contribute to distinction. It would be interesting to explore to what extent this fact generalizes. It may be the case that in the context of Galois pairs, Corollary \ref{cor cusp dist}
remains true for $\sigma$ a unitary discrete series representation, or more optimistically a discrete series such that the induced representation is a standard module (as suggested by \cite[Lemma 3.3]{MR3421655} in the general linear case).
\end{remark}

\begin{remark}\label{rem counter-ex} 
If the necessary condition for distinction is satisfied by a non-admissible $P$-orbit, it does not guarantee distinction of the induced representation. We provide such an example.
The Galois pair in question is $(\GL_5(E),\GL_5(F))$ (and the example easily generalizes to $(\GL_{5n}(E),\GL_{5n}(F))$). Use the symbol $\times$ for normalized parabolic induction. Let $\pi_1$ be the trivial representation of $\GL_1(E)$, $\pi_2$ the irreducible submodule of 
$|\cdot|_E^0\times |\cdot|_E$, namely the character $|\det|_E^{1/2}$, and $\pi_3$ the irreducible quotient of $|\cdot|_E^{-1}\times |\cdot|_E^{0}$, namely the twist of the Steinberg representation of $\GL_2(E)$ by 
$|\det|_E^{-1/2}$. The normalized parabolic induction 
$\pi_1\times \pi_2 \times \pi_3$ (which is in fact irreducible by \cite[Theorem 0.1]{MR3237446}) is not $\GL_5(F)$-distinguished, however it has a Jacquet module distinguished by a stabilizer corresponding to a (necessarily) non admissible orbit. 

Indeed, the Jacquet module is that with respect to the Borel, as it has as a quotient $|\cdot|_E^0 \otimes |\cdot|_E^0\otimes |\cdot|_E \otimes |\cdot|_E^0 \times |\cdot|_E^{-1}$. And this quotient is distinguished with respect to the stabilizer corresponding to the double coset attached, in the notations of \cite{MR2755483}, to the sequence in $I(1,2,2)$ given by $n_{1,1}=n_{1,2}=0$, $n_{1,3}=1, n_{2,1}=0$, $n_{2,2}=n_{2,3}=1$, and $n_{3,1}=n_{3,2}=1$ and $n_{3,3}=0$. On the other hand, the induced representation $\pi_1\times \pi_2 \times \pi_3$ is not distinguished, it is the quotient of the standard module $|\cdot|_E\times |\cdot|_E^0\times |\cdot|_E^0\times \pi_3$, which is not distingushed according to \cite[Proposition 3.4]{MR3421655}.
\end{remark}

We end this section with a somewhat weaker variant of Proposition \ref{prop lift} that, nevertheless, removes the bounded hypothesis on the inducing representation and avoids the use of intertwining periods. For irreducibly induced representations it provides the same sufficient condition for distinction.

\begin{proposition}\label{prop lift 2}
Suppose that $x'\in G\cdot x$ is such that $\theta_{x'}(M)=M$ and $\sigma$ is a representation of $M$ of finite length that is $M_{x'}$-distinguished. Then there exists $w\in W(M)$ such that $I_{P_1}^G(w \sigma)$ is $G_x$-distinguished. (Here $P_1$ is the standard parabolic subgroup of $G$ with Levi subgroup $wMw^{-1}$.)
In particular, if in addition $I_P^G(\sigma)$ is irreducible then it is $G_x$-distinguished.  
\end{proposition}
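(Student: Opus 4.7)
My plan is to walk the vertex $(M,x')$ to a minimal vertex in the graph $\G$, preserving distinction on the Levi at each edge, and then apply the minimal-vertex construction (Corollary \ref{cor bd}), which, unlike Theorem \ref{thm merom}, places no boundedness requirement on the inducing data. Since the twisted-conjugation $G$-action on $X$ carries $G_{x'}$-invariant linear forms on a $G$-representation to $G_x$-invariant ones via translation by the conjugating element, I may assume from the outset that $x'=x$, so that $\theta_x(M)=M$ and $\sigma$ is $M_x$-distinguished. Applying Proposition \ref{prop min} produces a path
\[
(M,x) \overset{n_1}\searrow (M_1,x_1) \overset{n_2}\searrow \cdots \overset{n_k}\searrow (M_k,x_k)
\]
in $\G$ ending at a minimal vertex. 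Writing $w_i \in W(M_{i-1})$ for the class of $n_i$ and setting $w := w_k \cdots w_1$, the composition rule for $W(\cdot)$ recalled in \S\ref{s elementary sym} gives $w \in W(M)$ with $M_k = wMw^{-1}$, so the $P_1$ of the statement is $P_k$.

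Along each edge, distinction transfers tautologically: since $(M_i)_{x_i}=n_i (M_{i-1})_{x_{i-1}} n_i^{-1}$ and the defining identity $((n_i\cdots n_1)\sigma)((n_i\cdots n_1)m(n_i\cdots n_1)^{-1})=\sigma(m)$ is realized on the same underlying vector space, any nonzero $M_x$-invariant linear form $\ell$ on $\sigma$ is automatically $(M_k)_{x_k}$-invariant on $w\sigma$. Now, because $(M_k,x_k)$ is minimal, Corollary \ref{cor bd} supplies a meromorphic family
\[
\lambda \mapsto J_{P_k}^G(x_k,\ell,w\sigma,\lambda) \in \Hom_{G_{x_k}}(I_{P_k}^G(w\sigma,\lambda),\triv_{G_{x_k}})
\]
on $(\aaa_{M_k,\C}^*)_{x_k}^-$ which, provided it is not identically zero, yields, by extracting its leading Laurent coefficient at $\lambda=0$ along a generic direction exactly as in the closing paragraph of the proof of Proposition \ref{prop lift}, a nonzero $G_{x_k}$-invariant linear form on $I_{P_k}^G(w\sigma)$. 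Since $x_k = g\cdot x$ with $g=n_k\cdots n_1$ we have $G_{x_k}=gG_xg^{-1}$, and composing the resulting linear form with the action operator $I_{P_k}^G(g,w\sigma)$ transports $G_{x_k}$-invariance into $G_x$-invariance, establishing the first assertion.

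For the ``in particular'' clause, observe that $I_P^G(\sigma)$ and $I_{P_k}^G(w\sigma)$ share the same semisimplification, so if $I_P^G(\sigma)$ is irreducible the two are isomorphic as $G$-modules (both have length one with the same irreducible constituent), and $G_x$-distinction of the latter transfers to the former. The only delicate point in the argument is the nonvanishing of the meromorphic family $J_{P_k}^G(x_k,\ell,w\sigma,\cdot)$; this is handled in the standard way by testing against a section $\varphi$ supported in a small open neighborhood of the identity coset and taking an appropriate value there, so that the integrand is concentrated and the defining integral in the cone of convergence is visibly nonzero. All remaining steps are bookkeeping along the graph walk together with invocations of results already in place.
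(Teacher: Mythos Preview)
Your argument is correct, but it differs from the paper's in an instructive way. Both proofs begin identically: use Proposition~\ref{prop min} to replace $(M,x')$ by a minimal vertex, carrying the $M_{x'}$-invariant form $\ell$ along the path via the tautological identification $\Hom_{(wMw^{-1})_{n\cdot x'}}(n\sigma,\triv)=\Hom_{M_{x'}}(\sigma,\triv)$. At the minimal vertex, however, the paper does \emph{not} invoke Corollary~\ref{cor bd} and leading-term extraction; instead it appeals to the proofs of \cite[Propositions~7.1 and 7.2]{MR3541705}. Concretely, with $Q=L\ltimes V$ the $\theta_{x'}$-stable parabolic furnished by minimality (Lemma~\ref{lem min}), the representation $\sigma'=I_{P\cap L}^{L}(\sigma)$ is $L_{x'}$-distinguished by an open-orbit argument (this is the content of \cite[Proposition~7.2]{MR3541705}, using that $(P\cap L)\cap\theta_{x'}(P\cap L)=M$), and then $I_P^G(\sigma)\simeq I_Q^G(\sigma')$ is $G_{x'}$-distinguished by a closed-orbit argument (the content of \cite[Proposition~7.1]{MR3541705}, using that $Q$ is $\theta_{x'}$-stable so $Q_{x'}\backslash G_{x'}$ is compact). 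This is exactly the point the paper advertises before stating the proposition: it \emph{avoids} intertwining periods altogether, whereas your route still passes through the Blanc--Delorme meromorphic family. What your approach buys is conceptual uniformity with Proposition~\ref{prop lift}---you are essentially observing that the finite-length hypothesis of Corollary~\ref{cor bd} already suffices at a minimal vertex, so the bounded-matrix-coefficient assumption of Theorem~\ref{thm merom} is not needed there. What the paper's approach buys is a genuinely more elementary argument that requires neither meromorphic continuation nor the leading-term/nonvanishing step (your ``standard'' nonvanishing claim is true but does require the open-orbit structure of the minimal case, which is precisely what \cite[Proposition~7.2]{MR3541705} encodes directly). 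Your treatment of the ``in particular'' clause via equality of semisimplifications is fine and matches the intended conclusion.
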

\begin{proof}
Note that for $w\in W(M)$ and a representative $n$ of $w$ in $G$ we have 
\[
\Hom_{(wMw^{-1})_{n\cdot x'}}(n\sigma,\triv_{(wMw^{-1})_{n\cdot x'}})=\Hom_{M_{x'}}(\sigma,\triv_{M_{x'}}). 
\]
Consequently, applying Proposition \ref{prop min} it is enough to assume that $(M,x')$ is in fact minimal and prove that $I_P^G(\sigma)$ is $G_x$-distinguished. 
But then let $L$ and $Q$ be as in Section \ref{s min}. The representation 
$\sigma'=I_{P\cap L}^{L}(\sigma)$ is $L_{x'}$-distinguished thanks to the proof of \cite[Proposition 7.2]{MR3541705}. Now 
$I_P^G(\sigma)\simeq I_Q^G(\sigma')$ is $G_{x'}$-distinguished, hence $G_x$-distinguished, by the proof of \cite[Proposition 7.1]{MR3541705}.
\end{proof}

\begin{remark}\label{rem applications to PTB pairs}
The results of \S \ref{sec int per} also hold for symmetric pairs of Prasad and Takloo--Bighash type, i.e. those of the form $(\GL_m(D),C_E(\GL_m(D)))$ where 
$D$ is an $F$-central division algebra of index $d$ with $md$ even, and $C_E(\GL_m(D))$ is the centralizer 
of an embedding of $E$ in the space $M_m(D)$ of $m\times m$ matrices with entries in $D$. This follows from the fact that Equation (\ref{eq eq mod}) holds in this setting (see \cite[Equation (5.3) and Remark 5.4]{10.1093/imrn/rnz254}).
\end{remark}

\begin{remark}\label{rem descent of linear form}
In particular, Proposition \ref{prop lift} and Corollary \ref{cor cusp dist} become more powerful when $I_P^G(\sigma)$ is reducible. Note that any irreducible representation $\pi$ of $G$ is the quotient of a representation of the form $I_P^G(\sigma)$ for $\sigma$ as in Proposition \ref{prop lift} or Corollary \ref{cor cusp dist}, and if $\pi$ is $G_x$-distinguished, certainly $I_P^G(\sigma)$ is as well. Moreover, Proposition \ref{prop lift} provides a particular $G_x$-invariant linear form $L$ on $I_P^G(\sigma)$. In the situation where $\Hom_{G_x}(I_P^G(\sigma),\triv_{G_x})$ is one dimensional, this means that $\pi$ is distinguished if and only if $L$ descends to $\pi$. Such an observation is in practice very useful and has been used to study distinction of Langlands quotients in a variety of situations, see \cite{MR2930996}, \cite{MR3421655}, \cite{MR3719517} or \cite{matringe2017gamma}. 

If $I_P^G(\sigma,\lambda)$ satisfies that $\Hom_{G_x}(I_P^G(\sigma,\lambda),\triv_{G_x})$ is one dimensional for a generic choice of $\lambda$ then the corresponding intertwining periods should satisfy functional equations more general then those obtained in this work. 
Namely, for any $w\in W(M)$ (represented by $n\in G$) and $P_1$ the standard parabolic subgroup of $G$ with Levi subgroup $wMw^{-1}$, the intertwining period $J_P^G(x',\ell, \sigma, \lambda)$ satisfies an identity of the form
\[
J_{P_1}^G(n\cdot x',\ell, w \sigma,w \lambda)\circ M(n,\sigma,\lambda)\circ I_P^G(n,\sigma,\lambda)=\gamma_w(\sigma,\lambda)\ J_P^G(x',\ell, \sigma, \lambda)
\] 
for some proportionality constant $\gamma_w$ that is meromorphic in $\lambda$. In some cases $\gamma_w$ can be explicitly expressed in terms of local invariants associated to $\sigma$, and again this has turned out to be very helpful to understand when the linear form $L$ descends to $\pi$, see \cite{MR2930996}, \cite{matringe2017gamma} or \cite{SX}.
\end{remark}

\section{Classical Galois pairs}

In the next section we explicate Corollary \ref{cor cusp dist} for a Galois symmetric pair $(G,\theta)$ where $G$ is a classical group. In this section we introduce the setting and classify the $G$-orbits in $X$. This classification is new and of interest for its own sake.

\subsection{Further notation and preliminaries}\label{sec prel}
Let $k$ be a non Archimedean local field of characteristic zero, $K/k$ a field extension of degree one or two and $\rho$ a generator of the Galois group $\Gal(K/k)$. 
Let 
\[
(K/k)_1=\{a\in K^*: \ aa^{\rho}=1\} \ \ \ \text{and}  \ \ \ N(K/k)=\{aa^{\rho}: \ a\in K^*\}.
\]
That is, $(K/k)_1$ is the subgroup $\ker(N_{K/k})$ if $K/k$ is a quadratic extension and $N_{K/k}:K^*\rightarrow k^*$ is the norm map associated with $K/k$ and the group $\{\pm 1\}$ otherwise, while $N(K/k)$ is the subgroup $N_{K/k}(K^*)$ of $k^*$ if $K/k$ is a quadratic extension and the group $k^{*2}$ of squares in $k^*$ otherwise.

For $\sgn\in \{\pm 1\}$ we denote by
\[
Y_n(K/k,\sgn)=\{y\in \GL_n(K):{}^t y^\rho=\sgn y\}
\]
the space of $\sgn$-hermitian invertible matrices with respect to the field extension $K/k$.
It comes naturally equipped with the following right $\GL_n(K)$-action
\[
y\star g={}^t g^\rho y g,\ \ \ y\in Y_n(K/k,\sgn),\ g\in \GL_n(K).
\]
We denote by $G(y,K/k)$ the stabilizer of $y\in Y_n(K/k,\sgn)$ in $\GL_n(K)$.

This set up is a standard unified notation for considering symplectic, orthogonal and unitary groups. We recall the classification of $\GL_n(K)$-orbits in $Y_n(K/k,\sgn)$ and explicate the stabilizers in each case.
\begin{enumerate}
\item {\bf The symplectic case: $K=k$, $\sgn=-1$ and $n$ is even.} The space $Y_n(k/k,-1)$ is not empty if and only if $n$ is even and in that case it is the space of alternating matrices in $\GL_n(k)$. For $m\in \N$ we have that $Y_{2m}(k/k,-1)=\sm{}{I_m}{-I_m}{}\star \GL_{2m}(k)$ is a unique $\GL_{2m}(k)$-orbit.

The stabilizer $G(y,k/k)$ is the symplectic group $\Sp(y,k)$ associated to an element $y=-{}^t y$ in $\GL_n(k)$. 

\item {\bf The orthogonal case: $K=k$ and $\sgn=1$.} The space $Y_n(k/k,1)$ is the space of symmetric matrices in $\GL_n(k)$.  The $\GL_n(k)$-orbits in $Y_n(k/k,1)$ are determined by the discriminant and the Hasse invariant. The discriminant of $y\in Y_n(k/k,1)$ is the square class $\det y\, k^{*2}$. 
The Hasse invariant is defined as follows. There exists $g\in \GL_n(k)$ such that $y\star g=\diag(a_1,\dots,a_n)$ is diagonal and the Hasse invariant is defined by
\[
\Hasse_k(y)=\prod_{1\le i<j\le n} (a_i,a_j)_k
\]
where $(\cdot,\cdot)_k$ is the quadratic Hilbert symbol on $k^*\times k^*$.
We have that $y_1,\,y_2\in Y_n(k/k,1)$ are in the same $\GL_n(k)$-orbit if and only if $\det y_1 \,k^{*2}=\det y_2 \,k^{*2}$ and $\Hasse_k(y_1)=\Hasse_k(y_2)$.

For $n=1$ we have that $Y_1(k/k,1)=k^*$ and the $\GL_1(k)=k^*$-orbits are the  square classes in $k^*/k^{*2}$.
For $n=2$ there is a unique orbit of discriminant $-k^{*2}$ (it has Hasse invariant one) and two orbits for every other discriminant. For $n\ge 3$ there are two orbits for every discriminant class. 

The stabilizer $G(y,k/k)$ is the orthogonal group $\O(y,k)$ associated to an element $y={}^t y$ in $\GL_n(k)$.

\item {\bf The unitary case: $K/k$ is a quadratic extension.} The space $Y_n(K/k,\sgn)$ is the space of $\sgn$-hermitian matrices in $\GL_n(K)$ with respect to $K/k$. 
We have that $y_1,\,y_2\in Y_n(K/k,\sgn)$ are in the same $\GL_n(K)$-orbit if and only if $ \det  y_1 N(K/k)= \det y_2 N(K/k)$. By local class field theory $N(K/k)$ is an index two subgroup of $k^*$ and therefore $Y_n(K/k,\sgn)$ consists of two $\GL_n(K)$-orbits. 
The stabilizer $G(y,K/k)$ is the unitary group $\U(y,K/k)$ associated to the element $y=\sgn {}^t y^\rho$ in $\GL_n(K)$.
\end{enumerate}

Let $y\in Y_n(K/k,\sgn)$.
In the symplectic case, every element of $G(y,K/k)$ has determinant one. In either the unitary or orthogonal case $\det$ maps $G(y,K/k)$ to  $(K/k)_1$.
We observe below that this map is in fact surjective. We apply the well known fact that in both cases every $\GL_n(K)$-orbit in $Y_n(K/k,\sgn)$ contains a diagonal element.
\begin{observation}\label{obs det}
In the orthogonal and unitary cases for every $y\in Y_n(K/k,\sgn)$ and $a\in (K/k)_1$ there exists $h\in G(y,K/k)$ such that $\det h=a$. Furthermore, in the orthogonal case there exists such an $h$ that satisfies $h^2=I_n$. 
\end{observation}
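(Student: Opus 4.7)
My plan is to reduce to the case of a diagonal $y$, where both surjectivity and the involution condition become transparent by exhibiting an explicit $h_0$, and then transport back by a conjugation that is compatible with both $\det$ and squaring.

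The first step is reduction. In both the orthogonal and the unitary cases, every $\GL_n(K)$-orbit in $Y_n(K/k,\sgn)$ contains a diagonal element (as recalled in the excerpt). So choose $g\in \GL_n(K)$ with $y_0:=y\star g=\diag(a_1,\dots,a_n)$. Conjugation by $g$ gives a group isomorphism $G(y_0,K/k)\to G(y,K/k)$, $h_0\mapsto g h_0 g^{-1}$, and this conjugation preserves both the determinant and the property of being an involution. It therefore suffices to produce $h_0\in G(y_0,K/k)$ with $\det h_0=a$, and in the orthogonal case with $h_0^2=I_n$.

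For the orthogonal case, $(K/k)_1=\{\pm 1\}$, and the only new value to produce is $a=-1$. The matrix
\[
h_0=\diag(-1,1,\dots,1)
\]
satisfies ${}^t h_0\, y_0\, h_0=y_0$ (it just flips the first diagonal entry twice), so $h_0\in \O(y_0,k)=G(y_0,k/k)$, while $\det h_0=-1$ and $h_0^2=I_n$. Conjugating back, $h=g h_0 g^{-1}$ does the job.

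For the unitary case, $K/k$ is a quadratic extension and $a\in (K/k)_1$ is arbitrary. Since $y_0$ is diagonal, the matrix
\[
h_0=\diag(a,1,\dots,1)
\]
satisfies ${}^t h_0^{\rho}\,y_0\, h_0=\diag(a^\rho a a_1, a_2,\dots, a_n)=y_0$, using $a a^\rho=1$. Thus $h_0\in \U(y_0,K/k)=G(y_0,K/k)$ with $\det h_0=a$, and again $h=g h_0 g^{-1}$ lies in $G(y,K/k)$ with $\det h=a$. The only mild subtlety is making sure diagonalization is available in the skew-Hermitian case as well; this is standard (pick $\delta\in K^*$ with $\delta^\rho=-\delta$, multiply by $\delta$ to reduce skew-Hermitian to Hermitian, and diagonalize), so no real obstacle arises. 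The whole argument is thus a short explicit construction, and the only thing requiring any attention is checking that the passage $h_0\mapsto g h_0 g^{-1}$ genuinely lands in $G(y,K/k)$ and respects the two properties $\det h=a$ and $h^2=I_n$.
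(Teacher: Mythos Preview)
Your proof is correct and follows essentially the same approach as the paper: diagonalize $y$ via $y\star g$, take $h_0=\diag(a,I_{n-1})$ in the stabilizer of the diagonal form, and conjugate back by $g$. The paper's argument is even more terse---it treats both cases at once with the single formula $h=g\,\diag(a,I_{n-1})\,g^{-1}$, noting that in the orthogonal case $a\in\{\pm1\}$ so $h^2=I_n$ is automatic---but the content is identical to what you wrote.
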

Indeed, $y\star g$ is diagonal for some $g\in \GL_n(K)$ and we can take $h=g\diag(a,I_{n-1})g^{-1}$.

\subsection{Classical Galois pairs-The set up}
Fix a quadratic extension of non Archimedean local fields $E/F$ of characteristic zero and let $\sigma(x)=\bar x$ be the associated Galois action. In particular, $E=F[\imath]$ for some $\imath\in E\setminus F$ such that $\imath^2\in F$. 

We would like to consider distinction for unitary, special orthogonal and symplectic Galois pairs. In order to unify notation let $F'=F$ and $\tau$ be the identity on $F'$ in the symplectic or orthogonal case while $F'$ is a quadratic extension of $F$ such that $E\cap F'=F$ and $\tau$ is the Galois action associated to $F'/F$ in the unitary case. Let $E'=EF'$. Note that in the unitary case $\sigma$ (resp. $\tau$) has a unique extension to an automorphism of $E'$ over $F'$ (resp. over $E$) that, by abuse of notation, we denote by $\sigma$ (resp. $\tau$) and that $E'/F$ is a Klein extension (that is, a Galois extension such that $\Gal(E'/F)$ is isomorphic to the Klein group $(\Z/2\Z)^2$). As a consequence, for $x\in E'$ we can write $\bar x^\tau$ for $\sigma(\tau(x))=\tau(\sigma(x))$ with no ambiguity.

\subsubsection{The space of $\sgn$-hermitian forms}
Fix $n\in \N$ and $\sgn\in\{\pm 1\}$ with the convention that if $\sgn=-1$ then $F'=F$ and $n$ is even and set $Y_n(E)=Y_n(E'/E,\sgn)$ and $Y_n(F)=Y_n(F'/F,\sgn)$.
\begin{remark}
Note that if $F'/F$ is a quadratic extension and $\jmath \in F'$ is such that $\tau(\jmath)=-\jmath$ then $y\mapsto \jmath y$ is a stabilizer preserving bijection between $Y_n(E'/E,1)$ and $Y_n(E'/E,-1)$ and therefore in the unitary case it is enough to consider $\sgn=1$.
\end{remark}

\subsubsection{The general linear Galois symmetric space}\label{ss Hilbert90}
We consider another symmetric space
\[
Z_n(E)=\{g\in \GL_n(E'):g\bar g=I_n\}
\]
with the left $\GL_n(E')$-action $g\cdot z=gz\bar g^{-1}$ by twisted conjugation. Since the first cohomology set $\coh(\Gal(E'/F'),\GL_n(E'))$ is trivial, we have that $Z_n(E)=\GL_n(E')\cdot I_n$ is a unique $\GL_n(E')$-orbit.  

\subsubsection{The symmetric space associated to a classical Galois pair}

Fix $\j\in Y_n(F)$ and let $G^\j$ be the algebraic group, defined over $F$, of isometries of the $\sgn$-hermitian form $\j$. That is
\[
G^\j(F)=\{g\in \GL_n(F'): {}^t g^\tau \j g=\j\}
\]
and
\[
G^\j(E)=\{g\in \GL_n(E'): {}^t g^\tau \j g=\j\}.
\]
Consider the $G^\j(E)$-space 
\[
X^\j=X^\j(E/F)=\{x\in G^\j(E):x\bar x=I_n\}
\]
with the $G^\j(E)$ action
\[
g\cdot x=gx\bar g^{-1},g\in G^\j(E),\ x\in X^\j
\]
by twisted conjugation. 
Note that $X^\j$ is a $G^\j(E)$-invariant subspace of $Z_n(E)$.

Let $R_{E/F}(G^\j_E)$ be the Weil restriction of scalars from $E$ to $F$ of the base change of $G^\j$ to $E$. The symmetric space $X^\j$ is the set of $F$-points of a $R_{E/F}(G^\j_E)$-variety. We denote by $G^\j_x$ the stabilizer in $R_{E/F}(G^\j_E)$ of $x\in X^\j$. It is an algebraic group defined over $F$ such that 
\[
G^\j_x(F)=\{g\in G^\j(E): g\cdot x=x\}.
\]

\subsection{A classification of orbits}\label{ss orbit classification}

We parameterize $G^\j(E)$-orbits in $X^\j$ in terms of certain $\GL_n(F')$-orbits in $Y_n(F)$ and enterprete stabilizers accordingaly.
\begin{lemma}\label{lem orb iso}
For $x\in X^\j$ let $z\in \GL_n(E')$ be such that $x=z\cdot I_n$ (see \S\ref{ss Hilbert90}) and let $y=\j\star z\in Y_n(E)$. 
\begin{enumerate}
\item Then, in fact, $y\in Y_n(F)$ and the assignment $G^\j(E)\cdot x\mapsto y\star\GL_n(F')$ is well defined and defines a bijection between the $G^\j(E)$-orbits in $X^\j$ and the $\GL_n(F')$-orbits in $\j\star\GL_n(E')\cap Y_n(F)$.
\item Furthermore, 
\[
G_x^\j(F)=zG^y(F)z^{-1}.
\] 
\end{enumerate}
\end{lemma}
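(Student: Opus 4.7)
The plan is to unpack what Hilbert 90 (as recalled in \S\ref{ss Hilbert90}) gives us and then translate the $G^\j(E)$-action by twisted $\sigma$-conjugation into the $\GL_n(F')$-action $\star$ on $Y_n(F)$. Throughout, I will use freely that $\j \in Y_n(F)$ means ${}^t\j^\tau = \sgn\,\j$ and $\bar\j = \j$, and that for any matrix $A$ over $E'$ the operations $A \mapsto \bar A$ and $A\mapsto A^\tau$ commute, commute with transpose, and commute with inversion.

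First, I would check that $y = {}^t z^\tau \j z$ indeed lies in $Y_n(F)$. The condition ${}^ty^\tau = \sgn\,y$ follows directly from ${}^t\j^\tau=\sgn\,\j$, so $y\in Y_n(E)$ automatically. The content is $\bar y=y$: computing
\[
\bar y = {}^t\bar z^\tau\,\j\,\bar z = {}^t(x^{-1}z)^\tau\,\j\,(x^{-1}z) = {}^tz^\tau\bigl({}^t(x^{-1})^\tau\,\j\,x^{-1}\bigr)z,
\]
where I used $\bar z = x^{-1}z$ (from $x=z\bar z^{-1}$), we see $\bar y = y$ is equivalent to ${}^t(x^{-1})^\tau\,\j\,x^{-1}=\j$, i.e.\ to $x\in G^\j(E)$, which holds by hypothesis.

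Next, I would handle well-definedness of the assignment $G^\j(E)\cdot x\mapsto y\star\GL_n(F')$. The choice of $z$ with $x=z\bar z^{-1}$ is unique up to right multiplication by $\GL_n(E')^\sigma=\GL_n(F')$ (by the same Hilbert 90 argument applied to the stabilizer of $I_n$), and replacing $z$ by $zg$ with $g\in\GL_n(F')$ replaces $y$ by $y\star g$. On the other hand, replacing $x$ by $h\cdot x=hx\bar h^{-1}$ for $h\in G^\j(E)$ lets us take $hz$ in place of $z$, and then $\j\star(hz)={}^tz^\tau({}^th^\tau\j h)z={}^tz^\tau\j z=y$ because $h\in G^\j(E)$. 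So the map is well-defined into the set of $\GL_n(F')$-orbits on $\j\star\GL_n(E')\cap Y_n(F)$.

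For surjectivity, given any $y\in \j\star\GL_n(E')\cap Y_n(F)$, write $y=\j\star z={}^tz^\tau\j z$ with $z\in\GL_n(E')$ and set $x=z\bar z^{-1}$. Then $x\bar x=I_n$ tautologically, and reversing the identity from the first step (using $\bar y=y$) shows $x\in G^\j(E)$, so $x\in X^\j$ and its orbit maps to that of $y$. For injectivity, if $y_2=y_1\star g$ with $g\in\GL_n(F')$, then ${}^t(z_2 g^{-1}z_1^{-1})^\tau\j(z_2 g^{-1}z_1^{-1})=\j$, so $h:=z_2 g^{-1}z_1^{-1}\in G^\j(E)$; since $\bar g=g$, one computes $x_2=z_2\bar z_2^{-1}=hz_1 g\bar g^{-1}\bar z_1^{-1}\bar h^{-1}=h\cdot x_1$, giving that $x_1,x_2$ are in the same $G^\j(E)$-orbit.

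The only real obstacle is bookkeeping: keeping the three involutions $\sigma$, $\tau$ and transpose straight, and making sure at each step that we invoke the correct one. Once one organizes the calculation as above — with Hilbert 90 absorbing the $\sigma$-ambiguity into $\GL_n(F')$ and the $G^\j$-condition encoding $\sigma$-invariance of $y$ — the proof reduces to direct matrix manipulations.
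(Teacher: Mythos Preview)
Your proof is correct and follows essentially the same approach as the paper's: both establish the key equivalence that $z\cdot I_n\in G^\j(E)$ if and only if $\j\star z\in Y_n(F)$, use Hilbert 90 to see that the ambiguity in $z$ is exactly a $\GL_n(F')$-factor, and check that $G^\j(E)$-translation on $x$ corresponds to replacing $z$ by $hz$ and hence leaves $\j\star z$ unchanged. The only cosmetic difference is that the paper packages the bijection by constructing an explicit inverse map $q$ and checking $\overline{p}$ and $\overline{q}$ are mutual inverses, whereas you verify surjectivity and injectivity directly; the underlying computations are identical.
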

\begin{proof}
Note first that for $z\in \GL_n(E')$ we have that $z\cdot I_n=z\overline{z}^{-1}\in G^\j(E)$ if and only if 
$\j\star z={}^tz^\tau \j z\in Y_n(F)$. Moreover, for $z,\,z'\in \GL_n(E')$ we have that  $z \cdot I_n= z' \cdot I_n$ if and only if $z^{-1}z'\in \GL_n(F')$ if and only if $\j\star z$ and $\j\star z'$ belong to the same $\GL_n(F')$-orbit. Hence the formula $p(z\cdot I_n)=(\j\star z)\star \GL_n(F')$ well defines a map $p$ from $X^\j$ to the $\GL_n(F')$-orbits in $\j\star\GL_n(E')\cap Y_n(F)$.
For $x=z\cdot I_n$ and $g\in G^\j(E)$ we have $g\cdot x=(gz)\cdot I_n$ and $\j\star z=\j\star (gz)$ and therefore $p(x)=p(g\cdot x)$. Consequently, the map $p$ descends to a map $\overline{p}$ from the $G^\j(E)$-orbits in $X^\j$ to the $\GL_n(F')$-orbits in $\j\star\GL_n(E')\cap Y_n(F)$.

Assume now that $z,\,z'\in \GL_n(E')$ are such that $\j\star z=\j\star z'\in Y_n(F)$. Then $z'z^{-1}\in G^\j(E)$ and therefore $z\cdot I_n$ and $z'\cdot I_n$ are in the same $G^\j(E)$-orbit in $X^\j$. Consequently, the formula $q(\j\star z)=G^\j(E)\cdot (z\cdot I_n)$ well defines a map $q$ from $\j\star\GL_n(E')\cap Y_n(F)$ to the $G^\j(E)$-orbits in $X^\j$.
It is again straightforward that $q$ is constant on $\GL_n(F')$-orbits and therefore descends to a map $\overline{q}$ from the $\GL_n(F')$-orbits in $\j\star\GL_n(E')\cap Y_n(F)$ to
the $G^\j(E)$-orbits in $X^\j$. It is also straightforward to verify that $\overline{p}$ and $\overline{q}$ are inverses of one another and the bijection of orbits follows.  

For the second part, let $z\in \GL_n(E')$ be such that $x=z\cdot I_n \in X^\j$ and let $y=\j \star z$. For $g\in G^\j(E)$ we have $g\in G^\j_x(F)$ if and only if $g\cdot x=x$ if and only if $(z^{-1}gz)\cdot I_n=I_n$ if and only if $z^{-1}gz\in \GL_n(F')$.
Since $z^{-1} G^\j(E)z=G^y(E)$ we conclude that $z^{-1}G_x^\j(F)z=G^y(F)$ and
the lemma follows.
\end{proof}

Let $SG^\j(E)=G^\j(E)\cap \SL_n(E')$. We indicate the relation between orbits in $X^\j$ for the actions of $G^\j(E)$ and $SG^\j(E)$. In the symplectic case $G^\j(E)$ is a subgroup of $\SL_n(E')$. Therefore, the following lemma is only relevant to the case $\sgn=1$.

Next we observe that determinant is the only invariant of an $SG^\j(E)$-orbit in a $G^\j(E)$-orbit in $X^\j$.  
\begin{lemma}\label{lem slorb}
For every $x\in X^\j$ we have $SG^\j(E)\cdot x=\{x'\in G^\j(E)\cdot x: \det x'=\det x\}$. In particular, in the orthogonal case $SG^\j(E)\cdot x=G^\j(E)\cdot x$.
\end{lemma}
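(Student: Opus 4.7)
The plan is to prove both containments, the easy one $\subseteq$ being immediate from the computation
$\det(g \cdot x) = \det(gx\bar g^{-1}) = \det g \cdot \det x \cdot \overline{\det g}^{-1}$,
which equals $\det x$ when $\det g = 1$. For the reverse containment, given $g \in G^\j(E)$ with $\det(g \cdot x) = \det x$, the same identity forces $\det g = \overline{\det g}$, so $\det g \in F'^*$. Combined with the determinant relation $\det g \cdot (\det g)^\tau = 1$ obtained from ${}^t g^\tau \j g = \j$, this places $\det g$ in $F'^* \cap (E'/E)_1 = (F'/F)_1$, where I use the Klein extension structure $\Gal(E'/F) = \langle \sigma\rangle \times \langle \tau\rangle$ to equate the two intersections.

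Next I would compute the stabilizer of $x$ in $G^\j(E)$, paralleling the bookkeeping of Lemma~\ref{lem orb iso}. Writing $x = z \cdot I_n$ with $z \in \GL_n(E')$ and setting $y = \j \star z \in Y_n(F)$, the substitution $g = z g' z^{-1}$ transforms the stabilizer equation $gx = x \bar g$ into $g' = \bar{g'}$, i.e.\ $g' \in \GL_n(F')$, and transforms the condition $g \in G^\j(E)$ into ${}^t g'^\tau y g' = y$, i.e.\ $g' \in G^y(F)$. Thus the stabilizer of $x$ equals $z\, G^y(F)\, z^{-1}$, and the conjugation preserves determinant. In the orthogonal and unitary cases, Observation~\ref{obs det} guarantees that $\det : G^y(F) \to (F'/F)_1$ is surjective, so I can pick a stabilizer element $s$ with $\det s = \det g$. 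Setting $g_1 := g s^{-1}$ yields $g_1 \in SG^\j(E)$ and $g_1 \cdot x = g \cdot (s^{-1} \cdot x) = g \cdot x$, which completes the reverse containment.

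For the ``in particular'' claim (the orthogonal case, where $F' = F$ and $\tau$ is trivial), the relation $(\det g)^2 = 1$ forces $\det g = \pm 1$ for every $g \in G^\j(E)$, and such $\det g$ is automatically $\sigma$-fixed; hence the condition $\det(g \cdot x) = \det x$ is vacuous and the main equality specializes to $SG^\j(E) \cdot x = G^\j(E) \cdot x$. The symplectic case ($F' = F$, $\sgn = -1$) is trivial since $SG^\j(E) = G^\j(E)$ by definition. The only substantive input is the surjectivity of $\det$ on $G^y(F)$ onto $(F'/F)_1$, and since that is precisely what Observation~\ref{obs det} supplies, no serious obstacle is anticipated.
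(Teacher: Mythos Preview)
Your proof is correct and follows essentially the same approach as the paper's: both reduce the reverse inclusion to finding a stabilizer element of prescribed determinant, obtained by conjugating an element of $G^y(F)$ (supplied by Observation~\ref{obs det}) by the $z$ from Lemma~\ref{lem orb iso}. Your version is slightly more explicit in identifying the full stabilizer as $z\,G^y(F)\,z^{-1}$, whereas the paper simply constructs the single element $h=zh'z^{-1}$ it needs, but the substance is identical.
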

\begin{proof}
Clearly, all elements of $SG^\j(E)\cdot x$ have the same determinant $\det x$ and the inclusion $\subseteq$ follows. For the other inclusion, note that if $g\in G^\j(E)$ is such that $\det(g\cdot x)=\det x$ then $\det g\in F'\cap (E'/E)_1=(F'/F)_1$. It follows from Lemma \ref{lem orb iso} and Observation \ref{obs det} that for any $a\in (F'/F)_1$ there exists $h\in G^\j_x(F)$ with $\det h=a$. 
For $a=\det g^{-1}$ and such $h$ we have $g\cdot x=(gh)\cdot x$ and $gh\in SG^\j(E)$. The lemma follows. 
\end{proof}

We record separately consequences of Lemmas \ref{lem orb iso} and \ref{lem slorb} in the three cases we consider.

\subsubsection{The symplectic case}
Recall that in the symplectic case $Y_n(F)=\j\star \GL_n(F)$ is a unique $\GL_n(F)$-orbit. As an immediate consequence of Lemma \ref{lem orb iso} we obtain
\begin{corollary}\label{cor symp orb}
In the symplectic case $X^\j=G^\j(E)\cdot I_n$ is a unique $G^\j(E)$-orbit.  \qed
\end{corollary}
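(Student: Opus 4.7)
The plan is simply to unwind the bijection of Lemma \ref{lem orb iso} in the symplectic setting. Recall that in the symplectic case we have $\sgn=-1$, $F'=F$ (so $\tau$ is trivial), $E'=EF'=E$, and $n$ is even. By item (1) of \S\ref{sec prel}, the spaces $Y_n(F)=Y_n(F/F,-1)$ and $Y_n(E)=Y_n(E/E,-1)$ are each single orbits under $\GL_n(F)$ and $\GL_n(E)$ respectively; in particular $Y_n(F)=\j\star\GL_n(F)$ and $Y_n(E)=\j\star\GL_n(E)$.

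Given this, the set appearing in Lemma \ref{lem orb iso}, namely $\j\star\GL_n(E')\cap Y_n(F)$, simplifies. Since $E'=E$ we have $\j\star\GL_n(E')=Y_n(E)$, and since $Y_n(F)\subseteq Y_n(E)$ (every alternating matrix in $\GL_n(F)$ is alternating in $\GL_n(E)$), the intersection is exactly $Y_n(F)=\j\star\GL_n(F)$. But $F'=F$ in this case, so this is a single $\GL_n(F')$-orbit. Lemma \ref{lem orb iso} then immediately gives that there is a single $G^\j(E)$-orbit in $X^\j$, and since $I_n\in X^\j$ this orbit must be $G^\j(E)\cdot I_n$.

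There is no real obstacle here: the content is entirely in the bijection of Lemma \ref{lem orb iso} combined with the fact that the symplectic form over any field is unique up to equivalence. The only things to double-check are the identifications $E'=E$ and $F'=F$ in the symplectic case and the obvious inclusion $Y_n(F)\subseteq Y_n(E)$, both of which are immediate from the definitions in \S\ref{sec prel}.
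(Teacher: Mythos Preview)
Your proposal is correct and follows exactly the same reasoning as the paper: the paper simply notes that in the symplectic case $Y_n(F)=\j\star\GL_n(F)$ is a single $\GL_n(F)$-orbit and declares the corollary an immediate consequence of Lemma~\ref{lem orb iso}. You have spelled out the identifications $E'=E$, $F'=F$ and the intersection $\j\star\GL_n(E')\cap Y_n(F)=Y_n(F)$ in more detail, but the argument is the same.
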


\subsubsection{The unitary case}
We begin with the following simple observation.
\begin{lemma}\label{lem klein ext}
Let $L/F$ be a Klein extension of local fields and let $E$ be a quadratic extension of $F$ in $L$. Then $F^*\subseteq N(L/E)$.
\end{lemma}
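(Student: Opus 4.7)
The plan is to exploit the existence of the other two quadratic subextensions of $L/F$, combined with local class field theory. Let $F'$ and $F''$ denote the two quadratic subextensions of $L$ over $F$ other than $E$; these exist because $\Gal(L/F)\cong(\Z/2\Z)^2$ has three subgroups of order two. Since $F'$ and $F''$ are both linearly disjoint from $E$ over $F$, we have $L=EF'=EF''$.

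First I would check that pulling back norms from $L/E$ gives norms from $F'/F$ and $F''/F$. Let $\sigma$ be the non-trivial element of $\Gal(L/E)$. Since $F'\neq E$, the restriction $\sigma|_{F'}$ is nontrivial, hence is the generator of $\Gal(F'/F)$. Consequently, for any $b\in F'^*$,
\[
N_{L/E}(b)=b\sigma(b)=N_{F'/F}(b)\in F^*,
\]
and likewise $N_{L/E}(c)=N_{F''/F}(c)\in F^*$ for $c\in F''^*$. This yields the inclusions $N_{F'/F}(F'^*)\subseteq N(L/E)$ and $N_{F''/F}(F''^*)\subseteq N(L/E)$.

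The remaining task, which I expect to be the key step, is to show that $F^*$ itself is generated by these two norm subgroups, i.e.\
\[
F^*=N_{F'/F}(F'^*)\cdot N_{F''/F}(F''^*).
\]
By local class field theory, $N_{F'/F}(F'^*)$ and $N_{F''/F}(F''^*)$ are index-two subgroups of $F^*$, and the correspondence between index-two subgroups of $F^*$ and quadratic extensions of $F$ is a bijection. Since $F'\neq F''$, the two norm groups are distinct. But two distinct subgroups of index two in an abelian group always generate the whole group (their product is a subgroup containing both properly, hence of index one). This gives $F^*\subseteq N(L/E)$, as desired.

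The main obstacle is really this last point, which requires invoking local class field theory (via the bijection between quadratic extensions of $F$ and index-two subgroups of $F^*$) to rule out the coincidence $N_{F'/F}(F'^*)=N_{F''/F}(F''^*)$. The rest of the argument is purely Galois-theoretic and direct.
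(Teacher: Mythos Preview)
Your proof is correct and follows essentially the same approach as the paper: both arguments identify the two other quadratic subextensions, observe that $N_{L/E}$ restricted to each of them coincides with the norm down to $F$, and then invoke local class field theory to conclude that the two distinct index-two norm subgroups generate $F^*$.
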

\begin{proof}
Let $E_1$ and $E_2$ be the two different quadratic extensions of $F$ contained in $L$ and different from $E$. Restriction to $E_i$ defines an isomorphism from $\Gal(L/E)$ to $\Gal(E_i/F)$ and therefore $N_{L/E}(E_i^*)=N(E_i/F)$ and in particular $N(E_i/F)$ is a subgroup of $N(L/E)$, $i=1,2$. By local class field theory $N(E_i/F)$, $i=1,2$ are two different subgroups of $F^*$ of index two and therefore their  product equals $F^*$. The lemma follows.
\end{proof}
Note that any $a\in (E'/E)_1\cap (E'/F')_1$ satisfies $a^\sigma=a^{-1}=a^\tau$ and it is therefore easy to see that
\[
(E'/E)_1\cap (E'/F')_1=((E')^{\sigma\tau}/F)_1.
\]
Let
\begin{equation}\label{eq def gamma}
\Gamma=\{a^{1-\sigma}: a\in (E'/E)_1\}
\end{equation}
and note that $\Gamma$ is a subgroup of $((E')^{\sigma\tau}/F)_1$. Note further that $\det$ maps $X^\j$ to $((E')^{\sigma\tau}/F)_1$.
\begin{observation}\label{obs x}
In the unitary case, for every $a\in ((E')^{\sigma\tau}/F)_1$ there exists $x\in X^\j$ such that $\det x=a$.
\end{observation}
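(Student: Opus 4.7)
The plan is to reduce to the case where $\j$ is diagonal and then exhibit the required $x$ explicitly as a diagonal matrix. Note first that the containment $\det(X^\j)\subseteq (E'/E)_1\cap(E'/F')_1$ is immediate: taking determinants of ${}^tx^\tau\j x=\j$ gives $(\det x)^{1+\tau}=1$, and of $x\bar x=I_n$ gives $(\det x)^{1+\sigma}=1$. So the content of the observation is surjectivity of $\det\colon X^\j\to (E'/E)_1\cap(E'/F')_1$.

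For the reduction, I would observe that for any $h\in \GL_n(F')$ the map $x\mapsto h^{-1}xh$ is a determinant-preserving bijection $X^\j\to X^{\j\star h}$; well-definedness uses that $h$ is fixed by $\sigma$, so $\overline{h^{-1}xh}=h^{-1}\bar x h$, and the two defining relations transform correctly because ${}^t(h^{-1}xh)^\tau(\j\star h)(h^{-1}xh)={}^th^\tau({}^tx^\tau\j x)h=\j\star h$. Hence one may replace $\j$ by any element of its $\GL_n(F')$-orbit in $Y_n(F)$. By the remark following the definition of $Y_n(F)$ we may restrict to $\sgn=1$, so that $\j$ is a hermitian form for $F'/F$. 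Since hermitian forms over the quadratic extension $F'/F$ diagonalize, we may take $\j=\diag(\j_1,\ldots,\j_n)$ with $\j_i\in F^*$.

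Given then $a\in(E'/E)_1\cap(E'/F')_1$, I would set $x=\diag(a,1,\ldots,1)\in \GL_n(E')$. A one-line computation gives
\[
{}^tx^\tau\j x=\diag(a^\tau a\,\j_1,\j_2,\ldots,\j_n)=\j
\]
(using $a^\tau a=1$) and $x\bar x=\diag(a\bar a,1,\ldots,1)=I_n$ (using $a\bar a=1$), so $x\in X^\j$, and manifestly $\det x=a$. I do not anticipate any genuine obstacle; the only non-formal input is the diagonalizability of hermitian forms over a quadratic extension, which is classical.
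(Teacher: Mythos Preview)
Your proposal is correct and follows essentially the same approach as the paper: reduce to diagonal $\j$ via conjugation by an element of $\GL_n(F')$ (which preserves determinants on the symmetric space), and then take $x=\diag(a,I_{n-1})$. The paper's argument is terser but identical in substance.
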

Indeed, if $\j$ is diagonal then $\diag(a,I_{n-1})\in X^\j$ for every $a\in ((E')^{\sigma\tau}/F)_1$. Otherwise, there exists $g\in \GL_n(F')$ such that $\j\star g$ is diagonal and we then have  $X^\j =g X^{\j\star g} g^{-1}$ so that $\det(X^\j)=\det(X^{\j\star g})$. 

\begin{corollary}\label{cor 2orb}
In the unitary case $Y_n(F)\subseteq \j\star\GL_n(E')$. In particular, $X^\j$ consists of exactly two $G^\j(E)$-orbits. Namely, $\Gamma$ is of index two in $((E')^{\sigma\tau}/F)_1$ and the two $G^\j(E)$-orbits in $X^\j$ are $\{x\in X^\j:\det x\in \Gamma\}=G^\j(E)\cdot I_n$ and $\{x\in X^j:\det x\not\in \Gamma\}$.

\end{corollary}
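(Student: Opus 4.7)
The plan is to reduce the question to the classification of $\GL_n(F')$-orbits on $Y_n(F'/F,1)$ recalled in \S\ref{sec prel} via Lemma \ref{lem orb iso}. I begin with the inclusion $Y_n(F)\subseteq \j\star\GL_n(E')$. I view $Y_n(F)=Y_n(F'/F,1)$ inside $Y_n(E)=Y_n(E'/E,1)$, whose $\GL_n(E')$-orbits are parameterized by $\det y\cdot N(E'/E)\in E^*/N(E'/E)$. For any $y\in Y_n(F)$ the Hermitian relation ${}^ty^\tau=y$ forces $\det y\in F^*$, and likewise $\det \j\in F^*$, so $(\det y)(\det\j)^{-1}\in F^*$. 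Since $E'/F$ is a Klein extension containing the quadratic subfield $E$, Lemma \ref{lem klein ext} gives $F^*\subseteq N(E'/E)$, so $y$ and $\j$ lie in the same $\GL_n(E')$-orbit. Lemma \ref{lem orb iso} then identifies the $G^\j(E)$-orbits on $X^\j$ with the $\GL_n(F')$-orbits on $\j\star\GL_n(E')\cap Y_n(F)=Y_n(F)$, of which there are exactly two by the classification in \S\ref{sec prel}.

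To identify the two orbits via $\det$, I take $x=z\cdot I_n=z\bar z^{-1}\in X^\j$ with $z\in\GL_n(E')$ and set $w=\det z$. Direct computation yields $\det x=w^{1-\sigma}$ and $\det(\j\star z)=(\det\j)\,w^{1+\tau}$, so under Lemma \ref{lem orb iso} the $G^\j(E)$-orbit of $x$ is classified by the coset $w^{1+\tau}\,N(F'/F)\in F^*/N(F'/F)$, and $G^\j(E)\cdot I_n$ corresponds to the trivial coset. The heart of the proof is then the chain of equivalences
\[
w^{1+\tau}\in N(F'/F)\;\Longleftrightarrow\; w\in (F')^*\cdot (E'/E)_1\;\Longleftrightarrow\; w^{1-\sigma}\in \Gamma.
\]
For the first equivalence, the forward direction is clear since $b^{1+\tau}=N_{F'/F}(b)\in N(F'/F)$ for $b\in (F')^*$ and $c^{1+\tau}=1$ for $c\in (E'/E)_1$; the reverse writes $w^{1+\tau}=b^{1+\tau}$ for some $b\in (F')^*$ and notes that $w/b\in\ker(1+\tau)=(E'/E)_1$. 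The second equivalence comes from the fact that the $\sigma$-fixed points of $(E')^*$ are exactly $(F')^*$, so $\ker(1-\sigma)=(F')^*$. Specializing to $x=I_n$ identifies $G^\j(E)\cdot I_n=\{x\in X^\j:\det x\in \Gamma\}$, with the other orbit as its complement.

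For the index assertion, I appeal to Observation \ref{obs x}: the map $\det\colon X^\j\to (E'/E)_1\cap (E'/F')_1$ is surjective, and since the two $G^\j(E)$-orbits map onto two distinct cosets of $\Gamma$ in this group and together exhaust it, $\Gamma$ must have index two. The main obstacle I anticipate is simply bookkeeping: one must juggle the three commuting nontrivial Galois actions $\sigma$, $\tau$, $\sigma\tau$ on the Klein extension $E'/F$ in parallel with the two norm maps $N_{E'/E}$ and $N_{E'/F'}$, so that the determinantal identities and kernel computations do not get confused between the intermediate fields $E$ and $F'$.
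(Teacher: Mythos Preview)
Your proof is correct and in several places more direct than the paper's. Both arguments rest on Lemma~\ref{lem orb iso} and the two-orbit classification of $Y_n(F'/F,1)$ under $\GL_n(F')$, but they diverge in how they establish the inclusion $Y_n(F)\subseteq \j\star\GL_n(E')$ and the index-two statement for $\Gamma$. The paper argues indirectly: it first treats $n=1$ by hand to extract that $\Gamma$ has index two, then for general $n$ constructs an explicit element of $\j\star\GL_n(E')\cap Y_n(F)$ outside $\j\star\GL_n(F')$, so that the intersection already contains two $\GL_n(F')$-orbits and hence equals all of $Y_n(F)$. You instead prove the inclusion in one line by comparing determinants modulo $N(E'/E)$ and invoking Lemma~\ref{lem klein ext}; this is cleaner and avoids the explicit witness. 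Likewise, the paper obtains the index of $\Gamma$ from the $n=1$ computation and then cites Observations~\ref{obs det} and~\ref{obs x} to match orbits with $\Gamma$-cosets, whereas you run the determinant bookkeeping directly via the equivalence chain $w^{1+\tau}\in N(F'/F)\Leftrightarrow w\in (F')^*(E'/E)_1\Leftrightarrow w^{1-\sigma}\in\Gamma$ and read off the index from surjectivity of $\det$ (Observation~\ref{obs x}). Your route buys uniformity in $n$ and a transparent link between the $Y_n(F)$-invariant (norm class of $\det$) and the $X^\j$-invariant ($\Gamma$-coset of $\det$); the paper's route has the minor advantage of making the $n=1$ picture fully explicit.
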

\begin{proof}
Consider first the case $n=1$.
Note that $Y_1(F) =F^*\subseteq Y_1(E)=E^*$ and $\j\star\GL_1(E')=\j N(E'/E)$ so that 
\[
\j\star\GL_1(E')\cap Y_1(F)=\j N(E'/E)\cap F^*=\j(N(E'/E)\cap F^*).
\]
It follows from Lemma \ref{lem klein ext} that $\j\star\GL_1(E')\cap Y_1(F)=\j F^*$.
Note further that $\GL_1(F')$ acts on $\j\star\GL_1(E')\cap Y_1(F)$ by $y\star g=yN_{F'/F}(g)$ and since $N(F'/F)$ is of index two in $F^*$ it follows that $\j\star\GL_1(E')\cap Y_1(F)$ consists of two $\GL_1(F')$-orbits. 

As a consequence of Lemma \ref{lem orb iso} we obtain that $X^\j$ consists of two $G^\j(E)$-orbits. 
Note that $\j\in F^*$ and $G^\j(E)=(E'/E)_1$ acts on $X^\j=((E')^{\sigma\tau}/F)_1$ by $g\cdot x=g^{1-\sigma} x$.
It follows that, indeed, $\Gamma$ has index two in $((E')^{\sigma\tau}/F)_1$.

We now consider a general $n$.
We first show that $\j\star \GL_n(E')\cap Y_n(F)$ contains two different $\GL_n(F')$-orbits. After replacing $\j$ with an appropriate element of $\j\star\GL_n(F')$, we may assume without loss of generality that $\j$ is diagonal. Now let $a\in (E')^*$ be such that $N_{E'/E}(a)=aa^\tau\in F^*\setminus N(F'/F)$ be given by Lemma \ref{lem klein ext}. Then $y=\j\star\diag(a,I_{n-1})\in \j\star \GL_n(E')\cap Y_n(F)$ but $y\not\in \j\star\GL_n(F')$. 
Since $Y_n(F)$ consists of two $\GL_n(F')$-orbits it follows that $Y_n(F)=\j\star \GL_n(E')\cap Y_n(F)$. By Lemma \ref{lem orb iso} we get that $X^\j$ consists of two $G^\j$-orbits. 

It follows from Observations \ref{obs det}  (resp. \ref {obs x}) that $\det (G^\j\cdot x)=
\Gamma\det x$ (resp. $\det(X^\j)=((E')^{\sigma\tau}/F)_1$). The corollary follows.
\end{proof}


\subsubsection{The orthogonal case}

Consider the orthogonal case and let $SX^\j=X^\j\cap SG^\j(E)$. Note that $SX^\j$ (and its complement $X^\j\setminus SX^\j$) are $G^\j(E)$-invariant spaces since the $G^\j(E)$-action preserves determinant. We observe the following refinement of Lemma \ref{lem orb iso}.
\begin{lemma}\label{lem orth orb} Let $\GL_n^D(E)=\{g\in \GL_n(E):\det g\in D\}$ for a subset $D$ of $E$. Consider the orthogonal case. 
\begin{enumerate}
\item\label{part y} The bijection defined by Lemma \ref{lem orb iso} restricts to a bijection between $SG^\j(E)$-orbits in $SX^\j$ and $GL_n(F)$-orbits in $\j\star \GL_n^F(E)\cap Y_n(F)$ and to a bijection between $SG^\j(E)$-orbits in $X^\j\setminus SX^\j$ and $GL_n(F)$-orbits in $\j\star \GL_n^{\imath F}(E)\cap Y_n(F)$.
\item\label{part x} Each of $SX^\j$ and $X^\j\setminus SX^\j$ contains at most two $SG^\j(E)$-orbits. 
For $n=2$ if  $\det j\in -F^{*2}$ then $SX^\j$ is a unique $SG^\j(E)$-orbit and if $\det \j\in -\imath^2F^{*2}$ then $X^\j\setminus SX^\j$ is a unique $SG^\j(E)$-orbit. 
\end{enumerate}
\end{lemma}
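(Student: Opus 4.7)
The plan is to combine the bijection of Lemma \ref{lem orb iso} with Lemma \ref{lem slorb} and the classification of $\GL_n(F)$-orbits in $Y_n(F)$ recalled in \S\ref{sec prel}.

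For part \eqref{part y}, I would first observe that in the orthogonal case the $G^\j(E)$-action on $X^\j$ preserves the determinant, since $\det g\in\{\pm 1\}\subseteq F$ gives $\det(gx\bar g^{-1})=\det x$. Hence $SX^\j$ and $X^\j\setminus SX^\j$ are $G^\j(E)$-stable, and Lemma \ref{lem slorb} tells us that $SG^\j(E)$-orbits in $X^\j$ coincide with $G^\j(E)$-orbits. It then remains to transport the partition $X^\j = SX^\j \sqcup (X^\j\setminus SX^\j)$ to the right-hand side of Lemma \ref{lem orb iso}. Writing $x=z\bar z^{-1}$ with $y=\j\star z\in Y_n(F)$, the identity ${}^tz\,\j\,z=y$ forces $(\det z)^2=\det y/\det\j\in F^*$, so $\det z\in F^*\cup\imath F^*$; and $\det x=\det z/\overline{\det z}$ equals $1$ precisely when $\det z\in F^*$ and $-1$ precisely when $\det z\in \imath F^*$. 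Therefore $x\in SX^\j$ iff $y\in\j\star\GL_n^F(E)\cap Y_n(F)$, and $x\in X^\j\setminus SX^\j$ iff $y\in\j\star\GL_n^{\imath F}(E)\cap Y_n(F)$, which yields the two restricted bijections.

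For part \eqref{part x}, I would invoke the classification of \S\ref{sec prel}: $\GL_n(F)$-orbits in $Y_n(F)$ are parameterized by discriminant together with Hasse invariant, with at most two orbits per discriminant class in $F^*/F^{*2}$ (exactly two for $n\ge 3$, one or two for $n=2$, and one for $n=1$). The computation from part \eqref{part y} shows that every element of $\j\star\GL_n^F(E)\cap Y_n(F)$ has discriminant $\det\j\cdot F^{*2}$, while every element of $\j\star\GL_n^{\imath F}(E)\cap Y_n(F)$ has discriminant $\imath^2\det\j\cdot F^{*2}$. This immediately yields the bound of at most two $SG^\j(E)$-orbits in each of $SX^\j$ and $X^\j\setminus SX^\j$. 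The explicit $n=2$ assertions then amount to the statement that the unique two-dimensional orbit of discriminant $-F^{*2}$ realizes the $SX^\j$-side exactly when $\det\j\in -F^{*2}$, and the $X^\j\setminus SX^\j$-side exactly when $\imath^2\det\j\in -F^{*2}$, i.e.\ $\det\j\in -\imath^2 F^{*2}$ (using that in $F^*/F^{*2}$ the classes of $\imath^{-2}$ and $\imath^2$ coincide).

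The main mild subtlety I foresee is checking that the dichotomy $\det z\in F^*$ versus $\det z\in\imath F^*$ depends only on $y$ and not on the chosen factorization $y=\j\star z$; two such choices differ by an element of $G^\j(E)$ whose determinant lies in $\{\pm 1\}\subseteq F^*$, so both land in the same coset. Once this well-definedness is established the remainder of the argument is bookkeeping via the classification of quadratic forms.
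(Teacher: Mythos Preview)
Your approach is essentially identical to the paper's: both combine Lemma~\ref{lem slorb} (so that $SG^\j(E)$-orbits and $G^\j(E)$-orbits in $X^\j$ agree) with the observation that $(\det z)^2\in F^*$ forces $\det z\in F^*\sqcup\imath F^*$, and then read off $\det x=\det z/\overline{\det z}\in\{\pm 1\}$ to split the bijection of Lemma~\ref{lem orb iso} along the two determinant cosets; part~\eqref{part x} then follows in both proofs from the fixed discriminant on each piece together with the classification of quadratic forms in \S\ref{sec prel}. Your well-definedness remark is a harmless extra check, automatic once one works at the level of orbits.
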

\begin{proof}
Note first that $a\in E^*$ satisfies $a^2\in F^*$ if and only if $a\in F^* \sqcup \imath F^*$. As a consequence $E^{*2}\cap F^*=F^{*2}\sqcup \imath^2 F^{*2}$ and therefore comparing determinants we obtain the disjoint union
\[
\j\star \GL_n(E)\cap Y_n(F)=(\j\star \GL_n^F(E)\cap Y_n(F))\sqcup (\j\star \GL_n^{\imath F}(E)\cap Y_n(F)).
\]
Let $x\in X^\j$ and $z\in \GL_n(E)$ be such that $x=z\cdot I_n$. Recall that $G^\j(E) \cdot x$ corresponds to $(\j\star z)\star \GL_n(F))$ under the bijection of Lemma \ref{lem orb iso}. Note that if $x\in SX^\j$ then $1=\det x=\det z\bar z^{-1}$ and therefore $z\in GL_n^F(E)$ and if $x\in X^\j\setminus SX^\j$ then $-1=\det x=\det z\bar z^{-1}$ and therefore $z\in \GL_n^{\imath F}(E)$. By Lemma \ref{lem slorb} every $SG^\j(E)$-orbit is a $G^\j(E)$-orbit. The first part of the lemma follows. 

Note further that every element in $\j\star \GL_n^F(E)\cap Y_n(F)$ has the fixed discriminant $\det j F^{*2}$ and every element of $\j\star \GL_n^{\imath F}(E)\cap Y_n(F)$ has the fixed discriminant $\imath ^2\det \j F^{*2}$. The second part of the lemma follows from the first part and the classification of $\GL_n(F)$-orbits in $Y_n(F)$. 
\end{proof}

\begin{proposition}\label{prop orth orbs}
\begin{enumerate}
\item The space $SX^\j$ is a unique $SG^\j(E)$-orbit if and only if there is a unique $\GL_n(F)$-orbit in $Y_n(F)$ with discriminant $\det \j\, F^{*2}$. That is, if and only if either $n=1$ or $n=2$ and $\det \j\in -F^{*2}$. 

\item The space $X^\j\setminus SX^\j$ is a unique $SG^\j(E)$-orbit if and only if there is a unique $\GL_n(F)$-orbit in $Y_n(F)$ with discriminant $\imath^2\det \j \,F^{*2}$. That is, if and only if either $n=1$ or $n=2$ and $\det \j\in -\imath^2 F^{*2}$. 

\item\label{part 3} If either $n\ge 3$ or $n=2$ and $\det j\not\in -F^{*2}$ (resp. $n\ge 3$ or $n=2$ and $\det j\not\in -i^2 F^{*2}$) then $SX^\j$ (resp. $X^\j \setminus SX^\j$) consists of exactly two orbits. 
\end{enumerate}
\end{proposition}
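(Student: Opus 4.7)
My plan is to apply the first part of Lemma \ref{lem orth orb} to reduce the orbit counting on $SX^\j$ (resp.\ $X^\j\setminus SX^\j$) to counting $\GL_n(F)$-orbits in $\j\star\GL_n^F(E)\cap Y_n(F)$ (resp.\ $\j\star\GL_n^{\imath F}(E)\cap Y_n(F)$), and then to invoke the classification of $\GL_n(F)$-orbits in $Y_n(F)$ recalled in \S\ref{sec prel}.

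The key preparatory step is to upgrade the inclusions established in the proof of Lemma \ref{lem orth orb} to the equalities
\[
\j\star\GL_n^F(E)\cap Y_n(F)=\{y\in Y_n(F):\det y\in \det \j\cdot F^{*2}\},
\]
\[
\j\star\GL_n^{\imath F}(E)\cap Y_n(F)=\{y\in Y_n(F):\det y\in \imath^2\det \j\cdot F^{*2}\}.
\]
Given $y\in Y_n(F)$ with $\det y\in \det\j\cdot F^{*2}$, I need to produce $g\in\GL_n(E)$ with $y={}^tg\j g$; then $(\det g)^2\in F^{*2}$ forces $\det g\in F^*$, so $g\in\GL_n^F(E)$, and the second case is analogous with $\det g\in\imath F^*$. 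To obtain the required $E$-equivalence of $y$ and $\j$, I would diagonalize each over $F$ and observe that the Hasse invariant computed over $E$ of either form is a product of Hilbert symbols $(a,b)_E$ with $a,b\in F^*$. These symbols are all trivial because the restriction map from the Brauer group of $F$ to that of $E$ is multiplication by $[E:F]=2$ under the invariant identification with $\Q/\Z$, and thereby annihilates the $2$-torsion generated by the quaternion algebras $(a,b)_F$. Since $y$ and $\j$ moreover have equal $E$-discriminants (by hypothesis together with $F^{*2}\subseteq E^{*2}$), the classification of non-degenerate quadratic forms over the $p$-adic field $E$ yields $y\sim_E\j$.

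With the identification in hand, I would apply the classification of $\GL_n(F)$-orbits in $Y_n(F)$: one orbit per discriminant class for $n=1$; one orbit in the discriminant class $-F^{*2}$ and two in every other class for $n=2$; and two orbits in every discriminant class for $n\geq 3$. Substituting the discriminants $\det\j\cdot F^{*2}$ and $\imath^2\det\j\cdot F^{*2}$ (using $\imath^{-2}F^{*2}=\imath^2 F^{*2}$) and reading off the resulting conditions yields parts (1), (2), and (3) of the proposition by direct case analysis.

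The main obstacle will be the identification step, hinging on the triviality of $(a,b)_E$ for $a,b\in F^*$; once this standard consequence of local class field theory is accepted, everything reduces to bookkeeping with the classification recalled in \S\ref{sec prel}.
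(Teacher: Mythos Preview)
Your approach is correct and genuinely different from the paper's.

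The paper proceeds by an explicit case analysis. For $n=1$ the statement is trivial. For $n=2$ with $\det\j\notin -F^{*2}$, the paper writes $\j=\diag(a,b)$, passes to the quadratic extension $K=F[\sqrt{-a^{-1}b}]$, and uses an explicit matrix $\eta\in\GL_2(K)$ to identify $SG^\j(E)$ with a one-dimensional torus; it then splits into the subcases $K=E$ and $K\ne E$ (the latter requiring the Klein extension $L=KE$ and an appeal to the unitary orbit count of Corollary~\ref{cor 2orb}) to exhibit two orbits directly. For $n=3$ it produces by hand two diagonal forms $y_1,y_2\in Y_3(F)$ with equal discriminant, distinct Hasse invariants over $F$, but equivalent over $E$, and for $n>3$ it appeals to Witt cancellation to reduce to the $n=3$ computation.

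Your route avoids all of this by proving the stronger uniform statement
\[
\j\star\GL_n^F(E)\cap Y_n(F)=\{y\in Y_n(F):\det y\in\det\j\cdot F^{*2}\},
\]
reducing everything to the single observation that $(a,b)_E=1$ for $a,b\in F^*$. This is indeed valid: by the projection formula for Hilbert symbols (or equivalently the fact that restriction $\mathrm{Br}(F)\to\mathrm{Br}(E)$ is multiplication by $[E:F]=2$ under the invariant isomorphism), one has $(a,b)_E=(N_{E/F}(a),b)_F=(a^2,b)_F=1$. Hence any two elements of $Y_n(F)$ with the same $F$-discriminant class become equivalent over $E$, and the determinant bookkeeping forces the congruence class of $\det g$ as you indicate. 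Your argument is shorter and more conceptual; the paper's argument is more elementary in that it stays entirely within the explicit form-theoretic setup of \S\ref{sec prel} and never invokes the Brauer group, at the cost of the lengthy casework.
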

\begin{proof}
By Lemma \ref{lem orth orb}\eqref{part y} the number of $SG^\j(E)$-orbits in $SX^\j$ equals the number of $\GL_n(F)$-orbits in $\j\star \GL_n^F(E)\cap Y_n(F)$ and the number of $SG^\j(E)$-orbits in $X^\j\setminus SX^\j$ equals the number of $\GL_n(F)$-orbits in $\j\star \GL_n^{\imath F}(E)\cap Y_n(F)$.
Note that 
\[
\GL_n^{\imath F}(E)=\diag(\imath,I_{n-1})\GL_n^F(E)=\GL_n^F(E)\diag(\imath ,I_{n-1}). 
\]

Replacing $\j$ by an appropriate element of $\j\star\GL_n(F)$ we assume without loss of generality that $\j$ is diagonal. When this is the case 
\[
\j\star \GL_n^{\imath F}(E)\cap Y_n(F)=(\j \diag(\imath^2,I_{n-1}))\star \GL_n^F(E)\cap Y_n(F).
\]
As a consequence, the second part of the proposition follows from the first and Lemma \ref{lem orth orb}\eqref{part y}. The third part follows from the first two and Lemma \ref{lem orth orb}\eqref{part x}.
It remains to prove the first part.

When $n=1$ we have that $SG^\j(E)=\{\pm 1\}$ acts trivially on $SX^\j=\{1\}$ and the statement is obvious. 
Consider the case $n=2$. If $\det \j\in -F^{*2}$ the statement follows from Lemma \ref{lem orth orb}\eqref{part x}. Assume that $\det \j\not\in -F^{*2}$. Without loss of generality assume that $\j$ is diagonal and write $\j=\diag(a,b)$. 
Let $K$ be a quadratic extension of $F$ such that $-a^{-1}b=u^2$ for some $u\in K^*$ and $\rho$ the Galois action of $K/F$.
Note that  for $\eta=\sm{u}{-u}{1}{1}\in \GL_2(K)$ we have
\[
\j\star \eta=\sm{0}{2b}{2b}{0}\ \ \ \text{and} \ \ \ \eta^{-1}\eta^\rho=\sm{0}{1}{1}{0}.
\]

Consider first the case that $-ab\in E^{*2}$ so that $K=E$ and $\rho=\sigma$. 
Then
\[
SG^\j(E)=\eta SG^{\j\star \eta}(E)\eta^{-1}=\eta \{t_a: a\in E^*\}\eta^{-1}
\]
where $t_a=\diag(a,a^{-1})$, $a\in E^*$.
For $a\in E^*$ and $g=\eta t_a\eta^{-1}$ we have 
\[
g\bar g=\eta t_{a\bar a^{-1}} \eta^{-1}.
\]
Hence 
\[
SX^\j=\eta\{t_a:a\in F^*\}\eta^{-1}. 
\]
Let $a\in E^*$ and $b\in F^*$ and write $g=\eta t_a\eta^{-1}\in SG^\j(E)$ and
$x=\eta t_b\eta^{-1}\in SX^\j$. Then 
\[
g\cdot x=\eta t_{a\bar a b} \eta^{-1}
\]
and since $N(E/F)$ is of index two in $F^*$ it follows that $SX^\j$ consists of two different $SG^\j(E)$-orbits.

Assume now that $-ab\not\in E^{*2}$ and let $L=KE$. Then $L/F$ is a Klein extension. 
By abuse of notation we continue to denote by $\rho$ the Galois action of $L/E$ and by $\sigma$ the Galois action of $L/K$. Note that $\eta^\sigma=\eta$. 

It follows from the previous case (with $L/E$ replacing $E/F$) that 
\[
SG^\j(L)=\eta \{t_a: a\in L^*\}\eta^{-1}.
\]
For $a\in L^*$ we have
\[
(\eta t_a \eta^{-1})^\rho=\eta t_{a^{-\rho}}\eta^{-1} 
\] 
and therefore
\[
SG^\j(E)=SG^\j(L)\cap \GL_2(E)=\eta\{t_a: a\in (L/E)_1\}\eta^{-1}.
\]
For $a\in (L/E)_1$ and $g=\eta t_a \eta^{-1}$ we have
\[
g \bar g=\eta t_{a a^\sigma}\eta^{-1}
\]
and therefore 
\[
SX^\j=\eta \{t_a: a \in (L/E)_1\cap (L/K)_1\}\eta^{-1}.
\]
For $a\in (L/E)_1$ and $b\in (L/E)_1\cap (L/K)_1$ set $g=\eta t_a\eta^{-1}\in SG^\j(E)$ and $x=\eta t_b\eta^{-1}\in SX^\j(E)$.
It follows that
\[
g\cdot x=\eta t_{a^{1-\sigma} b}\eta^{-1}.
\]
It follows from Corollary \ref{cor 2orb} that $\{a^{1-\sigma}:a\in (L/E)_1\}$ is of index two in $(L/E)_1\cap (L/K)_1$ and therefore $SX^\j$ consists of exactly two $SG^\j(E)$-orbits.

Next we show that for $n=3$, the space $\j\star \GL_3^F(E)\cap Y_3(F)$ contains two different $\GL_3(F)$-orbits. Note that for $a\in F^*$ the map $y\mapsto ay$ defines a $\GL_3(F)$-equivariant bijection between $\j\star \GL_3^F(E)\cap Y_3(F)$ and $(a\j)\star \GL_3^F(E)\cap Y_3(F)$. We may therefore assume without loss of generality that $\det j\in -F^{*2}$. Let $u\in F^*\setminus N(E/F)$ and recall that $(u,\imath^2)_F=-1$. It follows that $y_1=\diag(1,1,-1)$ and $y_2=\diag(\imath^2,u,-\imath^2u)$ both have discriminant $-F^{*2}$, are not in the same $\GL_3(F)$-orbit but are in the same $\GL_3(E)$-orbit. There is therefore $z\in \GL_3(E)$ such that $y_2=y_1\star z$ and comparing determinants it follows that $z\in \GL_3^F(E)$. Since $\j\star \GL_3(F)=y_i\star \GL_3(F)$ for some $i\in \{1,2\}$ our claim follows.

Next we show that for $n> 3$ the space $\j\star \GL_n^F(E)\cap Y_n(F)$  contains two different $\GL_n(F)$-orbits. 
We may again assume without loss of generality that $\j$ is diagonal. Write $\j=\diag(\j_1,\j_2)$ where $\j_1\in Y_3(F)$. It follows from the $n=3$ case that there exists $z\in \GL_3^F(E)$ such that $\j_1\star z \not\in \j_1\star \GL_3(F)$. It follows from Witt's cancellation theorem that $\j\star \diag(z,I_{n-3})\not\in \j\star \GL_n(F)$ while clearly $\diag(z,I_{n-3})\in \GL_n^F(E)$. All together this shows that for $n\ge 3$ it is always the case that  $\j\star \GL_n^F(E)\cap Y_n(F)$  contains two different $\GL_n(F)$-orbits. 
The proposition now follows from Lemma \ref{lem orth orb}\eqref{part y}.
\end{proof}

\subsubsection{Invariants for the $SG^\j(E)$-orbits in $X^\j$ in the orthogonal case}\label{ss xinv}
Continue to consider the orthogonal case. It will be convenient to introduce invariants that classify the $SG^\j(E)$-orbits in $X^\j$ based on the above results.
Let $x\in X^\j$, $z\in \GL_n(E)$ such that $x=z\cdot I_n$ and $y=\j\star z\in Y_n(F)$. Note that
\[
\det y F^{*2}=\begin{cases} \det \j F^{*2} & x\in SX^\j \\ \imath^2\det \j F^{*2} & x\in X^\j\setminus SX^\j \end{cases}
\]
and $\Hasse_F(y)$ are invariants of $SG^\j(E) \cdot x$ that uniquely determine it. 
Let 
\[
\partial^{\j}(x)=\det y F^{*2} \ \ \ \text{and}\ \ \ \hbar^\j(x)=\Hasse_F(y).
\]
For $x,\,x'\in X^\j$ we have $SG^\j(E)\cdot x=SG^\j(E)\cdot x'$ if and only if $(\partial^\j(x),\hbar^\j(x))=(\partial^\j(x'),\hbar^\j(x'))$.

\subsection{Stabilizers}
Next, we analyze conjugacy classes of stabilizers. 
In the orthogonal case let $SG^\j=\SL_n\cap G^\j$ and for $x\in X^j$ let $SG_x^\j=R_{E/F}(\SL_n)\cap G_x^\j$ be the algebraic group defined over $F$ so that $SG_x^\j(F)=\SL_n(E)\cap G_x^\j(F)$. 
In the sequel even or odd cases refer to the parity of $n$. 
\begin{lemma}\label{lem exp stab}
Let $H$ denote the algebraic $F$-group $SG^\j$ in the orthogonal case and $G^\j$ otherwise. 
\begin{enumerate}
\item In either the symplectic or the odd unitary case for every $x\in X^j$ the group $G_x^\j(F)$ is $G^\j(E)$-conjugate to $H(F)$.
\item In the even unitary case fix $x_0\in X^\j$ such that $\det x_0\not \in \Gamma$ (see \eqref{eq def gamma} and Observation \ref{obs x}) and let $H'=G_{x_0}^\j$.
For $x\in X^\j$ we have 
\begin{itemize}
\item if $\det x\in \Gamma$ then the group $G_x^\j(F)$ is $G^\j(E)$-conjugate to $H(F)$
\item and if $\det x\not\in \Gamma$  then the group $G_x^\j(F)$ is $G^\j(E)$-conjugate to $H'(F)$.
\end{itemize}
\item In the orthogonal case if either $n\ge 3$ or $n=2$ and $\det \j\not\in -F^{*2}$ fix $x_1\in SX^\j$ such that $\hbar^\j(x_1)\ne \Hasse_F(\j)$ and let $H_1=SG_{x_1}^\j$.

In the following three cases and only in these cases the group $SG_x^\j(F)$ is $SG^\j(E)$-conjugate to $H(F)$ for all $x\in SX^\j$: $n=1$, $n=2$ and $\det \j\in -F^{*2}$,  $n$ is even and $(\imath^2,(-1)^{\frac n2}\det \j)_F=-1$. Otherwise
for $x\in SX^\j$ we have
\begin{itemize}
\item if $\hbar^\j(x)=\Hasse_F(\j)$ then the group $SG_x^\j(F)$ is $SG^\j(E)$-conjugate to $H(F)$
\item otherwise the group $SG_x^\j(F)$ is $SG^\j(E)$-conjugate to $H_1(F)$.
\end{itemize}
\item In the odd orthogonal case for $x\in X^\j\setminus SX^\j$ we have
\begin{itemize}
\item if $\hbar^\j(x)=(\imath^2,-1)_F^{\frac{n-1}2}\Hasse_F(\j)$ then the group $SG_x^\j(F)$ is $SG^\j(E)$-conjugate to $H(F)$
\item otherwise the group $SG_x^\j(F)$ is $SG^\j(E)$-conjugate to $H_1(F)$.
\end{itemize}
\item In the even orthogonal case fix $x_2\in X^\j\setminus SX^\j$ and set $H_2=SG_{x_2}^\j$ and unless $n=2$ and $\det \j\in -\imath^2F^{*2}$ fix $x_3\in X^\j\setminus SX^\j$ such that $\hbar^\j(x_3)\ne \hbar^\j(x_2)$ and set $H_3=SG_{x_3}^\j$. 

In the following three cases and only in these cases the group $SG_x^\j(F)$ is $SG^\j(E)$-conjugate to $H_2(F)$ for all $x\in X^\j\setminus SX^\j$: $n=1$, $n=2$ and $\det j\in -\imath^2F^{*2}$, 
$n$ is even and  $(\imath^2,(-1)^{\frac n2+1}\det j)_F=-1$ . Otherwise for $x\in X^\j\setminus SX^\j$ we have
\begin{itemize}
\item if $\hbar(x)=\hbar(x_2)$ then the group $SG_x^\j(F)$ is $SG^\j(E)$-conjugate to $H_2(F)$
\item otherwise the group $SG_x^\j(F)$ is $SG^\j(E)$-conjugate to $H_3(F)$.
\end{itemize}

\end{enumerate}
\end{lemma}
\begin{proof}
Clearly, $G_{g\cdot x}^\j(F)=gG_x^\j(F)g^{-1}$, $g\in G^\j(E)$, $x\in X^\j$ and $G_{I_n}^j(F)=H(F)$. 
In the symplectic case, the statement is now obvious from Corollary \ref{cor symp orb} and in the even unitary case from Corollary \ref{cor 2orb}. 

In the odd unitary case note that for $a\in (E'/E)_1\cap (E'/F')_1$ and $x\in X^\j$ we have $ax\in X^\j$ and clearly $G_{ax}^\j=G_x^\j$. Note morever that since, in the notation of the Corollary \ref{cor 2orb}, the group $\Gamma$ is of index two in $(E'/E)_1\cap (E'/F')_1$ we have $\det(ax)\Gamma=a\det x\,\Gamma$. This case now also follows from Corollary \ref{cor 2orb} .

In the orthogonal case we apply the fact that for $a\in F^*$ and $y\in Y_n(F)$ we have $\Hasse_F(ay)=(a,(-1)^{\lfloor \frac n2 \rfloor}\det y^{n-1})_F\Hasse_F(y)$.
Note that for $x\in X^\j$ we have that $-x\in X^j$ and  if $x=z\cdot I_n$ then $-x=z'\cdot I_n$ where $z'=\imath z$. 
Also, in the odd case $x\in SX^\j$ if and only if $-x\in X^\j\setminus SX^\j$ while in the even case $x\in SX^\j$ if and only if $-x\in SX^\j$.
It easily follows that 
\[
\partial^\j(-x)=\begin{cases} \partial^\j(x) & n\text{ is even} \\ \imath^2 \partial^\j(x) & n\text{ is odd} \end{cases} \ \ \ \text{and}\ \ \  \hbar^\j(-x)=\begin{cases} (\imath^2,(-1)^{\frac{n}2}\partial^\j(x))_F\hbar^\j(x) & n\text{ is even} \\ (\imath^2,-1)_F^{\frac{n-1}2}\hbar^\j(x) & n\text{ is odd.}  \end{cases}
\]
Furthermore, 
\[
\partial^\j(x)=\begin{cases} \det \j F^{*2} & x\in SX^\j \\ \imath^2\det \j F^{*2}& x\in X^\j\setminus SX^\j. \end{cases}
\]
Since we have $SG_{-x}^\j=SG_x^\j$ the orthogonal case now follows from Proposition \ref{prop orth orbs} and \S\ref{ss xinv}. 
\end{proof}

\section{Distinction for classical Galois pairs-statement of the main result}\label{section Galois distinction and induction}

Let $\j\in Y_{n_0}(F)$ be anisotropic (that is, for $0\ne v\in (F')^{n_0}$ we also have ${}^t v^\tau\j v\ne 0$). 
This implies that in the symplectic case $n_0=0$, in the unitary case $n_0\in \{0,1,2\}$ and in the orthogonal case $n_0\in \{0,1,2,3,4\}$. For every $n\in \Z_{\ge 0}$ let $w_n=(\delta_{i,n+1-j})\in \GL_n(F)$, 
\[
\j[n]=\begin{pmatrix} & &  w_n \\ & \j & \\\sgn w_n& & \end{pmatrix}, \ \ \ G_n=G^{\j[n]}(E), \ X_n=X^{\j[n]} \ \text{and} \ H_n=G^{\j[n]}(F).
\]

The group $G_0$ is compact. In the orthogonal case let $G_n^\circ=G_n\cap \SL_{n_0+2n}(E)$ be the special orthogonal group. In order to unify notation set $G_n^\circ=G_n$ in the unitary and symplectic cases. Thus $G_n^\circ$ is a connected reductive $p$-adic group. 
For any subset $S$ of $G_n$ let $S^\circ=S\cap G_n^\circ$.

In the unitary case if $n_0=2$ let $x_0 \in X_0\setminus (G_0\cdot I_2)$ and 
\[
H_n'=G_{\diag(I_n,x_0,I_n)}^{\j[n]}(F),\ \ \ n\ge 0
\]
and if $n_0=0$ let $x_1\in X_1\setminus (G_1\cdot I_2)$ and
\[
H_n'=G_{\diag(I_{n-1},x_1,I_{n-1})}^{\j[n]}(F),\ \ \ n\ge 1.
\]

Consider the orthogonal case. If $n_0\ge 2$ let $x_{0,1}\in X_0^\circ$ be such that $\hbar^\j(x_0)\ne \Hasse_F(\j)$ 
(note that if $n_0=2$ then the anisotropic assumption means that $\det \j\not\in -F^{*2}$).
If in addition $n_0=2$ and $\det \j\not\in-\imath^2F^{*2}$ or $n_0=4$ let $x_{0,\ell}\in X_0\setminus X_0^\circ$, $\ell=2,3$ be such that $\hbar^\j(x_{0,2})\ne \hbar^\j(x_{0,3})$. 

If $n_0=0$ let $x_{2,1}\in X_2^\circ \setminus G_2\cdot I_4$ and if $n_0=1$ let $x_{1,1}\in X_1^\circ\setminus G_1\cdot I_3$. 
Furthermore, if either $n_0=0$ or $n_0=2$ and $\det \j\in-\imath^2F^{*2}$ let $x_{1,\ell}\in X_1\setminus X_1^\circ$, $\ell=2,3$ such that $\hbar^{\j[1]}(x_{1,2})\ne \hbar^{\j[1]}(x_{1,3})$. 

Whenever $x_{t,i}$ is defined above for some $t=0,1,2$ and $i=1,2,3$ (note that for a given $i$ there is at most one such $t$) set
\[
H_{i,n}=G_{\diag(I_{n-t},x_{t,i},I_{n-t})}^{\j[n]}(F),\ \ \ n\ge t, \ \ \ i=1,2,3.
\]
When $n_0=2$ and $\det j\in-\imath^2F^{*2}$ we also define $H_{2,0}=G_{y_{0,2}}^{\j}(F)$ for a fixed $y_{0,2}\in X_0\setminus X_0^\circ$.

Fix $n\in \N$ and let $G=G_n$, $X=X_n$ and $H=H_n$. 
Let $N=n_0+2n$ so that $\j[n]\in Y_N(F)$ and $G$ is a subgroup of $\GL_N(E')$.

\subsection{The standard parabolic subgroups of $G^\circ$}\label{ss parab}
 
 For a decomposition $\alpha=(N_1,\dots,N_k)$ of $N$ let $Q_\alpha=L_\alpha\ltimes V_\alpha$ be the standard parabolic subgroup of $\GL_N(E')$ of type $\alpha$ with standard Levi subgroup $L_\alpha$ and unipotent radical $V_\alpha$. That is, $Q_\alpha$ is the unique parabolic subgroup containing all upper-triangular matrices in $\GL_N(E')$ with Levi subgroup
 \[
 L_\alpha=\{\diag(g_1,\dots,g_k): g_i\in \GL_{N_i}(E'),\ i=1,\dots,k\}.
 \]

For the definition of a parabolic subgroup of $G$ and the following analysis (particularly in the orthogonal case when $G$ is not connected) we refer to \cite[CHAPITRE 1, III\ 2]{MR1041060}. 

The standard minimal parabolic subgroup of $G$ is $P_0=M_0\ltimes U_0$ with Levi part
\[
M_0=\{\diag(a_1,\dots,a_n,h,a_n^{-\tau},\dots,a_1^{-\tau}):a_1,\dots,a_n\in (E')^*,\ h\in G_0\}
\]
and unipotent radical $U_0=V_{(1^{(n)},n_0,1^{(n)})}\cap G$ consisting of block upper triangular unipotent matrices. (Here $1^{(n)}$ is the $n$-tuple $(1,\dots,1)$.)

Let $A$ be the set of all tuples $\alpha$ of non-negative integers of the form $\alpha=(n_1,\dots,n_k;r)$ where $k
,r\in \Z_{\ge 0}$, $n_i\in \N$, $1\le i \le k$ and $n=n_1+\cdots+n_k+r$. When $r=0$ we will simply write $\alpha=(n_1,\dots,n_k)\in A$ and denote by $A_0$ the subset of all tuples in $A$ with $r=0$.

For $m\in \N$ and $g\in \GL_m(E')$ let $g^*=w_m {}^t g^{-\tau}w_m$. 
For $\alpha=(n_1,\dots,n_k;r)\in A$ let $\inj=\inj_\alpha:\GL_{n_1}(E')\times\cdots\times \GL_{n_k}(E')\times G_r \rightarrow G$ be the imbedding
\[
\iota(g_1,\dots,g_k;h)=\diag(g_1,\dots,g_k,h,g_k^*,\dots,g_1^*).
\]
When $\alpha\in A_0$ we simply write $\iota(g_1,\dots,g_k)=\diag(g_1,\dots,g_k,g_k^*,\dots,g_1^*)$.
Denote by $M_\alpha$ the image of $\inj_\alpha$ and let 
\[
U_\alpha=V_{(n_1,\dots,n_k,n_0+2r,n_k,\dots,n_1)}\cap G
\]
and $P_\alpha=M_\alpha\ltimes U_\alpha$. The groups $P_\alpha$, $\alpha\in A$ are parabolic subgroups of $G$ containing $P_0$. 
The map $\alpha\mapsto P_\alpha$ is a bijection from $A$ to a complete set of representatives of the $G$-conjugacy classes of parabolic subgroups of $G$. A parabolic subgroup of $G$ is called standard if it is of the form $P_\alpha$ for some $\alpha\in A$. 

Excluding the split even orthogonal case (that is, when $F'=F$, $\sgn=1$ and $n_0=0$) the map $\alpha\mapsto P_\alpha^\circ$ is a bijection from $A$ to the parabolic subgroups of $G^\circ$ containing $P_0^\circ$. In order to describe the standard parabolic subgroups of $G^\circ$ in the split even orthogonal case we introduce some further notation. 

Consider the split even orthogonal case and let $\kappa=\kappa_n=\iota(I_{n-1};w_2)$. We set $\inj_\alpha'=\Ad(\kappa)\circ \inj_\alpha$ and let $M_\alpha'=\kappa M_\alpha \kappa$ be the image of $\inj_\alpha'$. When $\alpha$ is clear from the context we also write $\inj'=\inj_\alpha'$. We further write $U_\alpha'=\kappa U_\alpha \kappa$ and $P_\alpha'=\kappa P_\alpha \kappa=M_\alpha'\ltimes U_\alpha'$. Then $P_\alpha'$ is a parabolic subgroup of $G$ containing $P_0$. Note that for $\alpha=(n_1,\dots,n_k;r)\in A$ we have $P_\alpha=P_\alpha'$ except for $\alpha$ such that $r=0$ and $n_k\ne 1$. Furthermore, for $\alpha,\,\beta\in A$ we have that the two sets $\{P_\beta, P_\beta'\}$ and $\{P_\alpha,\, P_\alpha'\}$ intersect if and only if $\beta=\alpha$.

A parabolic subgroup of $G^\circ$ is called standard if it contains $P_0$. Every standard parabolic subgroup of $G^\circ$ is either of the form $P_\alpha^\circ$ or $(P_\alpha')^\circ$. However, there are some repetitions. We have  $P_\alpha^\circ=(P_\alpha')^\circ$ except for $\alpha$ such that $r=0$ and $n_k\ne 1$. Furthermore, if $n_1+\cdots+n_m+1=n$ we also have $P_{(n_1,\dots,n_m,1)}^\circ=P_{(n_1,\dots,n_m;1)}^\circ$. These cases provide the only possible repetitions. 

In order to unify notation, back to the general case, let $A^\circ$ equal $A$ unless we are in the split even orthogonal case in which case we define $A^\circ$ to be the disjoint union of 
\[
\{\alpha=(n_1,\dots,n_k;r)\in A:  r>1 \ \ \ \text{or}\ \ \ r=0, \  n_k=1\} 
\]
and 
\[
\{[\alpha,i]: \alpha=(n_1,\dots,n_k)\in A_0,\ n_k\ne 1, i\in\{\pm 1\}\}.
\]
For such $[\alpha,i]$ we write
\[
P_{[\alpha,i]}=\begin{cases} P_\alpha & i=1 \\ P_\alpha' & i=-1. \end{cases},\ \ M_{[\alpha,i]}=\begin{cases} M_\alpha & i=1 \\ M_\alpha' & i=-1. \end{cases} \ \ \ \text{and} \ \ \ U_{[\alpha,i]}=\begin{cases} U_\alpha & i=1 \\ U_\alpha' & i=-1. \end{cases}
\]
The map $a\mapsto P_a^\circ$ is a bijection from $A^\circ$ to the set of standard parabolic subgroups of $G^\circ$.

\subsection{Distinction and cuspidal induction for classical Galois pairs}\label{ss mainthm class}

Let $a\in A^\circ$ and $P=M\ltimes U$ with $M=M_a$ and $U=U_a$. That is, $P^\circ=M^\circ\ltimes U$ is a standard parabolic subgroup of $G^\circ$. 
If $a\in A\cap A^\circ$ let $\alpha=a$. Otherwise, write $a=[\alpha,\epsilon]$ where $\alpha=(n_1,\dots,n_k)\in A_0$ and $\epsilon\in\{\pm 1\}$ and set $r=0$.
If either $a\in A\cap A^\circ$ or $\epsilon=1$ denote by $\inj_a^\circ$ the restriction of $\inj_\alpha$ to $\GL_{n_1}(E')\times\cdots\times\GL_{n_k}(E')\times G_r^\circ$.
Otherwise, let $\inj_a^\circ=\inj_\alpha'$. 
Thus, 
\[
\inj_a^\circ: \GL_{n_1}(E')\times\cdots\times\GL_{n_k}(E')\times G_r^\circ \rightarrow M^\circ
\] 
is an isomorphism.

Let $\pi_i$ be a representation of $\GL_{n_i}(E')$, $i\in [1,k]$ and $\pi_0$ a representation of $G_r^\circ$. (If $n_0=r=0$, by $\pi_0$ we will always mean the trivial representaion of the trivial group).
Let $\pi=(\pi_1\otimes \cdots\otimes \pi_k \otimes \pi_0)\circ \inj_a^\circ$ be the associated representation of $M^\circ$. We apply the following standard notation for parabolic induction
\[
\pi_1\times\cdots\times\pi_k\ltimes \pi_0=I_{P^\circ}^{G^\circ}(\pi).
\]

Our main result in this section is the characterization of distinction for classical symmetric pairs of representations parabolically induced from cuspidal.

For $\rho\in S_k$ and a subset $\set$ of $[1,k]$ let 
\begin{itemize}
\item $I(\rho\set)=\{i\in \set: \rho(i)=i\}$;
\item $\odd(\set)=\odd_M(\set)$ be the number of $i\in \set$ such that $n_i$ is odd;
\item $N(\rho\set)=N_M(\rho\set)=\sum_{i\in I(\rho\set)} n_i$.
\end{itemize}

\begin{theorem}\label{thm Hdist ind class}
With the above notation, let $\pi_i$ be an irreducible cuspidal representation of $\GL_{n_i}(E')$, $i\in [1,k]$ and $\pi_0$ a cuspidal representation of $G_r^\circ$. The representation 
\[
\pi_1\times\cdots\times\pi_k\ltimes \pi_0
\]
of $G^\circ$ is $H^\circ$-distinguished if and only if there exist an involution $\rho\in S_k$ and a subset $\set$ of $[1,k]$ such that $\rho(\set)=\set$ and $n_{\rho(i)}=n_i$ such that 
\begin{equation}\label{eq rhocond}
\begin{array}{ll}
\pi_{\rho(i)}\simeq\bar \pi_i^\vee & i\not\in \set,\ \rho(i)\ne i \\
\pi_i \text{ is }\GL_{n_i}(F')\text{-distinguished} & i\not\in\set,\ \rho(i)=i \\
\pi_{\rho(i)}\simeq \pi_i^{\tau\sigma} & i\in  \set 
\end{array}
\end{equation}
and furthermore 
\begin{itemize}
\item in either the symplectic or the odd unitary case or if $I(\rho \set)$ is empty then $\pi_0$ is $H_r^\circ$-distinguished
\item if $I(\rho\set)$ is not empty then
\begin{itemize}
\item in the even unitary case $\pi_0$ is either $H_r$-distinguished or $H_r'$-distinguished
\item  in the odd orthogonal case if $n_0+2r>1$ then $\pi_0$ is either $H_r$-distinguished or $H_{1,r}$-distinguished
\item in the even orthogonal case if $n_0+2r=0$ then $\odd(\set)$ is even and if $n_0+2r>0$ then
\begin{itemize}
\item if $\odd(\set)$ is even then $\pi_0$ is either $H_r$-distinguished or $H_{1,r}$-distinguished
\item if $\odd(\set)$ is odd then
\begin{itemize}
\item if $n_0=2$, $r=0$ and $\det \j\in-\imath^2 F^{*2}$ then $\pi_0$ is $H_{2,0}$-distinguished
\item otherwise $\pi_0$ is either $H_{2,r}$-distinguished or $H_{3,r}$-distinguished.
\end{itemize}
\end{itemize}
\end{itemize}
\end{itemize}
\end{theorem}

\section{Distinction for classical Galois pairs-proof of the main result}\label{s main pf}

\subsection{A reduction step} We begin the proof of the theorem with a reduction to the case where $P$ is a standard parabolic subgroup of $G$, that is, either $a\in A$ or $\epsilon=1$ in the above notation. This step is only relevant to the split even orthogonal case.

Consider the split even orthogonal case and let $\alpha=(n_1,\dots,n_k)\in A_0$. 
Recall that $\kappa=\inj(I_{n-1};w_2)$ and for a representation $\pi$ of $G^\circ$ let $\pi^\kappa$ be the representation of $G^\circ$ on the space of $\pi$ defined by $\pi^\kappa(g)=\pi(\kappa g\kappa^{-1})$. More generally, for a representation $\pi$ of $M_\alpha$ denote by $\pi^\kappa$ the representation of $M_\alpha'=\kappa M_\alpha \kappa^{-1}$ defined similarly.
\begin{lemma}\label{lem kappa}
\begin{enumerate}
Let $x\in X$.
\item For a representation  $\pi$ of $G^\circ$ we have
\[
\Hom_{G_x^\circ}(\pi,\triv)\simeq\Hom_{G_x^\circ}(\pi^\kappa,\triv).
\]

\item Let $\alpha\in A_0$ and $\pi$ a representation of $M_\alpha$. Then 
\[
\Hom_{G_x^\circ}(I_{P_\alpha}^{G^\circ}(\pi),\triv)\simeq \Hom_{G_x^\circ}(I_{P_\alpha'}^{G^\circ}(\pi^\kappa),\triv).
\]
\end{enumerate}
\end{lemma}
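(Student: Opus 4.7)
The plan is to exploit that, although $\kappa=\iota(I_{n-1},w_2)=\diag(I_{n-1},w_2,I_{n-1})$ lies outside $G^\circ$, it does lie in $H$: its entries are in $F$, and a direct check shows that swapping the two middle coordinates (meaningful because $n_0=0$) preserves the anti-diagonal form $\j[n]$, while $\det\kappa=-1$ forces $\kappa\notin H^\circ$. Since $H^\circ$ has index at most two in $H$, it is normal, so $\Ad(\kappa)$ is an involutive automorphism of $H^\circ$; moreover $\kappa^2=1$. All subsequent bookkeeping hinges on these facts.

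For part (1), given a representation $\pi$ of $G^\circ$ and a linear form $L$ on its space, the very definition $\pi^\kappa(h)=\pi(\kappa h\kappa^{-1})$ gives $L(\pi^\kappa(h)v)=L(\pi(\kappa h\kappa^{-1})v)$ for every $h\in H^\circ$. Since $h\mapsto \kappa h\kappa^{-1}$ is a bijection of $H^\circ$ onto itself, $H^\circ$-invariance of $L$ for $\pi^\kappa$ coincides with that for $\pi$, proving the equality of the two $\Hom_{H^\circ}$-spaces as subspaces of the common linear dual.

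For part (2), I would construct a $G^\circ$-equivariant isomorphism
\[
\Phi\colon \bigl(I_{P_\alpha}^{G^\circ}(\pi)\bigr)^\kappa \;\simeq\; I_{P_\alpha'}^{G^\circ}(\pi^\kappa), \qquad (\Phi\varphi)(g)=\varphi(\kappa g\kappa^{-1}),
\]
where the left-hand side denotes the representation of $G^\circ$ on the space of $I_{P_\alpha}^{G^\circ}(\pi)$ with action $(g\cdot \varphi)(x)=\varphi(x\kappa g\kappa^{-1})$. Well-definedness of $\Phi\varphi$ as a section of $I_{P_\alpha'}^{G^\circ}(\pi^\kappa)$ follows from $P_\alpha'=\kappa P_\alpha\kappa^{-1}$, conjugation-invariance of the modulus $\delta$, and the definition of $\pi^\kappa$ on $M_\alpha'$; intertwining is a one-line substitution; and bijectivity is immediate from $\kappa^2=1$, which makes $\Phi$ its own inverse. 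Combining this isomorphism with part (1) applied to the $G^\circ$-representation $I_{P_\alpha}^{G^\circ}(\pi)$ yields
\[
\Hom_{H^\circ}(I_{P_\alpha}^{G^\circ}(\pi),\triv) = \Hom_{H^\circ}\bigl((I_{P_\alpha}^{G^\circ}(\pi))^\kappa,\triv\bigr) \simeq \Hom_{H^\circ}(I_{P_\alpha'}^{G^\circ}(\pi^\kappa),\triv).
\]

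There is no real obstacle here: the lemma essentially says that $\kappa$ witnesses the $H$-conjugacy of the pairs $(P_\alpha,\pi)$ and $(P_\alpha',\pi^\kappa)$. The only point requiring care is that in general $I_{P_\alpha}^{G^\circ}(\pi)$ and $I_{P_\alpha'}^{G^\circ}(\pi^\kappa)$ need not be isomorphic as $G^\circ$-representations — the outer automorphism $\Ad(\kappa)$ of $G^\circ$ can genuinely alter the isomorphism class — so one must introduce the twist $(\cdot)^\kappa$ as an intermediate step and then absorb it in the $H^\circ$-invariants via part (1).
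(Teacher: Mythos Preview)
Your proof is correct and follows essentially the same approach as the paper: both parts rest on the fact that $\kappa\in H\setminus H^\circ$ normalizes $H^\circ$, and part (2) is deduced from part (1) via the explicit $G^\circ$-isomorphism $(I_{P_\alpha}^{G^\circ}(\pi))^\kappa\simeq I_{P_\alpha'}^{G^\circ}(\pi^\kappa)$ given by $\varphi\mapsto\varphi(\kappa\,\cdot\,\kappa^{-1})$. Your write-up is more detailed than the paper's two-line argument, but the content is identical.
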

\begin{proof}
For the first part, note that $\kappa G_x^\circ \kappa^{-1}=G_{\kappa\cdot x}^\circ$ and therefore 
\[
\Hom_{G_x^\circ}(\pi^\kappa,\triv)\simeq\Hom_{G_{\kappa\cdot x}^\circ}(\pi,\triv).
\]
It follows from Lemma \ref{lem slorb} that $G_{\kappa\cdot x}^\circ$ is $G^\circ$-conjugate to $G_x^\circ$ and therefore 
\[
\Hom_{G_{\kappa\cdot x}^\circ}(\pi,\triv) \simeq \Hom_{G_x^\circ}(\pi,\triv).
\]
The first part follows. The second part follows from the first part and the observation that  $I_{P_\alpha'}^{G^\circ}(\pi^\kappa)\simeq I_{P_\alpha}^{G^\circ}(\pi)^\kappa$. Indeed, for $\varphi\in I_{P_\alpha}^{G^\circ}(\pi)$ let $\varphi^\kappa(g)=\varphi(\kappa g\kappa^{-1})$, $g\in G^\circ$. Then $\varphi\mapsto \varphi^\kappa$ realizes this equivalence of representations.
\end{proof}

We continue with some further notation and preliminaries.

\subsection{Admissible orbits}\label{sec adm}
For a standard parabolic subgroup $P=M\ltimes U$ of $G$ we recall some facts on the $P^\circ$-orbits in $X^\circ$ following \cite[Section 3]{MR3541705}. For $x\in X^\circ$ the $P^\circ$-orbit $P^\circ\cdot x$ is contained in the Bruhat cell $P^\circ xP^\circ$. By the Bruhat decomposition it corresponds to a double coset in $W_{M^\circ} \bs W_{G^\circ}/ W_{M^\circ}$. If $w$ is the element of minimal length in this double coset, then $w$ is an involution. We say that $P^\circ\cdot  x$ is $M^\circ$-admissible if $w$ normalizes $M^\circ$.  
Intersection with $N_{G^\circ}(M^\circ)$ defines a bijection from $M^\circ$-admissible $P^\circ$-orbits in $X^\circ$ to $M^\circ$-orbits in $X\cap N_{G^\circ}(M^\circ)$.  In the sequel we study $M^\circ$-orbits in $X\cap N_{G^\circ}(M^\circ)$. 

\subsection{The signed permutation group}
Let $\weyl_k$ be the signed permutation group on $k$ elements realized as $\weyl_k=S_k\ltimes \Xi_k$ where $\Xi_k\simeq (\Z/2\Z)^k$ is the group of subsets of the integer interval $[1,k]$ with the operation of symmetric difference and the group $S_k$ of permutations of $[1,k]$  acts naturally on $\Xi_k$ (In $\weyl_k$ we have $\rho\set\rho^{-1}=\rho(\set)$, $\rho\in S_k$, $\set\in \Xi_k$. That is, conjugation of an element $\set$ in $\Xi_k$ by the permutation $\rho$ is the image of the set $\set$ under the permutation $\rho$). Denote by $S_k[2]$ the set of involutions in $S_k$ and by $\weyl_k[2]$ the set of involutions in $\weyl_k$. Note that 
\[\weyl_k[2]=\{\rho \set: \rho\in S_k[2],\ \set \in  \Xi_k,\ \rho(\set)=\set\}.\]
When we write $w=\rho\set\in \weyl_k$ we always mean that $\rho\in S_k$ and $\set\in \Xi_k$.


\subsection{The split even orthogonal case}

Some of the arguments in the sequel will require further justification in the case where $F'=F$, $n_0=0$ and $\sgn=1$ henceforth the split even orthogonal case. 

In the split even orthogonal case we define the Weyl group $W_G$ of the non-connected group $G$ as $W_G=N_G(T)/T$. 
Clearly $W_{G^\circ}$ is a subgroup of $W_G$.
Note that $T=\{\inj(a_1,\dots,a_n): a_i\in F^*,\ i\in [1,n]\}$ is a maximal split torus of $G^\circ$ contained in $M_0$ and let $e_i\in X^*(T)$ be defined by $e_i(\inj(a_1,\dots,a_n))=a_i$, $i\in [1,n]$. 
The split connected group $G^\circ$ has a root system
\[
R(T,G^\circ)=\{\pm (e_i\pm e_j):1\le i\ne j\le n\}
\]
of type $D_n$ with a basis of simple roots
\[
\Delta_0^\circ=\{e_i-e_{i+1}:i\in [1,n-1]\}\sqcup \{e_{n-1}+e_n\}.
\]
The set
\[
R(T,G)\footnote{the notation $R(T,G)$ is somewhat artificial, note that there are no root subgroups for the roots $2e_i$}=R(T,G^\circ)\sqcup \{\pm 2 e_i: i\in [1,n]\}
\]
is a root system of type $C_n$  with basis
\[
\Delta_0=(\Delta_0^\circ\setminus \{e_{n-1}+e_n\})\sqcup \{2e_n\}.
\]
It is easy to see that the action of $N_G(T)$ on $T$ by conjugation identifies $W_G$ as the Weyl group of the root system $R(T,G)$ and in particular, defines an isomorphism  
\[
W_G\simeq \weyl_n.
\] 
In particular, $W_G$ admits a length function with respect to the elementary reflections associated with the simple roots in $\Delta_0$.

For the rest of this section let $\alpha=(n_1,\dots,n_k;r)\in A\cap A^\circ$, $M=M_\alpha$, $U=U_\alpha$ and $P=M\ltimes U$. 
\subsection{The set $W_G(M)$ and elementary symmetries.}\label{sec elm sym}
Let $\{s_\alpha:\alpha\in \Delta_{M^\circ}\}$ be the set of elementary symmetries in $W_{G^\circ}(M^\circ)$ and $\ell=\ell_{M^\circ}: W_{G^\circ}(M^\circ)\rightarrow \Z_{\ge 0}$ the length function defined in \cite[I.1.7]{MR1361168}. 

For the sake of unified notation set $W_G(M)=W_{G^\circ}(M^\circ)$, $\ell_M=\ell_{M^\circ}$, $\Delta_P=\Delta_{P^\circ}$ and $R(A_M,G)=R(A_{M^\circ},G^{\circ})$ except if $r=0$ in the split even orthogonal case. 

Assume that $r=0$ in the split even orthogonal case and note that $M=M^\circ$.  
Let $W_G(M)$ be the subset of elements $w\in W_G$ such that $w$ has minimal length in $w W_M$ and $wMw^{-1}$ is a standard Levi subgroup of $G$ (that is, of the form $M_\beta$ for some $\beta\in A$). 
Let $R(A_M,G)$ be the set of non-zero restrictions to $A_M$ of the roots in $R(T,G)$ and let $\Delta_P$ be the non-zero restrictions to $A_M$ of the roots in $\Delta_0$. For $\alpha\in R(A_M,G)$ we write $\alpha>0$ if it is the restriction of a positive root in $R(T,G)$ with respect to $\Delta_0$ and $\alpha<0$ otherwise. Note that 
\[
A_M=\{\inj(a_1I_{n_1},\dots,a_k I_{n_k}): a_i\in F^*,\,i\in [1,k]\}
\]
and $\Delta_P=\{\alpha_1,\dots,\alpha_k\}$ where 
\[
\alpha_i(\inj(a_1I_{n_1},\dots,a_k I_{n_k}))=a_ia_{i+1}^{-1}, \ \ \ i\in [1,k-1]
\] 
and 
\[
\alpha_k(\inj(a_1I_{n_1},\dots,a_k I_{n_k}))=a_k^2. 
\]
Let 
\[
s_i=s_{i,M}=\inj(I_{n_1+\cdots+n_{i-1}},\sm{}{I_{n_{i+1}}}{I_{n_i}}{},I_{n_{i+2}+\cdots+n_k}),\ \ \ i\in [1,k-1]
\]
and
\[
s_k=s_{k,M}=\inj(I_{n_1+\cdots+n_{k-1}};\sm{}{I_{n_k}}{I_{n_k}}{}).
\]
Note that $s_i T\in W_G(M)$ and set $s_{\alpha_i}=s_i T$. The elements $\{s_\alpha: \alpha\in \Delta_P\}$ serve the role of elementary symmetries for $W_G(M)$. We define the length function $\ell_M$ on $W_G(M)$ by letting $\ell_M(w)$ be the cardinality of $\{\alpha\in R(A_M,G): \alpha>0,\,w\alpha<0\}$.
With this set up the results of \cite[I.1.7 and I.1.8]{MR1361168} extend to $W_G(M)$ without modification.

\subsection{A choice of representatives in $G$ for $W_G(M)$}\label{ss WMG}
If $n_0>0$ fix once and for all an element $\eta_0\in H_0$ as follows. 
In the unitary case set $\eta_0=I_{n_0}$ and in the orthogonal case let $\eta_0\in H_0\setminus H_0^\circ$ be such that $\eta_0^2=I_{n_0}$ (see Observation \ref{obs det}).

For $m\in \N$ we define an involution $\eta_m\in G_m$ as follows.
If $n_0>0$ set $\eta_m=\iota(I_m;\eta_0)$. If $n_0=0$, in the (split even) orthogonal case set $\eta_m=\inj(I_{m-1};w_2)$ otherwise set $\eta_m=I_{2m}$.

For every $w\in \weyl_k$ we define an element $t_w=t_{w,M}\in G$ as follows.
For $\rho\in S_k$ let $w_\rho\in \GL_n(F)$ be the unique permutation matrix such that
\[
w_\rho\diag(g_1,\dots,g_k)w_\rho^{-1}=\diag(g_{\rho^{-1}(1)},\dots,g_{\rho^{-1}(k)}),\ \ \ g_i\in \GL_{n_i}(F), i\in[1,k]
\]
and let 
\[
t_{\rho,M}=\inj(w_\rho;I_{n_0+2r}).
\]
For $r\le m\in \N$ let $\eta_{m,M}=\eta_m$ except if $r=0$ in the split even orthogonal case where we set $\eta_{m,M}=I_{2m}$. Note that $\eta_{m,M}=\inj(I_{m-r};\eta_{r,M})$.
For $i\in [1,k]$ let
\[
t_{i,M}=\begin{pmatrix} I_{n_1+\cdots+n_{i-1}} & & & &  \\ & & & I_{n_i} & \\ & & \eta_{n_{i+1}+\cdots+n_k+r,M}^{n_i}  & & \\ & \sgn I_{n_i} & & & \\ & & & & I_{n_1+\cdots+n_{i-1}}  \end{pmatrix}
\]
and note that $t_{i,M}\in G^\circ$ except if $n_i$ is odd and $r=0$ in the split even orthogonal case in which case $t_{i,M}\in G\setminus G^\circ$.
Note further that $t_{1,M},\dots,t_{k,M}$ commute with each other. For $\set\in \Xi_k$ let 
\[
t_{\set,M}=\prod_{i\in \set} t_{i,M}
\]
and recall that $\odd(\set)=\odd_M(\set)$ is the number of $i\in \set$ such that $n_i$ is odd.  
Thus, $t_{\set,M}\in G^\circ$ except if $\odd_M(\set)$ is odd and $r=0$ in the split even orthogonal case in which case $t_{\set,M}\in  G\setminus G^\circ$.
Finally, for $w=\rho\set\in \weyl_k$ let
\[
t_w=t_{w,M}=t_{\rho,M}t_{\set,M}.
\]
Note that $t_w\in N_G(T)\cap H$ and in fact $t_w\in G^\circ$ except if $\odd_M(\set)$ is odd and $r=0$ in the split even orthogonal case.
Furthermore, $t_w$ represents an element in $W_G(M)$ (that is, $t_w M_0\in W_G(M)$) and for $g_i\in \GL_{n_i}(E')$, $i\in [1,k]$, $h\in G_r$ and $m=\inj(g_1,\dots,g_k;h)\in M$ we have
 \begin{equation}\label{eq wconj}
t_w m t_w^{-1}=\imath(g'_1,\dots,g'_k;h') \ \ \ 
\text{where} \ \ \
g_i'=\begin{cases} g_{\rho^{-1}(i)} & i\not\in \set \\ g_{\rho^{-1}(i)}^*& i\in \set \end{cases}  \ \ \ \text{and} \ \ \ h'=\eta_r^{\odd(\set)}h \eta_r^{-\odd(\set)}.
\end{equation}
In particular, $t_w Mt_w^{-1}=M_{w\alpha}$ where $w\alpha=(n_{\rho^{-1}(1)},\dots,n_{\rho^{-1}(k)};r)$.
Note further that for $w_1,\,w_2\in \weyl_k$, $t_{w_1}=t_{w_1,M_{w_2\alpha}}$, $t_{w_2}=t_{w_2,M}$ and $t_{w_1w_2}=t_{w_1w_2,M}$ we have
\[
t_{w_1}t_{w_2} m (t_{w_1}t_{w_2})^{-1}=t_{w_1w_2}  m t_{w_1w_2}^{-1},\ \ \ m\in M.
\]
That is, $t_{w_1w_2}^{-1}t_{w_1}t_{w_2}$  lies in the center of $M$ and in particular in $M_0$. 

Let $s_1,\dots,s_k$ be the standard simple reflections in $\weyl_k$, that is, $s_i=(i,i+1)\in S_k$, $i\in [1,k-1]$ and $s_k=\{k\}\in \Xi_k$. Note that
$t_{s_1},\dots,t_{s_k}$ are representatives of the elementary symmetries in $W_G(M)$. It is now a simple consequence of the results in \cite[I.1.7 and I.1.8]{MR1361168}  and the discussion in \S \ref{sec elm sym} that the map $w\mapsto t_w M_0:\weyl_k\rightarrow W_G(M)$ is bijective. We denote by $j_M:W_G(M)\rightarrow \weyl_k$ its inverse. For $w\in W_G(M)$ and $w'\in W_G(M')$ where $M'=wMw^{-1}$ we have
\[
j_M(w'w)=j_{M'}(w')j_M(w).
\]
\subsubsection{}\label{sec sp orth} In this paragraph assume that $r=0$ in the even orthogonal case. Note that neither of $W_G(M)$ and $W_{G^\circ}(M^\circ)$ is necessarily contained in the other. In fact, it is easy to see that
\[
W_G(M)\cap W_{G^\circ}(M^{\circ})=\{w\in W_G(M): j_M(w)=\rho\set \  \text{where} \ \odd(\set) \ \text{is even}\}.
\]
Furthermore, clearly $W_{G^\circ}(M^{\circ},M^{\circ})\subseteq W_G(M)\cap W_{G^\circ}(M^{\circ})$ and we conclude that
\[
W_{G^\circ}(M^{\circ},M^{\circ})=\{w\in W_G(M): j_M(w)=\rho\set \  \text{where} \ \odd(\set) \ \text{is even and}\ n_{\rho(i)}=n_i,\,i\in [1,k]\}.
\]
In particular, $t_{j_M(w)}\in G^\circ$, $w\in W_{G^\circ}(M^{\circ},M^{\circ})$. 
\subsection{Orbits and stabilizers in $X\cap N_{G^\circ}(M^\circ)$}\label{subsec orb and stab}
Let 
\[
\weyl_k[2,M]=\{w=\rho\set\in \weyl_k[2]: n_{\rho(i)}=n_i,\,i\in [1,k]\}.
\]
We begin with two lemmas that consist of simple computations that allow us to compute $M^\circ$-orbits and stabilizers in $X\cap N_{G^\circ}(M^\circ)$.

\begin{lemma}\label{lem in x}
Let $w=\rho\set\in \weyl_k[2,M]$ and $m=\inj(g_1,\dots,g_k;h)\in M$ where $g_i\in \GL_{n_i}(E')$, $i\in [1,k]$ and $h\in G_r$. Then
\[
t_w m \overline{t_w m}=\inj(a_1,\dots,a_k;b) \ \ \ \text{where} \ \ \ a_i=\begin{cases} g_{\rho(i)} \bar g_i & i\not\in \set \\ \sgn g_{\rho(i)}^* \bar g_i & i\in \set \end{cases}  \ \ \ \text{and} \ \ \ b=\eta_r^{\odd(\set)}h \eta_r^{\odd(\set)}\bar h.
\]
In particular, $t_w m\in X$ if and only if
\[
\begin{array}{ll}
g_{\rho(i)}\bar g_i=I_{n_i} & i\not\in \set,\ \rho(i)\ne i \\
g_i\in Z_{n_i}(E) & i\not\in\set,\ \rho(i)=i \\
g_{\rho(i)}^*\bar g_i=\sgn I_{n_i} & i\in  \set, \ \rho(i)\ne i \\
w_{n_i} \bar g_i\in Y_{n_i}(E'/(E')^{\sigma\tau},\sgn) & i\in \set,\ \rho(i)=i \\
\eta_r^{\odd(\set)}h\in X_r.& 
\end{array}
\]

\end{lemma}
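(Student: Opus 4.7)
The proof is a direct computation that exploits the fact that $t_w \in N_G(T) \cap H$ (from \S \ref{ss WMG}) and that $\iota$ is a group homomorphism. The plan has three main stages.

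First, since $t_w \in H$ its entries lie in the fixed field $F'$ of $\sigma$, so $\overline{t_w} = t_w$ and therefore
\[
t_w m \overline{t_w m} = t_w m t_w \bar m = (t_w m t_w^{-1}) \cdot t_w^2 \cdot \bar m.
\]
Using that $w = \rho\set$ is an involution in $\weyl_k[2,M]$ (so $\rho^{-1} = \rho$ and $\rho(\set) = \set$), I apply \eqref{eq wconj} to write $t_w m t_w^{-1} = \iota(g_1', \ldots, g_k'; h')$ with $g_i' = g_{\rho(i)}$ for $i \notin \set$, $g_i' = g_{\rho(i)}^*$ for $i \in \set$, and $h' = \eta_r^{\odd(\set)} h \eta_r^{-\odd(\set)} = \eta_r^{\odd(\set)} h \eta_r^{\odd(\set)}$ (since $\eta_r^2 = I$).

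Second, I compute $t_w^2 = t_{\rho,M}^2 t_{\set,M}^2$; they commute because $t_{\rho,M} t_{i,M} t_{\rho,M}^{-1} = t_{\rho(i),M}$ and $\rho(\set) = \set$. Since $n_{\rho(i)} = n_i$, the permutation matrix $w_\rho$ satisfies $w_\rho^2 = I_n$, so $t_{\rho,M}^2 = \iota(I_n; I_{n_0+2r}) = I$. For $t_{i,M}^2$ a direct block-matrix squaring, using that the central factor $\eta_{\cdot,M}^{2n_i} = I$ in all cases, yields $t_{i,M}^2 = \iota(I_{n_1},\ldots,\sgn I_{n_i},\ldots,I_{n_k}; I_{n_0+2r})$. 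The $t_{i,M}$ mutually commute, hence
\[
t_w^2 = \iota(\sgn^{1_\set(1)} I_{n_1}, \ldots, \sgn^{1_\set(k)} I_{n_k}; I_{n_0+2r}).
\]
Multiplying the three factors $\iota(g_1',\ldots,g_k';h') \cdot t_w^2 \cdot \iota(\bar g_1, \ldots, \bar g_k; \bar h)$ and using that $\iota$ is a homomorphism gives exactly the stated formulas for $a_i$ and $b$.

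For the second assertion, $t_w m \in X$ amounts to $a_i = I_{n_i}$ for all $i$ and $b = I_{n_0+2r}$. The cases $\rho(i)\ne i$ translate directly into the listed relations between $g_i$ and $g_{\rho(i)}$. For $i \notin \set$ with $\rho(i) = i$, the condition $g_i \bar g_i = I_{n_i}$ is precisely the definition of $Z_{n_i}(E)$. The central condition $\eta_r^{\odd(\set)} h \eta_r^{\odd(\set)} \bar h = I_{n_0+2r}$ becomes, using $\overline{\eta_r} = \eta_r$, exactly $\eta_r^{\odd(\set)} h \in X_r$. The only step requiring a little manipulation---and the main technical point of the proof---is the case $i \in \set$ with $\rho(i) = i$: substituting $g_i^* = w_{n_i}\, {}^tg_i^{-\tau} w_{n_i}$ into $g_i^* \bar g_i = \sgn I_{n_i}$ and rearranging gives ${}^tg_i^\tau w_{n_i} = \sgn w_{n_i} \bar g_i$; together with $\bar g_i^{\sigma\tau} = g_i^\tau$ this is equivalent to ${}^t(w_{n_i}\bar g_i)^{\sigma\tau} = \sgn (w_{n_i}\bar g_i)$, i.e.\ $w_{n_i}\bar g_i \in Y_{n_i}(E'/(E')^{\sigma\tau},\sgn)$.
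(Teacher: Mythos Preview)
Your proof is correct and follows essentially the same approach as the paper's: use $\overline{t_w}=t_w$ to rewrite $t_w m\,\overline{t_w m}$ as $(t_w m t_w^{-1})\,t_w^2\,\bar m$ (equivalently $t_w^2(t_w m t_w^{-1})\bar m$ since $t_w^2$ is central in $M$), compute $t_w^2$ explicitly, and apply \eqref{eq wconj}. The paper simply asserts the formula for $t_w^2$ (``it is easy to compute explicitly''), whereas you derive it via the factorization $t_w=t_{\rho,M}t_{\set,M}$ and the commutation $t_{\rho,M}t_{i,M}t_{\rho,M}^{-1}=t_{\rho(i),M}$; this identity is correct here because $\eta_{m,M}=\inj(I_{m-r};\eta_{r,M})$ shows that $t_{i,M}$ acts only by swapping blocks $i\leftrightarrow i'$ (with sign $\sgn$) and by $\eta_{r,M}^{n_i}$ on the central block, so conjugation by the block permutation $t_{\rho,M}$ sends it to the corresponding element for $\rho(i)$ (using $n_{\rho(i)}=n_i$). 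Your explicit treatment of the case $i\in\set$, $\rho(i)=i$ in the ``in particular'' part, which the paper leaves to the reader, is also correct.
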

\begin{proof}
Note that $t_w\in N_G(M)$ and recall that $\bar t_w=t_w$. It follows from the above discussion that $t_w^2$ lies in the center of $M$ and therefore $t_w m \overline{t_w m}=t_w^2 (t_w mt_w^{-1}) \bar m$. In fact, it is easy to compute explicitly that
\[
t_w^2=\inj(u_1,\dots,u_k;I_{n_0+2r}) \ \ \ \text{where} \ \ \ u_i=\begin{cases} I_{n_i} & i\not\in \set \\ \sgn I_{n_i} & i\in \set. \end{cases}
\]
Combined with \eqref{eq wconj} the lemma follows.
\end{proof}
\begin{lemma}\label{lem stab}
Let $w=\rho\set\in \weyl_k[2,M]$ and $m=\inj(g_1,\dots,g_k;h),\,x=\inj(x_1,\dots,x_k,y)\in M$ where $g_i,\,x_i\in \GL_{n_i}(E')$, $i\in [1,k]$ and $h,\,y\in G_r$. Then
\[
m t_w x\overline m^{-1}=t_w \inj(a_1,\dots,a_k;b) \ \ \ \text{where} \ \ \ a_i=\begin{cases} g_{\rho(i)} x_i\bar g_i^{-1} & i\not\in \set \\ g_{\rho(i)}^* x_i\bar g_i^{-1} & i\in \set \end{cases}  \ \ \ \text{and} \ \ \ b=\eta_r^{\odd(\set)}h \eta_r^{-\odd(\set)}y\bar h^{-1}.
\]
\end{lemma}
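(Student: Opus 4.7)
The plan is to follow the same template as Lemma \ref{lem in x} and reduce everything to a direct block-diagonal multiplication. The key initial observation is that $t_w\in N_G(M)$ and $\overline{m}\in M$ (since $M$ is defined over $F$), so we may pull $t_w$ to the left and rewrite
\[
m t_w x \overline{m}^{-1}= t_w\cdot\bigl[(t_w^{-1} m t_w)\cdot x\cdot\overline{m}^{-1}\bigr],
\]
where the bracketed element lies in $M$. It then suffices to evaluate the bracketed block-diagonal product componentwise.

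The next step is to compute $t_w^{-1} m t_w$ using \eqref{eq wconj}. The subtlety is that \eqref{eq wconj} gives a formula for $t_w m t_w^{-1}$, not its inverse; however, the computation of $t_w^2$ carried out in the proof of Lemma \ref{lem in x} shows that $t_w^2\in Z(M)$, so the inner automorphism $\Ad(t_w)$ of $M$ has order two and consequently coincides with $\Ad(t_w^{-1})$. Combined with the fact that $w=\rho\set\in\weyl_k[2,M]$ being an involution forces $\rho^{-1}=\rho$ and $\rho(\set)=\set$, \eqref{eq wconj} then yields
\[
t_w^{-1} m t_w=\iota(g'_1,\dots,g'_k;h'),\quad g'_i=\begin{cases} g_{\rho(i)} & i\notin\set \\ g_{\rho(i)}^* & i\in\set \end{cases},\quad h'=\eta_r^{\odd(\set)} h \eta_r^{-\odd(\set)}.
\]

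The final step is the componentwise product $\iota(g'_1,\dots,g'_k;h')\cdot\iota(x_1,\dots,x_k;y)\cdot\iota(\overline{g_1}^{-1},\dots,\overline{g_k}^{-1};\overline{h}^{-1})$, which immediately gives the stated formulas $a_i=g'_i x_i \overline{g_i}^{-1}$ and $b=h' y \overline{h}^{-1}=\eta_r^{\odd(\set)} h \eta_r^{-\odd(\set)} y \overline{h}^{-1}$. There is no genuine obstacle here; this is a bookkeeping identity, packaged as a separate lemma so that it can be combined with Lemma \ref{lem in x} in the orbit-and-stabilizer analysis of \S\ref{subsec orb and stab}.
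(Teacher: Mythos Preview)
Your proof is correct and follows essentially the same approach as the paper: both pull $t_w$ to the left, use that $t_w^2\in Z(M)$ to identify $t_w^{-1}mt_w$ with $t_wmt_w^{-1}$, and then read off the blocks from \eqref{eq wconj}. The paper phrases the first step as $mt_w=t_w^2mt_w^{-1}=t_w(t_wmt_w^{-1})$, while you phrase it via $\Ad(t_w)$ being an involution on $M$, but these are the same computation.
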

\begin{proof}
Since $t_w^2$ is in the center of $M$ we have  $mt_w=t_w^2 mt_w^{-1}$ and therefore $m t_w x\overline m^{-1}=t_w (t_wmt_w^{-1})x\overline m^{-1}$. The lemma is therefore straightforward from  \eqref{eq wconj}.
\end{proof}

If $r=0$ in the split even orthogonal case let 
\[
\weyl_k^\circ[2,M]=\{w=\rho\set\in \weyl_k[2,M]: \odd(\set) \ \text{is even}\}. 
\]
For the sake of unified notation set $\weyl_k^\circ[2,M]=\weyl_k[2,M]$ otherwise. 

Recall that for $w=\rho\set\in \weyl_k[2]$ we have
\[
I(w)=\{i\in \set:\rho(i)=i\}.
\]
For $w=\rho\set\in \weyl_k^\circ[2,M]$, $y_i\in Y_{n_i}(E'/(E')^{\sigma\tau},\sgn)$, $i\in I(w)$ and $z\in X_r$ let 
\begin{equation}\label{eq def x}
x_w(\{y_i\}_{i\in I(w)},\,z)=t_w\inj(u_1,\dots,u_k;\eta_r^{\odd(\set)}z) \ \ \ \text{where} \ \ \ u_i=\begin{cases} I_{n_i} & i\not\in I(w) \\ w_{n_i}\bar y_i & i\in I(w). \end{cases}
\end{equation}
\begin{proposition}\label{prop adm orbs}
\begin{enumerate}
\item\label{part orb decomp} The disjoint $M^\circ$-orbit decomposition of $X\cap N_{G^\circ}(M^\circ)$ is given by
\[
X\cap N_{G^\circ}(M^\circ)=\sqcup_{w\in \weyl_k^\circ[2,M]} \sqcup_{\{y_i\}_{i\in I(w)},\,z} M^\circ\cdot x_w(\{y_i\}_{i\in I(w)},\,z)
\]
where $y_i$ ranges over a choice of representatives for the two $\GL_{n_i}(E')$-orbits in $Y_{n_i}(E'/(E')^{\sigma\tau},\sgn)$, $i\in I(w)$ and if $n_0+2r>0$ then $z$ ranges over a set of representatives for the (at most two) $G_r^\circ$-orbits in 
\[
X_r\cap \eta_r^{\odd(\set)}G_r^\circ=\begin{cases} X_r\setminus X_r^\circ & \odd(\set) \ \text{is odd in the orthogonal case}   \\ \ X_r^\circ & \text{otherwise.} \end{cases} 
\]
\item In particular, for $w\in \weyl_k^\circ[2,M]$ the number of $M^\circ$-orbits in $X\cap t_wM^\circ$ is $2^{\abs{I(w)}+\delta_{w,M}}$ where $\delta_{w,M}\in \{0,1\}$ is determined as follows: 
\begin{itemize}
\item In the symplectic case $\delta_{w,M}=0$.
\item In the unitary case $\delta_{w,M}=\begin{cases} 1 & n_0+2r>0 \\ 0 & n_0=r=0.\end{cases}$
\item In the orthogonal case $\delta_{w,M}=0$ if 
\begin{itemize}
\item $n_0\in \{0,1\}$ and $r=0$ or 
\item $n_0=2$, $r=0$ and $\det \j$ is in $-F^{*2}$ if $\odd(\set)$ is even and in $-\imath^2F^{*2}$ if $\odd(\set)$ is odd. 
\end{itemize} Otherwise, $\delta_{w,M}=1$. 
\end{itemize}
\item \label{part stab}The stabilizer of $x_w(\{y_i\}_{i\in I(w)},\,z)$ in $M^\circ$ consists of elements $\inj(g_1,\dots,g_k;h)$ such that
\[
\begin{array}{ll}
g_{\rho(i)}=\bar g_i\in \GL_{n_i}(E') & i\not\in \set,\ \rho(i)\ne i \\
g_i\in \GL_{n_i}(F') & i\not\in\set,\ \rho(i)=i \\
g_{\rho(i)}^*=\bar g_i\in \GL_{n_i}(E') & i\in  \set, \ \rho(i)\ne i \\
\bar g_i\in \U(y_i,(E'/(E')^{\sigma\tau}) & i\in \set,\ \rho(i)=i \\
h\in (G_r^\circ)_z.& 
\end{array}
\]
\end{enumerate}

\end{proposition}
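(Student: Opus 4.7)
The plan is to combine the coset parametrization of $N_{G^\circ}(M^\circ)$ from \S\ref{ss WMG} with the two computational lemmas just proved: Lemma \ref{lem in x} tells us when $t_w m$ lies in $X$, and Lemma \ref{lem stab} transcribes the $M^\circ$-twisted conjugation in those coordinates. Thanks to the block structure of $\inj$, the orbit problem then reduces to four independent local problems, each governed by a classification from \S\ref{ss orbit classification}.

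First, every element of $N_{G^\circ}(M^\circ)$ factors as $t_w m$ with $m \in M^\circ$ and $w = \rho\set \in \weyl_k$ satisfying $n_{\rho(i)} = n_i$ and, in the split even orthogonal case only, $\odd(\set)$ even (see \S\ref{sec sp orth}). Requiring $t_w m \in X$, Lemma \ref{lem in x} then forces $w^2 = 1$ (by equating $(t_w m)\overline{t_w m}$ with the identity block-wise) and splits the condition on $m = \inj(g_1,\ldots,g_k;h)$ into five independent block equations. Hence the non-empty slices $X \cap t_w M^\circ$ are exactly those with $w \in \weyl_k^\circ[2,M]$, which gives the outer disjoint union of part (\ref{part orb decomp}).

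For each such $w$, Lemma \ref{lem stab} describes the $M^\circ$-action on $t_w \inj(x_1,\ldots,x_k;y)$ block by block, and I would analyze each block separately. On an off-diagonal pair $\{i,\rho(i)\}$ with $i \ne \rho(i)$ the action is transitive, contributing no invariants. For $i \notin \set$ with $\rho(i) = i$, the constraint $x_i \in Z_{n_i}(E)$ together with the $M^\circ$-action gives a single orbit by Hilbert 90 (\S\ref{ss Hilbert90}). For $i \in I(w)$, the substitution $y_i = \bar x_i w_{n_i}^{-1}$ identifies the orbit problem with the right $\GL_{n_i}(E')$-action on $Y_{n_i}(E'/(E')^{\sigma\tau},\sgn)$, which has exactly two orbits by the unitary case of \S\ref{sec prel}. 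Finally, the $G_r$-block, after the substitution $z = \eta_r^{-\odd(\set)}h$, becomes $G_r^\circ$-twisted conjugation on an element of $X_r \cap \eta_r^{\odd(\set)}G_r^\circ$. Assembling these block-wise analyses produces the representatives $x_w(\{y_i\}_{i \in I(w)}, z)$ of \eqref{eq def x}.

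The total orbit count in each slice is thus $2^{|I(w)|}$ times the number of $G_r^\circ$-orbits in $X_r \cap \eta_r^{\odd(\set)}G_r^\circ$, so $\delta_{w,M}$ is read off case-by-case from the orbit classifications of \S\ref{ss orbit classification}: Corollary \ref{cor symp orb} gives the single orbit in the symplectic case, Corollary \ref{cor 2orb} gives the two unitary orbits (collapsing to one when $n_0 = r = 0$), and Proposition \ref{prop orth orbs} handles the orthogonal case, where the parity of $\odd(\set)$ picks out either $SX_r$ or $X_r \setminus SX_r$ and the small-case exceptions in $(n_0,r) \in \{(0,0),(1,0),(2,0)\}$ produce the stated single-orbit situations. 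For part (\ref{part stab}), substituting $a_i = u_i$ and $b = \eta_r^{\odd(\set)}z$ into Lemma \ref{lem stab} and equating the result with $(u_1,\ldots,u_k;\eta_r^{\odd(\set)}z)$ block-by-block yields the stated stabilizer equations directly; the $i \in I(w)$ case requires unwinding $g^* = w_{n_i}\,{}^t g^{-\tau} w_{n_i}$ against the defining equation of $\U(y_i,E'/(E')^{\sigma\tau})$. The principal technical obstacle is the orthogonal bookkeeping in (ii): the parity of $\odd(\set)$, the discriminant alternative ($-F^{*2}$ versus $-\imath^2 F^{*2}$) in Proposition \ref{prop orth orbs}, and the non-connected structure in the split even orthogonal case must all be aligned to produce the stated $\delta_{w,M}$ values.
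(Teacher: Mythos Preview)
Your proposal is correct and follows essentially the same route as the paper: reduce to cosets $t_w M^\circ$, apply Lemmas~\ref{lem in x} and~\ref{lem stab} to split the problem into independent blocks, then invoke Hilbert~90 (\S\ref{ss Hilbert90}), the two-orbit unitary classification (\S\ref{sec prel}), and the $G_r^\circ$-orbit results of \S\ref{ss orbit classification} block by block. One small imprecision: you say Lemma~\ref{lem in x} ``forces $w^2=1$'', but that lemma is stated only for $w\in\weyl_k[2,M]$, so the involution hypothesis is already built in. The paper obtains $w^2=1$ instead from the admissible-orbit discussion in \S\ref{sec adm}; alternatively, you can argue directly that since $t_w\in H$ the cosets of $x$ and $\bar x=x^{-1}$ in $N_{G^\circ}(M^\circ)/M^\circ$ correspond to $w$ and $w^{-1}$, whence $w=w^{-1}$.
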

\begin{proof}

Recall that the map $w\mapsto wM^\circ : W_{G^\circ}(M^\circ,M^\circ) \rightarrow N_{G^\circ}(M^\circ)/M^\circ$ is a group isomorphism. 
It follows from the discussion in \S\ref{sec adm} that if $x\in X\cap N_{G^\circ}(M^\circ)$ then $xM=wM$ where $w\in W_{G^\circ}(M^\circ,M^\circ)$ is an involution.
Furthermore, $W_{G^\circ}(M^\circ,M^\circ)\subseteq W_G(M)\cap W_{G^\circ}(M^\circ)$ and  $t_{j_M(w)}\in G^\circ$ for $w\in W_{G^\circ}(M^\circ,M^\circ)$ (see \S \ref{sec sp orth} if $r=0$ in the split orthogonal case). Note that $j_M$ maps the set of involutions in $W_{G^\circ}(M^\circ,M^\circ)$ to $\weyl_k^\circ[2,M]$. We therefore have the disjoint union
\[
X\cap N_{G^\circ}(M^\circ)=\sqcup_{w\in \weyl_k^\circ[2,M]} X \cap t_w M^\circ.
\]
The disjoint decomposition 
\[
X \cap t_w M^\circ=\sqcup_{\{y_i\}_{i\in I(w)},\,z} M^\circ\cdot x_w(\{y_i\}_{i\in I(w)},\,z)
\]
as well as the explication of the stabilizer $M_{x_w(\{y_i\}_{i\in I(w)},\,z)}^\circ$ are immediate from Lemmas \ref{lem in x} and \ref{lem stab} and the facts that $Y_m(E'/(E')^{\sigma\tau},\sgn)$ consists of two $\GL_m(E')$-orbits (see the unitary case in  \S \ref{sec prel}) and $Z_m(E)=\GL_m(E')\cdot I_m$ (see \S\ref{ss Hilbert90}).
The formula for the number of $M^\circ$-orbits in $X\cap t_w M^\circ$ further follows from the classification of $G_r^\circ$-orbits in $X_r^\circ$ and in $X_r\setminus X_r^\circ$ (see  Corollary \ref{cor symp orb} in the symplectic case, Corollary \ref{cor 2orb} in the unitary case and Proposition \ref{prop orth orbs} in the orthogonal case).

\end{proof}

Finally it will be relevant for us to determine, for $x\in X^\circ$, when is the representative \eqref{eq def x} in $G^\circ\cdot x$. 
\begin{lemma}\label{lem in eorb}
Let $x\in X^\circ$, $w=\rho\set\in \weyl_k^\circ[2,M]$, $y_i\in Y_{n_i}(E'/(E')^{\sigma\tau},\sgn)$, $i\in I(w)$ and $z\in X_r\cap \eta_r^{\odd(\set)}G_r^\circ$. Set $x_w=x_w(\{y_i\}_{i\in I(w)},\,z)\in X^\circ$. 
\begin{enumerate}
\item In the symplectic case $x_w\in G^\circ\cdot x$.
\item In the unitary case $x_w\in G^\circ\cdot x$ if and only if 
\[
(-1)^{\odd(\set)} \det z\prod_{i\in I(w)} \det y_i^{\tau-1}\in \det x\,\Gamma.
\]
(Here, if $n_0=r=0$ replace $\det z$ by one. See \eqref{eq def gamma} for the definition of $\Gamma$.)
\item In the orthogonal case
\begin{itemize}
\item for $x_1,\,x_2\in X^\circ$ we have $G^\circ \cdot x_1=G^\circ \cdot x_2$ if and only if $\hbar^{\j[n]}(x_1)=\hbar^{\j[n]}(x_2)$;
\item $x_w\in G^\circ\cdot x$ if and only if 
\[
((-1)^{r\odd(\set)+{N(\rho\set)\choose 2}}(2\det \j)^{\odd(\set)}\prod_{j\in I(w)} (\det y_j,\imath^2)_F \frac{\hbar^{\j[r]}(z)}{\hbar^{\j[r]}(I_{n_0+2r})}=\frac{\hbar^{\j[n]}(x)}{\hbar^{\j[n]}(I_N)}.
\]
(See \S \ref{ss xinv} for the definition of the invariant $\hbar^{\j[r]}$. Here if $n_0=0$ replace $\det \j$ by one and if $n_0+2r=0$ replace the quotient on left hand side of the equality by one.)

\end{itemize}

\end{enumerate} 
\end{lemma}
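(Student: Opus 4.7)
The overall strategy is to reduce each of the three cases to an orbit-invariant computation using the classification of \S\ref{ss orbit classification}, and then explicitly evaluate the invariant for $x_w$ via its block decomposition \eqref{eq def x}.

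The symplectic case is immediate: Corollary \ref{cor symp orb} states that $X=G\cdot I_n$ is a single orbit, and $G=G^\circ$ in this case, so $x_w\in G^\circ\cdot x$ automatically. For the unitary case, again $G^\circ=G$, and by Corollary \ref{cor 2orb} the set $X^\circ$ consists of exactly two orbits, distinguished by the coset of $\det$ modulo $\Gamma$ in $(E'/E)_1\cap(E'/F')_1$; thus $x_w\in G^\circ\cdot x$ iff $\det x_w\in\det x\cdot\Gamma$. Using the formula $\det\iota(g_1,\dots,g_k;h)=\det h\cdot\prod_i(\det g_i)^{1-\tau}$ (coming from $\det g^*=(\det g)^{-\tau}$), together with the fact that $t_w^2$ is central in $M$ so that $\det t_\rho\in\Gamma$, I will track through \eqref{eq def x} the contributions: for $i\notin I(w)$ the block contributes trivially modulo $\Gamma$; for $i\in I(w)$ the block $w_{n_i}\bar y_i$ contributes $\overline{\det y_i}^{\,1-\tau}\equiv(\det y_i)^{\tau-1}\pmod{\Gamma}$; the central block contributes $\det z\cdot\det\eta_r^{\odd(\set)}$; and the factors $t_{i,M}$ for $i\in\set$ assemble to the sign $(-1)^{\odd(\set)}$. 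Collecting gives the stated criterion.

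For the orthogonal case, Proposition \ref{prop orth orbs} together with \S\ref{ss xinv} asserts that the $G^\circ$-orbits in $X^\circ=SX^{\j[n]}$ are classified by the pair $(\partial^{\j[n]},\hbar^{\j[n]})$. Since $\partial^{\j[n]}=\det\j[n]\,F^{*2}$ is constant on $X^\circ$, two elements of $X^\circ$ lie in the same orbit iff their Hasse invariants coincide, proving the first equivalence. The second equivalence then reduces to computing $\hbar^{\j[n]}(x_w)$: I will select $z_0\in\GL_n(E)$ in block form with $x_w=z_0\bar z_0^{-1}$ (using Hilbert 90 blockwise), so that by definition $\hbar^{\j[n]}(x_w)=\Hasse_F(\j[n]\star z_0)$, and then apply multiplicativity of the Hilbert symbol. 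The resulting decomposition of $\Hasse_F(\j[n]\star z_0)$ splits into (i) a contribution from the $y_i$'s for $i\in I(w)$; (ii) a contribution from the central block equal to $\hbar^{\j[r]}(z)$ up to a base-point correction involving $\hbar^{\j[r]}(I_{n_0+2r})$ and $\hbar^{\j[n]}(I_N)$; and (iii) a sign contribution from the permutation piece $t_\rho$, the involutions $t_{i,M}$ for $i\in\set$, and the twist $\eta_r^{\odd(\set)}$. A careful count, using the quoted explicit formula $\hbar^{\j[n]}(I_N)=(\det\j,-1)_F^n(-1,-1)_F^{\binom{n}{2}}\Hasse_F(\j)$ to discharge the dependence on $\j$, identifies this sign with $(-1)^{r\odd(\set)+\binom{N(\rho\set)}{2}}(2\det\j)^{\odd(\set)}$, yielding both stated equivalences after rearrangement.

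The principal obstacle is the orthogonal Hasse-invariant bookkeeping: combining the sign factors arising from the anti-diagonal block form of $\j[n]$, the block permutation $w_\rho$, the ``flip'' elements $t_{i,M}$ for $i\in\set$, and the central $\eta_r^{\odd(\set)}$-twist, and then isolating precisely the explicit sign appearing in the statement. The computation is reminiscent of standard Hasse invariant manipulations for orthogonal sums of hyperbolic planes, and the factorization of $\hbar^{\j[n]}(I_N)$ quoted above will be the key simplifying identity that makes the comparison with $\hbar^{\j[r]}(z)/\hbar^{\j[r]}(I_{n_0+2r})$ tractable.
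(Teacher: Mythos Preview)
Your plan is correct and matches the paper's approach: the symplectic case via Corollary~\ref{cor symp orb}, the unitary case via Corollary~\ref{cor 2orb} together with the determinant computation $\det x_w=(-1)^{\odd(\set)}\det z\prod_{i\in I(w)}(\det y_i)^{\tau-1}$ (which the paper states without the intermediate steps you outline), and the orthogonal case by computing $\hbar^{\j[n]}(x_w)=\Hasse_F(\j[n]\star z_w)$ for a blockwise Hilbert~90 lift $z_w$. The paper carries out the orthogonal computation by exhibiting explicit matrices $\zeta_m=\sm{I_m}{\imath I_m}{I_m}{-\imath I_m}$ and $z(y)=\sm{I_m}{\imath w_m}{w_m\bar y}{-\imath w_m\bar y w_m}$ to show that $y_w$ lies in the $\GL_N(F)$-orbit of $\diag(u_1,\dots,u_k,u)$ with $u_i=I_{2n_i}$ for $i\notin I(w)$, $u_i=\eta(y_i):=w_{2n_i}\star z(y_i)$ for $i\in I(w)$, and $u=\j[r]\star\zeta$; in particular the blocks with $i\notin I(w)$ (including those permuted by $\rho$ and those in $\set\setminus I(w)$) contribute trivially, so your item~(iii) is absorbed entirely into the Hasse-invariant cross terms rather than appearing as a separate ``permutation sign'' --- organizing the bookkeeping this way will streamline the final Hilbert-symbol count considerably.
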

\begin{proof}
In the symplectic case the result of the lemma is immediate from Corollary \ref{cor symp orb}. In the unitary case it is immediate from Corollary \ref{cor 2orb} and the observation that 
\[
\det x_w(\{y_i\}_{i\in I(w)};z)=(-1)^{\odd(\set)} \det z\prod_{i\in I(w)} \det y_i^{\tau-1}. 
\]
The orthogonal case is more elaborate and requires a careful computation of invariants. We point out all the ingredients of our computation. 

It follows from Lemma \ref{lem orth orb} \eqref{part y} that $\partial^{\j[n]}(x_1)=\partial^{\j[n]}(x_2)$. 
The first part immediately follows (see \S \ref{ss xinv}).
For the second part, it follows that $x_w\in G^\circ\cdot x$ if and only if $\hbar^{\j[n]}(x)=\hbar^{\j[n]}(x_w)$. 

We now explicate the computation of $\hbar^{\j[n]}(x_w)$.
Let $z_w\in \GL_N(E)$ be such that $x_w=z_w\cdot I_N$ and $y_w=\j[n]\star z_w$. By definition
$\hbar^{\j[n]}(x_w)=\Hasse_F(y_w)$. 

We make the following observations. Let $\zeta_m =\sm{I_m}{\imath I_m}{I_m}{-\imath I_m}$ and $\upsilon_m=w_{2m}\star \zeta_m$. We observe that
\[
\zeta_m\cdot I_{2m}=\sm{}{I_m}{I_m}{}
\ \ \ \text{and} \ \ \
\upsilon_m=\diag(2w_m,-2\imath^2w_m).
\]
Note further that 
\[
w_{4m} \star \diag(\zeta_{2m},\zeta_{2m})=\sm{}{\eta_m}{\eta_m}{}
\]
and that $\eta_{2m},\,\sm{}{\eta_m}{\eta_m}{}\in w_{4m}\star \GL_{4m}(F)=I_{4m}\star \GL_{4m}(F)$.

For $y\in Y_m(E/F,1)$ let $x(y)=\sm{}{y^{-1}w_m}{w_m \bar y}{}\in X^{w_{2m}}$, $z(y)=\sm{I_m}{\imath w_m}{w_m\bar y}{-\imath w_m \bar y w_m}\in \GL_{2m}(E)$ and $\eta(y)=w_{2m}\star z(y)$.
Note that $z(y)\cdot I_{2m}=x(y)$ and therefore $\eta(y)\in Y_{2m}(F/F,1)$. If we assume further that $y\in Y_m(F/F,1)=Y_m(E/F,1)\cap \GL_m(F)$ then 
\[
\eta(y)=\diag(2y,-2\imath^2 w_m yw_m).
\]
We can deduce that
\[
\frac{\det \eta(y)}{\det w_{2m}}\in \imath^{2m}F^{*2}
\]
and
\[
\frac{\Hasse_F(\eta(y))}{\Hasse_F(w_{2m})}=(\det y,\imath^2)_F(2,\imath^2)_F^m(-1,\imath^2)_F^{m\choose 2}.
\]
Indeed, note that $\eta(y)\star \GL_{2m}(F)$ (hence $\det \eta(y)F^{*2}$ and $\Hasse_F(\eta(y))$) depends only on $y\star \GL_m(E)$ and therefore we can replace $y$ with $\diag(I_{m-1},\det y)$. In this case $\eta(y)$ is diagonal and its invariants are easy to compute. We further recall that $\det w_{2m}=(-1)^m$.

Based on the above observations we have that $\diag(u_1,\dots,u_k,u)\in y_w\star \GL_N(F)$ where
\[
u_i=\begin{cases} \eta(y_i) & i\in I(w) \\ I_{2n_i} & \text{otherwise} \end{cases}\ \ \ \text{and}\ \ \ u=\j[r]\star \zeta \text{ where } z=\zeta\cdot I_{n_0+2r}.
\]
It therefore follows that
\begin{multline*}
\frac{\Hasse_F(y_w))}{\Hasse_F(\j[n])}=\frac{\hbar^{\j[r]}(z)}{\Hasse_F(\j[r])} \left[\prod_{i\in I(w)}\frac{(\partial^{\j[r]}(z), \det \eta(y_i))_F}{(\det \j[r],\det w_{2n_i})_F} \frac{\Hasse_F(\eta(y_i))}{\Hasse_F(w_{2n_i})} \right] \times \\ \prod_{\{i,\,j\in I(w): i<j\}} \frac{(\det \eta(y_i),\det \eta(y_j))_F}{(\det w_{2n_i},\det w_{2n_j})_F}.
\end{multline*} 
Let $\odd(w)=\odd_M(w)$ be the number of $i\in I(w)$ such that $n_i$ is odd. 
If $n_0+2r>0$ then $\det \eta_r=-1$ and therefore $\det z=(-1)^{\odd(\set)}$.
It follows (see \S \ref{ss xinv}) that
\[
\frac{\partial^{j[r]}(z)}{\det \j[r]}\in \imath^{2\odd(\set)}F^{*2}.
\]

It follows from the above computations that
\[
\prod_{\{i,\,j\in I(w): i<j\}} \frac{(\det \eta(y_i),\det \eta(y_j))_F}{(\det w_{2n_i},\det w_{2n_j})_F}=(-1,\imath^2)_F^{\odd(w)\choose 2}
\]
and 
\[
\prod_{i\in I(w)}\frac{\Hasse_F(\eta(y_i))}{\Hasse_F(w_{2n_i})}=(2,\imath^2)_F^{\odd(w)}\prod_{j\in I(w)}(-1,\imath^2)_F^{n_j\choose 2} (\det y_j,\imath^2)_F.
\]
Recall that $N(w)=N_M(w)=\sum_{i\in I(w)} n_i$ and note that
\[
\sum_{j\in I(w)} {n_j\choose 2}\equiv {N(w)\choose 2} +{\odd(w) \choose 2} (\mod 2).
\]
It follows that 
\[
\prod_{\{i,\,j\in I(w): i<j\}} \frac{(\det \eta(y_i),\det \eta(y_j))_F}{(\det w_{2n_i},\det w_{2n_j})_F} \times \prod_{i\in I(w)}\frac{\Hasse_F(\eta(y_i))}{\Hasse_F(w_{2n_i})}=(2,\imath^2)_F^{\odd(w)}(-1,\imath^2)_F^{N(w)\choose 2} (\prod_{j\in I(w)} \det y_j,\imath^2)_F.
\]
Recall further that $\det \j[r]=(-1)^r\det \j$. Hence
\[
\prod_{i\in I(w)}\frac{(\partial^{\j[r]}(z), \det \eta(y_i))}{(\det \j[r],\det w_{2n_i})_F} =(-1,\imath^2)_F^{r \odd(w)}(\det j,\imath^2)_F^{\odd(w)}.
\]
We note further that $\odd(w)\equiv \odd(\set)(\mod 2)$. We conclude that
\[
\frac{\Hasse_F(y_w))}{\Hasse_F(\j[n])}=((-1)^r2\det \j,\imath^2)_F^{\odd(\set)}(-1,\imath^2)_F^{N(w)\choose 2} (\prod_{j\in I(w)} \det y_j,\imath^2)_F\frac{\hbar^{\j[r]}(z)}{\Hasse_F(\j[r])}.
\]
The lemma follows. 
\end{proof}


\subsection{The contribution of an admissible orbit}

Let 
\[
\Delta(\GL_m(E'))=\{(g,g):g\in \GL_m(E')\}
\] 
be the image of the diagonal imbedding of $\GL_m(E')$ in $\GL_m(E')\times \GL_m(E')$.
The following is an immediate consequence of the explication of the stabilizers in Proposition \ref{prop adm orbs} \eqref{part stab}.

\begin{corollary}\label{cor dist adm}
Let $\alpha=(n_1,\dots,n_k;r)\in A$ and $M=M_\alpha$. Let $\pi_i$ be a representation of $\GL_{n_i}(E')$, $i\in [1,k]$ and $\pi_0$ a representation of $G_r^\circ$. 
Let $w=\rho\set\in \weyl_k^\circ[2,M]$,  $y_i\in Y_{n_i}(E'/(E')^{\sigma\tau},\sgn)$, $i\in I(w)$, $z\in X_r\cap \eta_r^{\odd(\set)}G_r^\circ$  and $x_w(\{y_i\}_{i\in I(w)},z)\in X^\circ$ be defined as in \eqref{eq def x}.  The representation $(\pi_1\otimes\cdots\otimes \pi_k\otimes \pi_0)\circ \inj_\alpha$ of $M^\circ$ is $M_{x_w(\{y_i\}_{i\in I(w)},z)}^\circ$-distinguished if and only if 
\[
\begin{array}{ll}
\pi_i\otimes \bar\pi_{\rho(i)} \text{ is }\Delta(\GL_{n_i}(E'))\text{-distinguished}& i\not\in \set,\ \rho(i)\ne i \\
\pi_i \text{ is }\GL_{n_i}(F')\text{-distinguished} & i\not\in\set,\ \rho(i)=i \\
\pi_i\otimes \bar\pi_{\rho(i)}^* \text{ is }\Delta(\GL_{n_i}(E'))\text{-distinguished}& i\in  \set, \ \rho(i)\ne i \\
\pi_i \text{ is }\U(y_i,E'/(E')^{\sigma\tau})-\text{distinguished} & i\in \set,\ \rho(i)=i \\
\pi_0 \text{ is } (G_r^\circ)_z-\text{distinguished.}& 
\end{array}
\]\qed
\end{corollary}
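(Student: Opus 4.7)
The strategy is to unpack Proposition \ref{prop adm orbs}\eqref{part stab} into a direct product decomposition of the stabilizer $M^\circ_{x_w(\{y_i\},z)}$ and then apply the standard tensor-product factorization for $\Hom$-spaces. Concretely, I would first group the indices $[1,k]$ into (i) two-element $\rho$-orbits $\{i,\rho(i)\}$ with $i \not\in \set$, (ii) two-element $\rho$-orbits with $i \in \set$, (iii) fixed points $\rho(i)=i$ with $i \not\in \set$, and (iv) fixed points with $i \in \set$, together with the ``$h$'' slot indexed by $0$. Proposition \ref{prop adm orbs}\eqref{part stab} exhibits $M^\circ_{x_w(\{y_i\},z)}$ as a direct product of a subgroup of $\prod_i \GL_{n_i}(E')$ (itself a product of one subgroup per $\rho$-orbit) with $(G_r^\circ)_z$.

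Next I would observe that, via the identification $\inj_\alpha$, the restriction of $(\pi_1 \otimes \cdots \otimes \pi_k \otimes \pi_0) \circ \inj_\alpha$ to this direct product is the external tensor product of its restrictions to each factor. Since for smooth representations one has
\[
\Hom_{H_1 \times H_2}(V_1 \otimes V_2,\ \triv) \simeq \Hom_{H_1}(V_1,\triv) \otimes \Hom_{H_2}(V_2,\triv),
\]
the full $\Hom$-space is non-zero if and only if each factor is non-zero. Thus distinction reduces to one condition per $\rho$-orbit plus the condition that $\pi_0$ be $(G_r^\circ)_z$-distinguished.

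The remaining step is to rewrite each per-orbit condition in the form stated in the corollary using only the definitions $\bar\pi(g) := \pi(\bar g)$ and $\pi^*(g) := \pi(g^*)$. For a two-element orbit with $i \not\in \set$, the stabilizer factor is $\{(g,\bar g) : g \in \GL_{n_i}(E')\}$ acting on the tensor factor $\pi_i \otimes \pi_{\rho(i)}$ by $g \mapsto \pi_i(g) \otimes \bar\pi_{\rho(i)}(g)$, which is exactly $\Delta(\GL_{n_i}(E'))$-distinction of $\pi_i \otimes \bar\pi_{\rho(i)}$. The case $i \in \set$ is identical except that ${}^*$ enters, giving $\pi_i \otimes \bar\pi_{\rho(i)}^*$. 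For fixed points with $i \not\in \set$, the stabilizer factor is $\GL_{n_i}(F') \subseteq \GL_{n_i}(E')$ acting via $\pi_i$, giving exactly $\GL_{n_i}(F')$-distinction; for fixed points with $i \in \set$, the factor is the stabilizer in $\GL_{n_i}(E')$ of $y_i$ for the appropriate twisted action, which by the notational convention used in the statement is recorded as $\U(y_i, E'/(E')^{\sigma\tau})$-distinction of $\pi_i$.

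No step is genuinely hard; the only place one has to be careful is the bookkeeping of the two involutions $\sigma$ (bar) and $\tau$ when recording which factor acts on $\pi_{\rho(i)}$ via $\bar\pi_{\rho(i)}$ versus $\bar\pi_{\rho(i)}^*$. Once this is done per orbit, collecting the equivalences across all factors yields the corollary directly from Proposition \ref{prop adm orbs}\eqref{part stab}.
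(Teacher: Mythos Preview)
Your proposal is correct and is exactly the argument the paper has in mind: the corollary is stated as an immediate consequence of the stabilizer description in Proposition~\ref{prop adm orbs}\eqref{part stab}, and you have simply spelled out that ``immediate'' means decomposing the stabilizer as a product over $\rho$-orbits (together with the $(G_r^\circ)_z$ factor), using the tensor factorization of the $\Hom$-space, and translating each per-orbit condition via the definitions of $\bar\pi$ and $\pi^*$. The only slightly delicate point, as you note, is the bookkeeping with $\sigma$ and $\tau$ in the $i\in\set$, $\rho(i)=i$ case (the stabilizer condition is on $\bar g_i$ rather than $g_i$), but this is harmless for the way the corollary is used.
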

\begin{remark}\label{rmk gk}
For irreducible representations $\pi_1$ and $\pi_2$ of $\GL_m(E')$ we clearly have that $\pi_1\otimes \pi_2$ is $\Delta(\GL_{n_i}(E'))$-distinguished if and only if $\pi_2\simeq \pi_1^\vee$. Furthermore, for an irreducible representation $\pi$ of $\GL_m(E')$ we have $\pi^*\simeq (\pi^\tau)^\vee$ (see \cite{MR0404534}). 
As a consequence, in the above lemma (and in its notation) if $\rho(i)\ne i$ and $\pi_i$ and $\pi_{\rho(i)}$ are irreducible then $\pi_i\otimes \bar\pi_{\rho(i)} $ is $\Delta(\GL_{n_i}(E'))$-distinguished if and only if $\pi_{\rho(i)}\simeq \bar\pi_i^\vee$ and $\pi_i\otimes \bar\pi_{\rho(i)} ^*$ is $\Delta(\GL_{n_i}(E'))$-distinguished if and only if $\pi_{\rho(i)}\simeq \pi_i^{\sigma\tau}$.
\end{remark}
\begin{remark}\label{rmk flo}
For an irreducible cuspidal representation $\pi$ of $\GL_m(E')$, it follows from \cite[Theorem 6.1 and Theorem 13.1]{MR2930996} that the following are equivalent:
\begin{enumerate}
\item $\pi\simeq \pi^{\sigma\tau}$
\item $\pi$ is $U(y,E'/(E')^{\sigma\tau})$-distinguished for some $y\in Y_m(E'/(E')^{\sigma\tau})$
\item $\pi$ is $U(y,E'/(E')^{\sigma\tau})$-distinguished for all $y\in Y_m(E'/(E')^{\sigma\tau})$.
\end{enumerate}
\end{remark}

\subsection{Explication of Corollary \ref{cor cusp dist} for classical symmetric pairs}

\begin{theorem}\label{thm dist ind class}
Let $x\in X^\circ$, $\pi_i$ an irreducible cuspidal representation of $\GL_{n_i}(E')$, $i\in [1,k]$ and $\pi_0$ a cuspidal representation of $G_r^\circ$. The representation 
\[
\pi_1\times\cdots \times \pi_k\ltimes \pi_0
\]
of $G^\circ$ is $G^\circ_x$-distinguished if and only if there exist an involution $\rho\in S_k$ and a subset $\set$ of $[1,k]$ such that $w=\rho\set\in \weyl_k^\circ[2,M]$ and there exist $y_i\in Y_{n_i}(E'/(E')^{\sigma\tau},\sgn)$, $i\in I(w)$ and $z\in X_r\cap \eta_r^{\odd(\set)}G_r^\circ$ such that $x_w(\{y_i\}_{i\in I(w)},\,z)\in G^\circ\cdot x$ (see Lemma \ref{lem in eorb}) and furthermore
\[
\begin{array}{ll}
\pi_{\rho(i)}\simeq\bar \pi_i^\vee & i\not\in \set,\ \rho(i)\ne i \\
\pi_i \text{ is }\GL_{n_i}(F')\text{-distinguished} & i\not\in\set,\ \rho(i)=i \\
\pi_{\rho(i)}\simeq \pi_i^{\tau\sigma} & i\in  \set \\
\pi_0\text{ is }(G_r^\circ)_z-\text{distinguished}. 
\end{array}
\]
\end{theorem}
\begin{proof}
It follows from Lemma \ref{lem kappa} that we may assume without loss of generality that $P=P_\alpha$ for some $\alpha=(n_1,\dots,n_k;r)\in A$ (with $r\ne 1$ in the split even orthogonal case). As a consequence of Corollary \ref{cor cusp dist} we have that $\pi_1\times\cdots\times\pi_k\ltimes \pi_0$ is $G^\circ_x$-distinguished if and only if there exists $x'\in G\cdot x \cap N_{G^\circ}(M^\circ)$ such that $(\pi_1\otimes\cdots\otimes \pi_k\otimes \pi_0)\circ \inj_\alpha$ is $M^\circ_{x'}$-distinguished. Note that for a representation $\upsilon$ of $M^\circ$ and $m\in M^\circ$ we have that  $\ell\mapsto \ell\circ \upsilon(m)$ is an isomorphism from $\Hom_{M^\circ_{m\cdot x'}}(\upsilon,\triv)$ to $\Hom_{M^\circ_{x'}}(\upsilon,\triv)$ and in particular, $\upsilon$ is $M^\circ_{x'}$-distinguished if and only if it is $M^\circ_{m\cdot x'}$-distinguished. The theorem is therefore now immediate from Proposition \ref{prop adm orbs}\eqref{part orb decomp}, Lemma \ref{lem in eorb}, Corollary \ref{cor dist adm} and Remarks \ref{rmk gk} and \ref{rmk flo}.
\end{proof}

\begin{remark}\label{rem existence dist cusp local} 
The statement of Theorem \ref{thm dist ind class} is not empty. It is proved in \cite{BP} that there exist representations $\pi_i$, $i=0,1,\dots,k$ as in Theorem \ref{thm dist ind class} that are distinguished in the stated manner.
In fact, more generally it is proved in \cite{BP} that if $(G,H)$ is a Galois pair with $G$ quasi-split then there exists a non-degenerate character $\psi$ of the unipotent radical $N$ of a Borel subgroup of $G$, trivial on $N\cap H$, such that there exists an irreducible cuspidal representation of $G$ that is simultaneously $H$-distinguished and $\psi$-generic. Moreover, if $H$ is quasi-split, this holds for any such choice of $\psi$.
\end{remark}

\subsection{A necessary condition for distinction}
As a consequence of Theorem \ref{thm dist ind class} we obtain a necessary condition for distinction that is easier to formulate and is useful for applications. 

We continue to assume that $n_1+\cdots+n_k+r=n$. For $w=\rho\set\in\weyl_k$ and a tuple $\pi=(\pi_1,\dots,\pi_k;\pi_0)$ such that $\pi_0$ is a representation of $G_r^\circ$ and $\pi_i$ is a representation of $\GL_{n_i}(E')$ let
\begin{equation}\label{eq w on pi}
w\pi=(\pi_1',\dots,\pi_k';\pi_0) \ \ \ \text{where} \ \ \ \pi_i'=\begin{cases} \pi_{\rho^{-1}(i)} &i\not\in\set \\  \pi^*_{\rho^{-1}(i)}  & i\in \set.\end{cases}
\end{equation}

\begin{corollary}\label{cor dist inv}
Let $x\in X^\circ$, $\pi_i$ an irreducible cuspidal representation of $\GL_{n_i}(E')$, $i\in [1,k]$ and $\pi_0$ a cuspidal representation of $G_r^\circ$. If the representation 
\[
\pi_1\times\cdots\times\pi_k\ltimes \pi_0
\]
of $G^\circ$ is $G_x^\circ$-distinguished and $w=\rho\set\in \weyl_k^\circ[2,M]$ satisfies \eqref{eq rhocond} (such $w$ is guaranteed by Theorem \ref{thm dist ind class}) then 
\[
w(\pi_1,\dots,\pi_k;\pi_0)=(\bar\pi_1^\vee,\dots,\bar\pi_k^\vee;\pi_0).
\]
\end{corollary}
\begin{proof}
Write $w\pi=(\pi_1',\dots,\pi_k';\pi_0)$ and recall that for an irreducible representation $\upsilon$ of $\GL_m(E')$ we have $\upsilon^*\simeq(\upsilon^\vee)^\tau$ by \cite{MR0404534}.
The identity $\pi_i'=\bar\pi_i^\vee$ is therefore clear whenever either $i\in\set$ or $\rho(i)\ne i$. For $i\not\in \set$ such that $\rho(i)=i$ it further follows from \cite[Proposition 12]{MR1111204} and the corollary follows. 
\end{proof}

\subsection{Completion of the proof of Theorem \ref{thm Hdist ind class}} Note that for $w=\rho\set \in\weyl_k^\circ[2,M]$ if $I(w)$ is empty then $\odd(\set)$ is even and $N(w)=0$. The theorem is therefore a consequence of Theorem \ref{thm dist ind class} with $x=I_N$ and Lemmas \ref{lem exp stab} and \ref{lem in eorb}.
\qed

\section{Distinction and conjugate-self-duality in the split odd orthogonal case}\label{sec selfdual}

It was recently proved by Beuzart-Plessis \cite{BP} that for any quasi-split reductive group $\mathbb{H}$ defined over $F$ and irreducible, generic representation $\pi$ of $\mathbb{H}(E)$ that is $\mathbb{H}(F)$-distinguished we have $\pi^\sigma=\pi^\vee$. 

When $\mathbb{H}$ is the split odd special orthogonal group we apply our previous results in order to prove a global analogue as well as several local generalizations of this result. In particular, we show that in the above statement $\mathbb{H}(F)$-distinction can be replaced by $\mathbb{H'}(F)$-distinction where $\mathbb{H'}$ is a non-quasi-split inner form of $\mathbb{H}$ such that $\mathbb{H'}(E)=\mathbb{H}(E)$. 

Let $\SO_{2n+1}=\SO(w_{2n+1})$ be the split odd special orthogonal group. Over any field $k$ we have
\[
\SO_{2n+1}(k)=\{g\in \SL_{2n+1}(k): {}^t gw_{2n+1} g=w_{2n+1}\}. 
\]
The following is well known. Since we do not know a proper reference we include the argument.
\begin{observation}\label{obs}
\begin{enumerate}
\item For a $p$-adic field $F$ and an irreducible, generic representation $\pi$ of $\SO_{2n+1}(F)$ we have $\pi^\vee\simeq \pi$. 
\item For a number field $k$ and an irreducible, generic, cuspidal automorphic representation $\Pi$ of $\SO_{2n+1}(\A_k)$ we have $\Pi^\vee=\Pi$. 
\end{enumerate}
\end{observation}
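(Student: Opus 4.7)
The plan is as follows. For part (1), the local statement, I would use that $\O_{2n+1}(F) = \SO_{2n+1}(F) \times \{\pm I\}$ decomposes as a direct product (since $-I$ has determinant $-1$ and is central), combined with the theorem of Moeglin--Vign\'eras--Waldspurger that every irreducible admissible representation of the full orthogonal group $\O_{2n+1}(F)$ is self-contragredient. Given an irreducible admissible $\pi$ of $\SO_{2n+1}(F)$ (genericity is in fact not needed in this approach), I would extend it to $\tilde\pi = \pi \boxtimes \chi$ on $\O_{2n+1}(F)$ for some character $\chi$ of $\{\pm I\}$. Since $\tilde\pi^\vee \simeq \tilde\pi$ and $\chi^\vee = \chi$, restricting to $\SO_{2n+1}(F)$ yields $\pi^\vee \simeq \pi$.

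For part (2), the global statement, I would invoke the Cogdell--Kim--Piatetski-Shapiro--Shahidi (CKPSS) functorial transfer from $\SO_{2n+1}(\A_k)$ to $\GL_{2n}(\A_k)$. The lift $\Sigma$ of the irreducible generic cuspidal $\Pi$ is an isobaric sum of pairwise inequivalent unitary cuspidal representations of symplectic type on various $\GL_{2n_i}(\A_k)$, so in particular $\Sigma \simeq \Sigma^\vee$. Because the lift is compatible with the contragredient operation (via the matching of Rankin--Selberg $\gamma$-factors by which it is characterized), the transfer of $\Pi^\vee$ is $\Sigma^\vee \simeq \Sigma$. The descent theory of Ginzburg--Rallis--Soudry then shows that the CKPSS lift is injective on the irreducible generic cuspidal spectrum of $\SO_{2n+1}(\A_k)$, and we conclude $\Pi \simeq \Pi^\vee$.

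The main obstacles are the two substantial external inputs: the Moeglin--Vign\'eras--Waldspurger self-duality for $\O_{2n+1}(F)$ on the local side, and the Ginzburg--Rallis--Soudry descent (together with Jacquet--Shalika strong multiplicity one for isobaric representations of $\GL_{2n}$) on the global side. A more uniform alternative would be to prove part (2) first and then deduce part (1) by globalizing $\pi$ to an irreducible generic cuspidal automorphic representation having $\pi$ as a local component; this, however, replaces a purely local argument with global machinery of comparable depth.
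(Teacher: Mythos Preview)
Your proposal is correct, but both parts proceed differently from the paper.

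For part (1), you invoke the Moeglin--Vign\'eras--Waldspurger self-duality theorem for $\O_{2n+1}(F)$ together with the direct product decomposition $\O_{2n+1}(F)=\SO_{2n+1}(F)\times\{\pm I\}$; this is a clean, purely local argument and, as you note, does not use genericity at all. The paper instead argues indirectly: it first checks self-duality for unramified $\pi$ via the Weyl orbit of the inducing character, deduces the global statement (part (2)) from this by Jiang--Soudry rigidity, then globalizes an arbitrary generic cuspidal $\pi$ to obtain the cuspidal case of (1), and finally passes to general generic $\pi$ via Rodier's uniqueness of the generic subquotient in a representation induced from cuspidal. Your route is shorter and more uniform; the paper's route is more elementary in its local ingredients (no MVW) but pays for this by importing global machinery into the proof of a local statement.

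For part (2), you use the Cogdell--Kim--Piatetski-Shapiro--Shahidi lift to $\GL_{2n}$ and the injectivity of this lift on the generic cuspidal spectrum coming from Ginzburg--Rallis--Soudry descent. The paper instead checks $\Pi_v\simeq\Pi_v^\vee$ at almost all places (the unramified computation) and then appeals to the Jiang--Soudry rigidity theorem. These are closely related: Jiang--Soudry rigidity is itself established through the local converse theorem and the same circle of ideas around the CKPSS lift, so the two arguments ultimately draw on the same deep input, packaged differently.
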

\begin{proof}
Consider first the local statement and assume further that $\pi$ is unramified. Then, $\pi$ is fully induced from a character $\chi$ of the standard Borel subgroup. Since $\chi^{-1}$ is in the Weyl orbit of $\chi$ it follows that $\pi$ is self-dual.  

In the global setting, this implies that $\Pi_v$ is self-dual for almost all places $v$ of $k$. The global statement is therefore immediate from the rigidity theorem of Jiang and Soudry \cite[Theorem 5.3]{MR1983781}.

Now a simple globalization argument such as \cite[Proposition 5.1]{MR1168488} implies that every irreducible, generic and cuspidal representation $\pi$ of $\SO_{2n+1}(F)$ is self-dual. 

For the general local case, in terms of its cuspidal support, $\pi^\vee$ is a subquotient of a representation induced from an irreducible, cuspidal and generic representation of a standard Levi subgroup. That is, there exists a decomposition $n=n_1+\cdots+n_k+r$, irreducible cuspidal representations $\pi_i$ of $\GL_{n_i}(F)$, $i\in [1,k]$ and an irreducible, generic and cuspidal representation $\pi_0$ of $G_r^\circ$ such that $\pi^\vee$ is a subquotient of $\pi_1\times \cdots\times \pi_k \ltimes \pi_0$.  It follows from the cuspidal case that $\pi_0$ is self-dual. Thus both $\pi$ and $\pi^\vee$ are subquotients of $\pi_1^\vee\times \cdots\times \pi_k^\vee \ltimes \pi_0^\vee$. However,  by a well known result of Rodier \cite{MR0354942}, $\pi_1^\vee\times \cdots\times \pi_k^\vee \ltimes \pi_0^\vee$ has a unique irreducible, generic subquotient and it follows that $\pi$ is self-dual.
\end{proof}

Let $\mathbb{G}$ be a semi-simple algebraic group and $\mathbb{H}$ a reductive subgroup of $\mathbb{G}$ both defined over a number field $k$. It follows from \cite[Proposition 1]{MR1233493} that the period integral 
\[
\int_{\mathbb{H}(k)\bs \mathbb{H}(\A_k)}\phi(h)\ dh
\] 
is absolutely convergent for every  cuspidal automorphic form $\phi$ on $\mathbb{G}(\A_k)$.
A cuspidal automorphic representation $\Pi$ of $\mathbb{G}(\A_k)$ is called $\mathbb{H}$-distinguished if this period integral is non-vanishing for some cusp form $\phi$ in the space of $\Pi$. 
\begin{theorem}\label{thm global}
Let $K/k$ be a quadratic extension of number fields with Galois action $\sigma$, $\mathbb{G}=\Res_{K/k}(\SO_{2n+1})$, $\mathbb{X}=\{x\in \mathbb{G}: x^\sigma=x^{-1}\}$ and $g\cdot x=gxg^{-\sigma}$ the action of $\mathbb{G}$ on $\mathbb{X}$ by twisted conjugation. Denote by $\mathbb{G}_x$ the stabilizer of $x\in \mathbb{X}(k)$ in $\mathbb{G}$, an algebraic group defined over $k$. Let $\Pi$ be an irreducible cuspidal generic representation of $\mathbb{G}(\A_k)=\SO_{2n+1}(\A_K)$ that is $\mathbb{G}_x$-distinugished. Then $\Pi^\sigma\simeq \Pi^\vee$.
\end{theorem}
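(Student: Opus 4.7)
The plan is to combine the rigidity theorem of Jiang--Soudry with our local distinction criterion from Corollary \ref{cor dist inv}. Since $\Pi$ is irreducible, generic, cuspidal, the local statement of Observation \ref{obs} applied componentwise gives $\Pi^\vee \simeq \Pi$, so the conclusion $\Pi^\sigma \simeq \Pi^\vee$ is equivalent to $\Pi^\sigma \simeq \Pi$. By the rigidity theorem \cite[Theorem 5.3]{MR1983781}, this last assertion reduces to showing $\Pi_v^\sigma \simeq \Pi_v$ at almost all places $v$ of $k$.

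First I would use that convergence and non-vanishing of the $\mathbb{G}_x(k)$-period on some cusp form in the space of $\Pi$ forces the local component $\Pi_v$ to be $\mathbb{G}_x(k_v)$-distinguished at every place $v$. I would then analyze two kinds of unramified places. If $v$ splits in $K$ into places $w,\,w'$, then $\mathbb{G}(k_v) \simeq \SO_{2n+1}(k_v) \times \SO_{2n+1}(k_v)$ with $\sigma$ swapping the factors, the stabilizer $\mathbb{G}_x(k_v)$ is an inner form of $\SO_{2n+1}(k_v)$ sitting as a twisted diagonal, and writing $\Pi_v \simeq \pi_w \otimes \pi_{w'}$ the distinction forces $\pi_{w'} \simeq \pi_w^\vee$; combined with the local case of Observation \ref{obs} ($\pi_w \simeq \pi_w^\vee$) this yields $\pi_w \simeq \pi_{w'}$, hence $\Pi_v^\sigma \simeq \pi_{w'}\otimes \pi_w \simeq \Pi_v$. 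If $v$ is inert in $K$ and both $K/k$ and $\Pi_v$ are unramified there, then $\Pi_v$ is the unique irreducible generic constituent of an unramified principal series $\chi_1\times\cdots\times\chi_n\rtimes \mathbf{1}$ of $\SO_{2n+1}(K_w)$ for $K_w/k_v$ the quadratic extension. After replacing the Satake parameter by a Weyl translate putting $\Pi_v$ in the position of a Langlands quotient, this induced representation is itself $\mathbb{G}_x(k_v)$-distinguished, and Corollary \ref{cor dist inv} applied with $M = \GL_1^n$ (note that $G_0^\circ$ is trivial since $n_0 = 1$) produces an involution $\tilde w = \rho\set \in \weyl_n^\circ[2,M]$ such that $\tilde w \cdot (\chi_1,\dots,\chi_n) = (\bar\chi_1^{-1},\dots,\bar\chi_n^{-1})$. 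This equality says that the Satake parameters of $\Pi_v$ and of $\Pi_v^{\sigma,\vee}$ coincide, hence $\Pi_v^\sigma \simeq \Pi_v^\vee \simeq \Pi_v$.

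The main obstacle is the inert unramified case: one has to pass cleanly from distinction of the irreducible $\Pi_v$ to distinction of a suitable principal series of which $\Pi_v$ is a Langlands quotient, and then correctly invoke Corollary \ref{cor dist inv} at the level of the torus to extract the Weyl-equivariance of $\chi$ under conjugate-inversion. Once the local identity $\Pi_v^\sigma \simeq \Pi_v$ is established at almost all places, an application of the rigidity theorem of Jiang--Soudry finishes the global argument.
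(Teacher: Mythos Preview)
Your proposal is correct and follows essentially the same route as the paper: local distinction at every place, the split places handled by the twisted-diagonal stabilizer, the inert unramified places handled via Corollary~\ref{cor dist inv} applied to the principal series, and the conclusion via Jiang--Soudry rigidity.

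Two minor presentational differences are worth noting. First, the paper inserts an argument (via Lemma~\ref{lem orb iso} and the Hasse invariant) showing that at almost all inert places $\mathbb{G}_x(k_v)$ is $\SO_{2n+1}(K_v)$-conjugate to the split form $\SO_{2n+1}(k_v)$; this is informative but not strictly needed, since Corollary~\ref{cor dist inv} applies for arbitrary $x\in X^\circ$. Second, at an unramified inert place the paper asserts that $\Pi_v$, being generic and unramified, is the full irreducible principal series $\chi_1\times\cdots\times\chi_n\ltimes\triv_0$ and then uses that every Weyl translate gives the same representation; you instead realize $\Pi_v$ as the Langlands quotient of a suitable principal series and transfer distinction upward to the induced module. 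Both variants feed into Corollary~\ref{cor dist inv} in the same way and yield the Weyl-equivariance $\tilde w\cdot\chi=\bar\chi^{-1}$, hence $\Pi_v^\sigma\simeq\Pi_v^\vee$.
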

\begin{proof}
Write $\Pi=\otimes_v \Pi_v$ where the restricted tensor product is over all places $v$ of $k$. Then $\Pi_v$ is $\mathbb{G}_x(k_v)$-distinguished for all places $v$ of $F$.

If $v$ is split in $K$ (that is $K_v\simeq k_v\times k_v$) then we may identify $\mathbb{G}(K_v)$ with $\SO_{2n+1}(k_v)\times \SO_{2n+1}(k_v)$ and $\sigma$ acts on it by interchanging the coordinates. Then $x_v=(x_0,x_0^{-1})$ for some $x_0\in \SO_{2n+1}(k_v)$ and 
\[
\mathbb{G}_x(k_v)=\{(g,x_0^{-1}gx_0): g\in \SO_{2n+1}(k_v)\}.
\]
Write $\Pi_v=\pi_1\otimes \pi_2$ for irreducible representations $\pi_1$ and $\pi_2$ of $\SO_{2n+1}(k_v)$. Since $\Pi_v$ is $\mathbb{G}_x(k_v)$-distinguished we see that $\pi_2\simeq\pi_1^\vee$, that is, $\Pi_v^\sigma\simeq\Pi_v^\vee$.

Since $\GL_{2n+1}$ has trivial Galois cohomology, there exists $z\in \GL_{2n+1}(K)$ such that $x=zz^{-\sigma}$. Note that $y={}^t zw_{2n+1} z$ is a symmetric matrix in $\GL_{2n+1}(k)$. Since $\det x=1$ we have $\det y\, k^{*2}=\det w_{2n+1}\,k^{*2}$. Since the Hasse invariant of $y$ is one at almost all places it follows that $y$ and $w_{2n+1}$ are in the same $\GL_{2n+1}(k_v)$-orbit for almost all places $v$ of $k$. 
It follows from Lemma \ref{lem orb iso} that $\mathbb{G}_x(k_v)$ is $\SO_{2n+1}(K_v)$-conjugate to $\SO_{2n+1}(k_v)$ for almost all places $v$ of $k$ that are inert in $K$. 
Let $v$ be a finite and inert place of $k$ such that $\Pi_v$ is unramified. Since $\Pi$ is generic we can write $\Pi_v\simeq\chi_1\times\cdots \times\chi_n\ltimes\triv_0$ 
as irreducibly induced from a character of the Borel subgroup. Thus, $\Pi_v\simeq\chi_{\rho(1)}'\times\cdots \times\chi_{\rho(n)}'\ltimes\triv_0$ whenever $\chi_i'\in \{\chi_i,\chi_i^{-1}\}$, $i\in [1,n]$ and $\rho\in S_n$. It now follows from Corollary \ref{cor dist inv} that $\Pi_v^\sigma=\Pi_v^\vee$.

We conclude that $\Pi_v^\sigma=\Pi_v^\vee$ for almost all places $v$ of $k$ and the theorem now follows from the rigidity theorem of Jiang and Soudry \cite[Theorem 5.3]{MR1983781}.
\end{proof}
For the rest of this section we go back to the notation of \S \ref{section Galois distinction and induction} in the split odd orthogonal setting. Thus $G^\circ=\SO_{2n+1}(E)$ and $X^\circ=\{x\in G^\circ: x^{-1}=x^\sigma\}$. Since $X=X^\circ\sqcup(-X^\circ)$ we will use without further mention the fact that $G^\circ_x=G^\circ_{-x}$, $x\in X$.

A representation $\pi$ of $\GL_m(E)$ is called essentially tempered if $\abs{\det}^{-e} \otimes \pi$ is tempered for some real number $e$. The number $e$ is uniquely determined by $\pi$ and we write $e(\pi)=e$. Every irreducible and cuspidal representation of $\GL_m(E)$ is essentially tempered.

\begin{theorem}\label{thm local sodd}
Let $\pi$ be an irreducible, generic representation of $G^\circ$. If $\pi$ is $G_x^\circ$-distinguished for some $x\in X$ then $\pi^\sigma\simeq\pi^\vee$.
\end{theorem}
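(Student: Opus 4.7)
The plan is to deduce this local statement from the global Theorem \ref{thm global} via a globalization of the distinguished pair $(\pi, G^\circ_x)$.

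First I would reduce to the case $x \in X^\circ$, using the decomposition $X = X^\circ \sqcup (-X^\circ)$ together with the identity $G^\circ_x = G^\circ_{-x}$ noted before the theorem. Then by Lemma \ref{lem orb iso}, writing $x = z\cdot I_N$ with $z\in \GL_{2n+1}(E)$, the form $y = \j[n]\star z$ belongs to $Y_{2n+1}(F/F,1)$, and the stabilizer $G^\circ_x$ is the special orthogonal group $\SO(y,F)$ attached to $y$. So the hypothesis says that the generic irreducible representation $\pi$ of $\SO_{2n+1}(E)$ is distinguished by the $F$-points of some (not necessarily split) inner form of $\SO_{2n+1}$.

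Second, I would invoke the globalization result of Beuzart-Plessis from \cite{BP}. The aim is to produce a totally imaginary (or at least suitably chosen) number field $k$ with a finite place $v_0$ such that $k_{v_0}\simeq F$, a quadratic extension $K/k$ with $K_{v_0}\simeq E$, a symmetric form $\mathbf{y}\in Y_{2n+1}(k/k,1)$ locally equivalent to $y$ at $v_0$, and an element $\mathbf{x}\in \mathbb{X}(k)$ whose $k$-stabilizer $\mathbb{G}_{\mathbf{x}}$ is the inner form defined by $\mathbf{y}$; and, simultaneously, an irreducible generic cuspidal automorphic representation $\Pi$ of $\SO_{2n+1}(\A_K)$ with $\Pi_{v_0}\simeq \pi$ that is $\mathbb{G}_{\mathbf{x}}$-distinguished. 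Beuzart-Plessis's construction proceeds by adding auxiliary places where one prescribes a supercuspidal distinguished vector to force non-vanishing of the global period on a carefully constructed Poincar\'e series, and by using Arthur's endoscopic classification to ensure that the resulting global representation is cuspidal and generic.

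Third, once such a global $\Pi$ is in hand, Theorem \ref{thm global} applied to $\Pi$ and $\mathbf{x}$ yields $\Pi^\sigma \simeq \Pi^\vee$ as automorphic representations of $\SO_{2n+1}(\A_K)$. Localizing this isomorphism at $v_0$ gives $\pi^\sigma\simeq \pi^\vee$, which is the desired conclusion.

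The only serious obstacle is the globalization step: one must simultaneously globalize the ambient form $y$, the local representation $\pi$, and the distinction property, while keeping $\Pi$ cuspidal and generic. This is precisely the content of the cited work of Beuzart-Plessis; the rest of the argument is formal, and the present theorem is a ``slight generalization'' of his result in that Theorem \ref{thm global}, via the orbit analysis of Section \ref{s main pf}, already accommodates arbitrary elements $\mathbf{x}\in \mathbb{X}(k)$ and so allows the distinguishing subgroup to be an arbitrary $k$-inner form rather than only the quasi-split one.
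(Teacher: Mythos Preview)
Your outline is correct for cuspidal $\pi$ and matches the paper's argument in that case. The gap is in the non-cuspidal case: the globalization result of Beuzart-Plessis \cite{BP} that you invoke is a Poincar\'{e}-series construction, and as such it takes as input a \emph{supercuspidal} distinguished representation at the chosen place. You cannot feed it an arbitrary generic $\pi$ and expect to obtain a cuspidal automorphic $\Pi$ with $\Pi_{v_0}\simeq\pi$ that is globally $\mathbb{G}_{\mathbf{x}}$-distinguished; that step is simply not available in the cited reference.

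The paper handles this by a purely local reduction. One realizes $\pi^\vee$ as the unique generic subquotient of $I=\pi_1\times\cdots\times\pi_k\ltimes\pi_0$ with the $\pi_i$ cuspidal on the $\GL$-blocks and $\pi_0$ cuspidal generic on $G_r^\circ$. Distinction of $\pi$ forces distinction of $I$, and Theorem~\ref{thm dist ind class} then yields that $\pi_0$ is $(G_r^\circ)_z$-distinguished for some $z\in X_r$; the globalization and Theorem~\ref{thm global} are applied only to the \emph{cuspidal} $\pi_0$, giving $\pi_0^\sigma\simeq\pi_0^\vee$. Corollary~\ref{cor dist inv} controls the $\GL$-blocks, so $I$ and $(I^\vee)^\sigma$ have the same composition series, and Rodier's uniqueness of the generic subquotient finishes the argument. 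If you want to salvage your direct approach you would need an honest globalization-with-distinction theorem for arbitrary generic local components, which is not in \cite{BP}; absent that, the reduction via the cuspidal support is the missing ingredient.
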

\begin{proof}
If $\pi$ is cuspidal this is immediate from Theorem \ref{thm global} and the globalization of distinguished, generic representations in \cite{BP}. We apply the cuspidal case in order to complete the proof of the theorem for a general $\pi$. 

In terms of its cuspidal support we realize $\pi$ as a quotient of a representation induced from an irreducible, generic and cuspidal representation of a standard Levi subgroup of $G^\circ$. That is, there exists a decomposition $n=n_1+\cdots+n_k+r$, irreducible cuspidal representations $\pi_i$ of $\GL_{n_i}(E)$, $i\in [1,k]$ and an irreducible, generic and cuspidal representation $\pi_0$ of $G_r^\circ$ such that $\pi^\vee$ is a quotient of $I=\pi_1\times \cdots\times \pi_k \ltimes \pi_0$.  Hence $I$ is also $G_x^\circ$-distinguished.
Furthermore, $\pi$ is the unique generic irreducible subquotient of $I$ by \cite{MR0354942}.

By Theorem \ref{thm dist ind class} the representation $\pi_0$ is $(G_r^\circ)_z$-distinguished for some $z\in X_r$ and by the cuspidal case of this theorem we conclude that $\pi_0^\sigma\simeq \pi_0^\vee$.  It further follows from Corollary \ref{cor dist inv} that $(\pi_1,\dots,\pi_k)=((\pi_{\rho(1)}')^\sigma,\dots, (\pi_{\rho(k)}')^\sigma)$ for some involution $\rho\in S_k$ and some choice $\pi_i'\in\{\pi_i,\pi_i^\vee\}$, $i\in [1,k]$. Since $(\pi_{\rho(1)}')^{\sigma} \times \cdots \times (\pi_{\rho(k)}')^{\sigma}\ltimes (\pi_0^\vee)^{\sigma}$ has the same decomposition series as $(I^\vee)^{\sigma}$ we conclude that $I$ and $(I^\vee)^{\sigma}$ have the same decomposition series. Thus, $(\pi^\vee)^{\sigma}$ is a generic subquotient of $I$ and therefore $\pi^\sigma=\pi^\vee$.  
\end{proof}

\begin{remark} The existence of generic representations as in the statement of Theorem \ref{thm local sodd} follows either from Remark \ref{rem existence dist cusp local} in the cuspidal case, or from 
Theorem \ref{thm dist ind class} applied to irreducible principal series induced from characters of the upper triangular Borel subgroup. We also note that the existence of cuspidal generic distinguished automorphic representations as in the statement of Theorem \ref{thm global}  follows from the existence of local distinguished representations (Remark \ref{rem existence dist cusp local}, a result of Beuzart-Plessis) and Beuzart-Plessis' globalization theorem in \cite{BP}. 
\end{remark}

Recall that by generalized injectivity \cite{MR3986415}, if a standard module is generic and reducible then its Langlands quotient is not generic. We now extend our result and show that for a certain family of irreducible, degenerate representations distinction implies Galois invariance. 
\begin{proposition}\label{prop standinv}
Let $I$ be a standard module of $G^\circ$ of the form $I=\pi_1\times \cdots\times \pi_s \ltimes T$ where $\pi_i$ is irreducible cuspidal for $i\in [1,s]$ and such that $e(\pi_1)\ge \cdots \ge e(\pi_s)>0$ and $T$ is irreducible, generic and has unitary cuspidal support (in particular, $T$ is tempered). If either $I$ or $I^\vee$ is  $G_x^\circ$-distinguished for some $x\in X$ then $I^\sigma=I$. Consequently, if the Langlands quotient of $I$ is $G_x^\circ$-distinguished for some $x\in X$ then it is Galois invariant.
\end{proposition}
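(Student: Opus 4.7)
The strategy is to reduce to Corollary \ref{cor dist inv} applied on the full cuspidal support of $I$, and then use the strict positivity of the exponents $e(\pi_i)$ to force the resulting Weyl element to preserve the exponent filtration. First I would reduce to the case that $I$ itself (and not $I^\vee$) is $G_x^\circ$-distinguished: by Observation \ref{obs} we have $T^\vee \simeq T$ so $I^\vee \simeq \pi_s^\vee \times \cdots \times \pi_1^\vee \ltimes T$, and the argument below proceeds symmetrically after swapping the sign of every exponent. Let $(\tau_1,\dots,\tau_t;\pi_0)$ denote the unitary cuspidal support of $T$. By Harish-Chandra's theorem, parabolic induction from unitary cuspidal data is semisimple tempered, so $T^{\mathrm{cusp}} := \tau_1 \times \cdots \times \tau_t \ltimes \pi_0$ decomposes as a direct sum of tempered irreducibles, of which $T$ is the unique generic one (by Rodier and Observation \ref{obs}). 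Hence $I$ is a direct summand of
\[
I^{\mathrm{cusp}} := \pi_1 \times \cdots \times \pi_s \times \tau_1 \times \cdots \times \tau_t \ltimes \pi_0,
\]
which is therefore also $G_x^\circ$-distinguished, and this is a representation parabolically induced from cuspidal data.

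Applying Corollary \ref{cor dist inv} to $I^{\mathrm{cusp}}$ produces an involution $w = \rho\set \in \weyl_{s+t}^\circ[2,M']$ and an element $z \in X_r$ such that $\pi_0$ is $(G_r^\circ)_z$-distinguished and
\[
w(\pi_1,\dots,\pi_s,\tau_1,\dots,\tau_t;\pi_0) = (\pi_1^{\sigma,\vee},\dots,\pi_s^{\sigma,\vee},\tau_1^{\sigma,\vee},\dots,\tau_t^{\sigma,\vee};\pi_0).
\]
The crucial exponent analysis now runs as follows. The four cases of Theorem \ref{thm dist ind class} force, for each index $i$, either $e(\pi_{\rho(i)}) = -e(\pi_i)$ (when $i \notin \set$ and $\rho(i) \ne i$), or $e(\pi_{\rho(i)}) = e(\pi_i)$ (when $i \in \set$ and $\rho(i) \ne i$), with $e(\pi_i) = 0$ in either fixed-point case since irreducible $\GL$- and $\U$-distinguished representations are conjugate-self-dual hence unitary. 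Every exponent in the data of $I^{\mathrm{cusp}}$ is non-negative and $e(\pi_i) > 0$ strictly for $i \in [1,s]$, so for such $i$ the only surviving possibility is $i \in \set$, $\rho(i) \ne i$, $\rho(i) \in [1,s]$, and $\pi_{\rho(i)} \simeq \pi_i^\sigma$. Consequently $\rho$ restricts to a fixed-point-free involution of $[1,s]$ pairing $\pi_i$ with $\pi_i^\sigma$, so $\{\pi_1,\dots,\pi_s\}$ is $\sigma$-invariant as a multiset.

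Since $\rho$ stabilizes $[1,s]$ it also stabilizes the complement $[s+1,s+t]$, so the restriction of the matching to that block together with $\pi_0$ being $(G_r^\circ)_z$-distinguished (whence $\pi_0^\sigma \simeq \pi_0^\vee \simeq \pi_0$ by the cuspidal case of Theorem \ref{thm local sodd} and Observation \ref{obs}) and the closure of $\{\tau_j\}$ under contragredience (forced by $T^\vee \simeq T$ from Observation \ref{obs}) together imply that the cuspidal support of $T$ is $\sigma$-stable as a multiset. Hence $(T^{\mathrm{cusp}})^\sigma \simeq T^{\mathrm{cusp}}$, and since $T$ is the unique generic summand, $T^\sigma \simeq T$. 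Combining this with the $\sigma$-invariance of $\{\pi_1,\dots,\pi_s\}$ and the uniqueness of Langlands data gives $I^\sigma \simeq I$. For the stated consequence: if the Langlands quotient $\pi$ of $I$ is $G_x^\circ$-distinguished, any $G_x^\circ$-invariant linear form on $\pi$ lifts along the quotient map $I \twoheadrightarrow \pi$ to show $I$ is distinguished; the main assertion then yields $I^\sigma \simeq I$, whence $\pi^\sigma \simeq \pi$ by uniqueness in the Langlands classification.

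The main obstacle is the cuspidal-support analysis for $T$. The $\sigma$-invariance of the non-tempered multiset $\{\pi_i\}$ is immediate from positivity of the exponents, but for the tempered multiset $\{\tau_j\}$ the Weyl matching naturally produces Type A pairs $\{\tau,\tau^{\sigma,\vee}\}$ whose $\sigma$-invariance is not intrinsic; one must combine this with the duality-closure $\{\tau_j\} = \{\tau_j^\vee\}$ coming from $T^\vee \simeq T$ (Observation \ref{obs}) in order to upgrade the matching to genuine $\sigma$-invariance of the cuspidal support of $T$.
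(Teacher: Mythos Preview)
Your overall strategy coincides with the paper's: pass to the full cuspidal support, apply Corollary \ref{cor dist inv}, use positivity of the $e(\pi_i)$ to separate the blocks $[1,s]$ and $[s+1,k]$, deduce $\sigma$-invariance of each block, and then identify $T$ as the unique generic summand. Two points, however, are not quite right.

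First, the assertion that the fixed-point case $i\in\set$, $\rho(i)=i$ forces $e(\pi_i)=0$ is false. Distinction by $\U(y_i,E/F)$ yields $\pi_i\simeq\pi_i^{\sigma}$ (this is the content of \cite[Theorem 6.1(1)]{MR2930996} as used in Corollary \ref{cor dist inv}), not $\pi_i\simeq(\pi_i^\vee)^\sigma$; so it gives no constraint on $e(\pi_i)$. Thus $\rho$ need not be fixed-point-free on $[1,s]$. This is harmless: in that case $\pi_i=\pi_i^\sigma$ already, and the multiset $\{\pi_1,\dots,\pi_s\}$ is still $\sigma$-invariant.

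Second, the argument for the tempered block does not go through as written. The claim that $T^\vee\simeq T$ forces the multiset $\{\tau_j\}$ to be closed under contragredient is vacuous: the cuspidal support of $T$ is only a $\weyl_t$-orbit, and the Weyl group already contains all sign changes, so $T^\vee\simeq T$ gives no information about the multiset. What Corollary \ref{cor dist inv} yields on $[s+1,s+t]$ is only $\{\tau_j\}=\{(\tau_j')^\sigma\}$ with $\tau_j'\in\{\tau_j,\tau_j^\vee\}$, and there is no way to upgrade this to $\{\tau_j\}=\{\tau_j^\sigma\}$ as multisets. The paper avoids this by working with the induced representation rather than the multiset: since for $G^\circ$ one has $\tau_1'\times\cdots\times\tau_t'\ltimes\pi_0\simeq\tau_1\times\cdots\times\tau_t\ltimes\pi_0$ (the Weyl group of $G^\circ$ contains the sign changes $\tau_j\mapsto\tau_j^\vee$), one directly gets
\[
(T^{\mathrm{cusp}})^\sigma=\tau_1^\sigma\times\cdots\times\tau_t^\sigma\ltimes\pi_0^\sigma
=\tau_1'\times\cdots\times\tau_t'\ltimes\pi_0
=\tau_1\times\cdots\times\tau_t\ltimes\pi_0=T^{\mathrm{cusp}},
\]
and then $T=T^\sigma$ by uniqueness of the generic summand. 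Replace your duality-closure step by this observation and the proof is complete.
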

\begin{proof}
The representation $T$ is the unique generic direct summand of a semi-simple representation of the form $\pi_{s+1}\times\cdots\times \pi_k \ltimes \pi_0$ where $\pi_0$ is an irreducible, generic and cuspidal representation of $G_r^\circ$ for some $r$ and $\pi_i$ is irreducible cuspidal for $i\in [s+1,k]$. Therefore $I$ is a quotient of $\Pi=\pi_1\times\cdots\times \pi_k \ltimes \pi_0$ and $I^\vee$ is a quotient of $\Pi^\vee$.

If $I$ (resp. $I^\vee$) is $G_x^\circ$-distinguished then so is $\Pi$ (resp. $\Pi^\vee$). It follows from Theorem \ref{thm dist ind class} that $\pi_0$ (resp. $\pi_0^\vee$) is $(G_r^\circ)_z$-distinguished for some $z\in X_r$ and from Theorem \ref{thm local sodd} and Observation \ref{obs} that $\pi_0^\sigma=\pi_0$. It further follows from Corollary \ref{cor dist inv}, the inequalities $e(\pi_1)\ge \cdots \ge e(\pi_s)>0$ and the equalities $e(\pi_{s+1})=\cdots=e(\pi_k)=0$ that we have the following equalities of multi-sets of cuspidal representations
\[
\{\pi_1,\dots,\pi_s\}=\{\pi_1^\sigma,\dots,\pi_s^\sigma\} \ \ \ \text{and} \ \ \ \{\pi_{s+1}',\dots,\pi_k'\}=\{\pi_{s+1}^\sigma,\dots,\pi_k^\sigma\}
\]
for some choice of $\pi_i'\in \{\pi_i,\pi_i^\vee\}$, $i\in[s+1,k]$. By well known results on standard modules for the general linear group over $E$ the first identity implies the equality of the induced representations 
\[
\pi_1\times \cdots \times \pi_s=\pi_1^\sigma \times \cdots \times \pi_s^\sigma
\]
while the second identity combined with the fact that $\pi_0$ is Galois invariant  imply the equality of the semi-simple representations
\[
\pi_{s+1}\times \cdots \times \pi_k\ltimes \pi_0=\pi_{s+1}^\sigma \times \cdots \times \pi_k^\sigma \ltimes \pi_0^\sigma.
\]
Since $T^\sigma$ is a generic summand of the right hand side we conclude that $T=T^\sigma$. All together, this implies that $I$ is Galois invariant.
 
The second part of the proposition follows from the first part and the facts that Galois invariance of $I$ implies Galois invariance of its Langlands quotient and $G_x^\circ$-distinction of the Langlands quotient implies $G_x^\circ$-distinction of $I$.
\end{proof}
\begin{remark}
The proof of Proposition \ref{prop standinv} uses Theorem \ref{thm local sodd} only in the case where $\pi$ is cuspidal. In fact, the proposition can be used as an alternative way to reduce Theorem \ref{thm local sodd} to the cuspidal case as follows. Let $\pi$ be as in Theorem \ref{thm local sodd}. We freely use the notation of the proof of the proposition. We may realize $\pi^\vee$ as the unique generic irreducible subquotient of a representation of the form $\Pi$. We may now define $T$ to be the unique generic summand of the semi-simple representation $\pi_{s+1}\times\cdots\times \pi_k \ltimes \pi_0$ so the standard module $I$ is defined as in the proposition. Thus, $\pi^\vee$ is the unique generic irreducible subquotient of $I$ and by generalized injectivity \cite{MR3986415}, $\pi^\vee$ is a subrepresentation of $I$. Consequently, $\pi$ is a quotient of $I^\vee$ and therefore $I^\vee$ is $G_x^\circ$-distinguished. It therefore follows from the proposition that $I$ is Galois invariant. Hence$(\pi^\vee)^\sigma$ is an irreducible generic subquotients of $I$ and therefore equals $\pi^\vee$. It follows that $\pi$ is Galois invariant.  
\end{remark}

\section{On distinction and local Langlands correspondence}\label{s llc}

In this section we identify a certain family of irreducible representations $\pi$ of the classical group $G^\circ$ (in the notation of \S  \ref{section Galois distinction and induction}), parabolically induced from cuspidal, such that their functorial transfer to $\GL_N(E')$ is also irreducibly induced from cuspidal and Galois distinction of $\pi$ implies Galois distinction of its transfer. We only consider quasi-split classical groups and the family of representations will contain, in particular, all generic principal series representations (that is, irreducibly induced from a character of the Borel subgroup).

The results, namely Theorem \ref{thm dist and trans}, are simple applications of the necessary condition in Theorem  \ref{thm dist ind class}. However, we make the relation to Prasad's conjectures in \cite{DP} explicit and this requires lengthy and rather technical computations.

\subsection{On Galois distinction for $\GL_N$}\label{ss gl dist}

For $m=m_1+\cdots+m_t$ and representations $\pi_i$ of $\GL_{m_i}(E')$, $i\in [1,t]$ we denote by $\pi_1\times \cdots\times \pi_t$ the representation of $\GL_m(E')$ parabolically induced from $\pi_1\otimes \cdots \otimes \pi_t$ (with the usual normalized induction).
If $\pi_i$ is $\GL_{m_i}(F')$-distinguished for all $i\in [1,t]$ then $\pi_1\times \cdots\times \pi_t$ is $\GL_m(F')$-distinguished by a closed orbit argument (see e.g. \cite[Proposition 26]{MR1194271}). Furthermore, for any representation  $\pi$ of $\GL_m(E')$ of finite length, the representation $\pi^\vee\times \pi^\sigma$ of $\GL_{2m}(E')$ is $\GL_{2m}(F')$-distinguished by an open orbit argument (see e.g. \cite[Proposition 2.3]{MR3421655}). 

For a representation  $\pi$ of $\GL_m(E')$ and a character $\chi$ of $(E')^*$ let $\pi\chi=\pi\otimes (\chi\circ \det)$. 
Let $\eta_{E'/F'}$ be the unique character of $(F')^*$ with kernel, the index two subgroup $N(E'/F')$, and let $\eta$ be any extension of $\eta_{E'/F'}$ to a character of $(E')^*$. In particular, $\eta^\sigma=\eta^{-1}$. Clearly, $\pi$ is $\GL_m(F')$-distinguished if and only if $\pi\eta$ is $(\GL_m(F'),\eta_{E'/F'}\circ \det)$-distinguished.
Since, with the above notation, $(\pi_1\times \cdots\times \pi_t)\eta\simeq (\pi_1 \eta)\times \cdots\times (\pi_t \eta)$, the closed orbit argument also implies that if 
 $\pi_i$ is $(\GL_{m_i}(F'),\eta_{E'/F'}\circ \det)$-distinguished for all $i\in [1,t]$ then $\pi_1\times \cdots\times \pi_t$ is $(\GL_m(F'),\eta_{E'/F'}\circ \det)$-distinguished. Similarly,   $(\pi^\vee \times \pi^\sigma)\eta\simeq (\pi\eta^{-1})^\vee \times (\pi\eta^{-1})^\sigma$ and therefore, the open orbit argument also implies that for any representation  $\pi$ of $\GL_m(E')$ of finite length, the representation $\pi^\vee\times \pi^\sigma$ of $\GL_{2m}(E')$ is $(\GL_{2m}(F'),\eta_{E'/F'})$-distinguished.
In the next lemma we freely use both the open and the closed orbit argument.

We make the following simple observation that, in conjunction with Theorem  \ref{thm dist ind class}, will allow us to relate Galois distinction for a classical group and a general linear group.
\begin{lemma}\label{lem gldist}
Let $N=2(n_1+\cdots+n_k)+r$, $\Pi_0$ a representation of $\GL_r(E')$, $\pi_i$ an irreducible representation of $\GL_{n_i}(E')$, $i\in [1,k]$ and $\chi$ be either the trivial character of $(F')^*$ or $\eta_{E'/F'}$.
Assume that
\begin{enumerate}
\item the representation 
\[
\Pi=\pi_1\times\cdots \times  \pi_k \times \Pi_0 \times (\pi_k^\vee)^\tau \times \cdots \times (\pi_1^\vee)^\tau
\]
of $\GL_N(E')$ is irreducible, 
\item $\Pi_0$ is $(\GL_r(F'),\chi\circ \det)$-distinguished and 
\item there exist an involution $\rho\in S_k$ and a subset $\set$ of $[1,k]$ such that $\rho(\set)=\set$ and $n_{\rho(i)}=n_i$, $i\in [1,k]$ and $y_i\in Y_{n_i}(E'/(E')^{\sigma\tau},\sgn)$ for any  $i\in \set$ such that $\rho(i)=i$ such that 
\[
\begin{array}{ll}
\pi_{\rho(i)}\simeq (\pi_i^\vee)^\sigma & i\not\in \set,\ \rho(i)\ne i \\
\pi_i \text{ is }(\GL_{n_i}(F'),\chi\circ \det)\text{-distinguished} & i\not\in\set,\ \rho(i)=i \\
\pi_{\rho(i)}\simeq \pi_i^{\tau\sigma} & i\in  \set, \ \rho(i)\ne i \\
\pi_i \text{ is }\U(y_i,E'/(E')^{\sigma\tau})-\text{distinguished} & i\in \set,\ \rho(i)=i. 
\end{array}
\]
\end{enumerate}
Then $\Pi$ is $(\GL_N(F'),\chi\circ \det)$-distinguished. 
\end{lemma}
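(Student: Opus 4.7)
The plan is to derive the lemma from Proposition \ref{prop lift 2} applied to the Galois symmetric pair $(\GL_N(E'), \GL_N(F'))$ with involution $\sigma$. Writing $\Pi = I_P^{\GL_N(E')}(\widetilde{\sigma})$ for the standard parabolic $P = M \ltimes U$ with Levi
\[
M = \GL_{n_1}(E') \times \cdots \times \GL_{n_k}(E') \times \GL_r(E') \times \GL_{n_k}(E') \times \cdots \times \GL_{n_1}(E')
\]
and inducing datum $\widetilde{\sigma} = \pi_1 \otimes \cdots \otimes \pi_k \otimes \Pi_0 \otimes (\pi_k^\vee)^\tau \otimes \cdots \otimes (\pi_1^\vee)^\tau$, the task is to exhibit an element $x' \in X = \{x \in \GL_N(E') : x \bar{x} = I_N\}$ such that $\theta_{x'}(M) = M$ and $\widetilde{\sigma}$ is $M_{x'}$-distinguished with respect to $\chi\circ\det$. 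Then, since $\Pi$ is irreducible by (1) and $X$ is a single $\GL_N(E')$-orbit by Hilbert 90 (so that $G_{x'}$ is $\GL_N(E')$-conjugate to $\GL_N(F')$), Proposition \ref{prop lift 2} yields the required $\GL_N(F')$-distinction of $\Pi$.

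The element $x'$ will be constructed as a block signed-permutation-type matrix encoded by $(\rho, \set, \{y_i\})$, in close analogy with the representatives $x_w(\{y_i\}_{i\in I(w)}, z)$ of \eqref{eq def x} for classical groups: fixed points $i \notin \set$ of $\rho$ contribute paired identity blocks across the mirror of $M$, fixed points $i \in \set$ contribute blocks built from $y_i$, two-cycles $\{i,\rho(i)\}$ contribute mirror-compatible block swaps, and the middle $\GL_r$-slot contains an element of $X_r$ whose $\GL_r(F')$-orbit is that of $I_r$. Computations parallel to Lemmas \ref{lem in x} and \ref{lem stab} should then show that $x' \in X$, that $\theta_{x'}(M) = M$, and that the stabilizer factors as
\[
M_{x'} \simeq \prod_{\substack{\rho(i)=i\\ i\notin\set}}\GL_{n_i}(F')\;\times\;\prod_{\substack{\rho(i)=i\\ i\in\set}}\U(y_i,E'/(E')^{\sigma\tau})\;\times\;\prod_{\{i,\rho(i)\}}\Delta(\GL_{n_i}(E'))\;\times\;\GL_r(F'),
\]
each factor acting on the mirror-paired blocks of $\widetilde{\sigma}$ through a specific (possibly twisted) embedding into $M$.

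The $M_{x'}$-distinction of $\widetilde{\sigma}$ will then be a factor-by-factor check using (2) and (3): the $\GL_{n_i}(F')$-factors use the $\chi$-distinction of $\pi_i$ for fixed points outside $\set$; the $\U(y_i, E'/(E')^{\sigma\tau})$-factors use the $\U$-distinction of $\pi_i$ for fixed points in $\set$; and for each two-cycle $\{i, \rho(i)\}$, the diagonal $\Delta(\GL_{n_i}(E'))$-action on $\pi_i \otimes \pi_{\rho(i)} \otimes (\pi_{\rho(i)}^\vee)^\tau \otimes (\pi_i^\vee)^\tau$ reduces, after applying the identification $\pi_{\rho(i)} \simeq (\pi_i^\vee)^\sigma$ (when $i,\rho(i)\notin\set$) or $\pi_{\rho(i)} \simeq \pi_i^{\sigma\tau}$ (when $i,\rho(i)\in\set$) from (3), to a product of canonical $\GL_{n_i}(E')$-invariant pairings $\pi_i \otimes \pi_i^\vee \to \C$ given by Schur's lemma. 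The middle $\GL_r(F')$-factor is hypothesis (2). The character $\chi$ is tracked uniformly: $\chi$ is quadratic and $\tau$-invariant (as $\tau$ preserves $N(E'/F')$, so $\eta_{E'/F'}^\tau = \eta_{E'/F'}$), so that duality and $\tau$-twists preserve $(\GL_m(F'),\chi\circ\det)$-distinction; for $\chi=\eta_{E'/F'}$ a uniform twist of $\Pi$ and $\widetilde{\sigma}$ by a fixed extension $\eta$ of $\eta_{E'/F'}$ to $(E')^*$, as discussed in \S\ref{ss gl dist}, reduces the problem to the trivial-character case.

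The main technical obstacle is the explicit construction of $x'$ and the verification that $M_{x'}$ splits as claimed, in particular the appearance of the $\U(y_i, E'/(E')^{\sigma\tau})$-factor for fixed points $i\in\set$: the $\sigma\tau$-involution arises from composing the $\sigma$-twist built into $\theta_{x'}$ with the $\tau$-twist already carried by the mirror factor $(\pi_i^\vee)^\tau$ of $\widetilde{\sigma}$. Once the orbit and stabilizer picture for $(\GL_N(E'), \GL_N(F'))$ is in place---a simpler version of the classical-group analysis of Section \ref{s main pf}, since $\GL_N$ carries no hermitian-form invariants and $X$ is already a single $\GL_N(E')$-orbit---the remaining verifications are routine.
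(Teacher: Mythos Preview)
Your stabilizer description contains a genuine error: for the Galois pair $(\GL_N(E'),\GL_N(F'))$ with involution $\sigma$, no element $x'\in X[M]$ can produce a factor of the form $\U(y_i,E'/(E')^{\sigma\tau})$ in $M_{x'}$. The $M$-admissible orbit analysis here is the standard one for $\GL_N$: the involution $\theta_{x'}$ permutes the $2k+1$ blocks of $M$, and $M_{x'}$ is a product of $\GL_m(F')$ for each fixed block and of a diagonally embedded $\GL_m(E')$ for each transposed pair of blocks. There is no hermitian structure in sight, and the $\tau$ living only in the inducing datum $\widetilde\sigma$ does not alter the group $M_{x'}$ itself. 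Consequently you cannot feed the hypothesis ``$\pi_i$ is $\U(y_i,E'/(E')^{\sigma\tau})$-distinguished'' directly into $M_{x'}$-distinction for $i\in\set$ with $\rho(i)=i$. What is actually needed there is only its consequence $\pi_i\simeq\pi_i^{\sigma\tau}$ (this is \cite[Theorem 6.1(1)]{MR2930996}); with that in hand, you swap block $i$ with its mirror block, obtain a diagonal $\GL_{n_i}(E')$-factor in $M_{x'}$, and distinction of $\pi_i\otimes(\pi_i^\vee)^\tau$ reduces via Schur to $\pi_i\simeq(((\pi_i^\vee)^\tau)^\vee)^{\sigma}\simeq\pi_i^{\sigma\tau}$.

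Once this is corrected your route does go through, but it is essentially a longer rederivation of the paper's argument rather than a different one. The paper invokes directly the classical open-orbit fact that $\pi^\vee\times\pi^\sigma$ is always $(\GL_{2m}(F'),\chi\circ\det)$-distinguished and the closed-orbit fact that products of $\chi$-distinguished factors are $\chi$-distinguished (see \S\ref{ss gl dist}). Using irreducibility of $\Pi$ to permute factors, it groups each $\pi_i$ (or its mirror) with a $\sigma$-dual partner: for $i\notin\set,\ \rho(i)\ne i$ pair $\pi_i$ with $\pi_{\rho(i)}\simeq(\pi_i^\vee)^\sigma$ and separately pair the mirrors; for $i\in\set,\ \rho(i)\ne i$ pair $\pi_i$ with $(\pi_{\rho(i)}^\vee)^\tau\simeq(\pi_i^\vee)^\sigma$; for $i\in\set,\ \rho(i)=i$ pair $\pi_i$ with its own mirror $(\pi_i^\vee)^\tau\simeq(\pi_i^{\sigma\tau\,\vee})^\tau=(\pi_i^\vee)^\sigma$ via \cite{MR2930996}; the remaining factors $\pi_i$, $(\pi_i^\vee)^\tau$ for $i\notin\set,\ \rho(i)=i$ and $\Pi_0$ are $\chi$-distinguished by hypothesis. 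Your construction of $x'$ and computation of $M_{x'}$ amounts to reproving these open/closed orbit statements inside this proof via Proposition~\ref{prop lift 2}; it buys no extra generality here and costs the explicit orbit bookkeeping.
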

\begin{proof}
Since $\Pi$ is irreducible we may freely permute its factors.
For $i\not\in \set$ such that  $\rho(i)\ne i$ we have that
\[
\pi_i \times \pi_{\rho(i)}\simeq \pi_i\times (\pi_i^\vee)^\sigma \ \ \ \text{and}\ \ \  (\pi_i^\vee)^\tau \times (\pi_{\rho(i)}^\vee)^\tau\simeq (\pi_i^\vee)^\tau \times \pi_i^{\sigma\tau}.
\]
For $i\in  \set$ such that $\rho(i)\ne i$ we have
\[
\pi_i \times  (\pi_{\rho(i)}^\vee)^\tau \simeq \pi_i \times (\pi_i^\vee)^\sigma.
\]
For $i\in  \set$ such that $\rho(i)= i$ it follows from \cite[Theorem 6.1(1)]{MR2930996} that $\pi_i\simeq \pi_i^{\sigma\tau}$ and therefore that 
\[
\pi_i\times (\pi_i^\vee)^\tau\simeq \pi_i^{\sigma\tau}\times  (\pi_i^\vee)^\tau.
\]
By the open orbit argument, it follows that $\Pi$ can be expressed as a product of representations each of the appropriate $\GL_m(E')$ that is $(\GL_m(F'),\chi\circ\det)$-distinguished. Therefore, $\Pi$ is $(\GL_N(F'),\chi\circ \det)$-distinguished by the closed orbit argument.  
\end{proof}

\subsection{L-groups and transfer maps}\label{ss lgp}
Recall (see  \S \ref{section Galois distinction and induction}) that $G^\circ=\mathbb{H^\circ}(E)$ where $\mathbb{H^\circ}$ is the connected component of the algebraic group $\mathbb{H}=\{g\in \GL_N:{}^tg^\tau \j[n] g=\j[n]\}$ defined over $F$. We denote by $W_k$ the Weil group of the non-archimedean local field $k$ and by $W_k'=W_k\times \SL_2(\C)$ the Weil-Deligne group.

Let $\bG^\circ=\mathbb{H}^\circ_E$ be the base change of $\mathbb{H^\circ}$ to $E$. Assume for the rest of this work that $\bG^\circ$ is quasi-split. The Langlands dual group (L-group) of $\bG^\circ$ is $\widehat{\mathbb{G^\circ}}(\C)\ltimes W_E$ with connected component $\widehat{\mathbb{G^\circ}}(\C)$. For many applications it suffices to keep track of the action of $W_E$ on $\widehat{\mathbb{G^\circ}}(\C)$ and it is convenient to consider a simplified version of the $L$-group. For instance, if $\mathbb{G}^\circ$ is split then $W_E$ acts trivially on $\widehat{\mathbb{G^\circ}}(\C)$ and we only keep track of the connected component.

We denote by ${}^LG^\circ$ the $L$-group of $G^\circ$ defined as follows. In the symplectic, the odd orthogonal and the split even orthogonal cases, we take 
${}^LG^\circ=\widehat{\mathbb{G^\circ}}(\C)$. Explicitly, $\widehat{\mathbb{G^\circ}}(\C)$ equals $\SO_{2n+1}(\C)$, $\Sp_{2n}(\C)$ or $\SO_{2n}(\C)$ in the symplectic, odd orthogonal or split even orthogonal case respectively. In the non-split but quasi-split even orthogonal case, $\mathbb{G^\circ}$ splits over a quadratic extension $Q/E$ and we set 
\[
{}^LG^\circ=\SO_{2n+2}(\C)\ltimes \Gal(Q/E).
\] 
Sending the Galois involution $\theta_{Q/E}$ of $Q/E$ to 
$s=\diag(I_n,w_2,I_n)$ we obtain an identification 
\[
{}^LG^\circ=\O_{2n+2}(\C)
\] 
with the split orthogonal group defined  with respect to $w_{2n+2}$. 
Hence in all these cases we have a natural imbedding of ${}^LG^\circ$ into $\GL_m(\C)$ where $m=m(n)$ equals $2n+1$ in the symplectic case, $2n$ in the split orthogonal case and $2n+2$ in the quasi-split non-split orthogonal case.

In the unitary case 
\[{}^LG^\circ=\GL_N(\C)\ltimes \Gal(E'/E)\] where the Galois involution $\tau$ of $E'/E$ acts on 
$\GL_N(\C)$ by \[\tau g\tau^{-1}=J_N {}^tg^{-1} J_N^{-1}\] where $J_N=\diag(1,-1,\dots,(-1)^{N-1})w_N$. In this case we set $m=m(n)=N$. 

We denote by $\Phi(G^\circ)$ the set of $L$-parameters for $G^\circ$. These are the $\widehat{\mathbb{G^\circ}}(\C)$-conjugacy classes $[\phi]$ of (admissible) $L$-homomorphisms $\phi:W'_E\rightarrow {}^LG^\circ$. Similarly, we denote by $\Phi(\GL_m(E'))$ the set of $L$-parameters of $\GL_m(E')$. That is, the $\GL_m(\C)$-conjugacy classes $[\phi]$ of $L$-homomorphisms $\phi:W'_{E'}\rightarrow \GL_m(\C)$. 

Note that in all cases an $L$-homomorphism $\phi:W'_E\rightarrow {}^LG^\circ$ for $G^\circ$ maps $W'_{E'}$ into $\GL_m(\C)$ and therefore defines an $L$-homomorphism for $\GL_m(E')$. Denote by $I:\Phi(G^\circ)\rightarrow \Phi(\GL_m(E'))$ the map defined by
\[
I([\phi])=[\phi|_{W'_{E'}}].
\]

The local Langlands correspondence is a conjectural classification of the irreducible representations of a local reductive group in terms of, so called, enhanced  $L$-parameters.  
The work of Arthur \cite{MR3135650} followed by that of Mok \cite{MR3338302} establish the local Langlands correspondence for tempered representations of the quasi-split classical groups $G^\circ$ that we consider. In fact, we will need a much weaker version of their result as formulated in \cite[Conjecture A]{MR3675168}.
In conjunction with \cite{MR3271238} or \cite{MR3834776} it provides a finite to one map from the set of isomorphism classes of irreducible representations of $G^\circ$ to the set $\Phi(G^\circ)$ of L-parameters of $G^\circ$ with certain natural properties. The fibers of this map form the $L$-packets for $G^\circ$.
Together with the local Langlands correspondence for general linear groups (\cite{MR1738446},\cite{MR1876802}) this defines a transfer of irreducible representations from $G^\circ$ to $\GL_m(E')$ along the map $I$. We denote by $T(\pi)$ the transfer of an irreducible representation $\pi$ of $G^\circ$. It is the representation of $\GL_m(E')$ with parameter $I([\phi])$ where $[\phi]$ is the parameter of $\pi$. 

It is known by the work of Arthur, Mok and others (see \cite{MR3801418}), that whenever one fixes a non-degenerate character $\psi$ of the unipotent radical of the Borel subgroup of $G^\circ$, each tempered L-packet contains a unique $\psi$-generic representation. 

\subsection{Transfer of induced generic representations}\label{ss trans ind}
Throughout this section fix a decomposition $n=n_1+\cdots+n_k+r$ and representations $\pi_0$ of $G_r^\circ$ and $\pi_i$ of $\GL_{n_i}(E')$, $i\in [1,k]$. In the split even orthogonal case, if $r=0$ we also fix a sign in order to choose one of the two standard parabolic subgroups of type $(n_1,\dots,n_k)$ (see \S\ref{ss parab}). Consider the representations
\[
\pi=\pi_1\times \cdots \times \pi_k\rtimes \pi_0
\] 
of $G^\circ$ and 
\[
\Pi=\pi_1\times \cdots \times \pi_k\times T(\pi_0) \times (\pi_k^\vee)^\tau \times \cdots \times (\pi_1^\vee)^\tau.
\]
of $\GL_{m}(E')$ where $m=m(n)$ and the representation $T(\pi_0)$ of $\GL_{m(r)}(E')$ is  the transfer of $\pi_0$ defined by the local Langlands correspondence.

We will say that an irreducible representation of $G^\circ$ is generic if it has a Whittaker model with respect to a non-degenerate character of the  standard maximal unipotent subgroup of $G^\circ$. 
By the standard module theorem \cite[Theorem 1.1]{MR1847492} every irreducible generic representation of $G^\circ$ is of the form $\pi$ above where $\pi_0$ is an irreducible generic tempered representation of $G_r^\circ$ and $\pi_1,\dots,\pi_k$ are irreducible essentially square-integrable representations such that $e(\pi_1)\ge \cdots\ge e(\pi_k)>0$. 
The standard parabolic subgroup of $G^\circ$ and the data $(\pi_1,\dots,\pi_k;\pi_0)$ are uniquely determined by $\pi$.
When this is the case, we will show that $\Pi$ is also irreducible and generic and in fact $\Pi=T(\pi)$.

\begin{lemma}\label{lemma explicit transfer}
Assume that $\pi_0$ is tempered, $\pi_1,\dots,\pi_k$ are essentially square-integrable, $e(\pi_1)\ge \cdots\ge e(\pi_k)\ge 0$ and both $\pi$ and $\Pi$ are irreducible.
Then $\Pi=T(\pi)$.
\end{lemma}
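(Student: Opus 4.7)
The plan is to show that $\Pi$ and $T(\pi)$ have the same Langlands parameter as irreducible representations of $\GL_m(E')$, and then conclude by the uniqueness clause of the local Langlands correspondence for general linear groups. By the definition recalled in \S\ref{ss lgp}, the parameter of $T(\pi)$ is $\phi_\pi\rest_{W'_{E'}}$ post-composed with the standard embedding ${}^LG^\circ\hookrightarrow\GL_m(\C)$, where $\phi_\pi\colon W'_E\to{}^LG^\circ$ is the $L$-parameter of $\pi$ provided by the Arthur--Mok parametrization.

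The first step is to analyze $\phi_\pi$ using the compatibility of the local Langlands correspondence with parabolic induction. Under our assumption $e(\pi_1)\ge\cdots\ge e(\pi_k)\ge 0$, the representation $\pi$ is either tempered (when $e(\pi_k)=0$) or the Langlands quotient of the standard module defining it (when $e(\pi_k)>0$); in both cases the irreducibility hypothesis identifies $\pi$ with the full induced representation, and $\phi_\pi$ factors through the $L$-embedding of the Levi $\GL_{n_1}(E')\times\cdots\times\GL_{n_k}(E')\times G_r^\circ$ attached to the inducing data $(\pi_1,\dots,\pi_k;\pi_0)$.

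Next I would make the standard $L$-embedding explicit case by case (symplectic, quasi-split even orthogonal, odd orthogonal, unitary). Post-composition with ${}^LG^\circ\hookrightarrow\GL_m(\C)$ sends the $L$-parameter of the Levi to
\[
\phi_1\oplus\cdots\oplus\phi_k\oplus\phi_{T(\pi_0)}\oplus(\phi_k^\vee)^\tau\oplus\cdots\oplus(\phi_1^\vee)^\tau,
\]
where $\phi_i$ is the parameter of $\pi_i$ and $\phi_{T(\pi_0)}=\phi_{\pi_0}\rest_{W'_{E'}}$ is the parameter of $T(\pi_0)$. The appearance of the dualized conjugate pieces reflects the fact that each $\GL_{n_i}$-factor inside the Siegel-type Levi of the classical group embeds into ${}^LG^\circ$ essentially as $g\mapsto\diag(g,\ast,(g^\vee)^\tau)$ (with $\tau$ acting nontrivially in the unitary and non-split orthogonal cases, and trivially otherwise). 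On the other hand, the local Langlands correspondence for $\GL_m(E')$ together with compatibility of parameters with parabolic induction identifies the parameter of the irreducible representation $\Pi$ with exactly this direct sum. Hence $\Pi$ and $T(\pi)$ have the same $L$-parameter, and so $\Pi\simeq T(\pi)$.

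The main technical obstacle is to verify that the standard $L$-embedding produces the claimed direct sum in each of the four families of classical groups; this is essentially a bookkeeping calculation with the explicit forms of ${}^LG^\circ$ recalled in \S\ref{ss lgp}, but it must be carried out carefully to get the twist by $\tau$ and the duality correct in the unitary and non-split even orthogonal cases. A secondary point is to cite the compatibility of the Arthur--Mok parametrization with parabolic induction in the precise form needed; this is built into the characterization of \cite[Conjecture A]{MR3675168}, \cite{MR3135650} and \cite{MR3338302} and, crucially, does not require identifying $\pi$ within its $L$-packet, since the parameter of $T(\pi)$ depends only on $[\phi_\pi]$.
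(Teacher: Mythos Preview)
Your proposal is correct and follows essentially the same approach as the paper: both arguments use compatibility of the Arthur--Mok parametrization with parabolic induction (via \cite[Conjecture~A]{MR3675168}) and with the Langlands classification (the paper cites \cite{MR3271238} or \cite{MR3834776} for this) to factor $\phi_\pi$ through the $L$-group of the Levi, then identify the restriction to $W'_{E'}$ with the block-diagonal parameter $\diag(\phi_1,\dots,\phi_k,(\phi_0)|_{W'_{E'}},\phi_k',\dots,\phi_1')$ that corresponds to $\Pi$. The paper compresses your case-by-case verification of the $L$-embedding into the single phrase ``from the compatibility condition defining an $L$-homomorphism one checks that\dots'', but the underlying computation is the one you describe.
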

\begin{proof}
Let $M^\circ\simeq \GL_{n_1}(E')\times \cdots\times \GL_{n_k}(E')\times G_r^\circ$ be the standard Levi subgroup of the standard parabolic subgroup of $G^\circ$ fixed  in the discussion above from which $\pi$ is induced. Denote by $^LM^\circ$ the relevant standard Levi subgroup of $^LG^\circ$ (cf. \cite[\S3]{MR546608}) corresponding to $M^\circ$. Let $M'$ be the standard Levi subgroup  of $\GL_{m(n)}(\C)$ of type 
$(n_1,\dots,n_k,m(r),n_k,\dots,n_1)$. 
The connected component of
$^LM^\circ$ equals $M'\cap G^\circ$. 

By compatibility of the local Langlands correspondence with parabolic induction for tempered representations (as in \cite[Conjecture A]{MR3675168}) and 
with the Langlands classification (\cite{MR3271238} or \cite{MR3834776}), the L-parameter $[\phi]$ of $\pi$ is represented by some L-homomorphism 
\[\phi:W'_E\rightarrow  {}^LM^\circ\] with image in ${}^LM^\circ$. Then $\phi$ maps $W'_{E'}$ into the connected component of ${}^LM^\circ$ and in particular into $M'=\diag(\GL_{n_1}(\C),\dots,\GL_{n_k}(\C),\GL_{m(r)}(\C),\GL_{n_k}(\C),\dots,\GL_{n_1}(\C))$ and from the compatibility condition defining an $L$-homomorphism one checks that 
\[
I[\phi]=[\phi_{|W'_{E'}}]=[\diag(\phi_1,\dots,\phi_k,(\phi_0)_{|W'_{E'}},\phi_k',\dots,\phi_1')]
\]
where $[\phi_i]$ is the $L$-parameter of $\pi_i$, $i\in[0,k]$ and $[\phi_i']$ the $L$-parameter of $(\pi_i^\vee)^\tau$, $i\in[1,k]$. That is, $[\phi]$ is the $L$-parameter for $\Pi$.

\end{proof}

\begin{lemma}\label{lemma transfer of generic is irreducible}
Assume that $\pi_0$ is generic tempered, $\pi_1,\dots,\pi_k$ are essentially square-integrable, $e(\pi_1)\ge \cdots\ge e(\pi_k)\ge 0$ and $\pi$ is irreducible.
Then $\Pi$ is irreducible and generic. In particular $\Pi=T(\pi)$.
\end{lemma}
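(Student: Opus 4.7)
The plan is to first reduce irreducibility of $\Pi$ to Zelevinsky's non-linkage criterion for general linear groups, then derive the required non-linkage from the irreducibility of $\pi$, and finally invoke Lemma~\ref{lemma explicit transfer} to conclude.

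Since $\pi_0$ is generic tempered, the local Langlands correspondence for classical groups established by Arthur \cite{MR3135650} and Mok \cite{MR3338302} ensures that $T(\pi_0)$ is an irreducible generic tempered representation of $\GL_{m(r)}(E')$. Every tempered representation of a general linear group is a full parabolic induction from square-integrable representations, so write
\[
T(\pi_0)\simeq \tau_1\times\cdots\times\tau_s
\]
with each $\tau_j$ square-integrable. By transitivity of parabolic induction,
\[
\Pi\simeq \pi_1\times\cdots\times\pi_k\times\tau_1\times\cdots\times\tau_s\times(\pi_k^\vee)^\tau\times\cdots\times(\pi_1^\vee)^\tau
\]
is a full parabolic induction from essentially square-integrable representations of $\GL$-subgroups. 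By Zelevinsky's irreducibility criterion, $\Pi$ is irreducible if and only if no two of these factors form a linked pair.

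The core step is to rule out every possible linkage using the irreducibility of $\pi$. Invoking the reducibility criteria for generic induced representations of quasi-split classical groups (in the spirit of Mui\'c and Jantzen), irreducibility of $\pi=\pi_1\times\cdots\times\pi_k\rtimes\pi_0$ is equivalent to the conjunction of: (i) the $\pi_i$'s are pairwise non-linked in the $\GL$ sense; (ii) no $\pi_i$ is linked to $\pi_j^*\simeq(\pi_j^\vee)^\tau$ for $i\ne j$ (classical linkage); (iii) no $\pi_i$ is self-classically-linked; and (iv) each $\pi_i\rtimes\pi_0$ is irreducible, which for generic tempered $\pi_0$ amounts to $\pi_i$ not being linked in the $\GL$ sense to any component $\tau_l$ of $T(\pi_0)$. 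Together with the irreducibility of $T(\pi_0)$ (the non-linkage among the $\tau_l$'s), these are exactly the pairwise non-linkage conditions for the factors of $\Pi$; in particular the crucial dictionary ``$\pi_i$ classically linked to $\pi_j^*$ inside the classical group'' $\Longleftrightarrow$ ``$\pi_i$ linked to $(\pi_j^\vee)^\tau$ in $\GL$'' uses the identification $\pi^*\simeq(\pi^\vee)^\tau$ of \cite{MR0404534}.

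The main obstacle will be assembling this dictionary cleanly, especially the identification of reducibility points of $\pi_i\rtimes\pi_0$ with $\GL$-linkages of $\pi_i$ to segments of $T(\pi_0)$; this rests on the compatibility of local Langlands with parabolic induction and the Langlands--Shahidi interpretation of reducibility. Granting it, $\Pi$ is irreducible. Genericity of $\Pi$ then follows because a full parabolic induction from essentially square-integrable (hence generic) representations of $\GL$ is itself generic. Finally, Lemma~\ref{lemma explicit transfer} applies to give $\Pi=T(\pi)$.
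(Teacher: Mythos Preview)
Your outline follows the same strategy as the paper's proof: reduce irreducibility of $\Pi$ to Zelevinsky's pairwise non-linkage criterion and derive the non-linkages from the irreducibility of $\pi$. The decisive step is your (iv), and this is where the proposal has a genuine gap.

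You assert that, for generic tempered $\pi_0$, irreducibility of $\pi_i\rtimes\pi_0$ ``amounts to'' $\pi_i$ not being $\GL$-linked to any discrete-series constituent $\tau_l$ of $T(\pi_0)$. This is precisely the point that requires work, and it is not a ready-made dictionary in the literature you cite. The paper establishes the needed implication as follows: first it reduces to $e(\pi_k)>0$ by absorbing the tempered $\pi_i$'s into $\pi_0$ (the case $e(\pi_1)=0$ being handled directly by Bernstein's theorem on unitary induction). Then, for $e(\pi_i)>0$, Mui\'c's standard-module criterion \cite[Theorem 1.1]{MR1847492} gives $L(1,\pi_i^\vee,T(\pi_0))=\prod_l L(1,\pi_i^\vee,\tau_l)\ne\infty$, which (via \cite[Lemma 2.3]{MR3785415}) rules out ``$\tau_l$ precedes $\pi_i$''. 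The other direction of linkage, ``$\pi_i$ precedes $\tau_l$'', is ruled out not by any reducibility criterion but simply by the exponent inequality $e(\pi_i)>0=e(\tau_l)$. Your sketch omits both the $L$-function step and the exponent argument; without them, (iv) is unjustified.

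Note also that the equivalence you state in (iv) is actually false: when $e(\pi_i)=0$, the segments of $\pi_i$ and of each $\tau_l$ are both centered at $0$ and hence never linked, yet $\pi_i\rtimes\pi_0$ can well be reducible (tempered reducibility). Only the forward implication is needed for the proof, and it does hold, but for different reasons in the two regimes $e(\pi_i)>0$ and $e(\pi_i)=0$; this is why the paper separates the cases. Finally, to cover the pairs $(\pi_i^\vee)^\tau$ versus $\tau_l$ you must use that the multiset $\{\tau_l\}$ is stable under $\delta\mapsto(\delta^\vee)^\tau$, i.e., that $T(\pi_0)$ is conjugate-self-dual \cite[Theorem 8.1]{MR3202556}; the paper invokes this explicitly, and your proposal should as well.
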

\begin{proof}
Note that $T(\pi_0)$ is tempered and in particular unitary.
If $e(\pi_1)=0$ then $\Pi$ is induced from a unitary representation and is therefore irreducible by a theorem of Bernstein \cite{MR748505}. 
Otherwise, let $l$ be maximal such that $e(\pi_l)>0$. Replacing $\pi_0$ by $\pi_{l+1}\times \cdots\times \pi_k \rtimes \pi_0$ and $k$ by $l$ we may assume without loss of generality that $l=k$, i.e., that $e(\pi_k)>0$.

Since $\pi$ is irreducible, we also have that $\pi_i' \rtimes \pi_0$, $\pi_i'\times \pi_j'$ are irreducible for all $i,
,j\in [1,k]$ where $\pi_i'\in \{\pi_i,(\pi_i^\vee)^\tau\}$ for $i\in [1,k]$.
Since $T(\pi_0)$ is tempered it is of the form
\[
T(\pi_0)\simeq \delta_1 \times \cdots \times \delta_t
\]
for a unique multi-set $\{\delta_1,\dots,\delta_t\}$ of square-integrable representations.  Since the standard module $\pi_i \rtimes \pi_0$ is irreducible it follows from \cite[Lemma 1.1 and Theorem 1.1]{MR1847492} that 
\[
L(1,\pi_i^\vee,\pi_0)=L(1,\pi_i^\vee,T(\pi_0))=\prod_{j=1}^t L(1,\pi_i^\vee,\delta_j)\ne \infty
\]
where $L(s,\sigma_1,\sigma_2)$ denotes the local Rankin-Selberg $L$-function of irreducible representations $\sigma_i$ of $\GL_{m_i}(E')$, $i=1,2$. The first equality can be viewed as a definition of the left hand side and the second follows from \cite[Proposition 8.4]{MR701565}.
It follows from \cite[Lemma 2.3]{MR3785415}, in its terminology, that $\delta_j$ does not precede $\pi_i$ and since $e(\pi_i)>0=e(\delta_j)$ it is also the case that $\pi_i$ does not precede $\delta_j$. It therefore follows from \cite[Theorem 9.7]{MR584084} that $\pi_i \times \delta_j$ is irreducible for every $i\in [1,k]$ and $j\in [1,t]$. It follows from \cite[Theorem 8.1]{MR3202556} that $(T(\pi_0)^\vee)^\tau\simeq T(\pi_0)$ and consequently $\{\delta_1,\dots,\delta_t\}$ is stable under $\delta\mapsto (\delta^\vee)^\tau$. Hence $\pi_i\times (\delta_j^\vee)^\tau$ and therefore also its conjugate dual $(\pi_i^\vee)^\tau\times \delta_j$ are irreducible. It now follows from \cite[Theorem 9.7]{MR584084} that $\Pi$ is irreducible and generic. The equality $\Pi=T(\pi)$ follows from Lemma \ref{lemma explicit transfer}. 

\end{proof}

\subsection{A consequence of a conjecture of Prasad}

We state a consequence of a conjecture of Prasad for distinguished representations in the context of a Galois pair. 
The conjecture is formulated in terms of the opposition group $\Y^\op=\Y^{\op,E/F}$ defined in \cite[\S 5]{DP}  and a character $\omega_{\Y}=\omega_{\Y,E/F}$ of $\Y(F)$ of order at most two constructed in \cite[Proposition 6.4]{DP} for any connected reductive group $\Y$ defined and quasi-split over $F$ with respect to the quadratic extension $E/F$. 

We begin by explicating $\Y^{\op}$ and $\omega_{\Y}$ for groups $\Y$ relevant to this work.

\subsubsection{A table for $\Y^{\op}$ and $\omega_{\Y,E/F}$.}

We denote by $\U_{m,K/F}$ the quasi-split unitary group defined over $F$ associated with a quadratic extension $K/F$ and defined by $w_m$. Let $\wsn: \U_{m,K/F}(F)\rightarrow K^*/F^*$ be the composition of $\det:\U_{m,K/F}(F)\rightarrow (K/F)_1$ with the inverse of the isomorphism 
$z F^*\mapsto zz^{-\tau}$ from $ K^*/F^*$ to $\U_{1,K/F}(F)$. 
(In \cite{MR0104764}, Wall interpreted this map as a spinor norm analogue.) For the purpose of the table below, $K/F$ will be a quadratic extension such that $K\cap E=F$ so that $KE/F$ is a Klein extension and contains a third quadratic extension of $F$, that we denote by $K'$, different than $K$ and $E$.

We denote by $\SO_m$ any special orthogonal group defined and quasi-split over $F$ of the form $\{g\in \SL_m: {}^t g yg=y\}$ for some $y={}^t y\in \GL_m(F)$. ( With this convention $\SO_2$ stands for a rank one torus that is either split over $F$ (isomorphic to $\GL_1$) or splits over a quadratic extension $L$ of $F$ (isomorphic to $\U_{1,L/F}$). We denote by $\sn:\SO_m(F)\rightarrow F^*/F^{*2}$ the spinor norm map. 

We also denote by $\eta_{E/F}$ the quadratic character of $F^*$ with kernel $N(E/F)$.
We have\\

\begin{center} {\centering\begin{tabular}{|L|L|L|}
 \hline
  \Y & \omega_{\Y} & \Y^\op \\ 
  \hline  
 \GL_m & \eta_{E/F}^{m-1}\circ \det & \U_{m,E/F}  \\ 
 \hline 
 \U_{m,E/F} & \triv & \GL_n \\ 
 \hline 
  \Sp_{2m} & \triv & \Sp_{2m} \\ 
  \hline
 \SO_m & \eta_{E/F}^m\circ \sn & \SO_m \\ 
 \hline 
 U_{m,K/F} & \eta_{EK/K}^{m-1}\circ \wsn & U_{m,K'/F} \\
 \hline 
\end{tabular}}\end{center}

\vspace{0.5cm}

In the sequel, this table is sometimes used without further mention.
For the sake of completeness we provide the details of the computation in Appendix \ref{app}.

\subsubsection{Prasad's conjecture and transfer of distinction for quasi-split classical groups}

Let $\Y$ be an $F$-quasi-split group. Since $(\Y^\op)_E\simeq \Y_E$ the $L$-group ${}^L \Y(E)$ of $\Y_E$ is a subgroup of the $L$-group ${}^L \Y^\op(F)$ of $\Y^\op$ with the same connected component and for any $L$-homomorphism $\phi:W_F' \rightarrow {}^L \Y^\op(F)$ we have $\phi(W_E')\subseteq {}^L \Y(E)$. This defines a base change map 
\[
\BC_F^E:\Phi(\Y^\op(F))\mapsto  \Phi(\Y(E)), \ \ \ \BC_F^E([\phi])=[\phi_{|W'_E}]
\]
from $L$-parameters of $\Y^\op(F)$ to $L$-parameters of $\Y(E)$.


We say that a representation $\pi$ of $\Y(E)$ is \emph{$E/F$-generic} if it is $(U,\psi)$-generic for a $\sigma$-stable maximal unipotent subgroup $U$ of $\Y(E)$ and a non-degenerate character $\psi$ of $U$ satisfying $\psi^\sigma=\psi^{-1}$ (which is the same as $\psi_{|U^\sigma}\equiv \triv$).
The following is a part of \cite[Conjecture 13.3]{DP}. 

\begin{conjecture}[Dipendra Prasad]\label{conjecture Prasad}
Assume that the Langlands correspondence is known for $\Y(E)$ and $\Y^{\op}(F)$. 
\begin{enumerate}
\item\label{part irreps} Let $\pi$ be a $(\Y(F),\omega_{\Y,E/F})$-distinguished irreducible representation of $\Y(E)$. Then the parameter of the L-packet of $\pi$ belongs to 
$\BC_F^E(\Phi(\Y^\op(F)))$. 
\item Conversely, if the $L$-packet of an $E/F$-generic representation $\pi$ of $\Y(E)$ corresponds to a parameter in $\BC_F^E(\Phi(\Y^\op(F)))$ then $\pi$ is $(\Y(F),\omega_{\Y,E/F})$-distinguished.
\end{enumerate}
\end{conjecture}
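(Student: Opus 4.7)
The plan is to approach both directions of the conjecture by combining the characterization of distinction for induced representations from cuspidal data (Theorem \ref{thm dist ind class} and Corollary \ref{cor dist inv}) with the explicit description of functorial transfer in \S\ref{ss trans ind}, reducing everything to case-checking within each classical Galois pair. First I would restrict attention to $\Y=\mathbb{H}^\circ$ a quasi-split classical group, so that the table in \S\ref{s llc} gives an explicit $\omega_{\Y,E/F}$ and the opposition $\Y^{\op}$ is another classical group of the same rank; the Langlands quotient theorem together with Corollary \ref{cor cusp dist} then reduce an arbitrary irreducible $\pi$ to a standard module $I_P^{G^\circ}(\sigma)$ with $\sigma$ cuspidal, and distinction of $\pi$ forces distinction of $I_P^{G^\circ}(\sigma)$, which is amenable to Theorem \ref{thm dist ind class}.

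For the necessity direction (1), given a $(\Y(F),\omega_{\Y,E/F})$-distinguished irreducible $\pi$, Corollary \ref{cor dist inv} forces the cuspidal support $(\pi_1,\dots,\pi_k;\pi_0)$ to carry an involutive pairing $w=\rho\set\in\weyl_k^\circ[2,M]$ with $w(\pi_1,\dots,\pi_k;\pi_0)\simeq(\bar\pi_1^\vee,\dots,\bar\pi_k^\vee;\pi_0)$, and $\pi_0$ must be distinguished by the fixed points of a twisted involution on $G_r^\circ$. Feeding this combinatorial data through the explicit transfer of Lemma \ref{lemma explicit transfer}, I would recognise the $\GL_m(\C)$-valued parameter $I([\phi_\pi])$ as the restriction to $W'_{E'}$ of an $L$-homomorphism landing in ${}^L\Y^{\op}(F)$; the self-duality and invariance relations between the $\pi_i$ encode exactly the additional $W_F$-equivariance needed for this factorisation, so $[\phi_\pi]\in\BC_F^E(\Phi(\Y^{\op}(F)))$.

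For the sufficiency direction (2), starting from an $E/F$-generic $\pi$ whose packet parameter factors through ${}^L\Y^{\op}(F)$, the standard module theorem together with the compatibility of base change with parabolic induction let me write $\pi\simeq\pi_1\times\cdots\times\pi_k\ltimes\pi_0$ where the cuspidal supports inherit the required pairing and self-duality from the $W_F$-structure of the parameter. I would then invoke the sufficiency direction of Theorem \ref{thm dist ind class} to build an explicit $M^\circ_x$-invariant form on the cuspidal datum, using $\GL_{n_i}(F')$-distinction (or its $\eta_{E'/F'}$-twisted variant) for $i\not\in\set$ with $\rho(i)=i$ and $\U(y_i,E'/(E')^{\sigma\tau})$-distinction for $i\in\set$ with $\rho(i)=i$, the parameters $y_i$ and the base orbit $z$ being chosen via the parity and Hasse invariant relations in Theorem \ref{thm dist ind class} so as to match $\omega_{\Y,E/F}$ (the explicit formulas of Appendix \ref{app} being essential here). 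The intertwining period from Proposition \ref{prop lift} and Theorem \ref{thm merom} then yields a nonzero $G^\circ_x$-invariant form on $I_P^{G^\circ}(\sigma)$, and its descent to the $E/F$-generic quotient $\pi$ is secured by uniqueness of the Whittaker model combined with generalized injectivity.

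The hardest step will be this descent: in general $\Hom_{G^\circ_x}(I_P^{G^\circ}(\sigma),\triv)$ is not one-dimensional, so the constructed intertwining period may vanish on a specific irreducible quotient; controlling it requires the full functional equations for $J_P^{G^\circ}(x,\ell,\sigma,\lambda)$ of Remark \ref{rem descent of linear form}, with proportionality constants $\gamma_w$ whose explicit expression in terms of local $L$- and $\varepsilon$-factors is only known in special cases. A second delicate point is matching the distinguishing character exactly: the Hasse invariant and discriminant discrepancies in Theorem \ref{thm dist ind class} must be reconciled with Prasad's $\omega_{\Y,E/F}$ from \cite{DP}, which forces the computations of Appendix \ref{app} and, in the unitary case, an appeal to Lemma \ref{lem klein ext}. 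Finally, for non-$E/F$-generic members of a packet one would need conjectural endoscopic character relations adapted to Galois symmetric spaces; these are out of reach at present, which is why I expect the body of the paper to establish only the partial one-sided consequence in Theorem \ref{thm dist and trans} rather than the full conjecture.
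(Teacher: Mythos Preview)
The statement is a \emph{conjecture}, not a theorem, and the paper does not prove it. There is no proof in the paper to compare your proposal against: the authors state it as Prasad's conjecture, note in Remark~\ref{rmk conj GL} that it is known only for $\Y=\GL_n$ (by work of Kable, Anandavardhanan--Kable--Tandon, Anandavardhanan--Rajan, Henniart, Matringe, Gurevich), and then prove only \emph{consequences} of part~(1) under the assumption that it holds (Proposition~\ref{prop distT}), together with an unconditional verification of those consequences for a restricted class of representations (Theorem~\ref{thm dist and trans}). You correctly anticipate this in your final paragraph.

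Your proposed strategy does not close the gap. For direction~(1), reducing an arbitrary irreducible $\pi$ to its cuspidal support via Corollary~\ref{cor cusp dist} only tells you that the \emph{induced} representation $I_P^{G^\circ}(\sigma)$ is distinguished, and Corollary~\ref{cor dist inv} then constrains the cuspidal data; but the conjugate-self-duality relations you extract on the $\pi_i$ do not suffice to exhibit $[\phi_\pi]$ as a base change from $\Y^{\op}(F)$---they show at best that $I([\phi_\pi])$ is conjugate-self-dual of the right sign, which is weaker (and is exactly what the paper exploits in Proposition~\ref{prop distT}, \emph{assuming} the conjecture, via Theorems~\ref{thm ggp} and~\ref{thm conjugate orth.-simp. for GL}). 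For direction~(2), you correctly identify the obstruction yourself: the descent of the intertwining period from $I_P^{G^\circ}(\sigma)$ to its Langlands quotient is not controlled by any result in the paper, and your appeal to ``uniqueness of the Whittaker model combined with generalized injectivity'' does not give this---generic injectivity places the generic constituent as a subrepresentation, not a quotient, so a $G_x^\circ$-invariant form on the full induced module need not restrict nontrivially. This descent problem is precisely what Remark~\ref{rem descent of linear form} flags as open in general.
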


\begin{remark}\label{rmk conj GL} 
This conjecture is now a theorem when $\Y=\GL_n$. It follows from the results in \cite{MR2075482, MR2063106, MR2146859, MR2595008, MR2755483, MR3421655}.
\end{remark}
Fix a quadratic extension $K$ of $F$ contained in $E'$ (so that $E'/K$ is also quadratic) and let $\U_n(E'/K)=\U_{n,E'/K}(K)$. We will need the following characterization  of the image of 
\[
\BC_{K}^{E'}:\Phi(\U_n(E'/K))\rightarrow \Phi(\GL_n(E')).
\]

We say that an L-homomorphism 
\[\phi_{E'}:W'_{E'}\rightarrow \GL_n(\C)\] (or the $L$-parameter $[\phi]$) is $E'/K$-conjugate of sign $\epsilon\in \{\pm 1\}$ (we also say $E'/K$-conjugate orthogonal if the sign is $1$ and $E'/K$-conjugate symplectic if the sign is $-1$) if there exists a non-degenerate 
bilinear form 
\[
B:\C^n\times \C^n\rightarrow \C
\] 
such that 
\begin{equation}\label{eq Bcond}
 B(\phi_{E'}(w )x, \phi_{E'}(sws^{-1}) y)=B(x, y) \ \ \ \text{and} \ \ \ B(x, \phi_{E'}(s^2)y)=\epsilon B(y,x) 
\end{equation}
for $w\in W_{E'}',\ \ \ x,\,y\in\C^n$ where $s$ is a choice of an element of $W_K'\setminus W_{E'}'$.
 
\begin{theorem}\emph{(}\cite[Theorem 8.1]{MR3202556}\emph{)}\label{thm ggp}
An L-homomorphism $\phi_{E'}$ as above is $E'/K$-conjugate of sign $(-1)^{n-1}$ if and only if $[\phi_{E'}]\in \BC_{K}^{E'}(\Phi(\U_n(E'/K)))$. 
\end{theorem}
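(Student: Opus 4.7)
The plan is to make explicit the dictionary between extensions of $\phi_{E'}$ to an $L$-parameter of $\U_n(E'/K)$ and bilinear forms on $\C^n$ with the symmetry property \eqref{eq Bcond}. Fix a lift $s\in W'_K$ of the non-trivial element of $\Gal(E'/K)$; the choice of $s$ is irrelevant since $W'_{E'}$ is of index two in $W'_K$ and modifying $s$ by $W'_{E'}$ only conjugates the data we construct.

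First, I would parameterize extensions. Recall from \S \ref{ss lgp} that ${}^L\U_n(E'/K)=\GL_n(\C)\rtimes \Gal(E'/K)$ with the Galois involution $\tau$ acting via $\tau g\tau^{-1}=J_n\,{}^tg^{-1}J_n^{-1}$. An $L$-homomorphism $\phi:W'_K\to {}^L\U_n(E'/K)$ with $\phi|_{W'_{E'}}=\phi_{E'}$ is determined by $\phi(s)=(A,\tau)$ for some $A\in \GL_n(\C)$, and the homomorphism property is equivalent to the two conditions
\begin{equation}\label{eq A cond}
\phi_{E'}(sws^{-1})=A\,J_n\,{}^t\phi_{E'}(w)^{-1}J_n^{-1}A^{-1},\quad w\in W'_{E'},
\end{equation}
together with $\phi_{E'}(s^2)=A\,J_n\,{}^tA^{-1}J_n^{-1}$. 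Thus $[\phi_{E'}]\in \BC_K^{E'}(\Phi(\U_n(E'/K)))$ if and only if such an $A$ exists.

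Second, I would encode $A$ as a bilinear form. Given $A$, set $M=J_n^{-1}A^{-1}$ and $B(x,y)={}^txMy$. A short computation shows that \eqref{eq A cond} is equivalent to the first equality in \eqref{eq Bcond}. For the second equality, using the identity ${}^tJ_n=(-1)^{n-1}J_n$ (which follows directly from the definition $J_n=\diag(1,-1,\ldots,(-1)^{N-1})w_N$), one gets
\[
M\,\phi_{E'}(s^2)=J_n^{-1}A^{-1}\cdot A\,J_n\,{}^tA^{-1}J_n^{-1}=J_n^{-1}\,{}^tA^{-1}J_n^{-1}=(-1)^{n-1}\,{}^tM,
\]
and hence $B(x,\phi_{E'}(s^2)y)=(-1)^{n-1}B(y,x)$. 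Conversely, any non-degenerate $B$ satisfying \eqref{eq Bcond} with sign $\epsilon=(-1)^{n-1}$ arises in this way by setting $A=J_n^{-1}M^{-1}$, and the two conditions on $A$ are recovered by reversing the computation.

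Finally, I would verify that the construction respects equivalences: replacing $\phi_{E'}$ by a $\GL_n(\C)$-conjugate corresponds to transporting $B$ by the same change of basis, and changing $s$ to $sw_0$ with $w_0\in W'_{E'}$ replaces $A$ by $A\,\phi_{E'}(s w_0 s^{-1})$, which only multiplies $M$ on the right by an element fixing the form. The main (mild) obstacle is the bookkeeping of the sign $\epsilon=(-1)^{n-1}$, which is forced by the symmetry of $J_n$ and is the reason the same matrix $J_n$ governs both the unitary group's $L$-group action and the conjugate-duality symmetry on the $\GL_n(E')$-side.
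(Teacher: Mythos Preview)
The paper does not prove this theorem; it is quoted directly from Gan--Gross--Prasad \cite[Theorem 8.1]{MR3202556} and used as input. Your sketch is essentially the argument given there: one unwinds the definition of an extension of $\phi_{E'}$ to an $L$-homomorphism $W'_K\to \GL_n(\C)\rtimes\Gal(E'/K)$ in terms of the single element $\phi(s)=(A,\tau)$, and then translates the resulting conditions on $A$ into the existence of a non-degenerate bilinear form with the stated symmetry, the sign $(-1)^{n-1}$ arising from ${}^tJ_n=(-1)^{n-1}J_n$.

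One small arithmetic slip: in your displayed computation you write
\[
J_n^{-1}A^{-1}\cdot A\,J_n\,{}^tA^{-1}J_n^{-1}=J_n^{-1}\,{}^tA^{-1}J_n^{-1},
\]
but the left-hand side simplifies to ${}^tA^{-1}J_n^{-1}$ (the $J_n^{-1}J_n$ cancel). With this correction, since ${}^tM={}^tA^{-1}\,{}^tJ_n^{-1}=(-1)^{n-1}\,{}^tA^{-1}J_n^{-1}$, you indeed get $M\phi_{E'}(s^2)=(-1)^{n-1}\,{}^tM$ as claimed. Also, in the converse direction the inverse of $M=J_n^{-1}A^{-1}$ is $A=M^{-1}J_n^{-1}$, not $J_n^{-1}M^{-1}$; this is harmless for the argument but should be fixed.
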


We will also need the following slightly more precise result.

\begin{theorem}\emph{(}\cite{MR2075482, MR2063106, MR2146859, MR2595008, MR2755483}\emph{)}\label{thm conjugate orth.-simp. for GL}
Let $\pi$ be a generic irreducible representation of $\GL_n(E')$ with L-parameter $[\phi_\pi]$. Then $\phi_\pi$ is $E'/K$-conjugate symplectic if and only if $\pi$ is $(\GL_n(K),\eta_{E'/K}\circ \det)$-distinguished and $E'/K$-conjugate orthogonal if and only if $\pi$ is 
$\GL_n(K)$-distinguished.
\end{theorem}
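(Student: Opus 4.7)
The plan is to assemble two ingredients from the representation theory of $\GL_n$ over $p$-adic fields. First, I would invoke Prasad's conjecture for $\GL_n$, which by Remark \ref{rmk conj GL} is a theorem. Applied to $(\Y,\Y^{\op})=(\GL_n,\U_{n,E'/K})$ over the base field $K$ with quadratic extension $E'/K$, and recalling from the table in the excerpt that $\omega_{\GL_n,E'/K}=\eta_{E'/K}^{n-1}\circ\det$, this says that $\pi$ is $(\GL_n(K),\eta_{E'/K}^{n-1}\circ\det)$-distinguished if and only if $[\phi_\pi]\in\BC_K^{E'}(\Phi(\U_{n,E'/K}(K)))$. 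Combining with Theorem \ref{thm ggp}, which identifies this image as the set of $E'/K$-conjugate parameters of sign $(-1)^{n-1}$, one obtains: for $n$ odd, $\pi$ is $\GL_n(K)$-distinguished iff $\phi_\pi$ is $E'/K$-conjugate orthogonal; for $n$ even, $\pi$ is $(\GL_n(K),\eta_{E'/K}\circ\det)$-distinguished iff $\phi_\pi$ is $E'/K$-conjugate symplectic. Thus one half of each claimed biconditional is in hand.

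For the remaining cases (distinction by $\eta_{E'/K}\circ\det$ when $n$ is odd, and distinction by the trivial character when $n$ is even), I would appeal to the dichotomy for conjugate selfdual generic representations that is the substance of the cited papers. On the representation-theoretic side: a generic irreducible $\pi$ of $\GL_n(E')$ can be $(\GL_n(K),\chi)$-distinguished for some $\chi\in\{\trivchar,\eta_{E'/K}\circ\det\}$ only if $(\pi^\vee)^\tau\cong\pi$, and when this conjugate selfduality holds, $\pi$ is distinguished by \emph{exactly one} of these two characters. On the spectral side: a generic conjugate selfdual parameter has a well-defined sign, being conjugate orthogonal and conjugate symplectic being mutually exclusive in the generic setting. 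Once one equivalence per parity has been obtained from the first step, the matching of signs to distinguishing characters propagates by elimination, yielding the remaining biconditionals.

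The main obstacle lies not in this final assembly but in the underlying dichotomy itself, whose proof requires delicate analysis of poles of Rankin--Selberg, Asai, and twisted-Asai $L$-functions, together with $\gamma$-factor calculations and globalization arguments. This is precisely the content that occupies the bulk of the cited references of Kable, Anandavardhanan--Kable--Tandon, Anandavardhanan--Rajan, and Matringe. The theorem in its present form is most naturally viewed as a convenient spectral reformulation of their combined results through the lens of Prasad's conjecture for $\GL_n$ and the Gan--Gross--Prasad characterization of Theorem \ref{thm ggp}.
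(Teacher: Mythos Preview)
The paper does not give its own proof of this theorem: it is stated with attribution to the cited works and used as a black box. So there is no argument in the paper to compare yours against.

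That said, your proposal has a logical circularity worth flagging. You invoke Remark~\ref{rmk conj GL} (Prasad's conjecture for $\GL_n$ is a theorem) as your first ingredient, but that remark cites exactly the same set of papers \cite{MR2075482, MR2063106, MR2146859, MR2595008, MR2755483} (plus \cite{MR3421655}) as the theorem you are trying to prove. In the paper's internal logic, Remark~\ref{rmk conj GL} and Theorem~\ref{thm conjugate orth.-simp. for GL} are parallel restatements of the same body of results, not one derived from the other; using the remark to establish the theorem therefore does not constitute an independent derivation. You yourself recognize this in your final paragraph, where you correctly describe the theorem as a spectral reformulation of the cited dichotomy results via Theorem~\ref{thm ggp}. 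That description is accurate and is really all that can be said here without reproducing the Asai $L$-function analysis of Kable, Anandavardhanan--Kable--Tandon, Anandavardhanan--Rajan, Henniart, and Matringe.
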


We are now in a position to show that the first part of Conjecture \ref{conjecture Prasad} for the generic spectrum implies a relation between distinction in the classical group and distinction of the transfer.

\begin{proposition}\label{prop distT}
Let $\Y=\mathbb{H}^\circ$ be a quasi-split classical $F$-group of $F$-rank $n$, defined as in \S\ref{ss lgp} and assume that Conjecture \ref{conjecture Prasad} \eqref{part irreps} holds for $\Y$. If $\pi$ is an irreducible, generic and $(\Y(F),\omega_{\Y,E/F})$-distinguished representation of $\Y(E)$ then the representation $T(\pi)$ of $\GL_m(E')$ (for the appropriate $m=m(n)$) is $(\GL_m(F'),\chi)$-distinguished where $\chi$ is the trivial character in the even orthogonal case and equals $\omega_{\GL_m,E'/F'}$ otherwise.
\end{proposition}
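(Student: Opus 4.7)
The strategy is to deduce the conclusion from Theorem~\ref{thm conjugate orth.-simp. for GL}, which translates $(\GL_m(F'),\chi)$-distinction of generic irreducible representations of $\GL_m(E')$ into a conjugacy statement about the associated $L$-parameter, and then to verify that conjugacy statement using the assumed Prasad conjecture.

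I would first apply the standard module theorem together with Lemma~\ref{lemma transfer of generic is irreducible} to ensure that $T(\pi)$ is irreducible and generic, so that Theorem~\ref{thm conjugate orth.-simp. for GL} applies. Then, applying Conjecture~\ref{conjecture Prasad}\eqref{part irreps} to the $(\Y(F),\omega_{\Y,E/F})$-distinguished representation $\pi$, I obtain an $L$-homomorphism $\phi_F\colon W'_F\to {}^L\Y^{\op}(F)$ whose restriction to $W'_E$ represents the $L$-packet of $\pi$. By the definition of the transfer $T$ and the restriction map $I\colon\Phi(G^\circ)\to\Phi(\GL_m(E'))$ of \S\ref{ss lgp}, the $L$-parameter $\phi_{T(\pi)}$ is then $\phi_F|_{W'_{E'}}$ composed with the standard embedding of the connected component of ${}^L\Y^{\op}(F)$ into $\GL_m(\C)$. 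Unwinding $\omega_{\GL_m,E'/F'}=\eta_{E'/F'}^{m-1}$ via Theorem~\ref{thm conjugate orth.-simp. for GL}, the claim reduces to showing that $\phi_{T(\pi)}$ is $E'/F'$-conjugate of sign $+1$ in the even orthogonal cases and of sign $(-1)^{m-1}$ otherwise.

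In the symplectic, odd orthogonal, and even orthogonal cases one has $F'=F$, and I would construct the required intertwiner $A\in\GL_m(\C)$ explicitly from $\phi_F(s)$ for $s\in W'_F\setminus W'_E$ together with the invariant classical form preserved by $\phi_F$. For instance in the symplectic case $\Y^{\op}=\Sp_{2n}$, ${}^L\Y^{\op}=\SO_{2n+1}(\C)$, taking $A=w_{2n+1}\phi_F(s^{-1})$ and using that $\phi_F$ factors through $\SO_{2n+1}(\C)$ yields both $\phi_F(sws^{-1})=A^{-1}\phi_F(w)^*A$ for $w\in W'_E$ and $\phi_F(s^2)=A^{-1}\,{}^tA$, i.e.\ conjugate orthogonality; here $m=2n+1$ is odd and $\omega_{\GL_m,E/F}=\triv$, so the signs match. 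The remaining non-unitary cases follow the same template with the symmetric or alternating form preserved by the appropriate dual group, the parity of $m$ in each case matching the character $\chi$.

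The main obstacle is the unitary case, where ${}^L\Y^{\op}(F)=\GL_N(\C)\rtimes\Gal(K'/F)$ is a genuine semidirect product and $W'_{F'}$ projects nontrivially onto $\Gal(K'/F)$. The key point is that for $w\in W'_{E'}$ the image in $\Gal(K'/F)$ \emph{is} trivial, because the Klein extension $E'/F$ satisfies $E'\supseteq K'$, so $\phi_F(w)=(a(w),1)$ lies in $\GL_N(\C)$. Writing $\phi_F(s)=(a(s),\bar s)$ for a lift $s\in W'_{F'}\setminus W'_{E'}$, with $\bar s$ the nontrivial element of $\Gal(K'/F)$ acting by $\alpha_{\bar s}(g)=J_N\,{}^tg^{-1}J_N^{-1}$, $J_N=\diag(1,-1,\ldots,(-1)^{N-1})w_N$, I would take $A=J_N^{-1}a(s)^{-1}$; the identity ${}^tJ_N=(-1)^{N-1}J_N$ then forces both the intertwining relation $a(sws^{-1})=A^{-1}a(w)^*A$ and $a(s^2)=(-1)^{N-1}A^{-1}\,{}^tA$, matching $\omega_{\GL_N,E'/F'}=\eta_{E'/F'}^{N-1}$ on the nose.
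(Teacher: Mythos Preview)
Your argument is correct and follows essentially the same strategy as the paper: reduce to Theorem~\ref{thm conjugate orth.-simp. for GL} via Lemma~\ref{lemma transfer of generic is irreducible}, then use the extension $\phi_F$ to $W'_F$ furnished by Conjecture~\ref{conjecture Prasad}\eqref{part irreps} to exhibit the required conjugate-duality of $\phi_{T(\pi)}$. In the symplectic and orthogonal cases your explicit intertwiner $A=w_m\phi_F(s^{-1})$ is exactly (the matrix of) the paper's form $B'(x,y)=B(\phi'(s)x,y)$, so the arguments are literally the same computation.

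The one point where the two presentations diverge is the unitary case. The paper observes that both $I\circ \BC_F^E$ and $\BC_{F'}^{E'}\circ \BC_F^{F'}$ on $\Phi(\U_n(F''/F))$ are restriction to $W'_{E'}$, hence $I([\phi])$ lies in the image of $\BC_{F'}^{E'}$, and then invokes Theorem~\ref{thm ggp} as a black box to obtain sign $(-1)^{n-1}$. You instead unpack the semidirect product ${}^L\Y^{\op}(F)=\GL_N(\C)\rtimes\Gal(F''/F)$ directly and build $A=J_N^{-1}a(s)^{-1}$ by hand; your computation $a(s^2)=a(s)\alpha_{\bar s}(a(s))$ together with ${}^tJ_N=(-1)^{N-1}J_N$ is precisely the content of (one direction of) Theorem~\ref{thm ggp} in this special situation. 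Both routes are valid; the paper's is shorter because it outsources the verification, while yours is self-contained and makes the mechanism transparent.
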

\begin{proof}
Let $[\phi]\in \Phi(\Y(E))$ be the parameter of $\pi$ so that $I([\phi])\in \Phi(\GL_m(E'))$ is the parameter of $T(\pi)$. It follows from Lemma \ref{lemma transfer of generic is irreducible} that $T(\pi)$ is generic and therefore, by Theorem \ref{thm conjugate orth.-simp. for GL}, it suffices to show that $[\phi']$ is $E'/F'$-conjugate of sign $\epsilon$ where $\epsilon$ equals one in the even orthogonal case and $(-1)^{m-1}$ otherwise. 

In the unitary case $m=n$, $\Y(E)=\U_n(E'/E)$, $\Y^\op(F)=\U_n(F''/F')$ where $F''=(E')^{\sigma\tau}$ and 
\[
I=\BC_{E}^{E'}:\Phi(\U_n(E'/E)) \rightarrow \Phi(\GL_n(E'))\] is the corresponding base change map. This implies the commutativity of the diagram
\[
\xymatrix{ 
 \Phi(\U_n(E'/E)) \ar[r]^-I & \Phi(\GL_n(E'))    \\
 \Phi(\U_n(F''/F)) \ar[r]^-{\BC_F^{F'}}\ar[u]^{\BC_F^E} & \Phi(\U_n(E'/F')). \ar[u]^{\BC_{F'}^{E'}} }
 \] 
(Both compositions from $\Phi(\U_n(F''/F))$ to $\Phi(\GL_n(E'))$ are defined by restriction to $W'_{E'}$.) By assumption, $[\phi]\in \Phi(\U_n(E'/E))$ lies in the image of $\BC_F^E$ and therefore $I([\phi])$ lies in the image of $\BC_{F'}^{E'}$ and by Theorem \ref{thm ggp} it is $E'/F'$-conjugate of sign $(-1)^{n-1}$. 

Consider now the symplectic or orthogonal case. Again, by assumption $[\phi]$ lies in the image of $\BC_F^E:\Phi(\Y^\op(F))\rightarrow \Phi(\Y(E))$. Let $\phi': W_F' \rightarrow {}^L \Y^{\op}(F)$ be such that 
\[
[\phi]=I(\BC_F^E([\phi]'))=[\phi'_{|W'_E}].
\] 
Note that there exists a non-degenerate bilinear form $B:\C^N \times \C^N\rightarrow \C^N$ such that $B(y,x)=\epsilon B(x,y)$, $x,\,y\in \C^N$ and the image of $\phi'$ preserves $B$. Let $s\in W_F\setminus W_E$ and define the bilinear form $B' :\C^N \times \C^N\rightarrow \C^N$ by
\[
B'(x,y)=B(\phi'(s)x,y).
\]
It is now a straightforward verification that $B'$ and $\phi$ satisfy the conditions \eqref{eq Bcond}
and since $I$ is defined via the inclusion ${}^L \Y(E)\subseteq \GL_N(\C)$ it follows that $I([\phi])$ is $E/F$-conjugate of sign $\epsilon$.
\end{proof}
\begin{remark}
In the odd orthogonal case we could alternatively argue as in the unitary case using the commutative diagram  
\[
\xymatrix{ 
 \Phi(\SO_{2n+1}(E)) \ar[r]^-I & \Phi(\GL_{2n}(E))    \\
 \Phi(\SO_{2n+1}(F)) \ar[r]^{I_F}\ar[u]^{\BC_F^E} & \Phi(\U_{2n}(E/F)) \ar[u]^{\BC_{F}^{E}}  }
 \] 
where $I_F$ is induced by the inclusion $\GL_{2n}(\C)^\sigma \subseteq {}^L U_{2n}(E/F)$ as $\GL_{2n}(\C)^\sigma$ is conjugate to $\Sp_{2n}(\C)$ in $\GL_{2n}(\C)$.
\end{remark}

\begin{corollary}
Assume that $-1\in N(E/F)$. In the notation of Proposition \ref{prop distT} assume that Conjecture \ref{conjecture Prasad} \eqref{part irreps} holds for $\Y$. If $\pi$ is a generic irreducible representation of $\Y(E)$ that is $\Y(F)$-distinguished then $T(\pi)$ is $\GL_{m(n)}(F')$-distinguished. 
\end{corollary}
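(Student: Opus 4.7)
The plan is to derive the corollary as a direct application of Proposition~\ref{prop distT}, combined with a case-by-case verification that, under the hypothesis $-1\in N(E/F)$, both the input character $\omega_{\Y,E/F}$ is trivial on $\Y(F)$ and the output character $\chi$ is trivial on $\GL_{m(n)}(F')$. Once these trivialities are in place, the hypothesis of the proposition (namely $(\Y(F),\omega_{\Y,E/F})$-distinction of $\pi$) is identical to the hypothesis of the corollary, and the conclusion $(\GL_m(F'),\chi)$-distinction of $T(\pi)$ given by the proposition is literally the $\GL_{m(n)}(F')$-distinction sought by the corollary.

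The verification is case-by-case, based on the explicit formulas for $\omega_{\Y,E/F}$ recalled in the table at the start of \S\ref{s llc} (with derivations in Appendix~\ref{app}) and the formula for $\chi$ from the statement of Proposition~\ref{prop distT}. The symplectic and even orthogonal cases require no hypothesis on $-1$: in the symplectic case $\omega_{\Y,E/F}$ is trivial by definition and $\chi=\omega_{\GL_{2n+1},E/F}=\eta_{E/F}^{2n}\circ\det$ is trivial by parity; in the even orthogonal case $\omega_{\Y,E/F}=\eta_{E/F}^{2n}\circ\sn$ is trivial by parity and $\chi$ is trivial by the statement of the proposition. The remaining cases use $\eta_{E/F}(-1)=1$ (immediate from $-1\in N(E/F)$) and, when $\eta_{E'/F'}$ intervenes in the unitary setting, also $\eta_{E'/F'}(-1)=1$, which follows by the transitivity of norms through the Klein extension $E'/F$ provided by Lemma~\ref{lem klein ext}.

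The main obstacle I foresee is the delicate triviality check in the odd orthogonal case (and in the unitary case when the relevant exponent is odd). Here the characters $\omega_{\Y,E/F}=\eta_{E/F}\circ\sn$ on $\SO_{2n+1}(F)$ and $\chi=\eta_{E/F}\circ\det$ on $\GL_{2n}(F)$ are a priori non-trivial, so triviality on the full groups of rational points must be extracted by combining the hypothesis $-1\in N(E/F)$ with a precise understanding of the image of the spinor (or Wall) norm on the rational points of $\Y$ attached to the specific form $\j[n]$ used in \S\ref{section Galois distinction and induction}, and with the constraint on the central character of $T(\pi)$ stemming from the fact that the $L$-parameter of $\pi$ lands in the appropriate classical dual group inside $\GL_m(\C)$. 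Once these reductions are carried out, the corollary follows without further effort from Proposition~\ref{prop distT}.
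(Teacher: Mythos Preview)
Your plan has a genuine gap in the odd orthogonal and even unitary cases. You propose to show that $\omega_{\Y,E/F}$ is trivial on $\Y(F)$ and that $\chi$ is trivial on $\GL_{m(n)}(F')$, but neither of these trivialities holds. In the odd orthogonal case $\omega_{\SO_{2n+1},E/F}=\eta_{E/F}\circ\sn$, and for the split form $\j[n]=w_{2n+1}$ with $n\ge 1$ the spinor norm is surjective onto $F^*/F^{*2}$; hence $\eta_{E/F}\circ\sn$ is a genuinely non-trivial character regardless of whether $-1\in N(E/F)$. Likewise $\chi=\eta_{E/F}^{2n-1}\circ\det=\eta_{E/F}\circ\det$ is manifestly non-trivial on $\GL_{2n}(F)$. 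The hypothesis $-1\in N(E/F)$ controls only the value of $\eta_{E/F}$ at $-1$, not its triviality, and no central-character constraint on $T(\pi)$ can collapse $(\GL_{2n}(F),\eta_{E/F}\circ\det)$-distinction and $\GL_{2n}(F)$-distinction into the same condition (Theorem~\ref{thm conjugate orth.-simp. for GL} shows these correspond to conjugate-symplectic versus conjugate-orthogonal parameters, which are distinct properties).

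The paper does not attempt any such triviality. Instead, it uses the assumption $-1\in N(E/F)$ for a different purpose: by Remark~\ref{rem extension is a prasad character} this hypothesis guarantees that $\eta_{E/F}$ extends to a quadratic character of $E^*$, and hence by Lemma~\ref{lemma Prasad character extends} that $\omega_{\Y,E/F}$ extends to a quadratic character $\omega$ of $\Y(E)$. One then twists: $\pi\otimes\omega$ is $(\Y(F),\omega_{\Y,E/F})$-distinguished, so Proposition~\ref{prop distT} applies to $\pi\otimes\omega$ and gives that $T(\pi\otimes\omega)$ is $(\GL_{m(n)}(F'),\chi)$-distinguished. Lemma~\ref{lemma transfer and character twist} identifies $T(\pi\otimes\omega)$ as $T(\pi)$ twisted by a character of $\GL_{m(n)}(E')$ whose restriction to $\GL_{m(n)}(F')$ is exactly $\chi$, and untwisting yields $\GL_{m(n)}(F')$-distinction of $T(\pi)$. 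The role of $-1\in N(E/F)$ is thus to enable an extension-and-twist argument, not to force any character to vanish.
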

\begin{proof}
Taking the formula for $\omega_{\Y,E/F}$ into account, the corollary is exactly the statement of Proposition \ref{prop distT} except in the even unitary and odd orthogonal cases. Consider these two cases now. Let $\omega$ be the quadratic character of $\Y(E)$ that extends the character $\omega_{\Y,E/F}$ of $\Y(F)$ given by Lemma \ref{lemma Prasad character extends} of Appendix \ref{app} (see Remark \ref{rem extension is a prasad character}). The representation $\pi\otimes \omega$ is $(\Y(F),\omega_{\Y,E/F})$-distinguished and since in both cases $m(n)$ is even it follows from Proposition \ref{prop distT} that $T(\pi\otimes \omega)$ is $(\GL_{m(n)}(F'),\eta_{E'/F'}\circ \det)$-distinguished. By Lemma \ref{lemma transfer and character twist} of Appendix \ref{app} we have $T(\pi\otimes \omega)=T(\pi)\otimes (\eta_{E'/F'}\circ \det)$ and it therefore follows that $T(\pi)$ is $\GL_{m(n)}(F')$-distinguished as required.
\end{proof}

\subsection{Transfer of distinction for principal series}

Let $\mathbb{H}^\circ=\mathbb{H}_n^\circ$ be a quasi-split classical $F$-group of $F$-rank $n$, defined as in \S\ref{ss lgp}, and set $H^\circ=H^\circ_n=\mathbb{H}^\circ(F)$ and $G^\circ=G^\circ_n=\mathbb{H}^\circ(E)$.

Our goal in this section is to show that for certain irreducible representations $\pi$ of $G^\circ$ induced from a cuspidal representation of a Levi subgroup of the form $M_\alpha^\circ$ with $\alpha\in A_0$ (see \S \ref{ss parab}), i.e., contained in the Siegel Levi, the implication of Proposition \ref{prop distT} can be proved unconditionally.

We need some further preparation. The following will be used in the non-split quasi-split even orthogonal case. In this case $\j\in Y_2(F/F,1)$ is such that $\SO(\j)$ is anisotropic and splits over a unique quadratic extension $K=F[\sqrt{-\det \j}]$ of $F$. 
Let $\alpha$ be a character of $\SO(\j,E)$. 

If $K=E$ then $\SO(\j,E)\simeq E^*$. Let $\lambda_\alpha$ be the character of $E^*$ corresponding to $\alpha$ under this identification. It is a straightforward observation that 
\[
T(\alpha)=\lambda_\alpha \times \lambda_\alpha^{-1}.
\]
Otherwise, let $L=KE$ and  $\SO_2(L/E)=\SO(\j,E)$. In particular, $\SO_2(L/E)\simeq (L/E)_1\simeq L^*/E^*$ where the right isomorphism is via Hilbert 90.  
We denote by $\lambda_\alpha$ the corresponding character of $L^*$ trivial on $E^*$. Let $\phi: W_L' \rightarrow \C^*$ be the $L$-parameter of $\lambda_\alpha$. The two dimensional representation $\Ind_{W_L'}^{W_E'}(\phi)$ is therefore a $L$-homomorphism corresponding to a representation $\pi(\alpha)$ of $\GL_2(E)$. In fact, the representation $\pi(\alpha)$ has a model as the space of $\alpha$-coinvariants in the Weil representation (see \cite[D\'{e}finition 2.1.1]{matringethese}).    
According to our convention, ${}^L\SO_2(L/E)=\O_2(\C)$ and for the $L$-homomorphism $\phi_\alpha:W_E' \rightarrow \O_2(\C)$ corresponding to $\alpha$ it is easy to see that composing with the inclusion $\O_2(\C)\subseteq \GL_2(\C)$ we get that $I(\phi_\alpha)\simeq \Ind_{W_L'}^{W_E'}(\phi)$. That is, $T(\alpha)=\pi(\alpha)$.


\begin{proposition}\label{prop T character of SO(2)}
Let $\alpha$ be a character of $G_0^\circ$ that is $(G_0^\circ)_z$-distinguished for some $z\in X_0$. Then the representation $T(\alpha)$ of $\GL_{m(0)}(E')$ is $\GL_{m(0)}(F')$-distinguished. 
\end{proposition}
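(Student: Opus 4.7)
The proposition is vacuous whenever $G_0^\circ$ is trivial, which covers all the quasi-split cases except two: the odd unitary case, where $G_0^\circ = \U_1(E'/E)$, and the non-split quasi-split even orthogonal case, where $G_0^\circ = \SO(\j,E)$ for the anisotropic $2$-dimensional form $\j$ over $F$ that splits over a quadratic extension $K/F$. The plan is to treat these two cases separately, in each identifying the stabilizer $(G_0^\circ)_z$ explicitly, translating the distinction hypothesis into a concrete condition on the character $\lambda_\alpha$ via Hilbert 90, and then concluding either by a direct calculation or by exhibiting $T(\alpha)$ as a base change from $\GL_2(F)$.

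In the unitary case, $\U_1(E'/E) \simeq (E'/E)_1$ is abelian, so the stabilizer is independent of $z$ and equals $(F'/F)_1$. Via Hilbert 90 for $E'/E$, the character $\alpha$ corresponds to $\lambda_\alpha$ on $(E')^*$ trivial on $E^*$, with $\lambda_\alpha(a) = \alpha(a/a^\tau)$; by definition $T(\alpha) = \lambda_\alpha$. For any $b \in (F')^*$ one has $b/b^\tau \in (F'/F)_1$, so $\lambda_\alpha(b) = \alpha(b/b^\tau) = 1$ by hypothesis; thus $\lambda_\alpha$ is trivial on $(F')^*$, as required.

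In the orthogonal case, I will realize $T(\alpha) = \pi(\alpha)$ as a base change of a representation of $\GL_2(F)$, so that Flicker's theorem yields $\GL_2(F)$-distinction. When $K = E$, the identification $\SO(\j,E) \simeq E^*$ with Galois action $a \mapsto \bar a^{-1}$ gives $X_0 = F^*$ and $(G_0^\circ)_z = (E/F)_1$ for every $z$, so the hypothesis forces $\alpha = \chi \circ N_{E/F}$, whence $T(\alpha) = \lambda_\alpha \times \lambda_\alpha^{-1} = \BC_{E/F}(\chi \times \chi^{-1})$. When $K \ne E$, set $L = KE$ and let $K'$ denote the third quadratic extension of $F$ in $L$; a direct computation based on $\SO(\j,E) \simeq (L/E)_1$ gives $X_0 = (L/K)_1 \cap (L/E)_1 = (K'/F)_1$ and $(G_0^\circ)_z = (K/F)_1$. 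Hilbert 90 for $L/E$ translates $\alpha|_{(K/F)_1} = \triv$, by taking lifts in $K^*$, into $\lambda_\alpha|_{K^*} = \triv$, whence $\lambda_\alpha(a\cdot a^{\sigma_L}) = \lambda_\alpha(N_{L/K}(a)) = 1$ and so $\lambda_\alpha^{\sigma_L} = \lambda_\alpha^{-1}$. Separately, the triviality of $\lambda_\alpha$ on $E^*$ together with $N_{L/E}(a) \in E^*$ gives automatically $\lambda_\alpha^{\rho_L} = \lambda_\alpha^{-1}$; multiplying these, $\lambda_\alpha^{\sigma_L \rho_L} = \lambda_\alpha$. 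Consequently $\lambda_\alpha$ factors as $\mu \circ N_{L/K'}$ for some character $\mu$ of $(K')^*$. A Mackey calculation, using that $W_F' = W_{K'}' \cdot W_E'$ with intersection $W_L'$ so that $W_{K'}' \backslash W_F' / W_E'$ is a singleton, identifies the $L$-parameter $\Ind_{W_L'}^{W_E'}(\phi_{\lambda_\alpha})$ of $T(\alpha)$ with the restriction to $W_E'$ of $\Ind_{W_{K'}'}^{W_F'}(\phi_\mu)$. Thus $T(\alpha) = \BC_{E/F}(\tilde\pi)$ for the dihedral representation $\tilde\pi$ of $\GL_2(F)$ with parameter $\Ind_{W_{K'}'}^{W_F'}(\phi_\mu)$, and Flicker's theorem gives $\GL_2(F)$-distinction.

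The main obstacle is the book-keeping in the sub-case $K \ne E$: although the distinction hypothesis is stated in terms of $K$, the conclusion goes through the \emph{third} quadratic extension $K'$ of $F$ inside $L$. The swap from $K$ to $K'$ is powered by the automatic identity $\lambda_\alpha^{\rho_L} = \lambda_\alpha^{-1}$, reflecting the $L/E$ origin of $\lambda_\alpha$, which when combined with the $\sigma_L$-anti-invariance coming from the hypothesis produces $\sigma_L \rho_L$-invariance of $\lambda_\alpha$, hence the desired factorization through $N_{L/K'}$.
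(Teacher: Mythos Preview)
Your unitary case is fine, but the orthogonal case has two genuine gaps.

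First, in both sub-cases $K=E$ and $K\ne E$ you compute only $X_0^\circ$, not $X_0$. Since $G_0=\O(\j,E)$ is disconnected, $X_0=X^\j$ has a component $X_0\setminus X_0^\circ$ consisting of determinant $-1$ elements, and the proposition allows $z$ there. For $K=E$ one finds (under your identification $G_0^\circ\simeq E^*$ with $\sigma$ acting as $a\mapsto \bar a^{-1}$) that the stabiliser of such $z$ is $F^*$, not $(E/F)_1$; for $K\ne E$ the stabiliser is $(K'/F)_1$, not $(K/F)_1$. The paper treats both cases; you treat only one. The missing cases are handled by arguments symmetric to yours (swap $K$ and $K'$ in the second sub-case; in the first, $\lambda_\alpha|_{F^*}=\triv$ gives $\lambda_\alpha^\sigma=\lambda_\alpha^{-1}$ and $T(\alpha)=\lambda_\alpha\times\lambda_\alpha^\sigma$, which is a base change of the dihedral representation with parameter $\Ind_{W_E}^{W_F}(\phi_{\lambda_\alpha})$), but you must say so.

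Second, ``Flicker's theorem gives $\GL_2(F)$-distinction'' is not a valid step: $\BC_{E/F}(\tilde\pi)$ need not be $\GL_2(F)$-distinguished (already for $\GL_1$, $\chi\circ N_{E/F}$ restricts to $\chi^2$ on $F^*$). What makes your case work is extra structure you have not invoked: from $\lambda_\alpha|_{E^*}=\triv$ and $\lambda_\alpha|_{K^*}=\triv$ one deduces $\mu|_{F^*}=\triv$ (use $N_{L/K'}|_{E^*}=N_{E/F}$ and $N_{L/K'}|_{K^*}=N_{K/F}$, together with $N(E/F)\cdot N(K/F)=F^*$), hence $\mu^{\tau'}=\mu^{-1}$ and $\det\bigl(\Ind_{W_{K'}}^{W_F}(\phi_\mu)\bigr)=\eta_{K'/F}\ne\triv$, so this $2$-dimensional self-dual parameter is \emph{orthogonal}; restricting to $W_E$ then yields an $E/F$-conjugate-orthogonal parameter, and the $\GL_n$ distinction criterion (the paper's Theorem~\ref{thm conjugate orth.-simp. for GL}) gives $\GL_2(F)$-distinction. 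In the $K=E$ sub-case the fix is simpler: $\lambda_\alpha=\chi\circ N_{E/F}$ gives $\lambda_\alpha^\sigma=\lambda_\alpha$, so $T(\alpha)=\lambda_\alpha\times\lambda_\alpha^{-1}=\lambda_\alpha^\sigma\times\lambda_\alpha^\vee$ is $\GL_2(F)$-distinguished by the open-orbit argument directly. The paper handles the $K\ne E$ case by citing \cite[Th\'eor\`eme 2.3.3]{matringethese}; your base-change route is a nice alternative once the orthogonality of the lifted parameter is established.
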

\begin{proof}
The lemma is trivially true if $m(0)=0$ (both $\alpha$ and $T(\alpha)$ equal the trivial character of the trivial group). Otherwise, either $m(0)=1$ in the odd unitary case or $m(0)=2$ in the quasi-split non split even orthogonal case. 

In the first case $G_0^\circ=(E'/E)_1$ and $(G_0^\circ)_z=(F'/F)_1$ is independent of $z\in X_0$. Thus $\alpha$ is a character of $(E'/E)_1$ that is trivial on $(F'/F)_1$ and $T(\alpha)(a)=\alpha(a^{1-\tau})$, $a\in (E')^*$. Since $a^{1-\tau}\in (F'/F)_1$ for $a\in (F')^*$ we conclude that $T(\alpha)$ is indeed $(G_0^\circ)_z$-distinguished.

In the second case $\mathbb{H}_0^\circ=\SO(\j)$. If $-\det j\in E^{*2}$ then as discussed above $T(\alpha)=\lambda_\alpha \times \lambda_\alpha^{-1}$ and  $G_0^\circ\simeq E^*$. Under this isomorphism the group $(G_0^\circ)_z$ identifies with $F^*$ if $z\in X_0^\circ$ and with $(E/F)_1$ otherwise. In the first case this means that $\lambda_\alpha$ (and hence also $\lambda_\alpha^{-1}$) is trivial on $F^*$ and in the second case that $\lambda_\alpha^\sigma=\lambda_\alpha$. Consequently, $T(\alpha)$ is $\GL_2(F)$-distinguished by a closed orbit argument in the first case and by an open orbit argument in the second (see \S \ref{ss gl dist}).
If $-\det j\not\in E^{*2}$ set $K=F[\sqrt{-\det \j}]$ and $L=KE$ and let $K'$ be the third quadratic extension of $F$ contained in $L$ that is different from $E$ and $K$.
As observed above $T(\alpha)=\pi(\alpha)$ and $G_0^\circ\simeq (L/E)_1$. In fact, the full orthogonal group $G_0$ is isomorphic to the semi-direct product $(L/E)_1 \rtimes \Z_2$ where the non-trivial element of $\Z_2$ acts on $(L/E)_1$ by the Galois action of $L/E$. Under this isomorphism $(G_0^\circ)_z$ identifies with  $(K/F)_1$ if $z\in X_0^\circ$ and with $(K'/F)_1$ otherwise. In other words, with the above notation, $\lambda_\alpha$ is trivial on either $K^*$ or $(K')^*$. The fact that for such $\alpha$ the representation $\pi(\alpha)$ is $\GL_2(F)$-distinguished is proved in \cite[Th\'{e}or\`{e}me 2.3.3]{matringethese}.

\end{proof}

Let $n=n_1+\cdots+n_k$ and $M^\circ=M^\circ_{(n_1,\dots,n_k)}$ be the corresponding standard Levi subgroup of $G^\circ$. For 
$x=x_w(\{y_i\}_{i\in I(w)},\,z)\in G^\circ \cdot I_N$ defined by \eqref{eq def x} let $\eta\in G^\circ$ be such that $\eta\cdot I_N=x$. Note that $M^\circ_x=M^\circ\cap \eta H^\circ \eta^{-1}$ and therefore the character $\omega_{\mathbb{H}^\circ,E/F}\circ \Ad(\eta^{-1})$ of $\eta H^\circ \eta^{-1}$ restricts to a character of $M^\circ_x$. 

\begin{lemma}\label{lem main charcomp}
Consider either the even unitary or odd orthogonal case. With the above notation let $g_i\in \GL_{n_i}(E')$, $i=1,\dots,k$ be such that $m=\inj(g_1,\dots,g_k)\in M_x$.
We have 
\[
\omega_{\mathbb{H}^\circ,E/F}(\eta^{-1}m\eta)= \prod_{i\not\in\set,\,\rho(i)=i} \eta_{E'/F'}(\det g_i).
\]
\end{lemma}
\begin{proof}
Note that in both cases $G_0^\circ$ is the trivial group. 
In the even unitary case we have 
\[
\omega_{U_{2n,F'/F},E/F}=\eta_{E'/F'}\circ \alpha^{-1} \circ \det
\] 
where $\alpha:(F')^*/F^*\rightarrow (F'/F)_1$ is the isomorphism $aF^*\mapsto a^{1-\tau}$. It follows from Proposition \ref{prop adm orbs} \eqref{part stab} that 
\[
\det(\eta^{-1}m \eta)=\det m=(aa^\sigma)^{1-\tau}b^{1-\tau}c^{1-\tau}
\] 
where 
$a=\prod_{i<\rho(i)}\det g_i\in (E')^*$, $b=\prod_{i\in \set,\ \rho(i)=i} \det g_i\in (E'/(E')^{\sigma\tau})_1$ and $c=\prod_{i\not\in \set,\ \rho(i)=i} \det g_i\in (F')^*$.
We have 
\[
\eta_{E'/F'}(\alpha^{-1}((aa^\sigma)^{1-\tau}))=\eta_{E'/F'}(N_{E'/F'}(a))=1
\] 
and
\[
\eta_{E'/F'}(\alpha^{-1}(c^{1-\tau}))=\eta_{E'/F'}(c).
\]
It remains to see that $\eta_{E'/F'}(\alpha^{-1}(b^{1-\tau}))=1$. Let $d\in (E')^*$ be such that $b=d^{1-\sigma\tau}$ (by Hilbert 90 ). Then $b^{1-\tau}=(d d^\sigma)^{1-\tau}$ and therefore 
\[
\eta_{E'/F'}(\alpha^{-1}(b^{1-\tau}))=\eta_{E'/F'}(N_{E'/F'}(d))=1.
\]
The even unitary case follows.
In the odd orthogonal case 
\[
\omega_{\SO_{2n+1},E/F}=\eta_{E/F}\circ \sn.
\]
It follows from Proposition \ref{prop adm orbs} \eqref{part stab} that $m=\inj(g_1,\dots,g_k) \mapsto (g_i)_{i\le \rho(i)}$ defines an isomorphism
\[
M_x\simeq [\mathop{\times}\limits_{i<\rho(i)} \GL_{n_i}(E)] \times [ \mathop{\times}\limits_{i\not\in \set ,\ \rho(i)=i} \GL_{n_i}(F)] \times  [ \mathop{\times}\limits_{i\in \set ,\ \rho(i)=i} \U(y_i,E/F)].
\]
Let $m_i=\inj(h_1,\dots,h_k)$ where $h_j=I_{n_j}$ unless $j\in\{i,\rho(i)\}$ in which case $h_j=g_j$. Then clearly $m=\prod_{i\le \rho(i)} m_i$ with all $m_i$'s commuting. Set also $m^\circ=\prod_{i\in \set,\ \rho(i)=i} m_i$. We compute separately $\eta_{E/F}\circ \sn (\eta^{-1}m_i \eta)$ for each $i$ such that either $i\not \in \set$ or $\rho(i)\ne i$ as well as $\eta_{E/F}\circ \sn (\eta^{-1}m^\circ \eta)$.

The computation for either $i\not \in \set$ or $\rho(i)\ne i$ is implicitly based on finding $\eta$ explicitly. More precisely, let
\[
x_i=\begin{cases} I_{2n_i} & i\not \in \set,\ \rho(i)=i \\ \inj\sm{}{I_{n_i}}{I_{n_i}}{} & i\not \in \set,\ \rho(i)\ne i \\ \sm{}{I_{2n_i}}{I_{2n_i}}{} & i\in \set,\ \rho(i)\ne i \end{cases}
\ \ \ \text{and}
\ \ \ 
\ell_i=\begin{cases} \inj(g_i) & i\not \in \set,\ \rho(i)=i \\ \inj(g_i,g_i^\sigma) & i\not \in \set,\ \rho(i)\ne i \\ \inj(g_i,\bar g_i^*) & i\in \set,\ \rho(i)\ne i. \end{cases} 
\]
Then $\eta_{E/F}\circ \sn (\eta^{-1}m_i \eta)=\eta_{E/F}\circ \sn(\eta_i^{-1} \ell_i\eta_i)$ where
$\eta_i\in \SO_{2k_i}(E)$ is such that $\eta_i\cdot I_{2k_i}=x_i$ and $k_i$ equals $n_i$ if $\rho(i)=i$ and $2n_i$ otherwise. Below we provide an explicit such $\eta_i$ whenever either $i\not \in \set$ or $\rho(i)\ne i$. This shows that $x\in \SO_{2n+1}(E)\cdot  x'$ where $x'=x_{I(w)}(\{y_i\}_{i\in I(w)},\,z)$ (here we think of the set of indices $I(w)$ as an element of $\weyl_k$), that is, $x'\in \SO_{2n+1}(E)\cdot I_{2n+1}$.
Similarly, set
\[
x^\circ=\begin{pmatrix} & & I_t \\ & (-1)^t & \\ I_t & & \end{pmatrix}\inj(w_t \bar y)\in X_{2t+1}^\circ
\]
where $i_1<\cdots<i_j$ are such that $I(w)=\{i_1,\dots,i_j\}$, $t=n_{i_1}+\cdots,+n_{i_j}$ and $y=\diag(y_{i_1},\dots,y_{i_j})$. Applying Lemma \ref{lem in eorb} to both $x'$ and $x^\circ $ we conclude that $x^\circ =\eta^\circ \cdot I_{2t+1}$ for some $\eta^\circ\in \SO_{2t+1}(E)$ and we then have 
\[
\eta_{E/F}\circ \sn (\eta^{-1}m^\circ \,\eta)=\eta_{E/F}\circ \sn ((\eta^\circ)^{-1}\inj(g_{i_1},\dots,g_{i_j}) \eta^\circ).
\]
We also note that $\inj(g_{i_1},\dots,g_{i_j}) \in U(y,E/F)$.

Furthermore, our explicit computations of the spinor norm are based on the fact that any $h\in \SO_m(F)$ can be written as an even 
product $h=s_{v_1}\dots s_{v_{2k}}$ of orthogonal reflections with respect to anisotropic vectors $v_i\in F^m$ for the 
quadratic form $q(v)= {}^t vw_m v$, $v\in F^m$ and that in such a situation 
\[
\sn(h)=q(v_1)\dots q(v_{2k})F^{*2}\in  F^*/F^{*2}.
\]
Based on this we have
\begin{equation}\label{eq sn siegel}
\sn(\inj(h))=\det h \ F^{*2},\ \ \ h\in \GL_m(F). 
\end{equation}
Indeed, any homomorphism from $\GL_m(F)$ to $F^*/F^{*2}$ factors through determinant and therefore $\sn(\inj(h))=\chi(\det h)$ for some homomorphism $\chi:F^* \rightarrow F^*/F^{*2}$. In order to show that $\chi$ is the natural projection it suffices to show that $\sn(\inj(\diag(I_{m-1},t)))=tF^{*2}$, $t\in F^*$ and this reduces to an $\SO_2(F)$ computation of $\sn(\diag(t,t^{-1}))$. Indeed, 
\begin{equation}\label{eq so2}
\diag(t,t^{-1})=s_{v(t)}s_{v(1)} 
\end{equation}
where ${}^t v(t)=(t,1)$ so that $q(v(t))=2t$.

If $i\not\in \set$ and $\rho(i)=i$ then we may choose $\eta_i=I_{2n_i}$ and 
\[
\eta_{E/F}( \sn (\eta^{-1}m_i \eta))=\eta_{E/F}( \sn (\inj(g_i)))=\eta_{E/F}(\det g_i)
\]
by \eqref{eq sn siegel}.

If $i\not\in \set$ and $i<\rho(i)$ then 
\[
\eta_{E/F}( \sn (\eta^{-1}m_i \eta))=\eta_{E/F} (\sn (\inj(\zeta_{n_i}^{-1} \diag(g_i,g_i^\sigma)\zeta_{n_i}))
\]
where $\zeta_m =\sm{I_m}{\imath I_m}{I_m}{-\imath I_m}$ and the right hand side is computed in $\SO_{2n_i}(F)$. (Recall from the proof of Lemma \ref{lem in eorb} that $\zeta_m\cdot I_{2m}=\sm{}{I_m}{I_m}{}$.)
Note that $\zeta_{n_i}^{-1} \diag(g_i,g_i^\sigma)\zeta_{n_i}\in \GL_{2n_i}(F)$ and by \eqref{eq sn siegel} we see that
\[
\eta_{E/F}( \sn (\eta^{-1}m_i \eta))=\eta_{E/F}(N_{E/F}(\det g_i))=1.
\]
For the next step we introduce the matrix
\[
\xi_m=\begin{pmatrix} \frac12 I_m & & \imath I_m & \\ & -\frac{1}{2\imath} I_m & & I_m \\ \frac12 I_m & & -\imath I_m & \\ & \frac{1}{2\imath} I_m & & I_m          \end{pmatrix}\in \SO_{4m}(E)
\]
that satisfies 
\[
\xi_m\cdot I_{4m}=\begin{pmatrix} & I_{2m} \\ I_{2m} & \end{pmatrix}.
\]
If $i\in \set$ and $i<\rho(i)$ then 
\[
\eta_{E/F}( \sn (\eta^{-1}m_i \eta))=\eta_{E/F} (\sn(\xi_{n_i}^{-1} \inj(g_i,\bar g_i^*)\xi_{n_i})).  
\]
By the argument used in the previous steps, in order to show that this equals to one for every $g_i\in \GL_{n_i}(E)$ it suffices to consider $g_i=\diag(I_{n_i-1},t)$ for all $t\in E^*$. This reduces us to the computation in $\SO_4(F)$ of $\eta_{E/F}(\sn(\xi_1^{-1}\inj(t,t^{-\sigma}) \xi_1))$. Note that for $v={}^t(0,1,1,0)$ the orthogonal reflection $s_v=\diag(1,-w_2,1)$ and clearly $\sn(\xi_1^{-1}\inj(t,t^{-\sigma}) \xi_1)=\sn(s_v\xi_1^{-1}\inj(t,t^{-\sigma}) \xi_1s_v)$. Furhtermore, $s_v\xi_1^{-1}\inj(t,t^{-\sigma}) \xi_1s_v=\inj(a(t))$ where
\[
a(t)=\begin{pmatrix} \frac12(t+t^\sigma) & \imath(t^\sigma-t) \\ \frac{1}{4\imath}(t^\sigma-t)  &\frac12(t+t^\sigma)  \end{pmatrix}.
\]
Since $\det a(t)=N_{E/F}(t)$ it now follows from \eqref{eq sn siegel} that $\eta_{E/F}(\sn(\inj(a(t))))=1$ as required.

To conclude the lemma it remains to show that for $y\in Y_m(E/F,1)$, such that  
\[
x(y)=\begin{pmatrix} & & I_m \\ & (-1)^m & \\ I_m & & \end{pmatrix}\inj(w_m \bar y)\in \SO_{2m+1}(E)\cdot I_{2m+1}
\]
and $g\in \U(y,E/F)$ we have 
\begin{equation}\label{eq ucase}
\eta_{E/F}(\sn(\eta(y)^{-1}\inj(g)\eta(y))=1
\end{equation}
where $\eta(y)\in \SO_{2m+1}(E)$ is such that $\eta(y)\cdot I_{2m+1}=x(y)$.
Indeed, we only need to show this for $y$ of the form $\diag(y_{i_1},\dots,y_{i_j})$ and $g$ of the form $\diag(g_{i_1},\dots,g_{i_j})$ as above. However, it is more convenient to show this more general statement. 

Assume first that $-1\in N(E/F)$. Then there is a quadratic character $\omega$ of $E^*$ that extends $\eta_{E/F}$ (see Remark  \ref{rem extension is a prasad character}). 
Now denoting by $\sn_E$  the spinor norm on $\SO_{2m+1}(E)$ the character $\omega\circ \sn_E$ of $\SO_{2m+1}(E)$ is well defined and extends $\eta_{E/F}\circ \sn$. Note further that $\omega$ is trivial on $(E/F)_1$. Indeed, $\omega$ is trivial on both $N(E/F)$ and $E^{*2}$ and by Hilbert 90 the group $(E/F)_1$ is contained in the product $N(E/F)E^{*2}$ ($z^{\sigma-1}=N_{E/F}(z)z^{-2}$, $z\in E^*$). 
Therefore,
\[
\eta_{E/F}(\sn(\eta(y)^{-1}\inj(g)\eta(y))=\omega(\sn_E(\eta(y)^{-1}\inj(g)\eta(y))=\omega(\sn_E(\inj(g))).
\]
By \eqref{eq sn siegel} this equals $\omega(\det g)$ that equals $1$ since $\det g\in (E/F)_1$.

Assume now that $-1\not \in N(E/F)$. Note that if $\eta(y)\cdot I_{2m+1}=x(y)$ and $h\in \GL_m(E)$ then $(\inj(h)\eta(y))\cdot I_{2m+1}=x({}^t h^{-\sigma}yh^{-1})=x(y \star h^{-1})$ and $U(y\star h^{-1},E/F)=h U(y,E/F)h^{-1}$. With our assumption it therefore suffices to show \eqref{eq ucase} for $y=\diag(a,I_{m-1})$ with $a=\pm 1$ (see \S \ref{sec prel}). Fix the sign $a$ and let $x=x(y)$ and $\eta=\eta(y)$. Let $e_1,\dots,e_{2m+1}$ be the standard basis of $F^{2m+1}$ and $v_i=\eta^{-1}e_i$, $i=1,\dots,2m+1$. Note that $x^2=I_{2m+1}$ and $xe_1=e_{2m+1}$ except when $a=-1$ and $m=1$ where $xe_1=-e_{2m+1}$. Consequently, 
\[
v_1^\sigma=\eta^{-\sigma} e_1=\eta^{-1} xe_1=\begin{cases} -v_{2m+1} & a=-1 \text{ and }  m=1 \\ v_{2m+1} & \text{otherwise.}\end{cases}
\] 

It follows from Lemma \ref{lem in eorb} (or by direct computation) that for $m=2$ and $a=1$ we have $x(y)=w_5\not\in \SO_5(F)\cdot I_5$. We may therefore exclude this case.  In all other cases, we claim that $[\U(y,E/F),\U(y,E/F)]=\SU(y,E/F)$. Indeed, for  $m=1$ this is clear and for all other cases (excluding $m=2$ and $a=1$) $y$ admits an isotropic vector. It therefore follows from \cite[Th\'{e}or\`{e}me 5]{MR0024439} that every proper normal subgroup of $\SU(y,E/F)$ is central. It is easy to see that $[\U(y,E/F),\U(y,E/F)]$ is not central. Consequently, there is a homomorphism $\chi:(E/F)_1 \rightarrow F^*/F^{*2}$ such that  $\sn\circ \Ad(\eta^{-1})\circ \inj=\chi\circ \det$. It thereforefore suffices to show \eqref{eq ucase} for $g=\diag(u,I_{m-1})\in U(y,E/F)$ with $u\in (E/F)_1$. Note that for such $g$ as in \eqref{eq so2} we have
\[
\inj(g)=s_{ue_1+e_{2m+1}}s_{e_1+e_{2m+1}}.
\]
Applying Hilbert 90, let $z\in E^*$ be such that $z^{\sigma-1}=u$. Then $ue_1+e_{2m+1}$ and $\sigma(z)e_1+ze_{2m+1}$ are co-linear in $E^{2m+1}$. Furthermore we have
\[
(v_1+v_{2m+1})^\sigma=\begin{cases} -(v_1+v_{2m+1}) & a=-1 \text{ and }  m=1 \\ v_1+v_{2m+1} & \text{otherwise.}\end{cases} 
\]
and similarly
\[
(\sigma(z)v_1+zv_{2m+1})^\sigma=\begin{cases} -(\sigma(z)v_1+zv_{2m+1}) & a=-1 \text{ and }  m=1 \\ \sigma(z)v_1+zv_{2m+1} & \text{otherwise.}\end{cases} 
\]
It follows that both $w=\alpha(v_1+v_{2m+1})$ and $w'=\alpha(\sigma(z)v_1+zv_{2m+1})$ are in $F^{2m+1}$ where
\[
\alpha=\begin{cases} \imath & a=-1 \text{ and }  m=1 \\ 1 & \text{otherwise.}\end{cases} 
\]
We deduce that
\[
\eta^{-1}\inj(g)\eta=\eta^{-1}s_{\sigma(z)e_1+ze_{2m+1}}s_{e_1+e_{2m+1}}\eta=s_{\sigma(z)v_1+zv_{2m+1}}s_{v_1+v_{2m+1}}=s_{w'}s_w
\]
and since $q=q\circ \Ad(\eta^{-1})$ we have
\[
\sn(\eta^{-1}\inj(g)\eta)=q(w)q(w')F^{*2}=q(\alpha(\sigma(z)e_1+ze_{2m+1}))q(\alpha(e_1+e_{2m+1}))F^{*2}=4\alpha^4 N_{E/F}(z)F^{*2}.
\]
Since $4\alpha^4 \in F^{*2}$, we obtain
\[
\eta_{E/F}(\sn(\eta^{-1}\inj(g)\eta))=\eta_{E/F}(N_{E/F}(z))=1
\]
as required.
\end{proof}

For a certain class of generic irreducible representations $\pi$ of $G^\circ$ we are now in a position to prove unconditionally the relation between distinction of $\pi$ and of its transfer $T(\pi)$ that Proposition \ref{prop distT} shows follows more generally from Conjecture \ref{conjecture Prasad}. In particular, $\pi$ could be any irreducible principal series. 

\begin{theorem}\label{thm dist and trans}
In the notation of \S \ref{ss trans ind} suppose that $\pi_i$ is cuspidal for $i=0,\dots,k$ (in particular, $\pi_0$ is the trivial character of the trivial group except in the odd unitary or quasi-split but non-split even orthogonal cases where $\pi_0$ is a character of $G_0^\circ$) and $\pi$ is irreducible. (In particular, $\pi$ could be any irreducible generic principal series representation of $G^\circ$.)
\begin{enumerate}
\item If $\pi$ is $H^\circ$-distinguished then its transfer $T(\pi)$ is $\GL_{m(n)}(F')$-distinguished.
\item If  $\pi$ is $(H^\circ,\omega_{\mathbb{H}^\circ,E/F})$-distinguished then its transfer $T(\pi)$ is $(\GL_{m(n)}(F'),\chi)$-distinguished where $\chi$ is the trivial character in the even orthogonal case and equals $\omega_{\GL_{m(n)},E'/F'}$ otherwise.
\end{enumerate}
\end{theorem}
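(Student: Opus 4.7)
The plan is to combine the combinatorial characterization of distinction in Theorem \ref{thm dist ind class} (equivalently Corollary \ref{cor cusp dist}) on the classical group side with the explicit sufficient condition of Lemma \ref{lem gldist} on the general linear side, matched via the transfer map computed in Lemma \ref{lemma transfer of generic is irreducible}.

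For part (1), suppose $\pi$ is $H^\circ = G^\circ_{I_N}$-distinguished. Apply Corollary \ref{cor cusp dist} together with the orbit/stabilizer analysis of Proposition \ref{prop adm orbs} to produce $x' = x_w(\{y_i\}_{i\in I(w)}, z) \in G^\circ \cdot I_N$ with $w = \rho\set \in \weyl_k^\circ[2,M]$ such that $(\pi_1 \otimes \cdots \otimes \pi_k \otimes \pi_0) \circ \iota$ is $M_{x'}^\circ$-distinguished. Unpacking via Corollary \ref{cor dist adm} and Remark \ref{rmk gk} gives: $\pi_{\rho(i)} \simeq (\pi_i^\vee)^\sigma$ when $i\notin \set, \rho(i)\ne i$; $\pi_i$ is $\GL_{n_i}(F')$-distinguished when $i\notin\set, \rho(i)=i$; $\pi_{\rho(i)} \simeq \pi_i^{\tau\sigma}$ when $i\in\set, \rho(i)\ne i$; $\pi_i$ is $\U(y_i,E'/(E')^{\sigma\tau})$-distinguished when $i\in\set, \rho(i)=i$; and $\pi_0$ is $(G_0^\circ)_z$-distinguished. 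By Lemma \ref{lemma transfer of generic is irreducible}, $T(\pi)$ equals the irreducible induced representation
\[
\Pi = \pi_1\times\cdots\times \pi_k\times T(\pi_0)\times (\pi_k^\vee)^\tau\times\cdots\times (\pi_1^\vee)^\tau.
\]
Proposition \ref{prop T character of SO(2)} provides $\GL_{m(0)}(F')$-distinction of $T(\pi_0)$. Feeding this data (with the same $\rho, \set, y_i$) into Lemma \ref{lem gldist} with $\chi=\triv$ yields $\GL_{m(n)}(F')$-distinction of $T(\pi)$.

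For part (2), a case-by-case inspection of the table in \S\ref{ss lgp} (together with $\eta_{E/F}^2 = \triv$) shows that $\omega_{\mathbb{H}^\circ,E/F}$ is trivial in the symplectic, even orthogonal (split or quasi-split), and odd unitary cases; in these cases (2) is immediate from (1). It remains to treat the split odd orthogonal and even unitary cases, where $\omega_{\mathbb{H}^\circ,E/F}$ is nontrivial but $G_0^\circ$ is trivial (so $\pi_0$ and $T(\pi_0)$ are trivial, and the conditions involving $\pi_0$ drop out). Here we use a character-twisted analogue of Corollary \ref{cor cusp dist}: proceeding as in the proof of Proposition \ref{prop lift}, the intertwining period $J_P^G(x',\ell,\sigma,\lambda)$ against an $(M_{x'}^\circ, \omega\circ\Ad(\eta^{-1}))$-equivariant form $\ell$ (where $x' = \eta\cdot I_N$) produces an $(H^\circ,\omega)$-invariant linear form on the induced representation. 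Consequently, $(H^\circ,\omega)$-distinction of $\pi$ reduces to $(M_{x'}^\circ, \omega\circ\Ad(\eta^{-1}))$-distinction of the cuspidal data for some $x'$ in the same orbit structure. By Lemma \ref{lem main charcomp} this character restricts to $\prod_{i\notin\set,\rho(i)=i}\eta_{E'/F'}(\det g_i)$ on the stabilizer, so the only modification to the list of conditions in part (1) is that for $i\notin\set$ with $\rho(i)=i$, $\pi_i$ is now $(\GL_{n_i}(F'),\eta_{E'/F'}\circ\det)$-distinguished. These conditions are precisely the hypotheses of Lemma \ref{lem gldist} with $\chi=\eta_{E'/F'}$, which in these two cases agrees with $\omega_{\GL_{m(n)},E'/F'}$, yielding the desired conclusion.

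The main obstacle is the character-twisted version of Corollary \ref{cor cusp dist} needed in part (2), since the results of \S\ref{sec int per} as stated are only for the trivial character. However, the construction is a routine adaptation: one replaces the $M_{x'}^\circ$-invariant form $\ell$ by an $(M_{x'}^\circ, \omega \circ \Ad(\eta^{-1}))$-equivariant one throughout the proofs of Theorem \ref{thm merom} and Proposition \ref{prop lift}. All convergence and meromorphic continuation estimates are unaffected since they depend only on $|\ell|$. The computation of $\omega \circ \Ad(\eta^{-1})|_{M_{x'}^\circ}$ in Lemma \ref{lem main charcomp} is the substantive input, and the case analysis showing that the twisted and untwisted $\GL$-distinctions match up exactly with the two surviving nontrivial cases (split odd orthogonal and even unitary) — via the coincidence that $\chi = \eta_{E'/F'} = \omega_{\GL_{m(n)},E'/F'}$ holds there — is what makes the statement clean.
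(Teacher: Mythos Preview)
Your approach to part (1) is essentially the paper's: both unwind Theorem~\ref{thm dist ind class} (equivalently the chain Corollary~\ref{cor cusp dist} / Proposition~\ref{prop adm orbs} / Corollary~\ref{cor dist adm}) to obtain the combinatorial conditions on the $\pi_i$, invoke Proposition~\ref{prop T character of SO(2)} for $T(\pi_0)$, and feed the result into Lemma~\ref{lem gldist}. One omission: you should first use irreducibility of $\pi$ to reorder so that $e(\pi_1)\ge\cdots\ge e(\pi_k)\ge 0$, since Lemma~\ref{lemma transfer of generic is irreducible} requires that hypothesis.

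For part (2) there is a genuine gap in the logic. You propose to extend the intertwining period machinery (Theorem~\ref{thm merom}, Proposition~\ref{prop lift}) to the $(H^\circ,\omega)$-twisted setting; but that extension yields the \emph{sufficient} direction --- from $(M_{x'}^\circ,\omega\circ\Ad(\eta^{-1}))$-distinction of the inducing data one constructs an $(H^\circ,\omega)$-equivariant form on $\pi$. The theorem needs the \emph{converse}: starting from $(H^\circ,\omega)$-distinction of $\pi$, deduce the conditions on the $\pi_i$. Your ``Consequently'' does not follow from the construction you describe. The paper proceeds differently and more simply: because $\sigma$ is cuspidal, only the necessary direction is required, and for that the paper invokes the geometric-lemma filtration result \cite[Corollary~5.2 and Corollary~6.9]{MR3541705}, which already handles distinction with respect to an arbitrary character of $H^\circ$. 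Combined with the orbit analysis of \S\ref{subsec orb and stab} and your correctly identified Lemma~\ref{lem main charcomp}, this immediately produces the hypothesis list of Lemma~\ref{lem gldist} with $\chi=\eta_{E'/F'}$. Thus no extension of the intertwining period theory is needed for this theorem, and the piece you propose to extend is not the one that does the work.
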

\begin{proof}
Since $\pi$ is irreducible, applying an appropriate Weyl element we assume without loss of generality that $e(\pi_1)\ge \cdots \ge e(\pi_k)\ge 0$. 
It follows from Lemma \ref{lemma transfer of generic is irreducible} that
\[
T(\pi)=\pi_1\times \cdots \times\pi_k\times T(\pi_0) \times (\pi_k^\vee)^\tau \times \cdots \times (\pi_1^\vee)^\tau.
\]

If $\pi$ is $H^\circ$-distinguished then it follows from Theorem \ref{thm dist ind class} and (in the odd unitary or quasi-split but non split even orthogonal case) Proposition \ref{prop T character of SO(2)} that $T(\pi)$ satisfies the conditions for Lemma \ref{lem gldist} and we deduce that $T(\pi)$ is $\GL_{m(n)}(F')$-distinguished.
Note that we only applied the necessary condition for distinction of Theorem \ref{thm dist ind class}. 

The second part of the theorem is only different from the first in the odd orthogonal and in the even unitary cases. Assume we are in one of those cases and note that $G_0^\circ$ is now the trivial group.
Suppose that $\pi$ is $(H^\circ,\omega_{\mathbb{H}^\circ,E/F})$-distinguished. It follows from \cite[Corollary 5.2 and Corollary 6.9]{MR3541705}, the orbits and stabilizers analysis done in Section \ref{subsec orb and stab}, Lemma \ref{lem main charcomp} and the fact that $\upsilon^*=(\upsilon^\tau)^\vee$ for an irreducible representation $\upsilon$ of $\GL_m(E')$ (see \cite{MR0404534}), that there exist an involution $\rho\in S_k$ and a subset $\set$ of $[1,k]$ such that $\rho(\set)=\set$ and $n_{\rho(i)}=n_i$, $i\in [1,k]$ and $y_i\in Y_{n_i}(E'/(E')^{\sigma\tau},1)$ for all $i\in \set$ such that $\rho(i)=i$ such that 
\[
\begin{array}{ll}
\pi_{\rho(i)}\simeq\bar \pi_i^\vee & i\not\in \set,\ \rho(i)\ne i \\
\pi_i \text{ is }(\GL_{n_i}(F'),\eta_{E'/F'}\circ\det)\text{-distinguished} & i\not\in\set,\ \rho(i)=i \\
\pi_{\rho(i)}\simeq \pi_i^{\tau\sigma} & i\in  \set, \ \rho(i)\ne i \\
\pi_i \text{ is }\U(y_i,E'/(E')^{\sigma\tau})-\text{distinguished} & i\in \set,\ \rho(i)=i. 
\end{array}
\]
It now follows from Lemma \ref{lem gldist} that $T(\pi)$ is $(\GL_{m(n)}(F'),\eta_{E'/F'}\circ \det)$-distinguished. Since in both cases $m(n)$ is even the theorem follows.
\end{proof}
\begin{remark}
We expect Theorem \ref{thm dist and trans} to hold for any generic irreducible $\pi$ which is cuspidally induced. Our methods can reduce this question to the case where $\pi$ is cuspidal. However,  the cuspidal case requires other methods and we do not address it here. Applying harmonic analysis on Galois symmetric spaces, this problem is addressed in work in preparation of Beuzart-Plessis and Wan.
\end{remark}
It is now tempting to suggest that a global analogue of this problem also holds. 
Let $K/k$ be a quadratic extension of number fields and $\mathbb{H}^\circ$ be a $k$-quasisplit classical group amongst the ones considered in this paper.
Recall from \cite[Theorem 1.1]{MR2767514} that for every globally generic cuspidal automorphic representation $\pi$ of $\mathbb{H}^\circ(\A_K)$ the functorial lift $T(\pi)$ exists and is an automorphic representations of $\GL_{m(n)}(\A_K)$. Furthermore, the definition of Prasad's character $\omega_{\mathbb{H}^\circ,K/k}$ makes sense globally (see \ref{subsec Prasad global char} in Appendix \ref{app}).

\begin{conjecture}
Let $\pi$ be an irreducible globally generic cuspidal automorphic representation of $\mathbb{H}^\circ(\A_K)$ such that 
$T(\pi)$ is a cuspidal automorphic representation of $\GL_{m(n)}(\A_K)$. 
\begin{enumerate}
\item If $\pi$ is $\mathbb{H}^{\circ}(\A_k)$-distinguished, then $T(\pi)$ is $\GL_{m(n)}(\A_k)$-distinguished. 
\item If $\pi$ is $(\mathbb{H}^{\circ}(\A_k),\omega_{\mathbb{H}^\circ,K/k})$-distinguished, then $T(\pi)$ is $(\GL_{m(n)}(\A_k),\chi_{\GL_{m(n),K/k}})$-distinguished where $\chi_{\GL_{m(n),K/k}}$ is the trivial character in the even orthogonal case and equals $\omega_{\GL_{m(n)},K/k}$ otherwise.
\end{enumerate}
\end{conjecture}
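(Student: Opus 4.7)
The plan is to establish this global conjecture by combining a local-to-global analysis with a period identity, closely following the local strategy of Theorem \ref{thm dist and trans} and the approach Beuzart-Plessis used in the $\GL_n$ setting. The starting observation is that the hypothesis that $T(\pi)$ is cuspidal places $\pi$ in a generic tempered L-packet whose functorial transfer is irreducible and cuspidal, so the local components $T(\pi)_v$ and $\pi_v$ are related by Lemma \ref{lemma transfer of generic is irreducible} at every place. Combined with Theorem \ref{thm dist and trans} (applied locally at every place where $\pi_v$ is an irreducible induction from cuspidal data), one gets that each local factor $T(\pi)_v$ is locally $\GL_{m(n)}(k_v)$-distinguished with the predicted character. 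The issue is that local distinction at every place does not imply global distinction, so a genuine global input is needed.

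I would first reduce Case 2 to Case 1 by a global twist. Choose a Hecke character $\omega$ of $\A_K^\times$ extending $\omega_{\mathbb{H}^\circ,K/k}$ (the local analogue is Lemma \ref{lemma Prasad character extends}; the global extension should exist up to a controlled ambiguity), and replace $\pi$ by $\pi \otimes (\omega \circ \det)$. A global analogue of Lemma \ref{lemma transfer and character twist} would then give $T(\pi \otimes \omega) = T(\pi) \otimes (\omega \circ \det)$, reducing us to a statement without characters. For Case 1 proper, my plan is to use the automorphic descent of Ginzburg-Rallis-Soudry, which realizes $\pi$ explicitly as a Fourier-Jacobi-type coefficient of $T(\pi)$; substituting this realization into the $\mathbb{H}^\circ(\A_k)$-period of $\pi$ and unfolding the resulting iterated integral should produce, as an inner integral, the $\GL_{m(n)}(\A_k)$-period of $T(\pi)$ (possibly tensored against an auxiliary integral that one shows is non-vanishing). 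Non-vanishing of the outer period then forces non-vanishing of the inner one, yielding the desired distinction.

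The main obstacle will be the global period identity itself: the descent integrals are delicate to unfold, and one must rule out vanishing from degenerate orbits and control convergence issues at the archimedean and ramified places. An alternative route is to run a relative trace formula comparison between the symmetric pairs $(\mathbb{H}^\circ, \mathbb{H}^\circ)$ over $K/k$ and $(\GL_{m(n)}, \GL_{m(n)})$ over $K/k$, in the spirit of Jacquet-Rallis; there Theorem \ref{thm dist and trans} provides the spectral matching at unramified places, but one is then confronted with the genuinely hard problem of establishing the fundamental lemma and smooth transfer for the relevant orbital integrals in the generality of all classical groups considered here, including the non-split inner forms and the non-quasi-split even orthogonal case. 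Either way, some archimedean and ramified local analysis beyond Theorem \ref{thm dist and trans} (which is stated only for induced-from-cuspidal data) is needed; a globalization argument in the spirit of \cite{BP} would be required to bootstrap from the cases we control to the general generic cuspidal case.
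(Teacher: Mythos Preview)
The statement you are attempting to prove is stated in the paper as a \emph{Conjecture}, not a theorem: it appears at the end of \S\ref{s llc} immediately after Theorem~\ref{thm dist and trans}, introduced by the sentence ``It is now tempting to suggest that a global analogue of this problem also holds.'' The paper offers no proof, partial proof, or even a proof sketch for it; it is posed as an open problem motivated by the local result Theorem~\ref{thm dist and trans}. There is therefore nothing in the paper to compare your proposal against.

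As for the content of your proposal itself: it is not a proof but an outline of two possible research programs (descent unfolding and relative trace formula comparison), and you are candid that both face serious unresolved obstacles. A few specific issues are worth flagging. Your reduction of Case~2 to Case~1 via a global character twist is not straightforward: the local Lemma~\ref{lemma Prasad character extends} requires $-1\in N(E/F)$ for the extension to exist, and globally there is no reason the needed quadratic Hecke character of $\A_K^\times$ extending $\omega_{\mathbb{H}^\circ,K/k}$ should exist without restriction. More fundamentally, the heart of either approach---unfolding a descent realization of $\pi$ inside the $\mathbb{H}^\circ(\A_k)$-period to expose a $\GL_{m(n)}(\A_k)$-period, or establishing fundamental lemma and smooth transfer for a Jacquet--Rallis-type comparison for all the classical groups in question---is a substantial open problem in its own right, not a step one can simply cite. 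Your proposal correctly identifies these as the hard points, but that means what you have written is a research plan, not a proof; the conjecture remains open.
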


\appendix
\section{Explicit computations of Prasad's quadratic character and opposition group}\label{app}

\subsection{Prasad's quadratic character}\label{subsec prasad char}

Let $\Y$ be a connected reductive group defined and quasi-split over $F$. We recall the construction of a quadratic character $\omega_{\Y,E/F}$ of $Y=\Y(F)$ in \cite[Proposition 6.4]{DP} associated to $\Y$ and the quadratic extension $E/F$.

The character $\omega_{\Y,E/F}$ factors through the natural projection $Y\mapsto Y^{\ad}=\Y^{\ad}(F)$ to the adjoint group and it suffices to define it for adjoint groups.

Denote by $\Y^{\sc}$ the simply connected cover of $\Y^{\ad}$, by $\mathbf{Z}$ its center and by $p:\Y^{\sc}\rightarrow \Y^{\ad}$ the natural projection. 
Then we have an exact sequence of algebraic $F$-groups
\begin{equation}\label{eq defining P char} 
1\rightarrow \mathbf{Z} \rightarrow \Y^{\sc} \rightarrow \Y^{\ad} \rightarrow 1 
\end{equation} 
that gives rise to an exact sequence of local groups
\[
1\rightarrow \mathbf{Z}(F) \rightarrow \Y^{\sc}(F) \mathop{\longrightarrow}\limits^p \Y^{\ad}(F) \mathop{\longrightarrow}\limits^{\delta_{p,F}} H^1(F,\mathbf{Z}).
\] 
(The Galois cohomology group $H^1(F,\mathbf{Z})$ and the connecting homomorphism $\delta_{p,F}$ are defined for example in \cite{MR1324577}.)

Let $\mathbf{B}^{\ad}=\mathbf{T}^{\ad}\ltimes \mathbf{U}$ be a Borel subgroup of $\Y^{\ad}$ with a maximal torus $\mathbf{T}^{\ad}$ and unipotent radical $\mathbf{U}$ definded over $F$. Then $p^{-1}(\mathbf{B}^{\ad})$ is a Borel subgroup of $\Y^{\sc}$ and $\mathbf{Z}\subseteq \mathbf{T}^{\sc}=p^{-1}(\mathbf{T}^{\ad})$. Thus $p$ identifies $\mathbf{U}$ with its pre-image in $\Y^{\sc}$ that we still denote by $\mathbf{U}$. It is known that the half sum (with multiplicities) $\rho$ of the positive roots of $\mathbf{T}^{\sc}$ (with respect to $\mathbf{U}$) is a weight (i.e. an algebraic character) of $\mathbf{T}^{\sc}$ defined over $F$ and that $2\rho$ is a weight of $\mathbf{T}^{\ad}$, that is, contains $\mathbf{Z}$ in its kernel. By restriction $\rho$ induces a character $\rho_{|\mathbf{Z}}\rightarrow \{\pm 1\}$. 
By the functorial properties of Galois cohomology $\rho$ gives rise to a homomorphism
\[
[\rho]_F: H^1(F,\mathbf{Z})\rightarrow H^1(F,\{\pm 1\})\simeq F^*/F^{*2}.
\]
Let $\eta_{E/F}$ be the quadratic character of $F^*$ with kernel $(E/F)_1$, the norm one subgroup. Then
\[
\omega_{\Y^{\ad},E/F}=\eta_{E/F}\circ [\rho]_F\circ \delta_{p,F}:Y^{\ad}\rightarrow \{\pm 1\}
\]
is the character defined by Prasad for $Y^{\ad}$.
Composing with the natural map from $Y$ to $Y^{\ad}$ we obtain Prasad's character 
$\omega_{\Y,E/F}: Y \rightarrow \{\pm{1}\}$.

\begin{remark}\label{rmk ss}
For semi-simple $\Y$ the character $\omega_{\Y,E/F}$ can be defined more directly since the projection $p: \Y^{\sc}\rightarrow \Y^{\ad}$ factors through $\Y$. Let $p':\Y^{\sc} \rightarrow \Y$ be the corresponding projection (so that $p$ is the composition of $p'$ with the narural projection $\Y \rightarrow \Y^{\ad}$). Then the short exact sequence
\[
1\rightarrow \mathbf{Z'} \rightarrow \Y^{\sc} \mathop{\longrightarrow}\limits^{p'} \Y \rightarrow 1
\]
gives rise to the connecting homomorphism $\delta_{p',F}: \Y(F)\rightarrow H^1(F,\mathbf{Z'})$. The kernel $\mathbf{Z'}$ of $p'$ is a subgroup of $\mathbf{Z}$ so that $\rho|_{\mathbf{Z'}}:\mathbf{Z'}\rightarrow \{\pm 1\}$ defines the map $[\rho]_F: H^1(F,\mathbf{Z'}) \rightarrow H^1(F,\{\pm 1\})$ and we have
\[
\omega_{\Y,E/F}=\eta_{E/F}\circ [\rho]_F\circ \delta_{p',F}.
\]
\end{remark}

Taking both $F$-points and $E$-points in Equation (\ref{eq defining P char}) and composing with $[\rho]_E$ and $[\rho]_F$ respectively we obtain the commutative diagram 
\[
\xymatrix{ 
1 \ar[r] & \mathbf{Z}(E)  \ar[r] & \Y^{\sc}(E)  \ar[r]^{} & \Y^{\ad}(E) \ar[r]^{\d_{p,E}} & H^1(E,\mathbf{Z})\ar[r]^{[\rho]_E} & H^1(E,\{\pm1\}) \ar[r]^{\sim} &  E^*/E^{*2}  \\
1 \ar[r] & \mathbf{Z}(F) \ar[r] \ar[u]^{i_{F,E}} & \Y^{\sc}(F) \ar[r] \ar[u]^{i_{F,E}} & \Y^{\ad}(F) \ar[r]^{\d_{p,F}} \ar[u]^{i_{F,E}}  & H^1(F,\mathbf{Z})\ar[r]^{[\rho]_F } \ar[u]^{[i_{F,E}]}  & H^1(F,\{\pm1\})  \ar[u]^{[i_{F,E}]} \ar[r]^{\sim} &  F^*/F^{*2} \ar[u]^i}
\] 
where the vertical arrows $i_{F,E}$ are the natural embeddings of $F$-points into $E$-points of an algebraic group defined over $F$ and $[i_{F,E}]$ are the maps induced by restriction of cocycles from the absolute Galois group of $F$ to that of $E$.  
We arrive at the following statement.
\begin{lemma}\label{lemma Prasad character extends}We have the following equality of characters of $\Y^{\ad}(F)$
\[
[\rho]_E\circ \d_{p,E}\circ i_{F,E}=[i_{F,E}] \circ [\rho]_F\circ \d_{p,F}.
\] 
In particular, if $\eta_{E/F}$ extends to a quadratic character $\eta$ of $E^*$ then 
\[
\omega_{E,\eta}^{\ad}:=\eta\circ [\rho]_E \circ \d_{p,E}
\] 
is a character of $\Y^{\ad}(E)$ that extends $\omega_{\Y^{\ad},E/F}$ and hence, composition with the natural projection $\Y(E)\rightarrow \Y^{\ad}(E)$ gives rise to a character $\omega_{E,\eta}$ of $\Y(E)$ that extends $\omega_{\Y,E/F}$.
\end{lemma}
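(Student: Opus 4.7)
The plan is to derive both claims from the commutative diagram displayed just before the lemma. I would first verify that every square in the diagram commutes, and then the first displayed equality is obtained simply by chasing around the two outer compositions.

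The leftmost squares (those not involving the connecting homomorphism) commute tautologically from the functoriality of taking $F$-points versus $E$-points in the short exact sequence \eqref{eq defining P char}. For the square involving $\d_{p,F}$ and $\d_{p,E}$, I would invoke the naturality of the connecting homomorphism in Galois cohomology with respect to restriction of the absolute Galois group from $\Gal(\overline F/F)$ to $\Gal(\overline F/E)$; indeed, viewing $\d_{p,F}$ and $\d_{p,E}$ as the first boundary map in the long exact sequence attached to $1\to \mathbf{Z}\to \Y^{\sc}\to \Y^{\ad}\to 1$, restriction of cocycles commutes with the coboundary operation, and this is exactly the assertion that $[i_{F,E}]\circ \d_{p,F}=\d_{p,E}\circ i_{F,E}$. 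For the square involving $[\rho]_F$ and $[\rho]_E$, the point is that $\rho_{|\mathbf{Z}}:\mathbf{Z}\to \{\pm 1\}$ is a morphism of algebraic $F$-groups, hence induces a morphism in cohomology that commutes with the restriction map $[i_{F,E}]$. Finally, under the canonical identifications $H^1(F,\{\pm 1\})\simeq F^*/F^{*2}$ and $H^1(E,\{\pm 1\})\simeq E^*/E^{*2}$ (coming from Kummer/Hilbert 90), the map $[i_{F,E}]$ corresponds precisely to the natural inclusion $F^*/F^{*2}\hookrightarrow E^*/E^{*2}$; this is a standard compatibility of Kummer theory with restriction that I would simply cite.

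Given all squares commute, chasing from $\Y^{\ad}(F)$ around the top and bottom of the diagram yields the first displayed identity
\[
[\rho]_E\circ \d_{p,E}\circ i_{F,E}=[i_{F,E}] \circ [\rho]_F\circ \d_{p,F}.
\]

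For the second assertion, suppose $\eta$ is a quadratic character of $E^*$ whose restriction to $F^*$ equals $\eta_{E/F}$; under the identification of the rightmost vertical arrow with the inclusion $F^*/F^{*2}\hookrightarrow E^*/E^{*2}$, this is exactly the statement that $\eta\circ [i_{F,E}]=\eta_{E/F}$ as characters on $F^*/F^{*2}$. Restricting $\omega_{E,\eta}^{\ad}=\eta\circ [\rho]_E\circ \d_{p,E}$ to $\Y^{\ad}(F)$ and applying the first identity gives
\[
\omega_{E,\eta}^{\ad}\circ i_{F,E}=\eta\circ [i_{F,E}]\circ [\rho]_F\circ \d_{p,F}=\eta_{E/F}\circ [\rho]_F\circ \d_{p,F}=\omega_{\Y^{\ad},E/F},
\]
so $\omega_{E,\eta}^{\ad}$ extends $\omega_{\Y^{\ad},E/F}$. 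Pre-composing with the natural projection $\Y(E)\to \Y^{\ad}(E)$ (which restricts to the projection $\Y(F)\to \Y^{\ad}(F)$ on $F$-points) then produces the desired extension $\omega_{E,\eta}$ of $\omega_{\Y,E/F}$ to $\Y(E)$.

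The only part requiring any care is the identification of $[i_{F,E}]:H^1(F,\{\pm 1\})\to H^1(E,\{\pm 1\})$ with the inclusion $F^*/F^{*2}\hookrightarrow E^*/E^{*2}$; this is the main substantive input beyond diagram chasing, and while standard it is the step I would state most carefully when writing the proof.
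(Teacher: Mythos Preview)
Your proposal is correct and follows essentially the same approach as the paper: the paper's proof is a one-sentence remark that the lemma is immediate from the commutative diagram together with the observation that $[i_{F,E}]:H^1(F,\{\pm 1\})\to H^1(E,\{\pm 1\})$ identifies with the natural map $F^*/F^{*2}\to E^*/E^{*2}$, and you simply spell out the diagram chase in more detail. One minor notational point: you write this natural map as an inclusion $\hookrightarrow$, but it is not injective in general (e.g.\ $\imath^2 F^{*2}$ lies in the kernel); the paper calls it only ``the natural map'', and fortunately nothing in your argument uses injectivity.
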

\begin{proof}
The lemma is immediate from the observation that the map $[i_{F,E}]:H^1(F,\{\pm{1}\}) \rightarrow H^1(E,\{\pm{1}\})$ identifies with the natural map $i:F^*/F^{*2}\rightarrow E^*/E^{*2}$ induced by the inclusion $F^*\subseteq E^*$.
\end{proof}

\begin{remark}\label{rem extension is a prasad character}
There exists a quadratic character $\eta$ of $E^*$ that extends $\eta_{E/F}$ if and only if $-1\in N(E/F)$.
Indeed, such $\eta$ exists if and only if $\eta_{E/F}$ considered as a character of $F^* /F^{*2}$ is trivial on the kernel of the natural map $aF^{*2}\mapsto aE^{*2}: F^* /F^{*2}\rightarrow E^*/E^{*2}$ and if $a\in E\setminus F$ is such that $a^2\in F$ then this kernel is $F^{*2}\sqcup a^2F^{*2}$ while $-a^2\in N(E/F)$. 
When this is the case, we can choose a quadratic extension $L/E$ such that $\eta_{L/E}$ extends $\eta_{E/F}$. 
Note that in this case we have 
\[
\omega_{E,\eta_{L/E}}=\omega_{\Y_E,L/E}
\]
where $\Y_E$ is the base change of $\Y$ to $E$ and the left hand side is defined in the lemma.
\end{remark}

We compute $\omega_{\Y,E/F}$ when $\Y$ is either $\GL_n$ or the quasi-split classical group $\mathbb{H}^\circ$ as in \S\ref{ss lgp}. The method that we use was explained to us by Beuzart-Plessis and we thank him for his help. We also mention that he obtained a different proof of Prasad's character computation in the case of unitary groups, similar to that of the general linear and odd orthogonal case.

\begin{itemize}
\item Assume that $\Y=\GL_n$ so that $\Y^{ad}=\PGL_n$ and $\Y^{\sc}=\SL_n$. Recall that $\operatorname{End}(\mult)=\Z$ where we identify $n\in \Z$ with the $n$th power endomorphism of the multiplicative group $\mult$. Let $\mu_n$ be the algebraic subgroup of $\mult$ defined by the exact sequence
\[
1 \longrightarrow \mu_n \longrightarrow \mult \mathop{\longrightarrow}\limits^n \mult \longrightarrow 1.
\]
Consider the following two commutative diagrams 
\[
\xymatrix{ 
1 \ar[r] & \mu_n \ar[d]_{n\choose 2} \ar[r] & \mult \ar[d]_{n\choose 2} \ar[r]^n & \mult \ar[d]_{n-1} \ar[r] & 1 \\
1 \ar[r] & \mu_2 \ar[r] & \mult \ar[r]^2 & \mult \ar[r] & 1 }
\]
and 
\[\xymatrix{ 
1 \ar[r]& \mu_n \ar[r] & \SL_n  \ar[r] & \PGL_n \ar[r] & 1 \\
1 \ar[r] & \mu_n \ar@{=}[u] \ar[d]_{-1} \ar[r] & \mult \times \SL_n \ar[u]^{p_2} \ar[d]_{p_1} \ar[r] & \GL_n \ar[u]^s \ar[d]_{\det}\ar[r] & 1 \\
1 \ar[r] & \mu_n \ar[r] & \mult \ar[r]^n & \mult \ar[r] & 1 }\]
where $s:\GL_n\rightarrow \PGL_n$ is the natural projection and $p_i$ the projection to the $i$th coordinate, $i=1,2$. 
Note that $\mathbf{Z}=\mu_n$ is the center of $\SL_n$ and $\rho|_{\mathbf{Z}}={n\choose 2}$. 
By passing to $F$-points and applying the long exact sequence of \cite[Proposition 43]{MR1324577} we, in particular, get from the first diagram the commutative diagram
\[\xymatrix{ 
 F^* \ar[d]_{n-1} \ar[r]^-{\d_n}& H^1(F,\mu_n) \ar[d]_{[{n\choose 2}]} & \\
 F^* \ar[r]^-{\d_2} & H^1(F,\mu_2) \ar[r]^{\sim} &   F^*/(F^*)^2}\]
and from the second diagram the  commutative diagram
\[\xymatrix{ 
 \PGL_n(F) \ar[r]^-{\d} & H^1(F,\mu_n) \\
 \GL_n(F) \ar[u]^s \ar[d]_{\det}\ar[r]^-{\d'} &  H^1(F,\mu_n)\ar@{=}[u] \ar[d]_{[-1]}  \\
F^* \ar[r]^-{\delta_n} &  H^1(F,\mu_n) }\]
where $\delta_n$, $\delta_2$, $\delta$ and $\delta'$ are the corresponding connecting homomorphisms.
Note that 
\[
\omega_{\GL_n,E/F}=\eta_{E/F}\circ [{n \choose 2}]_F \circ \delta\circ s.
\]
Putting the two last diagrams together we obtain 
\[
 [-{n \choose 2}]_F\circ \d\circ s=\delta_2 \circ (n-1) \circ \det.
 \] 
Since $\delta_2$ induces the natural projection $F^* \rightarrow  F^*/F^{*2}$ and $[-1]_F$ the identity map on $F^*/F^{*2}$ we deduce that
\[\omega_{\GL_n,E/F}= \eta_{E/F}^{n-1}\circ \det.\]

\item In the symplectic case we have that $\omega_{\Sp_{2n},E/F}$ is trivial. Indeed, if $\Y=\Sp_{2n}$ then $\Y=\Y^{\ad}=\Y^{\sc}$ and $\mathbf{Z}=\{1\}$. In fact, $\Sp_{2n}(F)$ has no non-trivial cahracters.

\item Consider the quasi-split orthogonal case. Thus, $\Y=\SO(\j[n])$ where in the odd case we may take $\j[n]=w_{2n+1}$ and in the even case, either $\j[n]=w_{2n}$ (the split case) or $\j$ is anisotropic of size two (the quasi-split non-split case). If $n=0$ then $\Y$ is anisotropic, hence $\rho$ is trivial and therefore so is $\omega_{\Y,E/F}$. Assume now that $n\ge 1$. Then $\Y$ is semi-simple and $\Y^{\sc}=\Spin(\j[n])$ is the associated spin group. The short exact sequence
\[
1\rightarrow \mu_2 \rightarrow \Spin(\j[n]) \rightarrow \SO(\j[n])\rightarrow 1
\] 
gives rise to the connecting homomorphism 
\[
\d_{\SO}:\SO(\j[n],F)\rightarrow  H^1(F,\mu_2)\simeq F^*/(F^*)^2.
\] 
By Remark \ref{rmk ss} we have $\omega_{\SO(\j[n]),E/F}=\eta_{E/F}\circ[\rho]_F\circ \delta_{\SO}$.
One verifies that in the even case $\rho$ is a weight of the diagonal maximal split torus while in the odd case it is not. In particular, in the even case $[\rho]_F$ is the trivial map so that $\omega_{\SO(\j[n]),E/F}$ is trivial and in the odd case $[\rho]_F$ is the identity map so that $\omega_{\SO(\j[n]),E/F}=\eta_{E/F} \circ \delta_{\SO}$.

The connecting homomorphism $\delta_{\SO}$ is known to be the spinor norm $\sn$. For convenience we provide the argument that is analogous to the $\GL_n$ computation. Denote by 
$\widetilde{\sn}: \GSpin(\j[n])\rightarrow \mult$ the spinor norm on $\GSpin(\j[n])$, so that if \[p:\GSpin(\j[n])\rightarrow \SO(\j[n])\] is the natural projection with kernel $\mult$, the composed maps 
\[\GSpin(\j[n],F)\overset{p}{\rightarrow} \SO(\j[n])\overset{\sn}{\rightarrow} F^*/F^{*2} \] and 
\[\GSpin(\j[n],F)\overset{\widetilde{\sn}}{\rightarrow} F^* \rightarrow F^*/F^{*2} \] are equal.
Consider the commutative diagram 

\[\xymatrix{ 
1 \ar[r]& \mu_2 \ar[r] & \Spin(\j[n])  \ar[r]  & \SO(\j[n])\ar[r] & 1 \\
1 \ar[r] & \mu_2 \ar@{=}[u] \ar@{=}[d]  \ar[r] & \mult \times \Spin(\j[n]) \ar[u]^{p_2}  \ar[d]_{p_1} \ar[r] & \GSpin(\j[n])\ar[u]^p \ar[d]_{\widetilde{\sn}}\ar[r] & 1 \\
1 \ar[r] & \mu_2 \ar[r] & \mult \ar[r]^2 & \mult \ar[r] & 1. }\]
Taking $F$-points we get the following commutative diagram 
\[\xymatrix{ 
 \SO(\j[n],F) \ar[r]^-{\d_{\SO}} & H^1(F,\mu_2)   \simeq  F^*/(F^*)^2 \\
 \GSpin(\j[n],F) \ar[u]^p \ar[d]_{\widetilde{\sn}}\ar[r]^-{\d_{\GSpin}} &  H^1(F,\mu_2)\ar@{=}[u] \ar@{=}[d] \simeq F^*/(F^*)^2 \ar@{=}[u] \ar@{=}[d] \\
F^* \ar[r]^-{\delta_2} &  H^1(F,\mu_2)  \simeq  F^*/F^{*2}. }\] 
Since $p$ is surjective on $F$-points (see e.g. \cite[Chapter 9, Theorem 3.3]{MR770063}) and $\delta_2$ induces the natural projection from $F^*$ to $F^*/F^{*2}$ we deduce that $\delta_{\SO}=\sn$. 
All together we conclude that for all $n\ge 0$ we have
\[
\omega_{\SO(\j[n]),E/F}=\eta_{E/F}^{n_0}\circ \sn
\] 
where $n_0\in \{0,1,2\}$ is the size of $\j$. 
\item Let $\Y=\U_{n,K/F}=\U_{K/F}(w_n)$ be the $F$-quasi-split unitary group with respect to a quadratic extension $K/F$ and $\tau$ the Galois involution associated to $K/F$. Thus, $\Y^{\sc}=\SU_{n,K/F}$ and $\Y^{\ad}=\PU_{n,K/F}$. Let $m:\U_{n,K/F}\rightarrow \PU_{n,K/F}$ be the natural projection. Its restriction to $\SU_{n,K/F}$ gives rise to the short exact sequence
 \[
 1\rightarrow \mu_n' \rightarrow \SU_{n,K/F} \rightarrow \PU_{n,K/F}\rightarrow 1
 \] 
where the center $\mu_n'$ of $\SU_{n,K/F}$ is its intersection with the center $\Res_{K/F}(\mu_n)$ of $\Res_{K/F}(\SL_n)$. By taking $F$-points we get the connecting homomorphism 
$\delta: \PU_{n,K/F}(F)\rightarrow H^1(F,\mu_n')$ and 
\[
\omega_{\U_{n,K/F},E/F}=\eta_{E/F}\circ [\rho]_F\circ \delta \circ m.
\]
Note that if $n$ is odd then the only homomorphism $\mu_n'\rightarrow \mu_2$ is the trivial one and it follows that $[\rho]_F$ and therefore also $\omega_{\U_{n,K/F},E/F}$ is trivial. 

We assume from now on that $n$ is even. Then one verifies that 
\[
\rho(z)=z^{-\frac n2}, \ \ \ z\in \mu_n'(F).
\]
We now interpret the connecting homomorphism $\delta$. 
Denote by 
\[
\wsn:\U_{n,K/F}(F)=\U_n(K/F)\rightarrow K^*/F^*
\] 
the composition of the determinant 
\[
\det:\U_{n,K/F}(F)\rightarrow \U_{1,K/F}(F)
\] 
with the inverse of the isomorphism 
$z F^*\mapsto zz^{-\tau}$ from $ K^*/F^*$ to $\U_{1,K/F}(F)$. 
(In \cite{MR0104764}, Wall interpreted this map as a spinor norm analogue.)
We show that 
\begin{equation}\label{eq wsn} 
[\rho]_F\circ \d \circ m=N_{K/F}\circ \wsn. 
\end{equation} 

The character $N_{K/F}\circ \wsn$ naturally appears in another manner, which we now describe. Let $\d_K\in K\setminus F$ be such that $\d_K^2\in F$. We denote by $\O_{2n}^q$ the orthogonal group defined over $F$ with respect to the quadratic form $q$ on $F^{2n}$ associated to the bilinear form 
\[
b:(\begin{pmatrix}X_1 \\ X_2 \end{pmatrix}, \begin{pmatrix}Y_1 \\ Y_2 \end{pmatrix})\mapsto \Tr_{K/F}(({}^tX_1-\d_K {}^tX_2)w_n
(Y_1+\d_K Y_2)),\ \ \ X_i,\,Y_i\in F^n,\ i=1,2.
\] Since we assume that $n$ is even, one checks that the quadratic space $(F^{2n},q)$ is a sum of hyperbolic planes hence $\O_{2n}^q$ is $F$-split (for $n$ odd it is quasi-split and splits over $K$). There is now an obvious natural imbedding 
\[
i:\U_{n,K/F}\rightarrow \SO_{2n}^q,
\] 
defined over $F$, and 
by  \cite[Chapter 10, Theorem 1.5]{MR770063} we have
\[N_{K/F}\circ \wsn=\sn \circ i.\] We will make use of this equality to prove equation (\ref{eq wsn}). 
Consider the commutative diagram 

\[\xymatrix{ 
 1  \ar[r] & \mu_n'  \ar[r]^-{i_1} \ar@{=}[d] & \U_{1,K/F}\times \SU_{n,K/F} \ar[r]^-{s_1}\ar[d]_{p_2} & \U_{n,K/F} \ar[d]_m \ar[r] & 1\\
 1  \ar[r] & \mu_n' \ar[r]^{} & \SU_{n,K/F} \ar[r]^{}&   \PU_{n,K/F} \ar[r] & 1}\] where 
 $i_1(\mu)=(\mu^{-1},\mu I_n)$ and $s_1(z,u)=zu$, that implies the following one 
 
 \[
 \xymatrix{ 
  \U_{n,K/F}(F) \ar[d]^m \ar[r]^{\d_{s_1,F}} & H^{1}(F,\mu_n')\ar@{=}[d] \\
  \PU_{n,K/F}(F) \ar[r]^{\d} & H^{1}(F,\mu_n')}
  \] 
  so that 
 \[\d\circ m=\d_{s_1,F}.\]

Let  
\[
\overline{\U_{n,K/F}}^2=\{(z,u)\in \U_{1,K/F}\times \U_{n,K/F}, \ \det(u)=z^2\}
\] and  $s_{2}:\overline{\U_{n,K/F}}^2\rightarrow \U_{n,K/F}$ the projection to the second coordinate
\[s_{2}(z,u)=u.\] The map $i_2:\mu\mapsto (\mu,I_n)$ identifies $\mu_2'=\mu_2$ with the kernel of $s_2$. Let $p_{n,2}:\U_{1,K/F}\times \SU_{n,K/F}\rightarrow \overline{\U_{n,K/F}}^2$ be given by 
\[p_{n,2}(z,u)=(z^{n/2}, zu)=(\rho(z)^{-1},zu).\]
\end{itemize} 

This gives the commutative diagram 
 \[\xymatrix{  1\ar[r] & \mu_n' \ar[r]^-{i_1} \ar[d]_{\rho}&   \U_{1,K/F}\times \SU_{n,K/F} \ar[r]^-{s_1} \ar[d]_{p_{n,2}}& \U_{n,K/F} \ar@{=}[d]\ar[r]& 1\\ 
1 \ar[r] &  \mu_2' \ar[r]^-{i_2} & \overline{\U_{n,K/F}}^2 \ar[r]^-{s_2} & \U_{n,K/F} \ar[r] & 1.}\] 
By taking $F$-points we obtain the commutative diagram 
 \[\xymatrix{ 
  \U_{n,K/F}(F) \ar@{=}[d] \ar[r]^{\d_{s_1,F}} & H^{1}(F,\mu_n')\ar[d]_{[\rho]_F} \\
  \U_{n,K/F}(F) \ar[r]^{\d_{s_2,F}} & H^{1}(F,\mu_2')}\] so that 
  \[\d_{s_2,F}=[\rho]_F\circ \d_{s_1,F}.\]

To prove (\ref{eq wsn}) it remains to show that 
\begin{equation}\label{eq dsni}
\d_{s_2,F}=\sn \circ i.
\end{equation} 
To this end we recall the exact sequence 
\[
1\rightarrow \mu_2 \overset{u}{\rightarrow} \Spin_{2n}^q \overset{v}{\rightarrow} \SO_{2n}^q \rightarrow 1
\] 
and show that there exists a map
\[
j:\overline{\U_{n,K/F}}^2\rightarrow \Spin_{2n}^q
\] 
defined over $F$ such that the following diagram is commutative
 \[
 \xymatrix{ 1 \ar[r] & \mu_2' \ar@{=}[d] \ar[r]^{i_2} &  \overline{\U_{n,F'/F}}^2 \ar[d]^j \ar[r]^{s_2} & \U_{n,F'/F} 
 \ar[d]^i \ar[r] & 1 \\  
 1 \ar[r] & \mu_2  \ar[r]^u & \Spin_{2n} \ar[r]^v & \SO_{2n}  \ar[r] & 1.}
 \] 
 Taking $F$-points and recalling from the orthogonal case that the spinor norm $\sn$ is the connecting homomorphism associated with the bottom row we arrive at \eqref{eq dsni} and complete our proof.
 
 To show that such $j$ exists we first make the following simple observation. There exists at most one map $j:\overline{\U_{n,K/F}}^2\rightarrow \Spin_{2n}^q$ such that $v\circ j=i\circ s_2$. Indeed, if $j'$ is another such map then $u\mapsto j'(u)j(u)^{-1}: \overline{\U_{n,K/F}}^2 \rightarrow \mu_2$ must be trivial since $\overline{\U_{n,K/F}}^2$ is connected. It is therefore enough to find such a $j$ defined over $K$. Indeed, if the above diagram commutes for $j$ it also does for $j^\tau$. 
Over $K$ we may identify
\[
\U_{n,K/F}\simeq \GL_n,
\] 
and
\[\overline{\U_{n,K/F}}^2 \simeq \{(z,g)\in \mult\times \GL_n, \det(g)=z^2\}=:\overline{\GL_n}^2\]
 and with these identifications $s_2(z,g)=g$ and $i:\GL_n\rightarrow \SO_{2n}^q$ is an imbedding defined over $K$. 
It now suffices to show that there exists $j:\overline{\GL_n}^2 \rightarrow \Spin_{2n}^q$ defined over $K$ such that $v\circ j=s_2\circ i$ and $j(-1,I_n)=u(-1)$.
In fact, we claim that if there exists $j$ such that $v\circ j=s_2\circ i$ then $j(-1,I_n)=u(-1)$ is automatic. Indeed, otherwise $j$ defines a map $\bar j:\GL_n\rightarrow \Spin_{2n}^q$ such that $v\circ \bar j=i$. Note that $\GL_n$, $\Spin_{2n}^q$ and $\SO_{2n}^q$ are all of rank $n$. Since $i$ is injective it maps any maximal torus of $\GL_n$ to a maximal torus $\T$ of $\SO_{2n}^q$. Furthermore, $\bar j$ must also be injective and therefore $\bar j(\T)$ is a maximal torus of $\Spin_{2n}^q$. Clearly $\bar j(\T)$ is a subgroup of $v^{-1}(\T)$.
It is a well known fact on simply connected covers that $v^{-1}(\T)$ is a maximal torus of $\Spin_{2n}^q$ and therefore $v^{-1}(\T)=\bar j(\T)$. Since $u(\mu_2)\subseteq v^{-1}(\T)$ is in the kernel of $v$ this contradicts the injectivity of $i$.  

 Note that we may further identify $\overline{\GL_n}^2$ as the semidirect product $\mult \ltimes \SL_n$ where the action of $\mult$ on $\SL_n$ is given by
 \[
 zgz^{-1}=\begin{pmatrix} z^2 & \\ & I_{n-1} \end{pmatrix} g\begin{pmatrix} z^{-2} & \\ & I_{n-1} \end{pmatrix}, \ \ \ z\in\mult,\ g\in \SL_n.
 \] 
 Indeed $(z,g)\mapsto (z,\diag(z^2,I_{n-1}) g): \mult \ltimes \SL_n \rightarrow \overline{\GL_n}^2$ is an isomorphism. 
 It now suffices to show that there is a map $j: \mult \ltimes \SL_n \rightarrow \Spin_{2n}^q$ defined over $K$ such that 
 \[
 j(z,g)=i(\diag(z^2,I_{n-1})g).
 \]
Let $j_1=i|_{\SL_n}:\SL_n\rightarrow \SO_{2n}^q$ and $j_2:\mult \rightarrow \SO_{2n}^q$ be given by $j_2(z)=i(\diag(z^2,I_{n-1}))$. 
We claim that it suffices to lift $j_1$ and $j_2$ to $\Spin_{2n}^q$. Indeed, if $\bar j_1:\SL_n\rightarrow \Spin_{2n}^q$ and $\bar j_2:\mult\rightarrow \Spin_{2n}^q$ 
are $K$-morphisms such that $v\circ \bar j_t=j_t$, $t=1,2$ then we claim that 
\[
\j(z,g)=\bar j_2(z)\bar j_1(g)
\]
is automatically a $K$-morphism. Indeed, we need to verify that 
\[
\bar j_2(z)\bar j_1(g)\bar j_2(z)^{-1}=\bar j_2 \begin{pmatrix} z^2 & \\ & I_{n-1} \end{pmatrix} g\begin{pmatrix} z^{-2} & \\ & I_{n-1} \end{pmatrix}.
\]
Fix $z$ and let $f,\,f':\SL_n \rightarrow \Spin_{2n}^q$ be defined by the left and right hand sides of the above equation respectively. Then $v\circ f=v\circ f'$ and therefore $g\mapsto f'(g)f(g)^{-1}:\SL_n\rightarrow \mu_2$ must be trivial as $\SL_n$ is connected. Thus, $f'=f$ as required.

To complete the proof we observe that $j_1$ and $j_2$ both lift to $\Spin_{2n}^q$. The map $j_1$ has a lifting since $\SL_n$ is simply connected. Indeed, let $\Y$ be the image of $j_1$ in $\SO_{2n}^q$ and $\Y'=v^{-1}(\Y)^\circ$ (the connected component of the pre-image of $\Y$ in $\Spin_{2n}^q$). Then $j_1$ is an isomorphism of $\SL_n$ with $\Y$ and $v:\Y'\rightarrow \Y$ is an isogeny (with kernel contained in $\mu_2$). Since $\SL_n$ and therefore also $\Y$ is simply connected, $v|_{\Y'}$ is in fact an isomorphism and therefore $\bar j_1=(v|_{\Y'})^{-1} \circ j_1:\SL_n \rightarrow \Spin_{2n}^q$ is such that $j_1=v\circ \bar j_1$.
Let $\mathbf{Z}\simeq \mult$ be the image of $j_2$ in $\SO_{2n}^q$ and $\mathbf{Z'}=v^{-1}(\mathbf{Z})^\circ\simeq \mult$ be the connected component of its pre-image in $\Spin_{2n}^q$. If $\mathbf{Z'}$ contains the kernel $u(\mu_2)$ of $v$ then $v_{|\mathbf{Z'}}=w^2$ is the square of an isomorphism $w:\mathbf{Z'}\rightarrow \mathbf{Z}$ (any endomorphism of $\mult$ with kernel $\mu_2$ is either the square map $2$ or its composition with the inverse map). We can then define the desired lifting $\bar j_2(z)=w^{-1}(i(\diag(z,I_{n-1})))$ such that $v\circ \bar j_2=j_2$. Otherwise, $v|_{\mathbf{Z'}}:\mathbf{Z'}\rightarrow\mathbf{Z}$ is an isomorphism and we can define $\bar j_2= (v|_{\mathbf{Z'}})^{-1} \circ j_2$. This completes our computation. (In fact, the second case cannot occur, it would result in a map $\bar j:\GL_n\rightarrow \Spin_{2n}^q$ such that $v\circ \bar j=i$ and as observed before this contradicts the fact that $i$ is injective.)

All together we proved the formula \[\omega_{\U_{n,K/F}, E/F}= \eta_{E/F}^{n-1} \circ N_{K/F} \circ \wsn.\]
\begin{remark}
In particular, $\omega_{\U_{n,E/F}, E/F}$ is always the trivial character since $\eta_{E/F}\circ N_{E/F}$ is the trivial character of $E^*$. For $K\ne E$ we have 
\[
\omega_{\U_{n,K/F}, E/F}=\eta_{EK/K}^{n-1} \circ \wsn.
\]
Indeed, in order to see that $\eta_{EK/K}=\eta_{E/F}\circ N_{K/F}$ we observe that on the one hand the right hand side is clearly trivial on $N(EK/K)$ and on the other hand it is not the trivial character since by local class field theory $N(K/F)$ and $N(E/F)$ are two different subgroups of $F^*$ of index two. Note further that $\eta_{EK/K}\circ \wsn$ is well defined since $F^*\subseteq N(EK/K)$ by Lemma \ref{lem klein ext}.
\end{remark}

We describe another interpretation of $\omega_{\Y,E/F}$ provided in \cite{DP} (see also \cite[\S 5.2]{MR3858402}). If $\Y$ is quasi-split then according to \cite{LL} the Langlands morphism 
\[
\alpha_{\Y}:H^1(W_F,Z(\widehat{\Y}(\C))) \rightarrow \Hom(Y,\C^\times)
\] 
is bijective. The class
\[
\chi_{\Y,E/F}:=\alpha_{Y}^{-1}(\omega_{\Y,E/F}):W_F\rightarrow Z(\widehat{\Y}(\C))
\] 
is represented by the unique cocycle $\chi_{\Y,E/F}$ that maps $W_E$ to the identity and the non-trivial class in $W_F/W_E$ to the image of $-I_2$ by any principal $\SL_2$-morphism from $\SL_2(\C)$ to $\widehat{\Y}(\C)$. In fact, $\chi_{\Y,E/F}$ is a quadratic character of $W_F$ and in the sequel we view $\chi_{\Y,E/F}$ as a character of 
$W_F'$ trivial on $\SL_2(\C)$ so that it makes sense to twist an $L$-homomorphism of $\Y$ by $\chi_{\Y,E/F}$. The following lemma follows easily. 

\begin{lemma}\label{lemma transfer and character twist}
Suppose that $-1\in N(E/F)$ and let $L/E$ be as in Remark \ref{rem extension is a prasad character}, so that $L'=LE'$ is a quadratic extension of $E'$. Let $\Y$ be either $\SO_{2n+1}$ or $U_{2n,E/F}$ and let $N$ be either $2n+1$ or $2n$ respectively. For $[\phi]\in \Phi(\Y(E))$ we have 
\[
I([\chi_{\Y_E,L/E} \,\phi])=[\chi_{\GL_N,L'/E'}\,\phi'] \ \ \ \text{where} \ \ \ [\phi']=I([\phi]).
\]
\end{lemma}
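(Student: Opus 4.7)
}

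The plan is to unwind the definition of $I$ and reduce the identity of parameters to an identity of quadratic characters of $W'_{E'}$ that can be verified by matching kernels and a single value. By definition $I$ is restriction from $W'_E$ to $W'_{E'}$, and $\chi_{\Y_E,L/E}$ is a cocycle with values in $Z(\widehat{\Y_E}(\C))$, trivial on the $\SL_2(\C)$-factor. Since the standard embedding $\widehat{\Y_E}(\C)\hookrightarrow \GL_N(\C)$ sends $Z(\widehat{\Y_E}(\C))$ into $\C^\times = Z(\GL_N(\C))$ in the cases at hand, the product $\chi_{\Y_E,L/E}(w)\,\phi(w)$ makes sense inside $\GL_N(\C)$ and one has the tautological equality
\[
I([\chi_{\Y_E,L/E}\,\phi]) = [\chi_{\Y_E,L/E}|_{W'_{E'}}\cdot \phi|_{W'_{E'}}] = [\chi_{\Y_E,L/E}|_{W'_{E'}}\cdot \phi'].
\]
Thus the lemma reduces to the equality of characters $\chi_{\Y_E,L/E}|_{W'_{E'}} = \chi_{\GL_N,L'/E'}$ of $W'_{E'}$.

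Both sides are quadratic characters trivial on $\SL_2(\C)$, hence are determined by their kernel together with their value on a single element outside the kernel. By construction $\chi_{\Y_E,L/E}$ has kernel $W_L\cdot \SL_2(\C)$ in $W'_E$, so its restriction to $W'_{E'}$ has kernel
\[
(W_L\cap W_{E'})\cdot \SL_2(\C) = W_{LE'}\cdot \SL_2(\C) = W_{L'}\cdot \SL_2(\C),
\]
which matches the kernel of $\chi_{\GL_N,L'/E'}$ by definition (here one uses the hypothesis $-1\in N(E/F)$, which ensures $L\not\subseteq E'$, so that $L'/E'$ is genuinely quadratic). It therefore remains to match values on a representative of the single nontrivial coset.

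The value of $\chi_{\Y_E,L/E}$ on such a representative is $\iota_{\Y_E}(-I_2)$ for a principal morphism $\iota_{\Y_E}\colon \SL_2(\C)\to \widehat{\Y_E}(\C)$, while $\chi_{\GL_N,L'/E'}$ sends it to $\iota_{\GL_N}(-I_2) = (-1)^{N-1}I_N$ for a principal $\iota_{\GL_N}\colon \SL_2(\C)\to \GL_N(\C)$. The main step, and the only genuine obstacle, is to verify that the composition of $\iota_{\Y_E}$ with the standard embedding $\widehat{\Y_E}(\C)\hookrightarrow \GL_N(\C)$ is itself a principal $\SL_2$ into $\GL_N(\C)$. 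For $\Y=\SO_{2n+1}$ this follows because the standard representation of $\widehat{\Y_E}=\Sp_{2n}(\C)$ is an $N$-dimensional symplectic representation whose restriction to a principal $\SL_2$ is the unique $N$-dimensional irreducible of $\SL_2$ (the half-sum-of-positive-coroots principle for $\Sp_{2n}$); for $\Y=U_{2n,E/F}$ one has $\Y_E\simeq \GL_{2n,E}$ so that $\widehat{\Y_E}(\C)=\GL_N(\C)$ and $\iota_{\Y_E}$ is by definition already the irreducible $N$-dimensional representation. In both cases $\iota_{\Y_E}(-I_2)=(-1)^{N-1}I_N$, so the two characters agree and the proof is complete.
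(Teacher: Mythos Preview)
Your argument is correct and takes essentially the same approach as the paper: both reduce to checking that the principal $\SL_2\to\widehat{\Y_E}(\C)$, composed with the standard inclusion into $\GL_N(\C)$, is the irreducible $2n$-dimensional representation of $\SL_2$ (so that $-I_2\mapsto -I_{2n}$), and both verify this case by case. Note only that the stated value $N=2n+1$ for $\Y=\SO_{2n+1}$ is a slip in the lemma---the map $I$ lands in $\Phi(\GL_{2n}(E))$ since $\widehat{\Y_E}=\Sp_{2n}(\C)$, and the paper's own proof works with $-I_{2n}\in\GL_{2n}(\C)$---so your phrase ``the standard representation of $\Sp_{2n}(\C)$ is an $N$-dimensional symplectic representation'' should be read with $N=2n$.
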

\begin{proof}
In both cases, the principal $\SL_2$-morphism from $\SL_2(\C)$ to $\widehat{\Y}(\C)$ is the irreducible $2n$-dimensional representation
(it has image in $\Sp_{2n}(\C)$ according to \cite[Lemma 3.2.15]{MR2522486}).

If $\Y=\SO_{2n+1}$ then $E'=E$ and $L'=L$, and $\chi_{\Y_E,L/E}$ identifies with the character of $\Gal(L/E)$ sending the non-trivial Galois element $\sigma_{L/E}$ to 
$-I_{2n}$ in the center of $\Sp_{2n}(\C)$, whereas $\chi_{\GL_N, L/E}$ identifies with the character of $\Gal(L/E)$ sending $\sigma_{L/E}$ to 
$-I_{2n}$ in the center of $\GL_{2n}(\C)$. As $I$ is induced by composing L-homomorphisms with the natural inclusion of dual groups the result follows. 

If $\Y=\U_{2n,E/F}$ then $I[\phi]=[\phi_{|W_{E'}}]$. Now $\chi_{\Y_E,L/E}$ identifies with the character of $\Gal(L/E)$ sending 
$\sigma_{L/E}$ to 
$-I_{2n}$ in the center of $\GL_{2n}(\C)$, whereas $\chi_{\GL_{2n},L'/E'}$ identifies with the character of $\Gal(L'/E')$ sending 
$\sigma_{L'/E'}$ to 
$-I_{2n}$ in the center of $\GL_{2n}(\C)$, and the result follows. 
\end{proof}

\subsection{Prasad's global character}\label{subsec Prasad global char} Let $K/k$ be a quadratic extension of number fields, and let $\eta_{K/k}$ be the quadratic automorphic character of $\A_k^\times$ attached to $K/k$ by global class field theory. It decomposes as $\eta_{K/k}=\prod_v \eta_{K_v/k_v}$ where the product is over all places $v$ of $k$ and $\eta_{K_v/k_v}$ is the trivial character if $v$ splits in $K$.

Denote by $\mathbf{Y}$ a $k$-quasi-split group. Thanks to Section \ref{subsec prasad char} for each finite place $v$ of $k$ the natural map  
$\Y(k_v) \mathop{\longrightarrow}\limits^{p_v} \Y^{\ad}(k_v)$ gives rise to the local homomorphism 
\[N_v:=[\rho]_{k_v}\circ \delta_{p,k_v}\circ p_v: \Y(k_v)\rightarrow k_v^*/k_v^{*2},\] and with the obvious notation
the exact sequence (\ref{eq defining P char}) also gives rise to the homomorphism 
\[N_k:=[\rho]_{k}\circ \delta_{p,k}\circ p_k: \Y(k)\rightarrow k^*/k^{*2}\] that satisfies 
$j_{k_v} \circ N_k= N_v \circ i_{k_v}$ where $i_{k_v}:\Y(k)\rightarrow \Y(k_v)$ is the natural inclusion and 
$j_{k_v}: k^*/k^{*2}\rightarrow k_v^*/{k_v^*}^2$ the natural homomorphism. This also holds by the same arguments for archimedean places. 
The local homomorphisms $N_v$ allows one to define the global homomorphism 
\[N=\prod_v N_v:\Y(\A_k)\rightarrow \A_k^\times/{\A_k^\times}^2.
\] 
Note that $N_{|\Y(k)}=N_k$. We set 
\[\omega_{\Y, K/k}:=\eta_{K/k}\circ N.\] It is a smooth character of $\Y(\A_k)$, which is in fact automorphic since 
 $\eta_{K/k}$ is and $N_{|\Y(k)}$ equals $N_k$ composed with the natural diagonal map from 
 $k^*/k^{*2}$ to $ \A_k^\times/\A_k^{\times 2}$. 
 Prasad's global character is related to Prasad's local characters by the following formula:
\[\omega_{\Y, K/k}=\prod_v \omega_{\Y, K_v/k_v}\]
where $\omega_{\Y, K_v/k_v}$ is the trivial character of $\Y(k_v)$ if $v$ splits in $K$.
(The local characters at archimedean places are similarly defined in \cite{DP}.)

\subsection{The opposition group}

Following \cite[\S 5]{DP} we define the opposition group $\Y^\op$ for any $F$-quasi-split reductive group $\mathbf{Y}$. 
We recall that $\mathbf{Y}$ is a form of a unique $F$-split group $\mathbf{Y_s}$, and that any quasi-split form of $\mathbf{Y_s}$
corresponds to a homomorphism in $\Hom(\Gal(\bar F/F),\Out(\mathbf{Y_s}))$. 
Denote by $\mu_\Y:\Gal(\bar F/F) \rightarrow \Out(\mathbf{Y_s})$ the homomorphism corresponding to $\Y$.

The group $\Out(\Y_s)$ of outer automorphisms of $\Y_s$ identifies with the group of pinned automorphisms of $\mathbf{Y_s}$ associated with a quadruple $(\mathbf{Y_s},\mathbf{B},\mathbf{T},(\mathbf{X_{\alpha}})_{\alpha\in \Delta})$
where $\mathbf{T}$ is a maximal torus inside a Borel subgroup $\mathbf{B}$ of $\mathbf{Y_s}$, $\Delta$ is a basis of simple roots of $\mathbf{Y_s}$ with respect to $(\mathbf{B},\mathbf{T})$ and $(\mathbf{X_{\alpha}})_{\alpha\in \Delta}$ is a pinning, all defined over $F$ (see for example \cite{MR546608}). 
Denote by $w_0$ the longest element in the Weyl group of $\mathbf{Y_s}$ associated to $\mathbf{T}$. Then $\Aut(\mathbf{H_s},\mathbf{B},\mathbf{T},(\mathbf{X_{\alpha}})_{\alpha\in \Delta})$ contains a unique element sending $\mathbf{X_{\alpha}}$ to $\mathbf{X}_{-w_0(\alpha)}$, $\alpha\in\Delta$, known as the \textit{Chevalley involution} of $\mathbf{Y_s}$, 
which corresponds to an involution $i_c$ in the center $Z(\Out(\mathbf{Y_s}))$ of $\Out(\mathbf{Y_s})$ independent of the choice of data $(\mathbf{B},\mathbf{T},(\mathbf{X_{\alpha}})_{\alpha\in \Delta})$. There is a unique element 
\[
\mu_{E/F}\in \Hom(\Gal(E/F),Z(\Out(\mathbf{Y_s})))\hookrightarrow \Hom(\Gal(\bar F/F),\Out(\mathbf{Y_s}))
\] 
such that $\mu_{E/F}(\sigma)=i_c$. Then 
\[
\mu_{\Y^\op}:=\mu_{E/F}\mu_{\Y}\in  \Hom(\Gal(\bar F/F),\Out(\mathbf{Y_s}))
\] 
corresponds to a unique $F$-quasi-split form of $\mathbf{Y_s}$ that we denote by $\Y^\op$. Taking $F$-points we call $Y^\op=\Y^\op(F)$ the opposition group of $Y$. Note that $\Y$ and $\Y^\op$ are isomorphic over $E$. It further follows from the definition that $(\Y^\op)^\op=\Y$.

We now compute the opposition groups in the cases of interest to us.

\begin{itemize}
\item If $\Y=\GL_n$ then $\Y=\mathbf{Y_s}$ and $\mu_{\Y}$ is the trivial homomorphism. Furthermore, the Chevalley involution of $\GL_n$ corresponds to $i_c\in Z(\Out(\GL_n))$ the class of the automorphism $g\mapsto w_n {}^t\! g^{-1}w_n^{-1}$. It is now easily verified that  
$\Y^\op=\U_{n,E/F}$. 
\item If $\Y=\Sp_{2n}$ or $\Y=\SO(\j[n])$ (either the quasi-split orthogonal or symplectic case) then the Chevalley involution of $\Y_s$ is trivial and therefore $\Y=\Y^\op$. 

\item If $\Y=\U_{n,F'/F}$ then $\mathbf{Y_s}=\GL_n$ and $\mu_{\Y}$ factors through $\Gal(F'/F)$ and sends $\tau$ to $i_c$. Hence $\mu_{\Y^\op}=\mu_{E/F}\mu_{\Y}$ factors through $\Gal(E'/F)$ and furthermore maps $\sigma\tau$ to $i_c^2$ hence lies in the kernel of $\mu_{\Y^\op}$. Thus, $\mu_{\Y^\op}$ factors through $\Gal(F''/F)$ where $F''=(E')^{\sigma\tau}$, and maps the non-trivial involution of $F''/F$ (the restriction of either $\sigma$ or $\tau$) to $i_c$. That is, 
\[
\Y^\op=\U_{n,F''/F}.
\]

\end{itemize}

\def\cprime{$'$} \def\Dbar{\leavevmode\lower.6ex\hbox to 0pt{\hskip-.23ex
  \accent"16\hss}D} \def\cftil#1{\ifmmode\setbox7\hbox{$\accent"5E#1$}\else
  \setbox7\hbox{\accent"5E#1}\penalty 10000\relax\fi\raise 1\ht7
  \hbox{\lower1.15ex\hbox to 1\wd7{\hss\accent"7E\hss}}\penalty 10000
  \hskip-1\wd7\penalty 10000\box7}
  \def\polhk#1{\setbox0=\hbox{#1}{\ooalign{\hidewidth
  \lower1.5ex\hbox{`}\hidewidth\crcr\unhbox0}}} \def\dbar{\leavevmode\hbox to
  0pt{\hskip.2ex \accent"16\hss}d}
  \def\cfac#1{\ifmmode\setbox7\hbox{$\accent"5E#1$}\else
  \setbox7\hbox{\accent"5E#1}\penalty 10000\relax\fi\raise 1\ht7
  \hbox{\lower1.15ex\hbox to 1\wd7{\hss\accent"13\hss}}\penalty 10000
  \hskip-1\wd7\penalty 10000\box7}
  \def\ocirc#1{\ifmmode\setbox0=\hbox{$#1$}\dimen0=\ht0 \advance\dimen0
  by1pt\rlap{\hbox to\wd0{\hss\raise\dimen0
  \hbox{\hskip.2em$\scriptscriptstyle\circ$}\hss}}#1\else {\accent"17 #1}\fi}
  \def\bud{$''$} \def\cfudot#1{\ifmmode\setbox7\hbox{$\accent"5E#1$}\else
  \setbox7\hbox{\accent"5E#1}\penalty 10000\relax\fi\raise 1\ht7
  \hbox{\raise.1ex\hbox to 1\wd7{\hss.\hss}}\penalty 10000 \hskip-1\wd7\penalty
  10000\box7} \def\lfhook#1{\setbox0=\hbox{#1}{\ooalign{\hidewidth
  \lower1.5ex\hbox{'}\hidewidth\crcr\unhbox0}}}
\providecommand{\bysame}{\leavevmode\hbox to3em{\hrulefill}\thinspace}
\providecommand{\MR}{\relax\ifhmode\unskip\space\fi MR }
\providecommand{\MRhref}[2]{%
  \href{http://www.ams.org/mathscinet-getitem?mr=#1}{#2}
}
\providecommand{\href}[2]{#2}


\end{document}